\newcommand{\scrC}{\EuScript{C}}
\newcommand{\scrF}{\EuScript{F}}
\newcommand{\scrH}{\EuScript{H}}
\renewcommand{\cA}{\EuScript{A}}
\newcommand{\scrB}{\EuScript{B}}
\newcommand{\scrT}{\EuScript{T}}
\newcommand{\scrU}{\EuScript{U}}
\newcommand{\rk}{\mathrm{rk}}
\newcommand{\vd}{\operatorname{vd}}
\newcommand{\cdbar}{\overline{\partial}}
\newcommand{\bQ}{\mathbb{Q}}
\newcommand{\bN}{\mathbb{N}}
\newcommand{\bP}{\mathbb{P}}
\newcommand{\bC}{\mathbb{C}}
\newcommand{\bR}{\mathbb{R}}
\newcommand{\bZ}{\mathbb{Z}}
\newcommand{\bA}{\mathbb{A}}
\newcommand{\bT}{\mathbb{T}}
\newcommand{\bK}{\mathbb{K}}
\newcommand{\ev}{\mathrm{ev}}
\newcommand{\tw}{\mathrm{tw}}
\newcommand{\cB}{\mathcal{B}}
\newcommand{\cG}{\mathcal{G}}
\newcommand{\cH}{\mathcal{H}}
\newcommand{\cV}{\mathcal{V}}
\newcommand{\cQ}{\mathcal{Q}}
\newcommand{\cY}{\mathcal{Y}}
\newcommand{\cZ}{\mathcal{Z}}
\newcommand{\cI}{\mathcal{I}}
\newcommand{\cK}{\mathcal{K}}
\newcommand{\cC}{\mathcal{C}}
\newcommand{\bk}{\mathbb{K}}
\newcommand{\PSS}{\mathrm{PSS}}
\newcommand{\SSP}{\mathrm{SSP}}
\newcommand{\orb}{\mathrm{orb}}
\newcommand{\val}{\mathrm{val}}
\newcommand{\depth}{\mathrm{depth}}
\newcommand{\reg}{\mathrm{reg}}
\newcommand{\cP}{\mathcal{P}}
\newcommand{\cF}{\mathcal{F}}
\newcommand{\cR}{\mathcal{R}}
\newcommand{\cW}{\mathcal{W}}
\newcommand{\bfh}{\mathbf{h}}
\newcommand{\bfr}{\mathbf{r}}
\newcommand{\bfm}{\mathbf{m}}
\newcommand{\bfa}{\mathbf{a}}
\begin{document}

\title{Orbifold Hamiltonian Floer theory for global quotients}
\date{\today}
\author{Cheuk Yu Mak, Sobhan Seyfaddini, and Ivan Smith}
\maketitle

\begin{abstract} 
\noindent We construct bulk-deformed orbifold Hamiltonian Floer theory for a global quotient orbifold $Y = [X / \Gamma]$, where $\Gamma$ is a finite group acting faithfully by symplectomorphisms on a smooth closed symplectic manifold $X$.  The moduli spaces define an `ordered marked Flow category', which we equip with a coherent presentation via derived orbifolds. The relevant global charts for moduli spaces of orbifold Floer cylinders are built from spaces of domains which are curves in $\bP(V)/\Gamma$ for a faithful $\Gamma$-representation $V$.
\end{abstract}

\tableofcontents

\section{Introduction}

Let $(X,\omega_X)$ be a closed symplectic manifold, $[\omega_X] \in H^2(X,\mathbb{Q})$ and $\Gamma$ a finite group acting faithfully by symplectomorphisms of $X$. Let $Y = [X/\Gamma]$ denote\footnote{
$Y$ and $[X/\Gamma]$ denote the orbifold quotient; we write $X/\Gamma$ for its underlying coarse space.} the quotient symplectic orbifold (i.e. differentiable Deligne-Mumford stack) and let $\frak{b}$ be a bulk-deformation class, i.e. a class in the cohomology of the inertia orbifold (see Theorem \ref{t:main} for a precise description).  In this paper, we construct bulk-deformed orbifold Hamiltonian Floer cohomology $HF^*(H_Y;\frak{b})$ for a non-degenerate smooth Hamiltonian function $H_Y: Y \times S^1 \to \bR$, and the associated spectral invariants 
\[
c^{\frak{b}}(x,-): C^{\infty}(Y \times S^1) \to \bR
\]
associated to classes $x$ in the $\mathfrak{b}$-deformed orbifold (Chen-Ruan) quantum cohomology  $QH^*_{orb}(Y;\mathfrak{b})$.  

The sequel \cite{MSS2} to this paper applies these spectral invariants, in the special case of symmetric product orbifolds, to questions in Hamiltonian dynamics related to density of periodic orbits, and the `smooth closing lemma'. In particular, we give there sufficient conditions for a sequence of spectral invariants on symmetric product orbifolds to satisfy a `Weyl law'.
\medskip

Fix a field  $\bk \supset \bQ$ and 
let $\Pi$ be a discrete subgroup of $\mathbb{R}$ containing $\frac{1}{|\Gamma|}\omega_X(H_2(X;\mathbb{Z}))$.
Consider the Novikov field
\[\Lambda_{\Pi}:=\left\{\sum_{i=1}^{\infty} a_iT^{b_i}| a_i \in \bk,  b_i \in \Pi,  \lim_i b_i=\infty\right\}.\]
with valuation 
\[
\val:\Lambda_{\Pi} \to \mathbb{R} \cup \{\infty\}, \quad \val\left(\sum a_iT^{b_i} \right)=\min\{b_i\}. 
\]
Let $\Lambda_{>0}:=\val^{-1}(\mathbb{R}_{>0} \cup \{\infty\})$ 
and $\Lambda_{\ge 0}:=\val^{-1}(\mathbb{R}_{ \ge 0} \cup \{\infty\})$
which consist of elements of $\Lambda_{\Pi}$ with positive and non-negative valuation respectively.

Recall that an orbifold $Y$ has an inertia stack $IY$, which splits into a union of components called `twisted sectors' (\cite{OrbifoldBook}).  For a global quotient $Y = [X/\Gamma]$, the twisted sectors are indexed by the conjugacy classes of $\Gamma$, with the conjugacy class $(g)$ of $g$ giving the sector $X^g / C(g)$ with $X^g \subset X$ the fixed point locus of $g$ and $C(g) \subset \Gamma$ the centraliser of $g$.\footnote{Different representatives of $(g)$ give isomorphic sectors.}
Let $\|\Gamma\|$ be the set of conjugacy classes of $\Gamma$ and $\|\Gamma\|^{\circ}=\|\Gamma\| \setminus \{id\}$.
The orbifold (Chen-Ruan) cohomology is, up to a grading shift,
\[
H_{orb}(Y;\bK)=\oplus_{(g) \in \|\Gamma\|} H(X^g / C(g);\bK)
\]
(see Section \ref{s:orbicoh} and Equation \ref{eqn:orbifold_cohomology_decomposition} for the discussion of the grading shift).

\begin{theo}\label{t:main}
Let $Y = [X / \Gamma]$ be a global quotient orbifold, and 
\begin{align}\label{eq:bulkb}
\frak{b}=\sum_{(g) \in \|\Gamma\|^{\circ}} \frak{v}_{(g)} PD[X^g/C(g)] \in H_{orb}(Y;\Lambda_{>0}),  \quad \frak{v}_{(g)} \in \Lambda_{>0}
\end{align}
where $PD$ stands for the Poincar\'e dual.

Then for each $H_Y \in  C^{\infty}(Y \times S^1;\bR)$ that is non-degenerate (in the sense that $1$ is not an eigenvalue of the linearised return map for any $1$-periodic orbit of $H_Y$) and each $\omega_Y$-compatible almost complex structure $J_Y$,

\begin{enumerate}
    \item[(A)] there is a $\mathbb{Z}/2\mathbb{Z}$-graded bulk-deformed Floer cohomology group $HF(H_Y;\frak{b})$ linear over $\Lambda$, independent of $J_Y$ up to canonical isomorphism, independent of $H_Y$ up to additive isomorphism, and intrinsic to $(Y,H_Y)$ (i.e. not depending on the presentation of $Y$ as a global quotient);
    
    \item[(B)] there is an orbifold pair of pants product
    \[
HF(H_Y;\frak{b}) \otimes HF(H'_Y;\frak{b}) \longrightarrow HF(H_Y\# H'_Y;\frak{b})
\]
    which depends on the bulk $\frak{b}$, and an isomorphism 
\begin{align}\label{eq:PSS}
QH^*_{orb}(Y;\frak{b}) \longrightarrow HF(H_Y;\frak{b})
\end{align}
which is compatible with the product structure (i.e. entwines the bulk-deformed orbifold quantum product with the pair of pants product).
\end{enumerate}

\end{theo}

In the statement (B) above, $H_Y \# H'_Y$ is referred to as the {\it composition} of the Hamiltonians $H_Y, H'_Y$ and is defined via the formula
\begin{align}\label{eq:comp}
H_Y \# H'_Y(t,x):= H_Y(t,x) + H'_Y(t, (\varphi^t_{H_Y})^{-1}(x)).
\end{align}
The Hamiltonian flow of $ H_Y \# H'_Y$ is the composition of the flows of $H_Y$ and $H'_Y$, i.e.\ $\varphi^t_{H_Y\# H'_Y} = \varphi^t_{H_Y} \circ \varphi^t_{H'_Y}$.

\begin{remark}
    We only treat the special case in which every term of the bulk insertion $\frak{b}$ (Equation \ref{eq:bulkb}) is a multiple of the fundamental class of some twisted sector of the inertia orbifold $IY$.  Our technology is sufficient to treat more general bulks (allowing contribution from the trivial sector), but this case already reveals the interesting features of the construction, and suffices for all the applications in the sequel. 
\end{remark}

The proof of Theorem \ref{t:main}(A), which occupies most of the paper, is concluded in Section \ref{s:Haminv}.
Theorem \ref{t:main}(B) is proved in Section \ref{s:additive}, \ref{s:compatibleprod} and \ref{s:product} (see Equation \eqref{eqn:product}, Lemma \ref{l:prodcommute} and \ref{p:proPss}).

The construction of $HF(H_Y;\frak{b})$ is involved, but the basic idea follows a standard template.
Generators are formally capped oriented Hamiltonian orbits of $H_Y$ and the differentials count zeroes (weighted by $\frak{b}$) of a system of homotopy coherent multi-valued perturbations of global Kuranishi chart lifts of the moduli spaces of Floer cylinders between formally capped oriented Hamiltonian orbits. More precisely, in Section \ref{s:orbifoldFloer}, we define the objects of an ``ordered marked Floer flow category" $\cC(H_Y)$ (see Definition \ref{d:object}). Every object $c_x$ has a $\mathbb{Z}/2\mathbb{Z}$-grading (we call the grading parity, see Remark \ref{r:parityde}, Definition \ref{d:parity} and Remark \ref{r:underlyingparity}) and a $\frak{b}$-deformed action (Equation \eqref{eqn:action of capping}).
The group $\Pi$ acts on $c_x$ by formal recapping and changes the action accordingly (Equation \eqref{eq:formalaction}).
The Floer cochain complex is the $\mathbb{Z}/2\mathbb{Z}$-graded $\Lambda_{\Pi}$-module generated by the objects of  $\cC(H_Y)$ modulo the relation
$T^ac_x=a \cdot c_x$ for any $a \in \Pi$ (see Definition \ref{d:chaincomplex}(1) and \ref{d:chaincomplex2}).
For any pair of objects $c_x, c_y$, the morphism space $\cM(c_x,c_y)$ is the moduli space of possibly broken representable orbifold Floer cylinders from $c_x$ to $c_y$ (see Definition \ref{d:orbicylinder} and Equation \eqref{eq:orbicylinder}).

The underlying combinatorial data governing the stratification of $\cM(c_x,c_y)$ and how boundary strata are fibre products of other moduli spaces is explained in Section \ref{s:flow}, and we call these data an \emph{ordered marked flow category}.
These data are not sufficient to extract the differential.
In Section \ref{s:multivalue}, we define a \emph{homotopy coherent global chart lift} of an ordered marked flow category (Definition \ref{d:coherenthomotopy} and \ref{d:global lift}, see also \cite[Section 3.2]{Hirschi-Hugtenburg} for a similar formulation).
Together with the abstract smoothing theory (Lemma \ref{lem:smoothing}) and coherent multi-valued perturbations (\cite{CMS}, cf. Lemma \ref{lem:mvp_exists}, \ref{l:pertubationexist}), we can count the zeroes of the multi-valued perturbations (see Definition \ref{d:chaincomplex}(2)).
But instead of counting the zeroes directly, we weight the count by $\mathfrak{b}$ to get our $\mathfrak{b}$-deformed Floer cochain complex (see Remark \ref{r:deform} and Definition \ref{d:chaincomplex2}).
The construction of a single global chart for a moduli space of closed curves as relevant to orbifold Gromov-Witten theory is explained in Section \ref{s:globalchartclosed}.
It underlies the main idea of the construction of a global chart of a moduli space of orbifold Floer cylinders, which is explained in Section \ref{s:globalchart_cylinder}.
The construction of a homotopy coherent global chart lift is explained in Section \ref{s:zigzag} and \ref{s:system}.
We remark that representable maps from an orbifold curve $\scrC$ to $Y$ are the same as $\Gamma$-equivariant maps from a \emph{smooth} curve $\Sigma$, a $\Gamma$-principal bundle over $\scrC$, to $X$ (see Theorem \ref{t:stablemap}). This allows us to work with smooth manifolds most of the time.

The existence of the isomorphisms \eqref{eq:PSS} is sufficient to construct spectral invariants.  

\begin{theo}\label{t:spectral}
Consider a bulk \[
\mathfrak{b}:=\sum_{(g)} \frak{v}_{(g)} PD([X^g/C(g)]) \in QH_{orb}(Y,\mathfrak{b};\Lambda_{>0}) := H_{orb}(Y;\Lambda_{>0}).\] 
For any class $x$ in the $\mathfrak{b}$-deformed orbifold quantum cohomology\footnote{This and the previous equality are additive identifications.} $QH_{orb}(Y,\mathfrak{b}):=H_{orb}(Y;\Lambda_{\Pi})$, there is an associated spectral invariant $c^{\frak{b}}(x,-):C^{\infty}(Y \times S^1; \bR) \to \mathbb{R}$ with the following properties.  

\begin{enumerate}
   \item (Symplectic invariance) $c^{\frak{b}}(x,H_Y\circ \psi) = c^{\frak{b}}(x,H_Y)$ for any $\psi \in \Symp(Y, \omega_Y)$;
    \item (Spectrality) for any $H_Y$, $c^{\frak{b}}(x,H_Y)$ lies in the action spectrum $\Spec(Y,H_Y)$;
\item (Hofer Lipschitz) for any $H_Y, H'_Y$
$$  \int_{0}^1 \min  ((H_Y)_t - (H'_Y)_t) dt \leq c^{\frak{b}}(x,H_Y) - c^{\frak{b}}(x,H'_Y) \leq  \int_{0}^1 \max\,  ((H_Y)_t - (H'_Y)_t) dt;$$
\item (Monotonicity) if $(H_Y)_t \leq (H'_Y)_t$ then $c^{\frak{b}}(x,H_Y) \leq c^{\frak{b}}(x,H'_Y)$;
\item (Homotopy invariance) if $H_Y, H'_Y$ are mean-normalized and determine the same point of the universal cover $\widetilde{\Ham}(Y,\omega_Y)$, then $c^{\frak{b}}(x,H_Y) = c^{\frak{b}}(x,H'_Y)$;
\item (Shift) $c^{\frak{b}}(x,H_Y + s(t)) = c^{\frak{b}}(x,H_Y) +  \int_0^1 s(t)\, dt$;
\item (Subadditivity) for any $x,x' \in QH_{orb}(Y,\mathfrak{b})$ and $H_Y,H'_Y$, $c^{\frak{b}}(x \ast_{\frak{b}} x',H_Y \# H'_Y) \leq c^{\frak{b}}(x,H_Y) + c^{\frak{b}}(x',H'_Y)$, where $\ast_{\frak{b}}$ is the bulk-deformed orbifold quantum product in $QH_{orb}(Y,\mathfrak{b})$.
\end{enumerate}
\end{theo}

The action spectrum in (2), introduced  in Definition \ref{d:action}, is the set of values of the action on `formally capped' orbits of the Hamiltonian.

The spectral invariant is defined and Theorem \ref{t:spectral} is proved in Section \ref{s:spectralprop}.

Our construction of orbifold Hamiltonian Floer cohomology is highly influenced by the fundamental works of Bai-Xu \cite{BX} and Rezchikov \cite{Rez}, see also \cite[Appendix]{Abouzaid-Blumberg}, who construct Hamiltonian Floer cohomology on a general symplectic manifold. Nonetheless, the orbifold setting throws up a number of new features, some of which we summarise here:

\begin{enumerate}
    \item We construct global charts for moduli spaces of orbifold Floer cylinders; the underlying spaces of domains are spaces of orbifold curves embedded in $\bP(V)/\Gamma$ where $V$ is a faithful $\Gamma$-representation. (These can be understood as $\Gamma$-principal bundles in $\bP(V)$ via the technology of admissible covers \cite{ACV}.) Relative to the case of stable maps to a manifold, a $\Gamma$-equivariant homotopy class of maps from a Riemann surface to $\bP(V)$ plays the role of the underlying homology class, and the representation $V$ plays the role of the degree of the map. Boundary strata are given by nodal curves in $\bP(V)/\Gamma$, where each component is in a $\bP(V_i)/\Gamma \subset \bP(V)/\Gamma$ that is dual to a surjective morphism of $\Gamma$-representations $V^* \to V_i^*$. We are able to avoid any deep analysis of the boundary strata by a zig-zag trick (cf. Section \ref{s:zigzag}) but the cost is that our global lift is only homotopy coherent as opposed to strictly compatible as in \cite{BX}.

    \item As understood by Cho and Hong \cite{Cho-Hong} in orbifold Morse theory, the moduli spaces of compactified gradient flow lines are not manifolds with corners: broken flow-lines may admit several different gluings. We interpret this
    in terms of choices in identifying sets of  asymptotic markers; in particular, we do obtain moduli spaces which are manifolds with corners when considering boundary strata of admissible covers labelled with suitable additional data at the nodes (cf. Section \ref{s:globalchart_cylinder} and Lemma \ref{l:realblowup}). As an important intermediate step, we clarify what  data an admissible cover of a broken orbifold cylinder with asymptotic markers should carry (cf. Section \ref{s:orbifoldFloer}). 
    
    \item The global chart lift of the orbifold Floer flow category is still constructed inductively in action, but the zig-zag relating the boundary of a chart to a chart on the boundary involves a homotopy of sections, which means the differentials in the chain complex eventually count zeroes of multivalued sections in some homotopy cube, cf. Section \ref{sec:ordered marked}. 
    \item The presence of positive-valuation bulk insertions means we encounter infinitely many moduli spaces for fixed asymptotics, and modifies the relationship between degrees of curves and their integralised actions, over which we induct. Furthermore, the presence of automorphisms of objects (isotropy groups of orbits) and of ordered interior marked points modifies the combinatorial structure of boundary breaking, which leads to the notion of an \emph{ordered marked flow category} which is a slight variation on the usual definition  (see Definition \ref{d:omfc}).
    \item The dynamical applications in \cite{MSS2} involve spectral invariants, in particular, we require the orbifold Floer product, which plays no role in establishing Arnol'd-type bounds as in \cite{BX,Rez}.
\end{enumerate}
\begin{remark}
    The construction of orbifold Gromov-Witten theory for general orbifolds is the subject of ongoing work of McLean and Ritter. Following seminal work of Ross and Thomas \cite{RT}, their theory is built on moduli spaces of domains comprised of curves in weighted projective spaces, rather than in global quotients of projective spaces.
\end{remark}

\begin{remark}\label{r:parityde}
    Orbifold cohomology $H_{orb}(Y)$ (sometimes called Chen-Ruan cohomology) is $\mathbb{Q}$-graded in general. 
    But there is a parity decomposition $H_{orb}(Y)=H_{orb}^{even}(Y) \oplus H_{orb}^{odd}(Y)$, with respect to which the product structure is supercommutative.
    Likewise, bulk-deformed orbifold quantum cohomology $QH_{orb}(Y,\mathfrak{b})$  is $\mathbb{Q}$-graded with an additional parity decomposition. 
    In this paper, we are going to use a simplified set-up where bulk-deformed orbifold quantum cohomology and Hamiltonian Floer cohomology will be viewed as ungraded theories, but we retain the parity decomposition to make sense of the supercommutativity of the quantum product so the resulting theory is $\mathbb{Z}/2\mathbb{Z}$-graded by parity.
\end{remark}

\begin{remark}
    The theory without bulk insertions and without the pair of pants product becomes  significantly simpler because one only needs to count orbifold cylinders without interior marked points.  Such a cylinder mapping  to $Y$ can be re-interpreted as a strip mapping to $X$ (with an identification of boundary strips under the action of a suitable group element). 
This viewpoint was studied in \cite{MMRM} and \cite{GZ21}.
\end{remark}

\noindent \textbf{Acknowledgements.} 
The authors are grateful to Mohammed Abouzaid, Amanda Hirschi and Mark McLean for several helpful conversations on global charts, and in particular to Mark McLean for suggestions on global charts for global quotients. We are also grateful to Dhruv Ranganathan for lessons on twisted stable maps. We are especially grateful to Roman Krutowski who identified a serious error in our original treatment, and to the anonymous referee who provided numerous insightful comments leading to improvements, corrections and clarifications. It will be obvious to the reader that the paper owes a significant debt to the works \cite{BX,Rez}. We also thank Michael Hutchings and the UC Berkeley Department of Mathematics for their warm hospitality in December 2022, when this project was initiated.

C.Y.M is partially supported by the Royal Society University Research Fellowship.
I.S. is partially supported by UKRI Frontier Research Grant EP/X030660/1 (in lieu of an ERC Advanced Grant).
S.S. is partially supported by ERC Starting Grant number 851701.

\section{Orbifold Quantum cohomology and Morse cohomology}

\subsection{Orbifold cohomology}\label{s:orbicoh}

\emph{We define orbifold cohomology for a global quotient, where the inertia stack has a particularly simple description.}

Let $\Gamma$ be a finite group acting faithfully on a closed connected symplectic manifold $X$ by symplectomorphisms, and let $Y := [X/\Gamma]$ denote the orbifold quotient, viewed as a groupoid \cite{OrbifoldBook, Lerman}. 
Let $J_Y$ be a $\omega_Y$-tamed almost complex structure on $Y$. This is equivalent to the data of a $\Gamma$-equivariant $\omega_X$-tamed almost complex structure $J_X$ on $X$.
We will write $Y^{\mathrm{reg}} \subset Y$ for the `regular locus', i.e. the locus where the isotropy group is trivial.  If $g\in \Gamma$ we write $X^g$ for the fixed-point set.  Define
\[
H^*(X,\Gamma) = \oplus_{g\in \Gamma} H^*(X^g)
\]
If $\Delta: \Gamma \times X \to X \times X$ is the action map $(g,x) \mapsto (gx,x)$ then this is equivalently $H^*(\Delta^{-1}(\Delta_X))$.

There is a rational grading on $H^*(X,\Gamma)$ in which the connected component $Z\subset X^g$ has $H^*(Z)$ shifted by the value $2a(g,Z)$. Here the `age' $a(g,Z) = \sum_{j=1}^{\dim_{\mathbb{C}} X} r_j$ where, for $z\in Z$, $g$ acts on $(T_zX,J_X)$ with weights $\exp(2i\pi r_j)$, $r_j \in \Q \cap [0,1)$. 
The age $a(g,Z)$ is independent of the choice of $z \in Z$, as the notation suggests.
One checks 
\begin{align}
a(g,Z) + a(g^{-1},Z) = \mathrm{codim}_{\bC}(Z)=\dim_{\bC}(X)-\dim_{\bC}(Z). \label{eq:agesum}
\end{align}
When $g$ and $g^{-1}$ are conjugate, $a(g,Z)=a(g^{-1},Z)$ so the age is half-integral and the grading on $H^*(X,\Gamma)$ is then integral.  The identity has age zero.

On top of the rational grading, there is a parity decomposition of $H^*(X,\Gamma)$ (see \cite[Definition 1.8]{FG03}) given by
\[
H^{even}(X,\Gamma) := \oplus_{g\in \Gamma} \oplus_{k \in \mathbb{Z}} H^{2k}(X^g), \quad \text{ and } \quad H^{odd}(X,\Gamma)=\oplus_{g\in \Gamma} \oplus_{k \in \mathbb{Z}} H^{2k+1}(X^g).
\]
For $h \in \Gamma$ and $\alpha \in H^*(X^g)$, we define $h \cdot \alpha:= (h^{-1})^*\alpha \in H^*(X^{hgh^{-1}})$. This defines a $\Gamma$ action on $H^*(X,\Gamma) $ which preserves the rational grading and the parity decomposition. 
As a graded vector space, the \emph{orbifold cohomology} of $Y$ is  (see \cite{Chen-Ruan, FG03, LS03})
\begin{equation} \label{eqn:orbifold_cohomology_decomposition}
H^*_{orb}(Y) := H^{*+2\mathrm{age}}(X,\Gamma)^\Gamma \cong \oplus_{(g) \in \|\Gamma\|} H^{*+2a(g,Z)}(X^g/C(g))
\end{equation}
where $\|\Gamma\|$ denotes the set of conjugacy classes in $\Gamma$, the corresponding summand is described via a choice of representative $g$ for the conjugacy class, $C(g)$ is the centralizer of $g$, and the grading shifts depend on the decomposition of $X^g$ into its set of connected components $\{Z\}$.  This is (a regrading of) the cohomology of the inertia orbifold $IY$ of $Y$. The non-identity summands in \eqref{eqn:orbifold_cohomology_decomposition} are called `twisted sectors'.
The even and odd parts are defined by  
\begin{align}\label{eq:parity}
H^{even}_{orb}(Y):=H^{even}(X,\Gamma)^\Gamma, \quad \text{ and } \quad H^{odd}_{orb}(Y):=H^{odd}(X,\Gamma)^\Gamma.
\end{align}

\begin{remark} The cyclic group $\langle g\rangle \subset C(g)$ acts trivially on $X^g$, so the orbifold $X^g/C(g)$ is not reduced.  One often passes to the `rigidified inertia stack' with components $X^g / (C(g)/\langle g\rangle)$. \end{remark}

\begin{example}
    For any $(g) \in \|\Gamma\|$, the Poincar\'e dual of the fundamental class, $PD([X^g/C(g)])$, lies in $H^{even}_{orb}(Y)$.
    Therefore, the $\mathfrak{b}$ in Equation \eqref{eq:bulkb} is in $H^{even}_{orb}(Y;\Lambda_{\Pi})$.
\end{example}

\begin{example}
If $Y = \Sym^n(M)$, the $n$-fold symmetric product of a symplectic manifold $M$, then $X=M^n$ and $\Gamma = \Sym_n$. 
In this case, $\|\Sym_n\|$ is indexed by cycle types, and the twisted sectors are products of smaller symmetric products of $M$ (however,  as orbifolds these may now have non-trivial generic stabiliser). Since a cycle is conjugate to its inverse, the grading on $H^*_{orb}(Y)$ is integral.
\end{example}

The associative ring structure is given by the `Chen-Ruan' product, which can be seen as the degree zero part of the quantum product (i.e. contributions from constant representable orbifold maps, see Section \ref{s:orbifoldmaps}, but with possibly non-trivial obstruction bundles). The degree shifts in the definition ensure that this product respects the grading.

\subsection{Representable orbifold maps and admissible covers}\label{s:orbifoldmaps}

\emph{In this section we give some background on orbifold stable maps.}

An orbifold prestable curve $\scrC$ comprises a prestable curve $C$ (the underlying coarse moduli space of the stacky curve $\scrC$)  together with a stack structure at the nodes and marked points. Concretely, at a marked point $z \in C$ we have an integer label $m_z = m \geq 1$ and  a disc neighborhood $U$ of $z$ which is uniformized by the branched covering map
$z \mapsto z^{m}$, i.e.  there is an open subset $V \subset \mathbb{C}$
invariant with respect to the $G_{z}:=\mathbb{Z} / m\bZ$ action $z \mapsto z\exp(2i \pi/m)$, 
together with a surjective $G_{z}$-invariant holomorphic map $\pi: V \to U$, such that  the induced map $V/G_z \to U$ is bijective.  (When $m_z=1$, $z$ is a smooth point.) We will always assume that the stack structure at a node $p$ is \emph{balanced}, meaning that  if $\mu_m$ is the group of $m$-th roots of unity with generator $t$ and if $\xi = e^{2i\pi/m}$ then a node on $\scrC$ is locally analytically equivalent to
\[
\{zw=0\} / \mu_m \qquad  (z,w) \cdot t = (\xi z, \xi^{-1} w)
\]
The balancing condition is necessary for the node to be smoothable.

We will be considering holomorphic maps from orbifold curves to orbifolds. 
To linearise the $\cdbar$-operator at an orbifold map, one must pull back the tangent bundle, but in general a map of orbifolds does not yield a well-defined pullback map on orbifold vector bundles (the issue arises for maps contained in the complement of the regular locus). The extra information required to define the pullback is a choice of a `compatible system' in the sense of \cite[Definition 4.4.1]{Chen-RuanGW}, \cite[Definition 16.12]{Cho-Poddar}. A compatible system comprises an indexing set $I$ and a covering of the domain and an open neighbourhood of the range of the given orbifold map by charts indexed by $I$ (in particular the charts are in bijection in domain and range), satisfying a number of compatibilities under inclusion maps and with respect to prescribed local uniformizers.  A  compatible system induces a group homomorphism between the isotropy groups $G_z \to G_{v(z)}$ for $z \in \scrC$ and $v:\scrC \to Y$, 
see Definition 16.11 (2)(b) of \cite{Cho-Poddar} and the subsequent paragraph, or the paragraph before \cite[Definition 4.4.1]{Chen-RuanGW}. 
Under an isomorphism of compatible systems, this group homomorphism is changed by post-composition with an automorphism of the target $G_{v(z)}$, in particular the  
(non-)injectivity of this homomorphism depends only on the isomorphism class of a compatible system.

The precise definition of compatible system will not play a role in the sequel, so we defer such to the references.  It is possible that a smooth orbifold map has no compatible system, or has more than one isomorphism class of compatible systems.  (Two compatible systems for an orbifold map $u$ are isomorphic precisely when they give rise to isomorphic pullback orbifold bundles $u^*E$ for all orbifold bundles $E$ on the target, cf. \cite[Lemma 16.1]{Cho-Poddar}.) 
A  smooth orbifold map that has a compatible system is called {\it good}.

When $Y = [X/\Gamma]$ is a global quotient, there is another description of good orbifold maps in terms of `admissible covers', which we will use systematically. In this context, good maps are called representable maps in the literature.
\begin{theo}[\cite{OrbifoldBook}, Theorem 2.45]\label{t:stablemap}
A representable orbifold map $v:\scrC \to Y$ is the same data as:
\begin{enumerate}
\item a (possibly disconnected) nodal Riemann surface $\Sigma$ with $\Gamma$-action so that 
\item $\pi_{\Sigma}:\Sigma \to \Sigma/\Gamma = \scrC$ is a $\Gamma$-principal bundle, branched at the orbifold points of $\scrC$ with ramification orders given by the orbifold orders of the marked points, and so that
\item the $\Gamma$-action is balanced at all nodes of $\Sigma$, together with
\item a $\Gamma$-equivariant map $u:\Sigma \to X$.
\end{enumerate}
\end{theo}

The $\Gamma$-equivariant map $\Sigma \to X$ determines a map $\scrC \to Y$.
The assumption that every component of $\Sigma$ has smooth normalisation (i.e. is  not an orbifold) guarantees that $\scrC \to Y$ is a representable orbifold map.

 Conversely, given a representable orbifold map $v:\scrC \to Y$, we can obtain the curve $\Sigma$ from the pullback square
\[
\xymatrix{
\Sigma \ar[r]^u \ar[d]^{\pi_{\Sigma}} & X \ar[d] \\ \scrC \ar[r]^v & Y
}
\]
The fact that the map $\scrC \to Y$ is representable ensures that every component of the fibre-product $\Sigma$ is smooth and not an orbifold, and the $\Gamma$-action on $\Sigma$ is balanced.

\begin{remark}\label{rmk:automorphism_of_cover}
An automorphism of a representable stable orbifold map $v:\scrC \to Y$ (or equivalently, a $\Gamma$-equivariant map $u:\Sigma \to X$) is a $\Gamma$-equivariant bundle isomorphism $f:\Sigma \to \Sigma$ covering the identity map of $\scrC$ such that $u(f(z))=u(z)$ for all $z \in \Sigma$.

For example, when $Y=[pt/\Gamma]$, the automorphism group of every map $v:\scrC \to Y$ contains the centre $Z(\Gamma)$.
\end{remark}

\begin{remark} When $Y$ is algebraic, a representable orbifold map is exactly a representable 1-morphism of the underlying Deligne-Mumford stacks (i.e. a representable map is a morphism of orbifolds viewed as groupoids).  See e.g. \cite[Section 2.4]{OrbifoldBook}. If $v:\scrC \to Y$ is any smooth orbifold map with $\scrC$ irreducible and  $v^{-1}(Y^{\reg})$ connected and dense, then $v$ is good (ie. representable) with a unique choice of  isomorphism class of compatible system. \end{remark} 

\begin{remark}\label{rmk:log_can_is_ample} For a smooth Riemann surface $C$ with marked points $D\subset C$, $(C,D)$ is stable if and only if $\omega_C(D)$ is ample.  The same result holds for an orbifold curve $\scrC$, taking the orbifold log canonical bundle. See \cite[Definition 2.1]{Cheong-CiocanFontanine-Kim} (in the case $\varepsilon>2$). \end{remark}

\subsection{Pseudoholomorphic representable orbifold stable maps}\label{s:algebraicstack}

\emph{We define the moduli space of orbifold stable maps and give its virtual dimension.}

A $J_Y$-holomorphic stable map $v$ from a pre-stable orbifold curve $\scrC$ to an orbifold $Y$ 
is a representable orbifold map
such that
\begin{enumerate}
\item $v: C \to Y$ is a $J_Y$-holomorphic map on the underlying coarse space $C$ of $\scrC$, i.e. $v$ is $J_Y$-holomorphic on each irreducible component;
\item the set of automorphisms of $v$ is finite.
\end{enumerate}
Under Theorem \ref{t:stablemap}, the first condition is equivalent to the associated $\Gamma$-equivariant map $u:\Sigma \to X$ being $J_X$-holomorphic.

Abramovich-Vistoli \cite{AV} prove that there is a compact moduli space $\cK_{g,h}(Y,\beta)$ of stable orbifold maps where the discrete data comprises:  the genus $g$, number of marked points $h$, and homology class $\beta$ on the target. 
The moduli space $\cK_{g,h}(Y,\beta)$ is a proper Deligne-Mumford stack and its coarse moduli space has a metrizable Gromov topology.

\begin{remark}
If $Y = [X/\Gamma]$ is a global quotient orbifold, then $\beta \in H_2(Y;\mathbb{Q})= H_2(X;\mathbb{Q})^{\Gamma}$, where the equality holds because 
$1/|\Gamma|$ exists in $\mathbb{Q}$. 
\end{remark}

If $v: \scrC\to Y$ is a representable stable orbifold map, with compatible system $\xi$ and $z$ is a marked point,  the local orbifold fundamental group has a well-defined positive generator $t$ (the image of $\{e^{2i\pi \theta}\}_{\theta \in [0,1/|G_z|]}$ in $V\subset \bC$ under the local uniformising map described previously), and the image of $t$ under  the homomorphism $\rho_{\xi,z}: G_z \to G_{v(z)}$ determined by the compatible system $\xi$ determines an element $\rho_{\xi,z}(t) \in G_{v(z)}$.  The conjugacy class of this element is independent of the choice of $\xi$, and so the evaluation map at $z$ naturally lifts to the inertia orbifold $IY$:
\begin{align*}
&\ev_{z_i}: \cK_{g,h}(Y,\beta) \longrightarrow IY, \ 1\leq i \leq h \\
&\ev_{z_i}(v)=(v(z_i),\rho_{\xi,z}(t)) \in X^{\rho_{\xi,z}(t)}/C(\rho_{\xi,z}(t)) \subset IY.
\end{align*}
Because of the balancing condition at nodes, it is also natural to introduce a \emph{twisted evaluation map}
\[
\ev^{\tw}_{z_i} :  \cK_{g,h}(Y,\beta) \longrightarrow IY, \ 1\leq i \leq h
\]
which is defined by
\[
\ev^{\tw}_{z_i} = \iota\circ \ev_{z_i}
\]
where $\iota: IY \to IY$ is the canonical involution of the inertia stack given by $(y,(g)) \mapsto (y, (g^{-1}))$ for $(g) \in \|\Gamma\|$.

The map 
\[
\cK_{g,h}(Y,\beta) \to  \|\Gamma\|^h \text{ given by } v \mapsto (\rho_{\xi,z_1}(t), \dots, \rho_{\xi,z_h}(t))
\]
is locally constant so we have a decomposition
\begin{align}\label{eq:closedmodulidecom}
\cK_{g,h}(Y,\beta)= \sqcup_{\bfg \in \|\Gamma\|^h} \cK_{g,h, \bfg}(Y,\beta)
\end{align}
into a union of open and closed substacks.

\begin{prop}[\cite{Chen-Ruan, AV}]\label{p:vdim}
The space $\cK_{g,h,\bfg}(Y,\beta)$ carries a virtual fundamental cycle of $\bC$-dimension $\langle c_1(TY),\beta\rangle + (\dim_{\bC}(Y)-3)(1-g) + h - \sum_{i=1}^h a(v,z_i)$, where $a(v,z_i):=a(\rho_{\xi,z_i}(t), \ev_{z_i}(v))$ denotes the age grading.
\end{prop}

\begin{remark}\label{r:indexformula}
The formula for the virtual dimension in \cite{Chen-Ruan} (Proposition 4.2.2, Lemma 3.2.4) is obtained as follows: for $(v:\scrC \to Y) \in \cK_{g,h}(Y,\beta)$, the complex vector bundle $v^*TY$ admits a canonical desingularization $|v^*TY|$ over the desingularization $C$ of $\scrC$ \cite[Proposition 4.2.2]{Chen-Ruan}.
The sheaf of holomorphic sections of $v^*TY$ over $\scrC$ agrees with the sheaf of  holomorphic sections of $|v^*TY|$ over $C$.
As a result, one can compute the virtual dimension as $\langle c_1(|v^*TY|), C\rangle + (\dim_{\bC}(Y)-3)(1-g) + h$, where the term $h$ comes from varying the marked points.
Finally, a local computation shows that $\langle c_1(|v^*TY|), C\rangle=  \langle c_1(v^*TY), \scrC\rangle - \sum_{i=1}^h a(\rho_{\xi,z_i}(t), \ev_{z_i}(u))$ and hence the result follows.
\end{remark}

\begin{remark}\label{r:breaking}
The breaking of a holomorphic sphere into $v$ and $v'$ joined along an orbifold node is a (complex) codimension one phenomenon. To see this, fix components $\cM(v) \subset  \cK_{g,h+1}(Y,\beta)$ and $\cM(v') \subset  \cK_{g',h'+1}(Y,\beta')$ with the property that the (twisted) evaluation maps 
$ev_{z_1}: \cM(v) \to IY$
and $ev_{z_1'}^{\tw}: \cM(v') \to IY$ have image in the same component $Z$ of a twisted sector of $IY$.  Then  the virtual complex dimension of the fibre product of these components over the evaluation maps is given by
\begin{align*}
&\vdim(v)+\vdim(v')- \dim(Z)\\
=&\, \langle c_1(TY),\beta\rangle + (n-3)(1-g) + h+1 - \sum_{i=1}^{h+1} a(v,z_i) \\ 
& + \langle c_1(TY),\beta'\rangle + (n-3)(1-g') + h'+1 - 
 \sum_{i=1}^{h'+1} a(v',z_i')- \dim(Z) \\
=&\, \langle c_1(TY),\beta+\beta' \rangle + (n-3)(1-(g+g'))+(h+ h')+(n-1) \\ &-  \sum_{i=1}^h a(v,z_i) - \sum_{i=1}^{h'} a(v',z_i')- \dim(Z)\\
=&\, \vdim(\cK_{g+g',h+h',\bfg}(Y,\beta+\beta'))+(n-1)-a(v,z_1)-a(v',z_1')-\dim(Z)
\end{align*} 
where $\bfg$ is the concatenation of $(\rho_{\xi,z_2}(t), \dots, \rho_{\xi,z_h}(t))$ and $(\rho_{\xi',z_2}(t), \dots, \rho_{\xi',z_{h'}}(t))$ from $v$ and $v'$. 
An element in the fibre product is smoothable only if $a(v,z_1)+a(v',z_1')=n-\dim(Z)$ (cf. \eqref{eq:agesum}) in which case the virtual dimension equals  $\vdim(\cK_{g+g',h+h',\bfg}(Y,\beta+\beta'))-1$.

This example shows that the degree of $|(v\# v')^*TY|$ is not the sum of the degree of $|v^*TY|$ and $|(v')^*TY|$ if the node has a nontrivial orbifold isotropy group. That is, the degree of $|(v\# v')^*TY|$ is not additive  under breaking.
\end{remark}

In fact, we have a decomposition more refined than \eqref{eq:closedmodulidecom}. Let $g=0$.
Given $v \in \cK_{0,h}(Y,\beta)$, we have the associated $u:\Sigma \to X$ and $\pi_{\Sigma}:\Sigma \to \scrC$.
Suppose that $\scrC$ is irreducible.
We choose a base point and a collection of pairwise disjoint simple loops from the base point such that the $i^{th}$ loop wraps around the $i^{th}$ marked point and not the other marked points.  
The monodromy of the loops for the bundle $\pi_{\Sigma}$ give us an element $\bfg=(g_1,\dots,g_h) \in \Gamma^h$ such that $g_1 \dots g_h=id$.

Two different collections of pairwise disjoint simple loops will give us two different $\bfg, \bfg'$ that are related by a pure Hurwitz equivalence (pure because the marked points are ordered); this is the equivalence relation generated by 
\[
(g_1,\ldots,g_h) \sim (g_1,\ldots,g_{i-1}, g_ig_{i+1}g_i^{-1},g_i,g_{i+2},\ldots,g_h) \qquad 1\leq i \leq h-1
\] 
which comes from the standard action of the braid group as automorphisms of a free group.

Conversely, given a $\bfg'$ in the same pure Hurwitz orbit as $\bfg$, we can find a collection of pairwise disjoint simple loops realizing $\bfg'$.
Changing the base point doesn't change the pure Hurwitz orbit.
In this way, we associate to $v$ a pure Hurwitz orbit.
When $\scrC$ is reducible with components $\scrC_1, \dots, \scrC_l$ so $\Sigma=\Sigma_1 \vee \dots \vee \Sigma_l$, we cannot choose disjoint simple loops as before. What we can do instead is smooth the nodes in $\scrC$ and nodes in $\Sigma$ simultaneously (preserving the $\Gamma$-principal bundle structure).
We emphasize that $\Sigma$, as a nodal Riemann surface, is determined by the representable orbifold map $v$ (i.e. how $\Sigma_i$ and $\Sigma_j$ are joined together is determined by $v$ for all $i,j$).
From a local model, one can see that there is an $S^1$-family of  choices of smoothing for each node in $C$, irrespective of whether  the node has a non-trivial isotropy group or not (cf. Example \ref{e:realblowup}).
After smoothing, we can associate to it a pure Hurwitz orbit as above, and it is independent of the choice of smoothing (because the association to a pure Hurwitz orbit is a locally constant map). Let $\cH_h$ be the set of pure Hurwitz orbits in $\Gamma^h$.
Then we have a decomposition
\begin{align}\label{eq:closedmodulidecom2}
\cK_{0,h}(Y,\beta)= \sqcup_{\bfm \in \cH_h} \cK_{0, \bfm}(Y,\beta).
\end{align}
into a union of open and closed substacks.

The definition of the orbifold quantum product  later in \eqref{eqn:quantum_product} relies only on  the existence of the  virtual class $ [\cK_{0,3+h}(Y;\beta)]^{vir} $. Such has been constructed by Chen-Ruan \cite{Chen-RuanGW} and Abramovich-Vistoli \cite{AV}.  The symplectic definition can be simplified by using global Kuranishi charts rather than Kuranishi atlases. In Section \ref{s:globalchartclosed} we discuss such a construction, which we will then adapt for the construction of orbifold Hamiltonian Floer theory.

\subsection{Orbifold quantum product}\label{s:orbQH}

\emph{We quickly review the definition of the bulk-deformed orbifold quantum cohomology algebra.}

Let $\frak{b} \in H^{even}_{orb}(Y; \Lambda_{>0}) = H^{even}(IY;\Lambda_{>0})$ (recall the parity decomposition in \eqref{eq:parity}). 
Restrict attention to the spaces $\cK_{0,3+h}(Y;\beta)$ where $h$ marked points $\{z_1\ldots,z_h\}$ carry `bulk' insertions, two $z_0,z_0'$ are inputs and one $z_{\infty}$ is an output. (Any of the marked points may be stacky.)   We define the $\frak{b}$-bulk deformed orbifold quantum cohomology $QH_{orb}(Y,\frak{b})$ to be the group $H_{orb}(Y;\Lambda_{\Pi})$ with the product
\begin{equation} \label{eqn:quantum_product}
u \ast v = \sum_{\beta} (u \ast v)_{\beta} T^{\omega_Y(\beta)}
\end{equation}
with 
\begin{equation}  \label{eqn:quantum_product_formula}
(u \ast v)_{\beta} =  PD \left( (ev_{z_{\infty}})_* (ev_{z_0}^*u \cdot ev_{z_0'}^*v\cdot \sum_{h \geq 0} \frac{1}{h!} \,\prod_{i=1}^h ev_{z_i}^*\frak{b} \cap  [\cK_{0,3+h}(Y;\beta)]^{vir} \right)
\end{equation}
with $PD$ denoting Poincar\'e duality in $H^*_{orb}(Y; \Lambda_{\Pi})$, noting that we are working in characteristic zero and $\Lambda_{\Pi}$ is a field. The particular choices of insertion for $\frak{b}$ give constraints to the data $\bfm$ (see Equation \eqref{eq:closedmodulidecom2}). Later we will take $\frak{b}$ to be a linear combination of the fundamental class of non-trivial sectors.
The virtual class appearing in the formula can be taken to be the one constructed in \cite{AV}.
However, for the purpose of comparing with the Hamiltonian Floer theory, we are going to use a different virtual class that will be constructed in \ref{s:globalchartclosed} (see Proposition \ref{prop:chart for closed curves}, \ref{p:doubly-closed}). We do not attempt to compare our virtual class to the one in  \cite{AV}.

\begin{prop}\label{p:QHwelldefined}
The bulk-deformed quantum product is well-defined (i.e. independent of $J_Y$). The resulting algebra is unital, associative, and supercommutative with respect to the parity decomposition.
\end{prop}

\begin{proof}
The first statement holds because of the well-definition of the virtual class (i.e. independent of the choice of global chart presentation for the moduli space, including cobordism invariance under changing $J_Y$). The other properties are then standard.
For the supercommutativity with respect to the parity decomposition, see \cite{FG03}.
\end{proof}

\subsection{Orbifold Morse cohomology\label{sec:Morse}}
\emph{In this section we give a Morse theory description of orbifold (quantum) cohomology, for later use when comparing with Hamiltonian Floer cohomology.}

Let $X$ be oriented, let $\Gamma$ be a finite group acting on $X$ preserving the given orientation, and  let $Y = [X/\Gamma]$. A generic $\Gamma$-invariant function $f: X \to \R$ is  Morse \cite{Was}, but there may in general be no $\Gamma$-equivariant Riemannian metric $g$ making $(f,g)$ a Morse-Smale pair (failure of equivariant transversality).  A recent result of \cite{Bao-Lawson} circumvents this issue. Suppose in the situation above $p\in X$ is a critical point of $f$, with stabiliser group $H \leq \Gamma$. There is a canonical splitting
\[
T_p X = (T_pX)^H \oplus (T_pX)^{\perp} \qquad (T_pX)^{\perp} = \{ v \in T_p X \, | \, \sum_{h\in H} dh|_p(v) = 0\}
\]
into the $H$-invariant subspace and its complement; the latter would agree with the $g$-orthogonal complement to $(T_pX)^H$ with respect to any equivariant Riemannian metric $g$. We say that $p$ is stable if  the Hessian of $f$ at $p$ is positive definite on $(T_pX)^{\perp}$, and $f$ is stable if all critical points are stable.  Then

\begin{lemma}[Bao-Lawson, \cite{Bao-Lawson}]\label{l:stableMorse}
For any $\Gamma$-equivariant Morse function $f$, there is a $C^0$-small perturbation $\hat{f}$ of $f$, with perturbation supported near $\mathrm{Crit}(f)$,  with the properties that
\begin{itemize}
\item $\hat{f}$ is a $\Gamma$-equivariant stable Morse function (in particular, a smooth function)
\item for any $\Gamma$-equivariant Riemannian metric $g$ and any $k$, there are arbitrarily small $\Gamma$-equivariant $C^k$-perturbations $\hat{g}$ of $g$, with perturbation supported in the complement of any prescribed neighbourhood of the critical points, making $(\hat{f},\hat{g})$ a Morse-Smale pair.
\end{itemize}
\end{lemma}
Note that $\hat{f}$ has more critical points than $f$, and the perturbation of the function is necessarily not $C^2$-small.  The key point is that after perturbation at each critical point $p$ the stabiliser group of $p$ pointwise fixes the descending manifold $W^u(p;f)$ of $f$ at $p$. The second part of the Lemma (cf. \cite[Lemma 7.7]{Bao-Lawson}) shows that one can assume that the metric $\hat{g}$  is diffeomorphic to  the standard Euclidean metric in small neighbourhoods of the critical points of $\hat{f}$, whilst maintaining that $(\hat{f},\hat{g})$ is a Morse-Smale pair. We will work under these hypotheses throughout (but to simplify notation will now drop the typographic hats).

Descending to the quotient, and writing $(\bar{f},\bar{g})$ for the induced function and metric on $Y$ arising from $(f,g)$, we thus have a Morse-Smale pair on the orbifold $Y$. The Morse complex in this setting was studied by Cho and Hong \cite{Cho-Hong}.

 If $x\in \mathrm{Crit}(f)$, then the isotropy group $\Gamma_x$ acts on the unstable manifold $W^u(x;f)$. We say that the critical point $x$ is \emph{orientable} if $\Gamma_x$ acts on $W^u(x;f)$ by orientation-preserving diffeomorphisms. One can check that for a critical point $y$ of $\bar{f}$, one preimage in $X$ is orientable if and only if all are. We write $\Crit(\bar{f})^+$ for the subset of orientable critical points.

  We define a chain complex
\begin{equation} \label{eqn:morse}
C^*_{Morse}(Y;f) := (\oplus_{y \in \Crit(\bar{f})^{+} }\, \mathbb{K}\cdot \langle y\rangle)
\end{equation}
to be generated by 
the \emph{oriented} critical points of $\bar{f}$.  The differential of an oriented critical point $y$ is defined by a weighted sum of critical points:
\[
\partial(y) = \sum_{z} n(y,z) z
\]
Here $z$ ranges through oriented critical points, and
\begin{align}\label{eq:Morse_weight}
n(y,z) = \sum_{u} \epsilon(u) |\Gamma_z| / |\Gamma_u|
\end{align}
where $u$ is an isolated flow line between $y$ and $z$, $\Gamma_z$ is the stabiliser group of $z$, $\Gamma_u$ is the isotropy group of points in $u$ (which is constant along $u$), and $\epsilon(u) \in \{\pm 1\}$ is the orientation sign of the trajectory. It is a theorem of \cite{Cho-Hong} that, with the given weights, $\partial^2 = 0$.

\begin{remark} \label{rmk: weights work}
    Given two trajectories $u,v$ meeting at a point $y$, the set of smoothings is indexed by the double coset space $ \mathrm{stab}(u) \backslash \Gamma_y / \mathrm{stab}(v)$. The orientation sign of a trajectory arising from gluing $u$ and $v$ depends only on the pair $\{u,v\}$ and not the particular smoothing. If $u$ has output $y$ and $v$ has output $z$, then
    \[
   \frac{ |\Gamma_y| \cdot |\Gamma_z| }{|\mathrm{stab}(u)|\cdot |\mathrm{stab}(v)| } \ = \ \sum \frac{|\Gamma_z|}{|\mathrm{stab}(u\# v)|}
    \]
    where we sum over the set of smoothings. This is why $\partial^2=0$ when the weights (asymmetrically) take account of the stabiliser group at the output.
\end{remark}

\begin{remark}\label{r:global}
    Let $\Crit(f)^+ \subset \Crit(f)$ denote the subset of oriented critical points. This carries an action of $\Gamma$.  The subcomplex
\[
C^*_{Morse}(X;f)^{\Gamma,+} := (\oplus_{x \in \Crit(f)^+} \mathbb{K}\langle x\rangle)^\Gamma
\]
is quasi-isomorphic to $C^*_{Morse}(Y;\bar{f})$: a sum of critical points is $\Gamma$-invariant only when they are individually oriented (see \cite{Cho-Hong}).
Indeed, if we denote the sum of the critical points over $y$ and $z$ by $\tilde{y}$ and $\tilde{z}$ respectively, then the weight $|\Gamma_z| / |\Gamma_{u}|$ can be simply understood as the total contribution from lifts of $u$ to the coefficient of $\tilde{z}$ in $\partial(\tilde{y})$.
\end{remark}

\begin{remark}\label{r:smoothings}
    Under the Morse-Floer isomorphism between the orbifold Floer theory of a $C^2$-small Morse function and orbifold Morse theory, the
    set of possible smoothings in Remark \ref{rmk: weights work} corresponds to the choices of identifying asymptotic markers on Floer cylinders, cf. Section \ref{s:orbifoldFloer}. 
\end{remark}

Recall that $\|\Gamma\|$ is the set of conjugacy classes in $\Gamma$. For $(g) \in \|\Gamma\|$, we choose a representative $g$ in the conjugacy class.
We have the fixed locus $X^{g} \subset X$ which carries an action of the centraliser group $C(g)$. The map $f$ induces a $C(g)$-invariant Morse function on $X^g$ for each $g$. We define the \emph{orbifold Morse complex}
\begin{align}\label{eq:orbiMorsecomplex}
C^*_{orb}(Y;f) :=  \oplus_{(g)\in \|\Gamma\|} C^*_{Morse}([X^g/C(g)];\bar{f})
\end{align}
The resulting cohomology is by definition the orbifold Morse cohomology, and computes the cohomology of the inertia stack of $Y$.

There is an orbifold Morse product, which can mix up the different inertia sectors (unlike the differential). The product  counts $Y$-shaped trajectories of gradient flow lines asymptotic to oriented critical points.  Each such should also be a weighted count, with
\[
\mu^2(x,y) = \sum_z n(x,y;z) z; \qquad n(x,y;z) = \sum_{u} \epsilon(u) |\Gamma_z| / |\mathrm{stab}(u)|
\]
where we sum over all the $Y$-configurations with given inputs and outputs, and again the weight depends only on the output. The same argument as in Remark \ref{rmk: weights work} then shows that this induces a chain map.

\section{Revisiting the quantum product using global charts}\label{s:globalchartclosed}

\subsection{Global Kuranishi charts}

\emph{In this section we define `global Kuranishi charts without boundary' and the virtual class of such a chart.}

Let $M$ be a compact metric  orbispace (see \cite[Section 3]{Pardon22},  \cite{HenGep07} and \cite{Moe02} for more discussion on orbispaces) where the isotropy group of every point is finite. A (topological and oriented) \emph{global Kuranishi chart} (without boundary) for $M$ is a quintuple $\bT=(G,\scrT,E,s,\phi)$ where $G$ is a compact Lie group, $\scrT$ is an oriented topological $G$-manifold (possibly with boundary) on which $G$ acts preserving the orientation and with finite stabilisers, $E \to \scrT$ is an oriented $G$-bundle, $s: \scrT \to E$ is a $G$-equivariant section with compact zero-set (contained in $\scrT \backslash \partial \scrT$ in the case when $\partial \scrT \neq \emptyset$), and $\phi: s^{-1}(0)/G \to M$ is  an isomorphism of orbispaces.  We will often suppress $\phi$ from the notation, since in the applications to spaces of holomorphic curves it will be `the identity'. 

The \emph{virtual dimension} of a global chart is $\dim(\scrT) - \dim(G) - \rk(E)$.   A global chart expresses $M$ as a closed subset of the orbifold $[\scrT/G]$, globally cut out as a section of the orbibundle $[E/G] \to [\scrT/G]$. We will call $\scrT$ the \emph{thickening} and $E$ the \emph{obstruction bundle}.

\begin{operation}\label{o:stablization}
There are various operations on global charts:
\begin{enumerate}
\item germ equivalence: replace $(G,\scrT,E,s)$ by $(G,\scrU,E|_\scrU, s|_{\scrU})$ for a $G$-invariant open subset $s^{-1}(0) \subset \scrU \subset \scrT$;
\item bundle stabilisation: replace $(G,\scrT,E,s)$ by 
$(G,Tot(F), p^*(E \oplus F), s \oplus \Delta)$
 where
 $F$ is a $G$-equivariant bundle over $\scrT$, $Tot(F)$ is the total space of $F$,
 $p: F \to \scrT$ is the projection,  and $s \oplus \Delta(x,f):=((x,f), s(x),f)$ for $x \in \scrT$ and $f \in F_x$, is the tautological diagonal section;
\item group induction: replace $(G,\scrT,E,s)$ by $(G', \scrT \times_G G', p^*E,p^*s)$ where $G \to G'$ is an inclusion of a Lie subgroup and $p: \scrT \times_G G' \to \scrT$ is projection;
\item free quotient: replace $(G,\scrT,E,s)$ by $(G/H, \scrT/H, E/H, s)$ where $H\subset G$ is a normal subgroup acting freely on $\scrT$ and $\scrT/H \to \scrT/G$ is viewed as a $G/H$-bundle.
\end{enumerate}
\end{operation}

\begin{remark} \label{rmk:essential_operations}
It is important to think of the underlying orbifold $[\scrT/G]$ and orbibundle $[E/G] \to [\scrT/G]$; from that viewpoint, bundle stabilisation adds a further orbibundle summand without changing the vanishing locus of the section, whilst induction and free quotients do not change the orbifold at all, but only its presentation as a global quotient. 
\end{remark}

    In general, an inclusion of topological manifolds need not have any analogue of a normal bundle. This indicates that the embeddings arising from changing the presentation of a global chart are rather special. We will incorporate this into a definition of embeddings of global charts which is adapted to our purposes, and is directly modelled on the geometry of the stabilisation operation introduced above.

\begin{definition}\label{d:embedding}
Given two global charts $\bT=(G,\scrT,E,s)$ and $\bT'=(G',\scrT',E',s')$ with the same virtual dimension, an \emph{embedding} from
$\bT$ to $\bT'$ consists of
\begin{enumerate}
    \item a $G$-invariant open subset $s^{-1}(0) \subset \scrU \subset \scrT$, and a $G'$-invariant open subset $(s')^{-1}(0) \subset \scrU' \subset \scrT'$;
    \item an orbifold vector bundle $F \to \scrU/G$ and an embedding $\scrU'/G' \to \mathrm{Tot}(F)$ to an open neighbourhood of the zero-section;
    \item an isomorphism of orbibundles $E'/G'  \to E/G \oplus F$ over $\scrU'/G'$ under which $s'$ is identified with $s \oplus \Delta$, where $\Delta$ is the canonical diagonal section of $p^*F \to \mathrm{Tot}(F)$.
\end{enumerate}
\end{definition}

\begin{remark}\label{rmk:not_global_quotient}
    Note that the `normal bundle' $F$ is an orbibundle which need not arise as a global quotient. This is necessary to allow one to compose embeddings.
\end{remark}

Two embeddings $\bT \to \bT'$ are isomorphic if all the data agrees after restricting to smaller open neighbourhoods of $s^{-1}(0)/G$ and $(s')^{-1}(0)/G'$ and perhaps composing with isomorphisms of the orbibundles. 
The composition of embeddings is well-defined up to isomorphism. We will also call embeddings of global charts stabilisation maps.  We emphasise that a stabilisation includes the data of an isomorphism of orbibundles in (3) above, and not just the existence of such.




    

\begin{definition}\label{d:itStab}
An iterative stabilisation of a global chart $\bT$ (with stabilising bundle $F$) is a global chart $\bT'$ with an embedding $\bT \to \bT'$ with normal bundle $F$. Two  global charts are equivalent if they admit a common iterative stabilisation.
    \end{definition}


\begin{remark}
An iterative stabilisation may have many steps. An embedding only remembers the outcome after composing the various bundle stablizations and germ restrictions, in a sufficiently small open neighborhood of the zero locus.    
\end{remark}

\begin{remark}\label{rmk:itStab}
    The requirement that $s'=s \oplus \Delta$ will be used when we build homotopy coherent perturbations in a later section (see Lemma \ref{l:pertubationexist}). In particular, a choice of perturbation of  $s$, and the condition $s' = s \oplus \Delta$, will canonically determine a perturbation of the extension $s'$. Note that writing down $\Delta$ relies on the identification of the stabilised chart $\scrU'/G'$ with a neighbourhood in an orbibundle over $\scrU/G$.
\end{remark}

\begin{remark}
    An alternative approach would be to define a class of `tailored' orbifold embeddings, which are induced from locally flat embeddings of topological manifolds which admit normal microbundles (in general a locally flat embedding only admits a normal microbundle after stabilisation of the codomain by $\bR^s$ for some $s>0$). 
\end{remark}

Fix a global chart $\bT:=(G,\scrT,E,s)$ for a compact space $M$ and let $Y = [\scrT/G]$. Let $r=\rk(E)$ and $\vd$ denote the virtual dimension.  There is a natural composition
\begin{equation} \label{eqn:vfc}
H^i(Y) \to H^{i+r}(E,E^{\sharp}) \to H^{i+r}(Y, Y\backslash s^{-1}(0)) \to H^{i+r}_{ct}(Y) \to H_{\vd-i}(Y) \to \bk.
\end{equation}
where the maps are respectively Thom isomorphism, pullback by the section $s$, the natural map since $s^{-1}(0)$ is compact (since $G$ and $M$ are), Poincar\'e duality in $H^*(\bullet;\bk)$ for the topological orbifold $Y$, and push-forward to a point $\bk = H_0(pt;\bk)$. The maps in \eqref{eqn:vfc} are all defined at chain level, given a cocycle representative for the Thom class.  Following Pardon \cite{Pardon}, one then defines:

\begin{definition}\label{defn:vfc}
 The \emph{virtual fundamental class} $[\bT]$ of the chart is the image of $1 \in H^0(Y;\bk)$ in $H_{\vd}(Y;\bk)$.  \end{definition}

When the virtual dimension is zero, we will usually just consider the image of the virtual class under the push-forward to the point. The virtual class plays the role of the non-existent fundamental class of $M$, viewed as an equivariant Euler class $e_G(E) \in H^*_{G,ct}(\scrT;\bk) = H^*_{ct}(Y;\bk)$.  More precisely, the virtual fundamental class is the Poincar\'e dual of $e_G(E)$.

\begin{lemma}
The virtual class is preserved (in an obvious sense) under the operations on global charts listed previously. 
\end{lemma}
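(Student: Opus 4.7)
The plan is to verify preservation of $[\bT]$ separately for each of the four operations, by tracing the composition \eqref{eqn:vfc} through the modification and checking that the resulting class in $H_{\vd}(Y;\bk)$ (or equivalently in $\bk$, after pushforward) is unchanged.

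I would start with induction and free quotient, since the essential remark (cf.\ Remark \ref{rmk:essential_operations}) is that both operations leave the underlying topological orbifold $Y = [\scrT/G]$, the orbibundle $[\scrE/G] \to Y$, and the induced section unchanged; only the presentation as a global quotient is altered. The composition \eqref{eqn:vfc} is defined purely in terms of $Y$, the orbibundle, the section and the Thom class, so the virtual class is literally the same. This reduces the content of the lemma to the remaining two cases.

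For germ equivalence, I would use excision. Let $Y' = \scrU/G \subset Y$; since $s^{-1}(0)/G \subset Y'$ is compact, the excision isomorphism $H^*(Y, Y\setminus s^{-1}(0)) \cong H^*(Y', Y'\setminus s^{-1}(0))$ shows that the first three arrows in \eqref{eqn:vfc} land in the same group for both charts. The fourth arrow uses Poincar\'e duality on the topological orbifold, which is compatible with open inclusions via the identification of $H^*_{ct}(Y')$ with a summand computing the local contribution near $s^{-1}(0)$, and the pushforward to $\bk$ depends only on the support of the class. Hence $[\bT] = [\bT|_{\scrU}]$.

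For stabilisation, the key computation is multiplicativity of the Thom class: the Thom class of $p^*\scrE \oplus W$ in $H^*(p^*\scrE \oplus W, (p^*\scrE\oplus W)^\sharp)$ equals $p^*u_{\scrE} \cup u_W$, where $u_W$ is the (equivariant) Thom class of the trivial $W$-bundle on $\scrT$. Pulling back by $\Delta^*s(x,w)=(s(x),w)$ yields $p^*(s^*u_\scrE) \cup (\text{Thom class of } \{0\}\subset W)$. Poincar\'e duality on $[\scrT\times W/G]$ combined with integration along the $W$-fibre (which consumes $u_W$) recovers Poincar\'e duality on $[\scrT/G]$ paired with $s^*u_\scrE$; the new virtual dimension $\dim(\scrT)+\dim(W)-\dim(G)-(\rk(\scrE)+\dim(W))$ coincides with the old one, and the resulting class in $\bk$ is unchanged.

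The main obstacle I anticipate is bookkeeping for Poincar\'e duality on topological orbifolds with compact supports, especially checking orientations and the compatibility of the Thom isomorphism with the stabilisation direction $W$. In practice this is routine once one works with Pardon's chain-level formulation of \eqref{eqn:vfc} \cite{Pardon}, but writing everything down carefully, and then noting that all four operations commute (and thus every sequence of operations preserves $[\bT]$), is where the care is required.
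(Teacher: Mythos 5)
Your proof is correct and takes essentially the same route as the paper, which simply cites ``elementary properties of the Euler class, cf.\ \cite{AMS1}.'' You are unpacking exactly those properties (invariance of the Euler class under restriction near the zero set, multiplicativity of the Thom class under stabilisation, and invariance of the underlying orbifold data under induction/free quotient) through the defining composition \eqref{eqn:vfc}.
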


\begin{proof} Follows from elementary properties of the Euler class, cf. \cite{AMS1}. \end{proof}

\subsection{Global charts  for moduli spaces of stable maps}

\emph{In this section we give global charts  for moduli spaces of stable maps to global quotient orbifolds.}

We now discuss global charts  for orbifold quantum cohomology of a global quotient  $Y = [X/\Gamma]$, where $\Gamma$ is a finite group acting faithfully by symplectomorphisms on a closed symplectic manifold $(X,\omega_X)$.  The construction here is adapted from \cite{AMS1, AMS2}, whose notation we follow for comparison purposes, and is similar in spirit to the final section of \cite{AGOT}. Whereas in \cite{AMS1,AMS2} the spaces of domains underlying the global charts are spaces of stable maps to projective space, we will now be considering spaces of orbifold stable maps to orbifolds $[\bP(V)/\Gamma]$, where $V$ is a faithful $\Gamma$ representation and  $\bP(V)$ is its projectisation.

We first give a brief overview of the appearance of $[\bP(V)/\Gamma]$.
We are interested in stable representable orbifold maps from a genus $0$ orbifold curve $\scrC$ (with coarse space $C$) with $h$ marked points to $Y$. Fix a pure Hurwitz orbit $\bfm \in \cH_h$ (cf. Equation \eqref{eq:closedmodulidecom2}).
It determines a $\Gamma$-twisted cover $\Sigma \to C$.
Let $\epsilon$ be a $\Gamma$-equivariant homotopy class of maps from $\Sigma $ to $ X$ (i.e. an isomorphism class of $\Gamma$-equivariant maps from $\Sigma$ to $X$ up to $\Gamma$-equivariant homotopy). 
We will build a global chart for the moduli space $\cM_{0,\bfm}(Y,\epsilon)$ of stable $\Gamma$-equivariant $J_X$-holomorphic maps $u:\Sigma \to X$ with Hurwitz orbit $\bfm$ and in the class $\epsilon$.
To do this, we will fix a Hermitian line bundle $L$ on $X$ once and for all and a positive integer $k>3$. Given $u:\Sigma \to X$, we consider a Hermitian line bundle 
$L_u=(u^*L^{\otimes 3} \otimes \omega_{log})^{\otimes k}$ where $\omega_{log}$ is the log-canonical line bundle of $\Sigma$.
We can show that $H^1(L_u)=0$
so the isomorphism type of $H^0(L_u)$ as a $\Gamma$ representation only depends on $\epsilon$ but not on a specific $u$.
We will take $V$ to be the $\Gamma$-representation such that $V^*$ is isomorphic to $H^0(L_u)$ as $\Gamma$-representations.
The global chart $(G,\scrT,E,s)$ for $\cM_{0,\bfm}(Y,\epsilon)$ will be defined such that $\scrT$ is a space of maps whose domains are elements in a space of orbifold stable maps to the orbifold $[\bP(V)/\Gamma]$.
In other words, compared with the construction in \cite{AMS1,AMS2}, $\epsilon$ now plays the role of the homology class, whilst $V$ plays the role of the degree.

For each isomorphism class of finite dimensional faithful $\Gamma$-representations, we fix once and for all a $\Gamma$-representation representing the isomorphism class. Denote this collection of representations by $\Theta$.

\subsubsection{Moduli of framed curves}\label{ss:framecurves}

Fix a $\Gamma$-equivariant $\omega_X$-tamed almost complex structure $J=J_X$ on $X$.

For each faithful $\Gamma$ representation $V$ in $\Theta$, the diagonal scaling commutes with $\Gamma$ so we can consider the orbifold
$[\bP(V)/\Gamma]$.
The centralizer $Z_{\Gamma}(V)$ of $\Gamma$ in $GL(V)$ descends to an action on $[\bP(V)/\Gamma]$.

\begin{remark}\label{r:normalizer}
    The normalizer of $\Gamma$ in $GL(V)$ also acts on $[\bP(V)/\Gamma]$; however, it is the action of the centraliser that will be relevant for us, cf. Proposition \ref{prop:chart for closed curves}.
\end{remark}

Fix a pure Hurwitz orbit $\bfm \in \cH_h$. For each (orbifold curve class) $\beta \in H_2([\bP(V)/\Gamma], \mathbb{Q})$\footnote{ We use $\epsilon$ to denote an equivariant homotopy class and $\beta$ is reserved for a homology class.}, we have a stack $\mathcal{K}_{0,\bfm}([\bP(V)/\Gamma],\beta)$ (see Section \ref{s:algebraicstack} and Equation \eqref{eq:closedmodulidecom2}).
Each $(v:\scrC \to [\bP(V)/\Gamma]) \in \mathcal{K}_{0,\bfm}([\bP(V)/\Gamma],\beta)$ comes with an equivariant map $u:\Sigma \to \bP(V)$ and the associated virtual $\Gamma$-representation $I(u):=H^0(u^*\mathcal{O}(1))-H^1(u^*\mathcal{O}(1))$.

\begin{lemma}\label{l:constantrep}
    Let $vRep(\Gamma)$ be the set of finite dimensional virtual representations of $\Gamma$ up to isomorphisms.
    The map $\mathcal{K}_{0,\bfm}([\bP(V)/\Gamma],\beta) \to vRep(\Gamma)$ given by $u \mapsto I(u)$ is  locally constant.
\end{lemma}

\begin{proof}
This follows from  equivariant index theory \cite{Mat71} (see also \cite[Appendix]{Atiyah},  \cite{Kuiper}, \cite{Janich}) and discreteness of finite dimensional $\Gamma$-representations.
\end{proof}

Therefore, by Lemma \ref{l:constantrep}, we can decompose the stack $\mathcal{K}_{0,\bfm}([\bP(V)/\Gamma],\beta)$ further into
\[
\mathcal{K}_{0,\bfm}([\bP(V)/\Gamma],\beta)=\sqcup_{R \in vRep(\Gamma)} \mathcal{K}_{0,\bfm}([\bP(V)/\Gamma],\beta,R)
\]
In fact, most $\mathcal{K}_{0,\bfm}([\bP(V)/\Gamma],\beta,R)$ are empty because $R$ constrains $\beta$.
We write 
\[
\mathcal{K}_{0,\bfm}([\bP(V)/\Gamma],R):= \cup_{\beta \in H_2([\bP(V)/\Gamma], \mathbb{Q})} \mathcal{K}_{0,\bfm}([\bP(V)/\Gamma],\beta,R).
\]

 Let $V^*$ be the dual of $V$ as  a $\Gamma$-representation.
We consider the stack  $\mathcal{K}_{0,\bfm}([\bP(V)/\Gamma],V^*)$.  
As a summary, it consists of maps $(u:\Sigma \to \bP(V))$  such that
\begin{enumerate}
\item $\Sigma$ is a  not necessarily connected nodal curve with a faithful $\Gamma$-action;
\item $\Sigma/\Gamma =C$ is a connected genus $0$ pre-stable curve with $h$ marked points, which includes the ramification points;
\item the $\Gamma$-action is balanced at the nodes of $\Sigma$;
\item there is a $\Gamma$-equivariant holomorphic map $u: \Sigma \to \bP(V)$ with $H^0(u^*\mathcal{O}(1))-H^1(u^*\mathcal{O}(1)) \simeq V^*$;
\item the induced map $v: [\Sigma / \Gamma] = \scrC \to [\bP(V) / \Gamma]$ is stable.
\end{enumerate}
The stack quotient $[\Sigma/\Gamma] = \scrC$ of $\Sigma$ by $\Gamma$ is a balanced twisted curve in the sense of \cite{AV,ACV}, and $C$ is its underlying coarse space.
If $u':\Sigma' \to [\bP(V)/\Gamma]$ is another such $\Gamma$-equivariant map, we say that $u'$ is isomorphic to $u$ if there is an isomorphism of stacks $\phi:[\Sigma/\Gamma] \to [\Sigma'/\Gamma]$ such that $v' \circ \phi =v$.
Automorphisms of $v:\scrC \to [\bP(V) / \Gamma]$ are defined in the same way as in Remark \ref{rmk:automorphism_of_cover}.
Such stacks are classically constructed e.g. in \cite{AV,ACV}, see also  \cite{Costello}.

We let $\scrF:=\scrF_{0,\bfm}(V^*) \subset \mathcal{K}_{0,\bfm}(\bP(V)/\Gamma,V^*)$ denote the open locus where
\begin{align}\label{eq:condition}
\begin{cases}
& u:\Sigma \to \bP(V) \text{ is an embedding}\\
& H^1(u^*T\bP(V))^{\Gamma} = 0 \\
& \text{the automorphism group of the map $v$ is trivial.}
\end{cases}
\end{align}

The first condition is equivalent to $u^*\mathcal{O}(1)$ being very ample on $\Sigma$, which is in turn equivalent to $H^1(\Sigma, u^*\mathcal{O}(1)\otimes I_Z)=0$ for any length $2$ subscheme $Z$ of $\Sigma$, where $I_Z$ is the ideal sheaf of $Z$.

\begin{lemma}\label{l:modulidomain}
The subspace $\scrF_{0,\bfm}(V^*)$ is smooth.
\end{lemma}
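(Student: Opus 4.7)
The plan is to show that $\scrF_{0,h}(d)$ sits inside a smooth ambient stack and that the defining open conditions make it a smooth variety. First I would observe that both conditions defining $\scrF$ are open: vanishing of $H^1(v^*T\bP^d)$ is open by upper semicontinuity of cohomology in flat families of coherent sheaves, while triviality of the automorphism group is open since $\mathcal{K}_{0,h}(\bP^d \times B\Gamma,d)$ is a proper Deligne-Mumford stack and automorphism groups are upper semicontinuous on such. Hence $\scrF$ is an open substack of $\mathcal{K}_{0,h}(\bP^d \times B\Gamma,d)$.

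Next I would appeal to the deformation theory of twisted stable maps developed by Abramovich-Vistoli \cite{AV}. There is a forgetful morphism to the Artin stack $\mathfrak{M}^{tw}_{0,h,\textbf{m}}$ of prestable balanced genus $0$ twisted curves with $h$ marked orbifold points carrying the fixed monodromy data $\textbf{m}$. This stack $\mathfrak{M}^{tw}_{0,h,\textbf{m}}$ is smooth: the coarse moduli of prestable genus zero pointed curves is smooth, and adding balanced stacky structure at the marked points and nodes with prescribed cyclic isotropy is an étale-local operation on the underlying prestable curve (see Olsson, and also \cite{ACV}).

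The relative deformation-obstruction theory of the forgetful map, at a point $v: \scrC \to \bP^d \times B\Gamma$, is governed by $H^\bullet(\scrC, v^*T(\bP^d\times B\Gamma))$: deformations lie in $H^0$ and obstructions in $H^1$. Since $B\Gamma$ is étale over a point its tangent complex is trivial, so $v^*T(\bP^d \times B\Gamma) = v^*T\bP^d$. On $\scrF$ the group $H^1(\scrC, v^*T\bP^d)$ vanishes by hypothesis, so the forgetful morphism $\scrF \to \mathfrak{M}^{tw}_{0,h,\textbf{m}}$ is smooth at every point of $\scrF$, and composing with the smoothness of $\mathfrak{M}^{tw}_{0,h,\textbf{m}}$ gives that $\scrF$ is smooth as a stack. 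Finally, since each $v \in \scrF$ is automorphism-free, $\scrF$ is in fact a smooth algebraic space and hence a smooth complex manifold.

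The main point to verify carefully is the identification of the obstruction group with $H^1(\scrC, v^*T\bP^d)$, which requires using that $T(\bP^d\times B\Gamma)$ pulls back to $v^*T\bP^d$ (the $B\Gamma$-factor contributing nothing since the map to $B\Gamma$ corresponds to the étale admissible cover $\Sigma \to \scrC$, whose deformations are absorbed into those of $\scrC$ itself); equivalently, one may compute on the admissible cover $\Sigma \to \scrC$ and identify $H^1(\scrC, v^*T\bP^d) = H^1(\Sigma, u^*T\bP^d)^{\Gamma}$, which vanishes precisely when the hypothesis holds.
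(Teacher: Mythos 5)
Your argument is correct and takes essentially the same route as the paper. The paper invokes the ACV result directly (that the deformation of $[\Sigma/\Gamma] \to B\Gamma$ is unobstructed) and then notes that $H^1(v^*T\bP^d)=0$ removes the remaining obstruction for the map to $\bP^d$, while you factor through the smooth stack of twisted prestable curves and phrase the same vanishing as smoothness of the relative deformation theory of the forgetful map; these are the same argument in slightly different packaging. One point you are a little cavalier about, which the paper handles by citing ACV Theorem 3.0.2 for the map to $B\Gamma$ rather than for bare twisted curves, is that the data also includes the $\Gamma$-admissible cover $\Sigma \to \scrC$ (equivalently the representable map $\scrC \to B\Gamma$); you address this in the parenthetical remark that such covers are rigid because $B\Gamma$ has trivial tangent complex, which is correct and closes the gap, but it is worth flagging that this rigidity is exactly what lets you replace $\mathfrak{M}^{tw}_{0,h,\textbf{m}}$ by $\cK_{0,h}(B\Gamma,\textbf{m})$.
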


\begin{proof}
It is proved in \cite[Theorem 3.0.2, Section 3.0.3]{ACV} that the deformation of $[\Sigma/\Gamma] \to B\Gamma$ is unobstructed.
Since $H^1(u^*T\bP(V))^{\Gamma}=H^1(v^*T\bP(V)/\Gamma)$, the condition $H^1(u^*T\bP(V))^{\Gamma} = 0$ above implies that the deformation of $[\Sigma/\Gamma] \to [\bP(V)/\Gamma]$ is also unobstructed for $u \in \scrF_{0,\bfm}(V^*)$, so $\scrF_{0,\bfm}(V^*)$ is a smooth Deligne-Mumford stack and hence an orbifold.
The condition which requires the automorphism of $v$ to be trivial ensures that $\scrF_{0,\bfm}(V^*)$ is actually a manifold.
\end{proof}

\begin{remark}\label{r:isotropy}
 Since $u:\Sigma \to \bP(V)$ is an equivariant embedding, the isotropy group of a point $z \in \Sigma$ is the same as the isotropy group of $u(z) \in \bP(V)$ because $u$ sends the $\Gamma$-orbit of $z$ equivariantly isomorphically to the  $\Gamma$-orbit of $u(z)$.
 It implies that the isotropy group of $u(z)$ is cyclic because the isotropy group of $z$ is the isotropy group of the corresponding point on $\scrC$, which is cyclic.

 If the $\Gamma$ action on $\bP(V)$ is not faithful, then there is no equivariant embedding $u:\Sigma \to \bP(V)$ so we only need to consider faithful $\Gamma$ representations.
\end{remark}

For a holomorphic orbifold $Y$, the boundary of the stack $\mathcal{K}_{g,h}(Y,\beta)$ is by definition the locus of twisted stable maps with domains containing at least one node.  

\begin{lemma}\label{lem:normal crossing}
    The boundary of $\scrF_{0,\bfm}(V^*)$ is a normal crossing divisor.
\end{lemma}

\begin{proof}
     The following argument, which we learned from Dhruv Ranganathan, applies much more generally, to the corresponding locus in $\mathcal{K}_{0,h}(Y,\beta)$ for any complex algebraic orbifold $Y$. We introduce some characters:
     \begin{itemize}
         \item $\mathcal{M}_{0,h}$ is the Deligne-Mumford space of curves, which (in genus zero) is smooth and has normal crossing boundary;
         \item $\mathfrak{M}_{0,h}$ is the Artin stack of prestable curves;
         \item $\mathfrak{M}^{tw}_{0,h}$ is the stack of twisted prestable curves;
         \item $\mathfrak{M}^{tw, \diamond}_{0,h}$ is the corresponding stack of curves decorated by curve classes in the target orbifold summing to the given class $\beta$.
     \end{itemize}
     (The decorations $\diamond$ give the analogue of the stacks $\mathfrak{M}(\tau)$ of prestable curves labelled by a modular graph in \cite{Behrend}.) 
     There are natural forgetful maps
     \[
     \mathcal{K}_{0,h}(Y,\beta) \to \mathfrak{M}^{tw, \diamond}_{0,h} \to \mathfrak{M}^{tw}_{0,h} \to \mathfrak{M}_{0,h} \to \mathcal{M}_{0,h}.
     \]
By definition, a normal crossing divisor on a smooth Deligne-Mumford stack is a divisor which has normal crossings seen in an atlas, i.e. when pulled back by any local \'etale map from a scheme.  Note that all the maps in the sequence above satisfy that the preimage of the boundary is the boundary.  The map from prestable curves to stable curves is an iterated prestable curve fibration, hence it is log smooth in the sense of \cite[Definition 3.3.3]{CLS}, which means that it pulls back the boundary (which is the normal crossing divisor defining the log structure) to a normal crossing divisor.

For any fixed Hurwitz orbit $\bfm$, there is an open and closed substack $\mathfrak{M}^{tw}_{0,\bfm} \subset \mathfrak{M}_{0,h}^{tw}$ and the map $\mathfrak{M}^{tw}_{0,h} \to \mathfrak{M}_{0,h}$, restricted to such a component, is given by a root stack construction, cf. \cite[Remark 1.10]{Olsson} and \cite[Remark 11.7]{Abramovich_etal}. It is standard that taking root stacks along a normal crossing divisor yields a normal crossing divisor upstairs, see \cite[Section 3]{Bergh-Rydh} for instance. Adding the discrete decoration data to pass to $\mathfrak{M}^{tw,\diamond}_{0,h}$ is passing to an  \'etale cover, so also preserves the normal crossings property (and now the domain is separated).

By definition, the map 
     \[
     \mathcal{K}_{0,h}(Y,\beta) \to \mathfrak{M}^{tw, \diamond}_{0,h}
     \]
     is smooth as a map of stacks on the locus where $H^1(v^*TY)=0$, and the preimage of a normal crossing divisor under a smooth map again has normal crossings. Finally, passing to the automorphism-free locus $\scrF_{0,\bfm}(V^*)$ yields a scheme, so one can check the normal crossing property on the open set itself rather than on some \'etale cover.
\end{proof}

\begin{remark}\label{r:highergenus}
    The same argument applies in higher genus, except in that case Deligne-Mumford space $\mathcal{M}_{g,n}$ is a smooth Deligne-Mumford stack whose underlying coarse space has boundary which is locally the \emph{quotient} of a normal crossing divisor.  
    \end{remark}

We also have the space 
\begin{align}\label{eq:double}
\scrF' \subset \mathcal{K}_{0,\bfm}((\bP(V) \times \bP(V))/\Gamma,(V^*,V^*))
\end{align}
of $\Gamma$-equivariant stable maps $u: \Sigma \to \bP(V) \times \bP(V)$, both of whose projections yield elements of $\scrF:=\scrF_{0,\bfm}(V^*)$.
Let $\scrF'$ be the underlying manifold; there is then a natural diagonal $\Delta: \scrF \to \scrF'$. 
The normal bundle of the diagonal at a point $\Delta(u)$ is given by $H^0(u^*T\bP(V))^{\Gamma}$.
Therefore, a neighbourhood $\scrU_{\Delta}$ of the diagonal $\Delta(\scrF) \subset \scrF'$ is modelled over the total space of the vector bundle $H \to \scrF$ with fibre $H^0(u^*T\bP(V))^{\Gamma}$. 
Fix such an identification 
\begin{align}\label{eq:Phi}
\Phi: Tot(H) \to \scrU_\Delta.     
\end{align}

Let $\scrC$ be the universal domain curve\footnote{See \cite[Section 4.1]{AGOT} for some discussion of the universal domain curve.} over $\scrF$. So for $\phi \in \scrF$, its fibre  is of the form $\scrC|_{\phi} = \Sigma_{\phi}$ with a balanced $\Gamma$ action. Let $\Sigma_{\phi}^{\circ}$ denote the complement of the marked points and nodes in that curve. We let $\scrC^\circ \to \scrF$ be the complement in $\scrC$ of the nodes and marked points.
Note that the action on $\bP(V)$ by $Z_{\Gamma}(V)$ induces an action on $\scrF$ by postcomposition.
The postcomposition extends to an action on $\scrC$ equivariantly over  $\scrF$  because on one hand, it extends locally, and on the other, elements in $\scrF$ have trivial automorphisms so there is a unique way to extend it and this is automatically consistent globally.
The $\Gamma$ action and $Z_{\Gamma}(V)$ action on $\scrC$ commute and form a $Z_{\Gamma}(V) \times \Gamma$ action on $\scrC$.

\subsubsection{A polarizing line bundle and another $\Gamma$ representation}

We consider smooth $\Gamma$-equivariant maps $u:  \scrC|_{\phi} \to X$.  Now suppose that $L\to X$ is a complex line bundle
 with Hermitian connection $\nabla$ with curvature $-2i\pi\Omega$, for a $\Gamma$-invariant integral $2$-form $\Omega$ such that $\Omega(\cdot, J_X \cdot)$ is non-negative.
If $v: [\Sigma/\Gamma] \to Y$ is an orbifold stable map, with admissible cover $u: \Sigma \to X$, then $u^*L$ carries a $\Gamma$-action, so $H^0(u^*L)$ and $H^1(u^*L)$ are $\Gamma$-representations. Similar to Lemma \ref{l:constantrep}, we have

\begin{lemma}\label{l:equivariantInd}
    Suppose that $u_t:\Sigma_t \to X$, for $t \in [0,1]$, is a family of $\Gamma$-equivariant maps from closed Riemann surfaces.
    Then $H^0(u_t^*L)-H^1(u_t^*L)$ as a virtual $\Gamma$-representation is independent of $t \in [0,1]$. 
\end{lemma}



Denote the relative log canonical line bundle of the universal curve $\scrC$ over $\scrF$ by $\omega_{\scrC/\scrF}$.

\begin{lemma}\label{l:regular}
  
Let $\phi \in \scrF$ and $u:\scrC|_{\phi} \to X$ be a $\Gamma$-equivariant $J_X$-holomorphic map.
  For $k \gg 0$ and  $L_u := (\omega_{\scrC/\scrF} \otimes u^*L^{\otimes 3})^{\otimes k}$, we have $H^1(L_u)=0$. 
\end{lemma}
 
\begin{proof}
    Using Remark \ref{rmk:log_can_is_ample}, we see that $L_u$ has strictly positive degree on every component of $\scrC|_{\phi}$. Indeed, the log canonical bundle can only have non-positive degree on unstable components, where the degree is $\geq -2$; such components are not contracted, so $u^*L$ has degree $\geq 1$ and the given tensor product has strictly positive degree. The result then follows, for large $k$, by Riemann-Roch.
\end{proof} 

Let $\epsilon$ be a homotopy class of  $\Gamma$-equivariant maps $\scrC_{\phi} \to X$ up to $\Gamma$-equivariant homotopy.

 \begin{corol}\label{c:representation}
  In Lemma \ref{l:regular}, the $\Gamma$-representation $H^0(L_u)$ is the same for
  any $\phi \in \scrF$ and any $\Gamma$-equivariant $J_X$-holomorphic map $u:\scrC|_{\phi} \to X$ in the class $\epsilon$.
 \end{corol}

  \begin{proof}
      By Lemma \ref{l:equivariantInd}, the virtual $\Gamma$-representation $[H^0(L_u)]-[H^1(L_u)]$ depends only on $\epsilon$. Lemma \ref{l:regular} says that  $H^1(L_u)=0$.
  \end{proof}

\subsubsection{Global chart}\label{ss:global-chart}

We have fixed the number of marked points $h$, the pure Hurwitz orbit $\bfm$ up to simultaneous conjugation and an equivariant line bundle $L$. We also fix an equivariant homotopy class $\epsilon$ and let $k >3$. Now we are going to construct a global chart for $\cM_{0,\bfm}(Y,\epsilon)$.

Let $V$ be the $\Gamma$ representation in the collection $\Theta$ such that $H^0(L_u) \simeq V^*$ as $\Gamma$ representations for some (and hence any) $\Gamma$-equivariant $u:\Sigma \to X$ in the class $\epsilon$ (Corollary \ref{c:representation}). 
For this $V$, we define $\scrF$ by Equation \eqref{eq:condition} and denote the universal domain curve by $\scrC$.
Then we pick
a $GL(V)$-invariant (in particular, $Z_{\Gamma}(V) \times \Gamma$-invariant) Hermitian metric  on $\omega_{\scrC/\scrF}$ cf.  \cite[Lemma 4.11]{AMS2}.

We need to introduce the following notion of finite-dimensional approximation schemes \footnote{There is a more general definition introduced after \cite[Definition 4.35]{AMS2} which we don't need.} to define obstruction bundles and obstruction sections later on.

\begin{definition}

Let $G$ be a compact Lie group, $N \to Q$ be a smooth $G$-equivariant vector bundle over a (not necessarily compact) smooth $G$-manifold $Q$, $K \subset Q$ a $G$-invariant closed subset and $C^{\infty}_K(N)$ be the space of compactly supported smooth sections of $N$ that are vanishing near $K$.
A finite-dimensional $G$-approximation scheme for $N$ is a sequence $\lambda_{\mu}: V_{\mu} \to C^{\infty}_K(N)$ of finite-dimensional $G$-representations, indexed by $\mu \in \bN$ and nested (meaning that $V_{\mu} \subset V_{\mu+1}$ and $\lambda_{\mu+1}|_{V_\mu} = \lambda_\mu$), with the property that $\cup_{\mu} \lambda_{\mu}(V_{\mu})$ is dense in $C^{\infty}_K(N)$ with respect to the $C^{\infty}_{loc}$-topology. 
\end{definition}
Such always exist \cite[Lemma 4.2]{AMS2}. It is often helpful to assume that the $V_{\mu}$ are equipped with invariant inner products, so one can make sense of the orthogonal complement of a subrepresentation.

Let $C^{\infty}_c(\omega_{\scrC/\scrF} \otimes_{\bC} TX)$ be the space of  smooth sections of the $Z_{\Gamma}(V) \times \Gamma$-bundle $\omega_{\scrC/\scrF} \otimes_{\bC} TX$ over $\scrC \times X$ that are vanishing near the nodes and marked points.

Let $U_{\Gamma}(V):=Z_{\Gamma}(V) \cap U(V)$ be the subgroup of unitary transformations in $Z_{\Gamma}(V)$.
We take a finite-dimensional approximation scheme  $\{W_{\mu}\}_{\mu \in \bN}$ for $C^{\infty}_c(\omega_{\scrC/\scrF} \otimes_{\bC} TX)$ by $U_{\Gamma}(V) \times \Gamma$ representations.
We furthermore assume that the  $\Gamma$-trivial-subrepresentations $V_\mu = W_{\mu}^{\Gamma}$ form a finite-dimensional approximation scheme of $C^{\infty}_c(\omega_{\scrC/\scrF} \otimes_{\bC} TX)^{\Gamma}$.

Now we can define a global Kuranishi chart $(G,\scrT,E,s)$ of $\cM_{0,\bfm}(Y,\epsilon)$. The group $G$ is $U_{\Gamma}(V)$.

Let $\{f_1,\dots,f_{rk(V)}\}$ be an ordered basis of $V^*$.
The thickening $\scrT$ comprises tuples $( \phi,u,e,F)$ where 
\begin{enumerate}
\item $\phi \in \scrF$, 
\item $u: \scrC|_{\phi} \to X$ is a smooth $\Gamma$-equivariant map representing the class $\epsilon$
\item  $e\in V_{\mu}$ (i.e. the $\Gamma$-trivial subrepresentation of $W_{\mu}$);
\item $F$ is an isomorphism $H^0(L_u) \to V^*$ as $\Gamma$-representations such that the matrix $H_F$ of inner products $(H_F)_{ij}:=\langle F^{-1}(f_i), F^{-1}(f_j) \rangle$ has positive eigenvalues
\end{enumerate}
and satisfying the further two conditions:
\begin{equation} \label{eqn:CR}
\cdbar_J(u)|_{\Sigma^\circ_{\phi}} + \lambda_{\mu}(e)(\iota_{\phi},u) = 0
\end{equation}
where $(\iota_\phi,u): \Sigma_{\phi}^{\circ} \to \scrC^\circ \times X$, and
\begin{equation} \label{eqn:diagonal_condition}
(\phi,\phi_F) \in \scrU_{\Delta} \subset \scrF'
\end{equation}
where $\phi_F: \Sigma_{\phi} \to \bP(V)$ is the map defined by the $\Gamma$-equivariant isomorphism $F$. In other words, we define $\phi_F(z):=[F^{-1}(f_1)|_z:  \dots : F^{-1}(f_{rk(V)})|_z] \in \bP(V)$.

The action of $G$ on $\scrT$ is given by $ (\phi, u,e,F)\cdot g:=(\phi \cdot g, u \cdot g, e \cdot g, F \cdot g)$. To spell it out, the action on $\phi$ comes from the action on $\scrF$. The map $(u \cdot g)(z)$ is defined by $u(z \cdot g^{-1})$, where for $z \in \Sigma_{\phi \cdot g}$, we have $z \cdot g^{-1} \in \Sigma_{\phi}$. The action on $e$ comes from the  $U_{\Gamma}(V)$ action on $V_{\mu}$. The action on $F$ comes from pre-composing the isomorphism
$H^0(L_{u \cdot g}) \to H^0(L_u)$.
The $G$ action on $\scrT$  has finite stabilisers (although the $\Gamma$-equivariant unitary action on $\scrF$ factors through a projective $\Gamma$-equivariant unitary action of $\bP U_{\Gamma}(V)$, the diagonal circle subgroup acts non-trivially on the framing data $F$).

The group of $\Gamma$-equivariant isomorphisms of $V$ is $Z_{\Gamma}(V) \subset GL(V)$.
Let $\mathfrak{z}$ be the Lie algebra of $Z_{\Gamma}(V) \subset GL(V)$ and $\scrH$ be the Lie subalgebra of $\mathfrak{z}$ consisting of Hermitian matrices.
Since we have chosen a $\Gamma$-invariant Hermitian metric, the  $\Gamma$ action on $V$ lies in $U(V)$. 
Therefore, $Z_{\Gamma}(V)$ is closed under taking Hermitian transpose and hence we have the following Cartan decomposition theorem:
\begin{prop}[\cite{BeyondLie}, Proposition 1.143]\label{p:polar}
    The map $U_{\Gamma}(V) \times \scrH \to Z_{\Gamma}(V)$ given by 
    \[
    (k,X) \mapsto k\exp(X)
    \]
    is a homeomorphism.
\end{prop}
Take a matrix $A \in Z_{\Gamma}(V)$ and regard it as an element in $GL(V)$.
The polar decomposition theorem for $GL(V)$ allows us to write $A=k_A \exp(h_A)$ for some $k_A \in U(V)$ and some Hermitian matrix $h_A$. Proposition \ref{p:polar} proves that $k_A \in U_{\Gamma}(V)$ and $h_A \in \scrH$.
Therefore, the matrix $H_F$ from the isomorphism $F$ above lies in $\exp(\scrH)$.

We will show that $\scrT$ (more precisely, an open subset of it) is a manifold in Lemma \ref{l:manifold}. Assuming it for now, we define the \emph{obstruction bundle} $E \to \scrT$ to be the vector bundle with fibres $H \oplus V_{\mu} \oplus \scrH$.  The section $s$ is defined by (recall the definition of $\Phi$ from \eqref{eq:Phi})
\[
(\phi,u,e,F) \mapsto (\Phi^{-1}(\phi,\phi_F), e, \exp^{-1}(H_F)).
\]
Where $s=0$, we have:
\begin{itemize}
\item  $e=0$ so \eqref{eqn:CR} is the usual Cauchy-Riemann equation, 
\item $\exp^{-1}(H_F) = 0$ implies that the isomorphism $F$ is unitary;
\item $\Phi^{-1}(\phi,\phi_F)=0$ shows that $\phi=\phi_F$ so the choice of $\phi$ is determined by the frame.
\end{itemize}
Taken together, this shows that  $s^{-1}(0)$ consists of elements of the form
\[
(\phi_F,u,0,F)
\]
where $u \in \cM_{0,h, \bfm}(Y, \epsilon)$, $F:H^0(L_u) \to V^*$ is a $\Gamma$-equivariant unitary isomorphism such that $\phi_F$ is an element of $\scrF$.

\begin{lemma}\label{l:manifold}
The space $\scrT$ is a topological manifold in a neighbourhood of $s^{-1}(0)$ for sufficiently large $k$ and $\mu$.
\end{lemma}

\begin{proof} Consider the same space $\scrT^{pre}$ without the condition that $u$ be $\Gamma$-equivariant and in which $e\in W_{\mu}$. This is a topological manifold for sufficiently large $k,\mu$  by \cite{AMS1,AMS2}. Now $\scrT \subset \scrT^{pre}$ is the fixed locus of a locally linear finite group action on a manifold $\scrT^{pre}$, so is again a manifold. \end{proof}

\begin{prop} \label{prop:chart for closed curves}
For sufficiently large $k$ and $\mu$ the data $\bT = (G,\scrT,E,s)$  above defines a global chart for the moduli space $\cM_{0,h, \bfm}(Y, \epsilon)$. In other words, we have a natural homeomorphism $s^{-1}(0)/G \simeq \cM_{0,h, \bfm}(Y, \epsilon)$ as orbispaces.
\end{prop}

\begin{proof} 

The result then follows from the arguments in \cite[Theorem 4.30]{AMS1} given the characterisation of orbifold stable maps as maps of admissible covers. We recall the argument here for the sake of completeness.

The degree of $L_u$ on every irreducible component of $\Sigma$ increases strictly when $k$ increases. Therefore, when $k$ is sufficiently large, we have $\phi_F \in \scrF$ for all $u \in \cM_{0,h, \bfm}(Y, \epsilon)$ and all $\Gamma$-equivariant unitary isomorphisms $F:H^0(L_u) \to V^*$.
Therefore the forgetful map $s^{-1}(0) \to \cM_{0,h, \bfm}(Y, \epsilon)$
\[
(\phi_F,u,0,F) \mapsto u
\]
is surjective.

Given $u \in \cM_{0,h, \bfm}(Y, \epsilon)$, we can construct an element in $s^{-1}(0)$ by choosing a $\Gamma$-equivariant unitary isomorphism $F$ and take $\phi$ to be $\phi_F$.
Two different choices of $F$ differ by a $\Gamma$-equivariant unitary isomorphism (i.e. an element in $G=U_{\Gamma}(V)$)\footnote{This is why we consider $Z_{\Gamma}(V)$ instead of the normalizer of $\Gamma$, cf Remark \ref{r:normalizer}. Indeed, we are interested in the group of automorphism of $V^*$ rather than the largest group acting on $\bP(V)/\Gamma$.}.
Therefore, $s^{-1}(0)/G \simeq \cM_{0,h, \bfm}(Y, \epsilon)$ as topological spaces.

To compare the isotropy groups, recall that an isomorphism of $u:\Sigma \to X$ in  $\cM_{0,h, \bfm}(Y, \epsilon)$ is a $\Gamma$-equivariant map $f: \Sigma \to \Sigma$ such that $u \circ f=u$.
Pulling back a $\Gamma$-equivariant isomorphism  $F:H^0(L_u) \to V^*$ along $f$ will give us an element $g \in U_{\Gamma}(V)$ such that $\phi_F \circ f=\phi_{F \cdot g}$. On the other hand, an isomorphism of $(\phi_F,u,0,F)$ is an isomorphism $f:\Sigma \to \Sigma$ of $u$ together with $g \in U_{\Gamma}(V)$ such that $\phi_F \circ f=g \circ \phi_{F}$.
Since $g \circ \phi_{F}=\phi_{F \cdot g}$, the isotropy groups in $s^{-1}(0)/G$ and in $ \cM_{0,h, \bfm}(Y, \epsilon)$ are canonically identified.
\end{proof}

 The construction of $\bT$ involves many choices, for example, the Hermitian line bundle $L$, finite dimensional approximation scheme, $k$, $\mu$, etc. 
We also use the presentation of $Y$ as $[X/\Gamma]$. 
Using the doubly framed trick \cite{AMS2}, one can show that the equivalence class (see Definition \ref{d:itStab}) of the global chart is independent of these choices.
As an illustration, we explain how to use the doubly framed trick to show that it is independent of the presentation of $Y$.

\begin{prop}\label{p:doubly-closed}
    The equivalence class of the global chart is independent of the choice of presentation of $Y$ as a global quotient.
\end{prop}

\begin{proof}
Suppose that $Y=[X/\Gamma]=[X'/\Gamma']$ where $X,X'$ are smooth manifolds and $\Gamma,\Gamma'$ are finite groups.
Any element $u \in \cM_{0,h, \bfm}(Y, \epsilon)$ defines a representable orbifold morphism $v:\scrC \to Y$.
By Theorem \ref{t:stablemap}, it gives the corresponding $\Gamma'$-equivariant map $u':\Sigma' \to X'$ and the associated pure Hurwitz type $\bfm'$ and equivariant homotopy class $\epsilon'$.
Homotoping $u$ equivariantly will induce a homotopy of $v$ and an equivariant homotopy of $u'$ so either of $(\bfm,\epsilon)$ and $(\bfm',\epsilon')$ determines the other. We want to compare the global charts of $\cM_{0,h, \bfm}(Y, \epsilon)=\cM_{0,h, \bfm'}(Y, \epsilon')$.

Let $L,L'$ be the respective equivariant Hermitian line bundles on $X$ and $X'$. As in Section \ref{ss:global-chart}, let $k >3$. 
We have $H^0(L_u) \simeq V^*$ and $H^0(L'_{u'}) \simeq (V')^*$, and the corresponding spaces of framed curves\footnote{The $\scrF'$ here is the space of maps to $\mathcal{K}_{0,\bfm'}([\bP(V')/\Gamma'],(V')^*)$ satisfying the analog of \eqref{eq:condition}, \emph{not} the space labelled $\scrF'$ introduced in the paragraph after Remark \ref{r:highergenus}; we hope this will cause no confusion! } $\scrF$ and $\scrF'$.
Let $V_\mu, W_{\mu}, V'_{\mu'}, W'_{\mu'}$ be the respective finite dimensional approximation schemes.
Let the associated global charts from Proposition \ref{prop:chart for closed curves} be $\bT$ and $\bT'$, respectively.

We consider the subspace $\scrF^{master} \subset \mathcal{K}_{0,(\bfm,\bfm')}([\bP(V)/\Gamma] \times [\bP(V')/\Gamma'],(V^*,(V')^*))$ consisting of elements whose projection to the first and second factors lie in $\scrF$ and $\scrF'$ respectively.

We now describe a `master' global chart $\bT^{master}$, which will be a stabilisation of both $\bT$ and $\bT'$.
Let $G=U_{\Gamma}(V)  \times U_{\Gamma'}(V')$.
The thickening space consists of $(\phi, v, e,e',F,F')$ such that
\begin{itemize}
    \item $\phi \in \scrF^{master}$ (denote the domain curve and its associated $\Gamma$ and $\Gamma'$ admissible covers by $\scrC_{\phi}$, $\Sigma_{\phi}$ and $\Sigma'_{\phi}$ respectively),
    \item $e \in V_{\mu}$ and $e' \in V'_{\mu'}$,
    \item $v:\scrC_{\phi} \to Y$ is a representable orbifold morphism descending from $u:\Sigma_{\phi} \to X$ (in the class $\epsilon$) and $u':\Sigma'_{\phi} \to X'$  (in the class $\epsilon'$) satisfying the perturbed holomorphic equation (i.e. $(dv)^{0,1}_{J/\Gamma}+\lambda_{\mu}(e)+\lambda'_{\mu'}(e')=0$),
    \item $F:H^0(L_u) \to V^*$ and $F':H^0(L'_{u'}) \to (V')^*$ are equivariant isomorphisms such that the associated matrices $H_F$, $H_{F'}$ have positive eigenvalues. Here $H_F$ and $H_{F'}$ are defined with respect to the fibrewise metric on the universal curves over $\scrF$ and $\scrF'$ (i.e. we are using the projection $\scrF^{master} \to \scrF, \scrF'$ here).
    They induce the associated equivariant maps $\phi_F: \Sigma_{\phi} \to \bP(V)$ and $\phi_{F'}: \Sigma'_{\phi} \to \bP(V')$,
    \item $\phi_F$ and $\phi_{F'}$ lie in neighborhoods of the respective diagonals so that $\Phi^{-1}(\phi_F,\phi)$ and $(\Phi')^{-1}(\phi_{F'},\phi)$ are well-defined (here we abuse notation and identify $\phi$ with its projection to $\scrF$ and $\scrF'$ respectively).
\end{itemize}
  The obstruction bundle has fibre $H \oplus V_{\mu} \oplus \scrH \oplus  H' \oplus V'_{\mu'} \oplus \scrH' $. Here $H$ and $H'$ are the fibres of the normal bundle of the diagonal inside the square of  $\scrF$, and  $\scrF'$, respectively. The vector spaces $\scrH$ and $\scrH'$ are subspaces of Hermitian matrices associated to $U_{\Gamma}(V)$ and $U_{\Gamma'}(V')$.
 The obstruction section is the obvious map.

We leave the readers to check that $\bT^{master}$ is a stabilisation of both $\bT$ and $\bT'$ (cf. \cite[Section 4.9]{AMS2}). 
\end{proof}

Using the evaluation maps from the master global chart $\bT^{master}$, one can show that the quantum multiplication \eqref{eqn:quantum_product_formula} is independent of choices. Proposition \ref{p:QHwelldefined} remains valid using the virtual fundamental class (Definition \ref{defn:vfc}) of the global Kuranishi chart in Proposition \ref{prop:chart for closed curves}.

Since our global chart construction agrees with the one in \cite{AMS2} when $\Gamma$ is trivial, another consequence of Proposition \ref{p:doubly-closed} is that if the $\Gamma$ action on $X$ is free, in which case $Y$ is a smooth manifold, then the resulting global chart in Proposition \ref{prop:chart for closed curves} is equivalent to the global chart constructed by \cite{AMS2}.

\section{Flow category generalities}\label{s:flowcat}

    This section explains our general mechanism for extracting a chain complex from a flow category $\cC$ where objects carry non-trivial automorphisms (see Definition \ref{d:chaincomplex} and Proposition \ref{p:dsquare=0}). In the sequel, we will associate an ordered marked flow category $\cC(H_Y)$ to a non-degenerate Hamiltonian function $H_Y: Y \times S^1 \to \bR$ such that  
     it is an invariant of the orbifold $Y = [X/\Gamma]$. 

\subsection{Ordered marked flow category}\label{s:flow}

\emph{We introduce ordered marked flow categories and their enrichment in global Kuranishi charts.
Since we consider broken cylinders with ordered interior marked points, the combinatorics of the corresponding stratifications of moduli spaces by broken curves will differ from the usual combinatorics of a flow category. We first set up the combinatorics and then discuss the enrichment in global Kuranishi charts.
}

We borrow the notation from \cite{BX}. Let $\bA$ be a poset with relation $\leq$, equipped with its canonical topology where a set $U$ is open whenever $\alpha \in U, \alpha \leq \beta \,  \Rightarrow \,  \beta \in U$.  Write $\partial^{\alpha} \bA = \{\beta \in \bA \, | \, \beta \leq \alpha\}$.  A topological orbispace $T$ is $\bA$-stratified if it is equipped with a continuous function $s:T \to \bA$ with finite range (more precisely, $s$ is a map from the coarse moduli space of $T$ to $\bA$); we write $T_{\alpha} := s^{-1}(\alpha)$ and $\partial^{\alpha}T := \sqcup_{\beta \leq \alpha} T_{\beta}$. Since $s$ is continuous, $\partial^{\alpha}T$ is a closed subset of $T$.

The input to our ordered marked flow category construction will comprise: 
\begin{enumerate}
\item a countable poset $\cP$, with an action $\cA:\cP \to \mathbb{R}$ and an integralised action $\cA_{\mathbb{Z}}: \cP \to \bZ$  which respects order, so $p <  q \Rightarrow \cA(p) < \cA(q) \text{ and } \cA_{\mathbb{Z}}(p) < \cA_{\mathbb{Z}}(q)$;
\item  
a free action of a countable group $\Pi$ on $\cP$ with $\cP / \Pi$ finite, and two maps 
$\cA_\Pi: \Pi \to \bR$, $\cA_{\Pi,\mathbb{Z}}: \Pi \to \bZ$   with
$\cA(a\cdot p) = \cA(p) - \cA_{\Pi}(a)$ and
$\cA_{\mathbb{Z}}(a\cdot p) = \cA_{\mathbb{Z}}(p) - \cA_{\Pi,\mathbb{Z}}(a)$ for $a \in \Pi$ and $p \in \cP$.
\end{enumerate}

\begin{remark}
The integralised action $\cA_{\bZ}$ will be used to run induction for the construction of a global chart lift of the flow category, while the action $\cA$ will be used to define the action filtration of the resulting chain complex. When we define the bulk-deformed orbifold Floer cohomology, $\cA_{\Pi,\mathbb{Z}}$ will be a multiple of $\cA_{\Pi}$.
\end{remark}

Recall that $\|\Gamma\|$ is the set of conjugacy classes in $\Gamma$ and 
 $\|\Gamma\|^{\circ}=\|\Gamma\| \setminus \{id\}$.
For $p,q \in \cP$, $k \in \mathbb{N}$ and $(g_1), \dots,(g_k) \in \|\Gamma\|^{\circ}$ we introduce a poset (which will not depend on the sequence of labels $\{(g_i)\}$)
\begin{align}
\bA_{pq}^{((g_1), \dots, (g_k))} = \left\{ (pr_1\cdots r_lq, h_1,\dots,h_k) \, | \, r_i \in \cP, \ p < r_1 <  \cdots < r_l < q, h_j \in \{0,\dots,l\}\right\}  \label{eq:poset}
\end{align}
with 
\[
(ps_1\ldots s_mq,h_{1,s},\dots,h_{k,s}) \leq (p r_1\ldots r_l q, h_{1,r}, \dots,h_{k,r})
\]
if and only if there is an injective monotonic increasing function $f:\{1,\dots,l\} \to \{1,\dots,m\}$ such that
\begin{enumerate}
\item $r_i=s_{f(i)}$ for all $i \in \{1,\dots,l\}$, and
\item $f(h_{j,r}) \le h_{j,s} \le f(h_{j,r}+1)$ for all $j \in \{1,\dots,k\}$, where $f(0):=0$ and $f(l+1):=m$.
\end{enumerate}
Note that $\bA_{pq}^{((g_1), \dots, (g_k))}=\bA_{pq}^{((g_1'), \dots,(g_k'))}$ for any $(g_j), (g_j') \in \|\Gamma\|^{\circ}$.
Let
\begin{align}\label{eq:Ak}
\bA_{pq}^{k} = \sqcup_{((g_1), \dots, (g_k)) \in (\|\Gamma\|^{\circ})^k} \bA_{pq}^{((g_1), \dots, (g_k))}.
\end{align}

\begin{remark}\label{r:compatibleH}
    In orbifold Hamiltonian Floer theory, $(pr_1\cdots r_lq, h_1,\dots,h_k) \in \bA_{pq}^{((g_1), \dots, (g_k))}$ corresponds to the stratum which consists of a possibly broken representable Floer cylinder  $v_i$ (possibly with some sphere bubbles) from the Hamiltonian orbit $r_i$ to $r_{i+1}$ for all $i$ ($r_0:=p$ and $r_{l+1}:=q$) such that the $j^{th}$-ordered marked point is an orbifold marked point labelled by conjugacy class $g_j$ (i.e. the evaluation map goes to the twisted sector $[X^{g_j}/C(g_j)]$) and is on the broken cylinder $v_{h_j}$ or the sphere bubbles attached to it.

    The partial ordering is defined such that $(ps_1\ldots s_mq,h_{1,s},\dots,h_{k,s}) \leq (p r_1\ldots r_l q, h_{1,r}, \dots,h_{k,r})$ if and only if the stratum associated to $(ps_1\ldots s_mq,h_{1,s},\dots,h_{k,s})$ is contained in the stratum associated to $(p r_1\ldots r_l q, h_{1,r}, \dots,h_{k,r})$.
\end{remark}

For simplicity, we will denote $(pr_1\cdots r_lq, h_1,\dots,h_k)$ by $(p\bfr q, \bfh)$. 
If we want to remember which $\bA_{pq}^{((g_1),\dots,(g_k))}=:\bA_{pq}^{\bfg}$ the element  $(p\bfr q, \bfh)$ lies in, we denote it by 
$(p\bfr q, \bfh, \bfg)$.
There is a maximal element $(pq,\mathbf{0})$ in each $\bA_{pq}^{\bfg}$ (we denote it by $pq$ for simplicity when it is clear).
The poset $\bA_{pq}^{\bfg}$ is homogeneous, so elements have a well-defined `depth'.
More precisely, the depth of $\alpha=(pr_1\cdots r_lq, h_1,\dots,h_k)$ is $\depth(\alpha)=l$ and $(pq,\mathbf{0})$ is the unique element with depth $0$.

For any $\alpha=(p\bfr q, \bfh) \in \bA_{pq}^\bfg$ of depth $l$ and $i \in \{0,\dots,l\}$, we define
\begin{align}\label{eq:k_i}
  k_i(\alpha):=\#   \{j| h_j=i\}, \text{ and } \bfg_i(\alpha)=((g_{j}): h_j=i)
\end{align}
Note that $\sum_{i=0}^l k_i(\alpha)=k$.

\begin{example}
    If $\bfh=(0,1,0,0,1)$ and $\bfg=((g_1),(g_2),(g_3),(g_4),(g_5))$, then $k_0(\alpha)=3$, $k_1(\alpha)=2$, $\bfg_0(\alpha)=((g_1),(g_3),(g_4))$ and $\bfg_1(\alpha)=((g_2),(g_5))$.
\end{example}

Let $\alpha=(prq, \bfh)$ be an element with depth $1$ and $\bfh=(h_1,\dots,h_k)$ with $h_j \in \{0,1\}$.
Consider its associated boundary $\partial^{\alpha} \bA_{pq}^\bfg$.
There is an isomorphism of homogeneous posets
\begin{align}\label{eq:posetface}
\bA_{pr}^{\bfg_0(\alpha)} \times \bA_{rq}^{\bfg_1(\alpha)} \cong \partial^{(prq,\bfh)} \bA_{pq}^{\bfg}
\end{align}
given by
\[
((p\bfr_0 r, \bfh_0), (r\bfr_1 q, \bfh_1)) \mapsto (p\bfr_0 r \bfr_1 q, \bfh')
\]
where if we write $\bfh_0=(h_{0,1}, \dots,h_{0,k_0(\alpha)})$, $\bfh_1=(h_{1,1}, \dots,h_{1,k_1(\alpha)})$ and $\bfh'=(h'_1,\dots,h'_k)$, then 
$h'_j=h_{0,j}$ if $h_j=0$ and $h'_j=h_{1,j}+|\bfr_0|+1$ if $h_j=1$.

\begin{example}
    If $\bfh=(0,1,0,0,1)$ and $\bfg=((g_1),(g_2),(g_3),(g_4),(g_5))$,  then under \eqref{eq:posetface} we have 
    \[
    ((pr, (0,0,0)), (rr_1q, (1,0))) \ \mapsto \ (prr_1q, (0,2,0,0,1)).
    \]
\end{example}

\begin{remark}\label{r:nchooser_A}
Notice that if $\beta=(prq, \tilde{\bfh})\in \bA_{pq}^\bfg$ satisfies $\bfg_i(\alpha)=\bfg_i(\beta)$ for $i=0,1$, then there is also an isomorphism from $\bA_{pr}^{\bfg_0} \times \bA_{rq}^{\bfg_1}$ to $\partial^{\beta} \bA_{pq}^\bfg$ as well.
Since there are $\frac{k!}{k_0!k_1!}$ many monotonic increasing injective functions from $\{1, \dots, k_0\}$ to $\{1, \dots, k=k_0+k_1\}$,  $\bA_{pr}^{\bfg_0} \times \bA_{rq}^{\bfg_1}$ appears in $\frac{k!}{k_0!k_1!}$ many depth one strata of $\bA_{pq}^{k}$ (see Equation \eqref{eq:Ak}).
A sanity check on the combinatorics: there are $(|\Gamma|-1)^k2^k$ many depth one strata in total in $\bA_{pq}^{k}$, among which $(|\Gamma|-1)^{k_0} (|\Gamma|-1)^{k_1}\frac{k!}{k_0!k_1!}$ are covered by $\bA_{pr}^{\bfg_0} \times \bA_{rq}^{\bfg_1}$ for some $\bfg_0 \in (\|\Gamma\|^\circ)^{k_0}$ and $\bfg_1 \in (\|\Gamma\|^\circ)^{k_1}$.
Indeed, $(|\Gamma|-1)^k2^k=\sum_{k_0+k_1=k} (|\Gamma|-1)^{k_0} (|\Gamma|-1)^{k_1}\frac{k!}{k_0!k_1!}$.
\end{remark}

Let $\bA_{pq}:=\cup_{k \in \mathbb{N}}\bA_{pq}^k$ viewed as a poset with no partial ordering relation between $\bA_{pq}^k$ and $\bA_{pq}^{k'}$ if $k \neq k'$.

As a generalization of \cite[Definition 3.7]{BX}, we introduce: 

\begin{definition}\label{d:omfc}
An {\bf ordered marked flow category} $\cC$ with object set $\cP$ consists of the following data:
\begin{enumerate}
\item A group $\Gamma_p$ for every object $p \in \cP$. We call $\Gamma_p$ the automorphism group of $p$ or isotropy group of $p$.
\item For each $p,q \in \cP$, the morphism space from $p$ to $q$ is a compact $\bA_{pq}$-orbispace $\cM_{pq}$  such that 
\[
\cM_{pq} \neq \emptyset \Rightarrow p< q \qquad \text{ so } \cM_{pp} = \emptyset.
\]
We write $\cM_{pq}$ as a disjoint union of compact $\bA_{pq}^k$-spaces $\cM_{pq}^k$ for $k \in \mathbb{N}$, and write $\cM_{pq}^k$ as a disjoint union of compact $\bA_{pq}^{\bfg}$-spaces $\cM_{pq}^\bfg$  for $\bfg \in (\|\Gamma\|^\circ)^k$.
We also impose the following properness condition on $\cA$ and $\cA_{\mathbb{Z}}$ (cf. Remark \ref{r:action_de} for our convention of differential later on):
\begin{align}\label{eq:action_proper}
    \text{ for every }r \in \mathbb{R} \text{ and }q \in \cP, \cup_{p, \cA(p)>r} \cM_{pq} \text{ and } \cup_{p, \cA_{\mathbb{Z}}(p)>r} \cM_{pq} \text{ are compact.}
\end{align}

\item A continuous map $\vdim:\cM_{pq} \to \mathbb{Z}$. Write $\cM_{pq}^k \cap \vdim^{-1}(m)$ and $\cM_{pq}^\bfg \cap \vdim^{-1}(m)$ as $\cM_{pq}^{k,m}$ and $\cM_{pq}^{\bfg,m}$ respectively, so $\cM_{pq}$ is a disjoint union of $\cM_{pq}^{k,m}$ (and also of $\cM_{pq}^{\bfg,m}$).  By compactness of $\cM_{pq}$, all but finitely many $\cM_{pq}^{k,m}$ are empty.

\item Representable orbispace morphisms (called evaluation maps) $\cM_{pq}^k \to [pt/\Gamma_p]$ and $\cM_{pq}^k \to [pt/\Gamma_q]$. In particular, this implies that the isotropy groups of elements in $\cM_{pq}^k$ are subgroups of both $\Gamma_p$ and $\Gamma_q$.

\item 
For any depth-one $\alpha=(prq,\bfh, \bfg)$, there is a boundary face map that is compatible with the $\bA$-stratification
\begin{align}\label{eq:codim1}
\xymatrix{
\cM_{pr}^{\bfg_0(\alpha)} \times_{[pt/\Gamma_r]} \cM_{rq}^{\bfg_1(\alpha)} \ar[rr]^{ \simeq} \ar[d] &  &\partial^{(prq,\bfh)}\cM_{pq}^\bfg \ar[d] \\
\bA_{pr}^{\bfg_0(\alpha)} \times \bA_{rq}^{\bfg_1(\alpha)} \ar[rr]^{\simeq} & & \partial^{(prq,\bfh)}\bA_{pq}^\bfg
}
\end{align}
where 
$\cM_{pr}^{\bfg_0(\alpha)} \times_{[pt/\Gamma_r]} \cM_{rq}^{\bfg_1(\alpha)}$ is the fibre product over the evaluation maps (see \cite[Section 2.3]{Moe02} for the definition of fibred product\footnote{Strictly speaking, \cite{Moe02} considers orbifolds rather than orbispaces and uses Lie groupoids rather than topological groupoids. Orbispaces are topological groupoids for which the source and target maps are surjective continuous maps of topological spaces. The definition of a fibre product carries over to this setting.}) and
the bottom arrow is \eqref{eq:posetface}.
Moreover, we require that 
\begin{align}\label{eq:vdimsum}
\vdim(u_0)+\vdim(u_1)=\vdim([u_0,u_1])-1
\end{align}
for any $u_0 \in \cM_{pr}^{\bfg_0(\alpha)}$ and $u_1 \in \cM_{rq}^{\bfg_1(\alpha)}$, where  $[u_0,u_1]$ is defined to be the image of $(u_0,u_1)$ under the top arrow. In other words, we have $\sqcup_{m_0+m_1=m-1} \cM_{pr}^{\bfg_0(\alpha),m_0} \times_{[pt/\Gamma_r]} \cM_{rq}^{\bfg_1(\alpha),m_1}  \simeq \partial^{(prq,\bfh)}\cM_{pq}^{\bfg,m}$.
\item The boundary face maps are strictly associative
\item The group $\Pi$ acts strictly, via stratified homeomorphisms 
\[
\xymatrix{
\cM_{pq} \ar[r] \ar[d] & \cM_{ap,aq} \ar[d] \\
\bA_{pq}  \ar[r] & \bA_{ap,aq}
}
\]
for $a\in \Pi$  and $p,q\in \cP$.
\end{enumerate}
\end{definition}

\begin{example} \label{ex:Morse flow cat definitions}
For Morse theory on $[X/\Gamma]$:
\begin{itemize}
\item $\cP$ is a set of critical points $p$ of a Morse function $f: X/\Gamma \to \bR$. In particular, $\cP$ is in bijection with the set of $\Gamma$-orbits $\tilde{p}$ of critical points of the $\Gamma$-invariant Morse function $\tilde{f}: X \to \bR$ (the lift from $f$);
\item $\Gamma_{p}$ is the isotropy group of $p$;
\item $\Pi$ is trivial;
\item $\cA$ is the usual action of $p$ (i.e. $\cA(p)=f(p)$) and
$\cA_{\mathbb{Z}}$ is an integral approximation of (a multiple of the) action of $p$ (see Section \ref{s:interalaction} for the discussion of the Floer setting, which specialises to the Morse setting);
\item $\cM_{pq}^k= \emptyset$ if $k>0$, and is the moduli of gradient trajectories from $p$ to $q$ otherwise 
\item $\bA_{pq}$ indexes possible breaking of Morse flow-lines between critical points $p$ and $q$.
\end{itemize}
\end{example} 

\begin{example}\label{ex:Ham Floer flow cat definitions}
For bulk-deformed orbifold Hamiltonian Floer theory (see Section \ref{s:orbifoldFloer}):
\begin{itemize}
\item $\cP$ is a set of formal capped Hamiltonian orbits in an orbifold $Y = [X/\Gamma]$ with partial order given by $p < q$ if and only if the compactified moduli space of Floer solutions $\cM_{pq} \neq \emptyset$; 
\item $\Gamma_p$ is the isotropy group of $p$;
\item $\Pi$ is a discrete subgroup of $\mathbb{R}$ containing $\frac{1}{|\Gamma|}\omega_X(H_2(X;\mathbb{Z}))$, acting by formal recapping; 
\item $\cA$ and $\cA_{\Pi}$ are the usual actions, while
$\cA_{\mathbb{Z}}$ and $\cA_{\Pi,\bZ}$ are `integralised' actions defined by an integral scaling of a rational approximation to $\cA$ and $\cA_{\Pi}$, respectively
(see Section \ref{s:interalaction}); 
\item $\bA_{pq}^k$ indexes the possible breakings and hence combinatorics for the moduli space of stable Floer cylinders with constraints at $k$ ordered marked points $\cM^k_{pq}$ between two given orbits (for which the formal cap at $q$ is the sum of that at $p$ with the cylinder).
\end{itemize}
\end{example}

\begin{remark}
    Since we are going to define bulk-deformed Hamiltonian Floer theory, the virtual dimension of the moduli $\cM^k_{pq}$ will depend on the bulk constraints and is not determined by $p$ and $q$ even though they are capped orbits.
    Therefore, we need the function $\vdim$ as part of the data in the Definition \ref{d:omfc}.
\end{remark}

\subsection{Global charts for $\bA$-spaces and fibre products}

\emph{Informally, global charts without boundary serve as models for solution spaces to elliptic equations which one expects to be smooth compact manifolds. Following  \cite{BX,Rez} one can extend this set-up to the case of a solution space which should be a manifold with boundary and corners. We will require a further extension to families of global charts parametrized over an auxiliary topological manifold.
}

For an $\bA$-orbispace $T$ with finite isotropy group at every point, a global Kuranishi chart $\bT = (G,\scrT,E,s, \phi)$ for $T$ consists of a compact Lie group $G$, an oriented topological $\bA$-manifold $\scrT$ (i.e. an $\bA$-stratified manifold with boundary and corners such that $\partial^{\alpha} \scrT$ is a manifold with boundary and corners for all $\alpha \in \bA$), an oriented $G$-bundle $E$, a $G$-equivariant section $s$, and an isomorphism of $\bA$-orbispaces $\phi:s^{-1}(0)/G \simeq T$. Operation \ref{o:stablization} can be defined in the same way for a global Kuranishi chart $\bT$.
Definition \ref{d:embedding} and \ref{d:itStab} carry over as well.

One additional feature for a global Kuranishi chart $\bT$ of an 
$\bA$-orbispace $T$ is that it induces a collection of Kuranishi charts $\partial^\alpha \bT$ for the spaces $\partial^{\alpha}T$, by restriction.

\begin{remark} \label{rmk:remember_orbifold}
Despite our notation, which reflects the underlying geometric construction, we view a global chart $(G,\scrT,E,s)$ as defining the triple $(\scrT/G,E/G,s)$ of a section of an orbibundle over an orbifold, and do not specifically  retain the information of the Lie group presenting the orbifold as a global quotient.  Thus, the `group enlargement' axiom of \cite{AMS1} on global charts is \emph{trivial} in this set-up, since the underlying orbifold $\scrT/G$ is literally identical.  The moves on global charts in this setting are thus germ-equivalence (restricting to an open neighbourhood of $s^{-1}(0)/G \subset \scrT/G$) and stabilisation of the chart by an orbibundle (above in Operation \ref{o:stablization}, the bundle $F \to \scrT/G$).
\end{remark}

We need a slight generalisation of the terminology of a global chart to allow disjoint union.
More precisely, let $\bT=(G,\scrT,E,s)$ and $\bT'=(G',\scrT',E',s')$ be global charts of $T$ and $T'$ respectively.
 Strictly speaking, we cannot view  $\bT \sqcup \bT'$ as a global chart of $T \sqcup T'$ because we cannot write $\bT \sqcup \bT'$ in the form of $(G'',\scrT'',E'',s'')$.
However, if we use Remark \ref{rmk:remember_orbifold} and define $\bT=(\scrT/G,E/G,s)$ and $\bT'=(\scrT'/G',E'/G',s')$ using orbifolds and orbibundles, then $\bT \sqcup \bT'$ makes sense as a global chart of $T \sqcup T'$ because we can take disjoint union of orbibundles
over orbifolds.
In what follows, we will give a global chart of $\cM_{pq}$.
Since $\cM_{pq}$ is a disjoint union of $\cM_{pq}^k$ and $\cM_{pq}^k$ is a disjoint union of $\cM_{pq}^{k,m}$\footnote{When we do Hamiltonian Floer theory, $\cM_{pq}^{k,m}$ will break further into a disjoint union over different possible monodromy $\bfm$ and classes $\epsilon$.}, we use the convention from now on that a global chart of a compact space $T$ (e.g. $\cM_{pq}$ or $\cM_{pq}^k$) is always understood as a disjoint union of global charts of its components. 

We also need to introduce
fibre products of global charts for later use.

Suppose that $\bT_0=(G_0,\scrT_0,E_0,s_0)$ and $\bT_1=(G_1,\scrT_1,E_1,s_1)$ are global Kuranishi charts and $H$ is a finite group.
If $e_i:\scrT_i/G_i \to [pt/H]$ is a representable orbifold map for $i=0,1$, then we can form the fibre product global chart 
$\bT_0 \times_{[pt/H]} \bT_1:=(G_0 \times G_1, \scrT_0 \times H \times \scrT_1, E_0 \oplus E_1, s_0 \oplus s_1)$
where $G_0 \times G_1$ acts on $\scrT_0 \times H \times \scrT_1$ by
\begin{align}\label{eq:fibreproduct}
 (x_0,h,x_1)\cdot(g_0,g_1) =(x_0 \cdot g_0, (e_1)_*(g_1)h((e_0)_*(g_0))^{-1}, x_1 \cdot g_1)
\end{align}
where $(e_i)_*:G_i \to H$ is the  group homomorphism induced by $e_i$.
The action on $E_0 \oplus E_1$ is defined similarly.
The orbifold $(\scrT_0 \times H \times \scrT_1)/(G_0 \times G_1)$ is exactly the same as $\scrT_0/G_0 \times_{[pt/H]} \scrT_1/G_1$, the fibre product of the given orbifolds (cf. \cite[Section 2.3]{Moe02}).

\begin{remark}\label{r:assocom}
Taking fibre products of global charts is associative. That is 
\[
(\bT_0 \times_{[pt/H_{01}]} \bT_1) \times_{[pt/H_{12}]} \bT_2=\bT_0 \times_{[pt/H_{01}]} (\bT_1 \times_{[pt/H_{12}]} \bT_2).
\]
Moreover, there is a canonical isomorphism
\[
(\bT_0 \times_{[pt/H_{01}]} \bT_1) \times_{[pt/H_{12}]} \bT_2 \simeq (\bT_1 \times_{[pt/H_{01}]} \bT_0) \times_{[pt/H_{12}]} \bT_2
\]
when we change the order as indicated.
\end{remark}

\subsection{Ordered marked flow categories enriched over global charts\label{sec:ordered marked}}

\emph{ It will be important to allow collections of charts which present the strata of an $\bA$-space which are more general than those obtained by restriction from a single chart, but incorporate suitable homotopies of sections.}


For any $\beta \le \alpha$, let $\partial_{\beta}^{\alpha} \bA:=\{\gamma \in \bA|  \beta \le \gamma \le \alpha\}$ be equipped with the induced topology and $d_{\alpha\beta}:=\depth(\beta)-\depth(\alpha)$.
Note that $\partial_{\beta}^{\alpha} \bA$ is homeomorphic to $\{0,1\}^{d_{\alpha\beta}}$ (with product poset topology on $\{0,1\}^{d_{\alpha\beta}}$).
Such a homeomorphism necessarily sends $\beta$ to $(0,\dots,0)$ and $\alpha$ to $(1,\dots,1)$.
We fix such a homeomorphism and then forget the topology on $\partial_{\beta}^{\alpha} \bA$ and identify $\partial_{\beta}^{\alpha} \bA$  as a subset of $\mathbb{R}^{d_{\alpha\beta}}$.
Let $I_{\alpha\beta}:=[0,1]^{d_{\alpha\beta}}$ be the convex hull of $\partial_{\beta}^{\alpha} \bA$ in $\mathbb{R}^{d_{\alpha\beta}}$.
We stratify $I_{\alpha\beta}$ by its manifold with boundary and corner structure.
In other words, a stratum of $I_{\alpha\beta}$ is the convex hull of $\partial^{\gamma}_{\delta} \bA$ for some $\gamma,\delta$ such that $\alpha \ge \gamma \ge \delta \ge  \beta$.
We denote this stratum by $\partial^{\gamma}_{\delta} I_{\alpha\beta}$.
There is a canonical isomorphism $\partial^{\alpha}_{\gamma}I_{\alpha\beta}=I_{\alpha,\gamma}$
and $\partial^{\gamma}_{\beta}I_{\alpha\beta}=I_{\gamma,\beta}$.

We call $I_{\alpha\beta}$ the \emph{homotopy cube} from $\alpha$ to $\beta$.


\begin{definition}
Given a global Kuranishi chart $\bT_{\beta}=(G_{\beta},\scrT_{\beta},E_{\beta},s_{\beta})$, an $I_{\alpha\beta}$-parametrized family of sections starting from $s_{\beta}$ is an $I_{\alpha\beta}$-family of $G_{\beta}$-equivariant section $s_{\beta}^I$ which equals $s_{\beta}$ at $\{\beta\} \subset I_{\alpha\beta}$. 
\end{definition}

Given two global charts $\bT=(G,\scrT,E,s)$, $\bT'=(G',\scrT',E',s')$ with $I_{\alpha\beta}$-parametrized family of sections $s^I$ and $(s')^I$, an embedding from $(G,\scrT,E,s^I)$ to $(G',\scrT',E',(s')^I)$ is an $I_{\alpha\beta}$-family of embeddings from 
$(G,\scrT,E,s^I|_x)$ to  $(G',\scrT',E',(s')^I|_x)$ for $x \in I_{\alpha\beta}$ (cf. Definition \ref{d:embedding}).

\begin{definition}\label{d:coherenthomotopy}
A \emph{system of chart presentations with coherent homotopies}\footnote{ \cite{BX} defines a system of chart presentations as a collection of global charts such that $\partial^{\beta} \bT_{\alpha}$ is a stabilisation of $\bT_{\beta}$ for all $\beta \le \alpha$. This notion is too restrictive for our setting.} for an $\bA_{p,q}$ space $T$ comprises
\begin{enumerate}
\item  for any $\alpha \in \bA_{p,q}$,
a global chart $\bT_{\alpha} = (G_{\alpha},\scrT_{\alpha}, E_{\alpha}, s_{\alpha})$ presenting the stratum $\partial^{\alpha} T$, and an $I_{pq,\alpha}$-parametrized family $\bT_{\alpha}^I=(G_{\alpha},\scrT_{\alpha},E_{\alpha},s_{\alpha}^I)$ of sections of $E_{\alpha}$ starting from $s_{\alpha}$

\item  for any $\beta \leq \alpha$,  an embedding  
\begin{align}\label{eq:emb}
\partial^{\beta}\bT^I_{\alpha} \to \bT_{\beta}^I|_{\partial_{\alpha}^{pq}I_{pq, \beta}}
\end{align}
of global charts with $I_{pq,\alpha}$-parametrized family of sections,
\item  for any $\gamma \leq \beta \leq \alpha$, the embeddings 
 $\partial^{\beta}\bT^I_{\alpha} \to \bT_{\beta}^I|_{\partial_{\alpha}^{pq}I_{pq, \beta}}$ and 
 $\partial^{\gamma}\bT^I_{\beta} \to \bT_{\gamma}^I|_{\partial_{\beta}^{pq}I_{pq, \gamma}}$
 restrict to embeddings
$\partial^{\gamma}\bT^I_{\alpha} \to \partial^{\gamma}\bT_{\beta}^I|_{\partial_{\alpha}^{pq}I_{pq, \beta}}$ 
and $\partial^{\gamma}\bT^I_{\beta}|_{\partial_{\alpha}^{pq}I_{pq, \beta}} \to \bT_{\gamma}^I|_{\partial_{\alpha}^{pq}I_{pq, \gamma}}$, respectively.
We insist that the composition of them is isomorphic to the embedding  $\partial^{\gamma}\bT^I_{\alpha} \to \bT_{\gamma}^I|_{\partial_{\alpha}^{pq}I_{pq, \gamma}}$.

\end{enumerate}    
\end{definition}

\begin{figure}[ht]
\begin{center}
\begin{tikzpicture}

\draw[semithick,fill] (0,0) circle (0.1);
\draw (0.75,0.5) node {$I_{pq,pq}$};

\draw[semithick,fill] (-1,0) circle (0.1);
\draw[semithick,fill] (-5,0) circle (0.1);
\draw[semithick] (-5,0) -- (-1,0);
\draw (-3,0.5) node {$I_{pq,plq}$};

\draw[semithick,fill] (0,-1) circle (0.1);
\draw[semithick,fill] (0,-5) circle (0.1);
\draw[semithick] (0,-1) -- (0,-5);
\draw (0.75,-3) node {$I_{pq,prq}$};

\draw[semithick] (-1,-1) -- (-5,-1) -- (-5,-5) -- (-1,-5) -- (-1,-1);
\draw (-3,-3) node {$I_{pq,prlq}$};
\draw (1.2,-1) node {$\partial^{pq}_{pq} I_{pq,prq}$};
\draw (1.2,-5.2) node {$\partial^{prq}_{prq} I_{pq,prq}$};

\draw (4,-2) node {$plq$}; 
\draw (5,-2) node {$\leq$};
\draw (6,-2) node {$pq$};
\draw (4,-3) node[rotate=90] {$\leq$};
\draw (6,-3) node[rotate=90] {$\leq$};
\draw (4,-4) node {$prlq$}; 
\draw (5,-4) node {$\leq$};
\draw (6,-4) node {$prq$};

\draw (-6.5,-2) node {$\partial^{pq}_{plq}$};
\draw[semithick,dashed,rounded corners,->] (-6,-2) -- (-4,-2) -- (-4,-1.25);

\draw (-2.5,-6) node {$\partial^{pq}_{prq}$};
\draw[semithick,dashed,rounded corners,->] (-2.5,-5.5) -- (-2.5,-4.25) -- (-1.25,-4.25);

\draw[semithick] (0,-6) -- (0,-12);
\draw[semithick] (-5,-9) -- (5,-9);

\draw (2.5,-7.5) node {$\mathbf{T^I_{pq,pq}}$};
\draw (2.5,-10.5) node {$\mathbf{T^I_{pq,prq}}$};
\draw (-2.5,-7.5) node {$\mathbf{T^I_{pq,plq}}$};
\draw (-2.5,-10.5) node {$\mathbf{T^I_{pq,prlq}}$};

\draw (4.5,-8.25) node {$\partial^{prq}T$};
\draw[semithick,dashed,rounded corners,->] (4,-8.25) -- (3.5,-8.25) -- (3.5,-8.75);
\draw (4.5,-9.75) node {$\partial^{pq}_{pq}I$};
\draw[semithick,dashed,rounded corners,->] (4,-9.75) -- (3.5,-9.75) -- (3.5,-9.25);

\draw (-4.5,-9.75) node {$\partial^{pq}_{plq}I$};
\draw[semithick,dashed,rounded corners,->] (-4,-9.75) -- (-3.5,-9.75) -- (-3.5,-9.25);

\draw (0.75,-6.5) node {$\partial^{plq}T$};
\draw[semithick,dashed,rounded corners,->] (0.75,-6.75) -- (0.75,-7.25) -- (0.25,-7.25);
\draw (-0.75,-6.5) node {$\partial^{pq}_{pq}I$};
\draw[semithick,dashed,rounded corners,->] (-0.75,-6.75) -- (-0.75,-7.25) -- (-0.25,-7.25);

\draw (0.75,-11.5) node {$\partial^{prlq}T$};
\draw[semithick,dashed,rounded corners,->] (0.75,-11.25) -- (0.75,-10.75) -- (0.25,-10.75);
\draw (-0.75,-11.5) node {$\partial^{pq}_{prq}I$};
\draw[semithick,dashed,rounded corners,->] (-0.75,-11.25) -- (-0.75,-10.75) -- (-0.25,-10.75);

\end{tikzpicture}
\end{center}
\caption{Boundary strata and boundary charts}\label{fig:internalcoherent}
\end{figure}

\begin{definition}\label{d:global lift}
    For an ordered marked flow category $\cC$ over a poset $\cP$ a  global chart lift comprises 
\begin{enumerate}
\item for every $p,q \in \cP$, $\bfg \in (\|\Gamma\|^\circ)^k$ and $m \in \mathbb{Z}$, a system of chart presentations with coherent homotopies of $\cM_{pq}^{\bfg,m}$ as an $\bA^\bfg_{pq}$-space. That is, for any $\alpha \in \bA^\bfg_{pq}$ with $\depth(\alpha)=l$, an $(m-l)$-dimensional global chart of $\partial^{\alpha}(\cM_{pq}^{\bfg,m})$
\[
\bT_{pq,\alpha}^{\bfg,m} = (G_{pq,\alpha}^{\bfg,m}, \scrT_{pq,\alpha}^{\bfg,m}, E_{pq,\alpha}^{\bfg,m}, s_{pq,\alpha}^{\bfg,m})
\]
and a $I_{\alpha,pq}$-parametrized family $\bT_{pq,\alpha}^{\bfg,m,I}$ of sections of $\bT_{pq,\alpha}^{\bfg,m}$
such that Definition \ref{d:coherenthomotopy}(2) and (3) are satisfied.

\item for all $p$, $\scrT_{pp}^{\bfg,m}=\emptyset$ if $(\bfg,m) \neq (\emptyset,0)$, $\scrT_{pp}^{\emptyset,0}$ is a singleton and $E_{pp}^{\emptyset,0}=0$,
\item  for any $\alpha = (p r_1\ldots r_s q,\bfh) \in \bA_{pq}$, $\beta_i \in \bA_{r_i,r_{i+1}}$ for $i=0,\dots,s$ and $\beta$ the concatenation of $(\beta_i)_{i=0}^s$ refining $\alpha$, an embedding
\begin{align}\label{eq:productboundary}
\bT_{pr_1,\beta_0}^{\bfg_0(\alpha),I}\times_{[pt/\Gamma_{r_1}]} \cdots \times_{[pt/\Gamma_{r_s}]} \bT_{r_sq,\beta_s}^{\bfg_s(\alpha),I} \to 
\bT_{pq,\beta}^{\bfg,I}|_{\partial^{\alpha}_{\beta}I_{pq,\beta}}
\end{align}
of global charts  with $(\partial^{\alpha}_{\beta}I_{pq,\beta}=I_{\alpha,\beta}=I_{pr_1,\beta_0} \times \dots \times I_{r_sq,\beta_s})$-family of sections; 
\item Similar to Definition \ref{d:coherenthomotopy}(3), we can restrict the embedding \eqref{eq:productboundary} to a boundary stratum of a global chart $\partial^{\gamma} \bT$ or to a boundary stratum of a homotopy cube. We insist that all possible compositions of embeddings arising from restrictions of \eqref{eq:emb} and \eqref{eq:productboundary} with the same domain and target are isomorphic;
\item We insist that the entire package is strictly $\Pi$-equivariant.
\end{enumerate}
\end{definition}

In particular, when $\alpha=\beta=(prq,\bfh)$, $\alpha_0=(pr,\mathbf{0})$ and $\alpha_1=(rq,\mathbf{0})$, then 
there is an embedding
\[
\bT_{pr}^{\bfg_0(\alpha)} \times_{[pt/\Gamma_r]} \bT_{rq}^{\bfg_1(\alpha)} \to \bT_{pq,(prq,\bfh)}^{\bfg,I}|_{\partial^{(prq,\bfh)}_{(prq,\bfh)} I_{pq,(prq,\bfh)}}
\]
(cf. Definition \ref{d:omfc}(5)).

\begin{figure}[ht]
\begin{center}
\begin{tikzpicture}

\draw[semithick,fill] (0,0) circle (0.1);
\draw (0.75,0) node {$I_{rq,rq}$};
\draw (0.75,3) node {$I_{pq,prq}$};
\draw (1.5,1) node {$\partial^{prq}_{prq}I = I_{prq,prq}$};

\draw[semithick,fill] (0,1) circle (0.1);
\draw[semithick,fill] (0,5) circle (0.1);
\draw[semithick,fill] (-1,0) circle (0.1);
\draw[semithick,fill] (-5,0) circle (0.1);
\draw[semithick,fill] (-5,1) circle (0.1);
\draw[semithick,fill] (-1,1) circle (0.1);

\draw[semithick] (-5,0)--(-1,0);
\draw[semithick] (0,1)--(0,5);
\draw[semithick] (-5,1) -- (-5,5) -- (-1,5) -- (-1,1) -- (-5,1);

\draw (4,0) node {$rlq$};
\draw (5,0) node {$\leq$};
\draw (6,0) node {$rq$};
\draw (4,3) node {$prlq$};
\draw (5,3) node {$\leq$};
\draw (6,3) node {$prq$};

\draw (-3,3) node {$I_{pq,prlq}$};
\draw (-3,-0.5) node {$I_{rq,rlq}$};

\draw (-7,1.75) node {$I_{prq,prlq} = \partial^{prq}_{prlq}I$};
\draw[semithick,dashed,rounded corners,->] (-5.5,1.75) -- (-4,1.75) -- (-4,1.25);


\draw (-5,-9) -- (5,-9);
\draw[semithick,fill] (-5,-9) circle (0.1);
\draw[semithick,fill] (0,-9) circle (0.1);

\draw[thick, ->] (0,-8.5) -- (0,-6.5);
\draw (1.25,-7.5) node {$\times_{[pt/\Gamma_r]} T_{pr,pr}$};
\draw (3,-8.5) node {$T^I_{rq,rq}$};
\draw (-3,-8.5) node {$T^I_{rq,rlq}$};

\draw (3,-4.5) node {$T^I_{pq,prq}$};
\draw (-3,-4.5) node {$T^I_{pq,prlq}$};

\draw (-6.5,-5) node {$\partial^{prq}_{prlq} I$};
\draw[semithick,dashed,rounded corners,->] (-5.75,-5) -- (-4,-5) -- (-4,-5.75);
\draw (6.25,-5) node {$\partial^{prq}_{prq} I$};
\draw[semithick,dashed,rounded corners,->] (5.5,-5) -- (4.25,-5) -- (4.25,-5.75);

\draw (-5,-6) -- (5,-6);
\draw (-5,-6) -- (-5,-2);
\draw (0,-6) -- (0,-2);
\draw (-5,-3) -- (5,-3);
\draw[semithick,fill] (-5,-6) circle (0.1);
\draw[semithick,fill] (0,-6) circle (0.1);

\end{tikzpicture}
\end{center}
\caption{Fibre product structure at the boundary}\label{fig:externalcoherent}
\end{figure}

\begin{remark}
    Our definition of flow category enriched over Kuranishi charts is more involved than that in \cite{BX} because the restriction of our global chart to a boundary stratum is not the stabilisation of the fibre product of the global charts for the components of the broken Floer cylinder.
    Instead, we will show that there is a larger thickening and obstruction bundle to which both of them stabilize, but the stabilized obstruction sections are different. Therefore, we need a homotopy of sections in the larger space to relate the Kuranishi charts. Similarly, we don't have strict associativity of the Kuranishi charts when we restrict to deeper strata, but only homotopy coherent associativity (cf. Lemma \ref{l:pertubationexist}).
\end{remark}

\begin{remark}[(Analogue of Remark \ref{r:nchooser_A})]\label{r:nchooser}
Notice that if both $\bfg_i(prq,\bfh,\bfg)=\bfg_i(prq,\bfh',\bfg)$ for $i=0,1$, then we have a strict isomorphism from $\bT_{pr}^{\bfg_0} \times_{[pt/\Gamma_r]} \bT_{rq}^{\bfg_1}$ to each of 
$ \bT_{pq,(prq,\bfh)}^{\bfg}$ and $ \bT_{pq,(prq,\bfh')}^{\bfg}$.
Since there are $\frac{k!}{k_0!k_1!}$  monotonic increasing (injective) functions from $\{1, \dots, k_0\}$ to $\{1, \dots, k=k_0+k_1\}$,  $\bT_{pr}^{\bfg_0} \times_{[pt/\Gamma_r]} \bT_{rq}^{\bfg_1}$ appears in $\frac{k!}{k_0!k_1!}$ many depth one strata of $\bT_{pq}^{k}=\sqcup_{\bfg \in (\|\Gamma\|^{\circ})^k} \bT_{pq}^{\bfg}$.
\end{remark}

\begin{remark}\label{r:bulksmooth}
    Even though our main goal is to set up the orbifold theory, our  framework of ordered marked flow categories also yields a definition of bulk-deformed Hamiltonian Floer theory on smooth symplectic manifolds. 
\end{remark}

\begin{example}[A zig-zag]\label{eq:zig-zag}
Assume $k=0$ for simplicity.
    Suppose that we have a global chart $\bT_{pq}$ of $\cM_{pq}$. It induces a global chart $\partial^{prq}\bT_{pq}$ of $\partial^{prq}\cM_{pq}$.
    On the other hand, suppose we also have global charts $\bT_{pr}$ and $\bT_{rq}$ of $\cM_{pr}$ and $\cM_{rq}$ so that we can form the fibre product Kuranishi chart $\bT_{pr} \times_{[pt/\Gamma_r]} \bT_{rq}$ of $\cM_{pr} \times_{[pt/\Gamma_r]} \cM_{rq} \simeq \partial^{prq}\cM_{pq}$.

    If, moreover, we have a $[0,1]$-parametrized family of sections of a Kuranishi chart $(\bT_t)_{t \in [0,1]}$ such that $\bT_0$ is an iterative stablization of $\bT_{pr} \times_{[pt/\Gamma_r]} \bT_{rq}$ and $\bT_1$ is an iterative stablization of $\partial^{prq}\bT_{pq}$, then we can take $\bT_{pq,prq}^I=(\bT_t)_{t \in [0,1]}$.
    The assumption that $\bT_1$ is an iterative stablization of $\partial^{prq}\bT_{pq}$ corresponds to Definition \ref{d:coherenthomotopy} item 2, whilst  $\bT_0$ being an iterative stablization of $\bT_{pr} \times_{[pt/\Gamma_r]} \bT_{rq}$ corresponds to Definition \ref{d:global lift} item 3.
    \end{example}

\begin{example}[Homotopy coherent associativity]\label{eg:homotopyassociativity}
    Assume $k=0$ for simplicity.
    Suppose that we have a global chart $\bT_{pq}$ inducing Kuranishi charts $\partial^{prq} \bT_{pq}$, $\partial^{plq} \bT_{pq}$ and $\partial^{prlq} \bT_{pq}$.
    Suppose we have also chosen $\bT^I_{pq,prq}$ and $\bT^I_{pq,plq}$ so that $\partial^{prq} \bT_{pq}$, $\partial^{plq} \bT_{pq}$ embed to the respective ends.
    Then we need to choose $\bT^I_{pq,prlq}$ that is compatible with both $\bT^I_{pq,prq}$ and $\bT^I_{pq,plq}$.
     The compatibility condition is that $\partial^{prlq}\bT^I_{pq,pq}$, $\partial^{prlq}\bT^I_{pq,prq}$ and $\partial^{prlq}\bT^I_{pq,plq}$ embed to the respective boundary strata of $\bT^I_{pq,prlq}$, and the embedding from $\partial^{prlq}\bT^I_{pq,pq}$ to $\bT^I_{pq,prlq}|_{\partial^{pq}_{pq} I_{pq,prlq}}$ is isomorphic to the composition of embeddings that factor through $\partial^{prlq}\bT^I_{pq,prq}$ and $\partial^{prlq}\bT^I_{pq,plq}$ (see Figure \ref{fig:internalcoherent}).
    This corresponds to Definition \ref{d:coherenthomotopy}(2)(3).

    On the other hand, we need the other boundary strata of $T^I_{pq,prlq}$ to be the stabilisation of the respective fibre products (see Figure \ref{fig:externalcoherent}).
    This corresponds to Definition \ref{d:global lift}(3).
    \end{example}

\subsection{Smoothing theory\label{subsec:smoothing}} \emph{We discuss abstract smoothing theory following \cite{AMS1, BX, Rez}.}


A microbundle of rank $n$ over a topological space $X$ comprises a topological space $E$ and a diagram $\{X \stackrel{s}{\to} E \stackrel{p}{\to} X\}$ satisfying 
\begin{itemize}
    \item $p\circ s = \id$;
    \item for each $x \in X$ there are neighbourhoods $U_x \subset X$ of $x$ and $V_x \subset E$ of $s(x)$ and a
homeomorphism $h_x : U_x \times  \bR^n \to V_x$ for which $p \circ h_x = pr_{U_x}$ and $h_x |_{U_x\times \{0\}} = s$. Here $pr_{U_x}$ denotes the projection map to the factor $U_x$.
\end{itemize}
These are viewed as germs near the image $s(X)$; a morphism $E_1 \to E_2$ of microbundles $\{X \stackrel{s_i}{\to}  E_i \stackrel{p_i}{\to} X\}$ is a map $E_1 \supset U_1 \to E_2$ defined on an open neighbourhood $U_1$ of $s_1(X)$ and entwining the maps $s_i,p_j$, viewed up to the equivalence of agreement on a perhaps smaller neighbourhood of $s_1(X)$.

A topological manifold has a tangent microbundle $T_{\mu}(X) = \{X \stackrel{\Delta}{\to}  X\times X \stackrel{pr_1}{\to} X\}$.  
If $\pi: V \to X$ is a vector bundle, its associated microbundle is $\{X \stackrel{0}{\to} V \stackrel{\pi}{\to} X\}$. 

There is a classifying space $BTop(n)$ for rank $n$ microbundles, with a canonical map $BO(n) \to BTop(n)$. A vector bundle lift of a microbundle $E$ over $X$ is a vector bundle $V_E$ over $X$, together with an equivalence between the associated microbundle for $V_E$ and $E$ itself. Equivalently, this is a diagram
\[
\xymatrix{
& BO(n) \ar[d] \\ 
X \ar[r]_-{E} \ar@{.>}[ur]& BTop(n)
}
\]
where the horizontal map is that classifying $E$. For a smooth manifold, the classical tangent bundle $TX$ is a vector bundle lift of $T_{\mu}(X)$.  Increasing $n$ gives a natural notion of a stable vector bundle lift.  If $X$ is a $G$-space, a $G$-microbundle comprises a $G$-space $E$ fitting into the corresponding diagram with $s,p$ both $G$-equivariant continuous maps. There is an obvious  notion of a $G$-vector bundle lift of a $G$-microbundle. 

If $X$ is a topological manifold, a \emph{stable smoothing} of $X$ is a smooth structure on $X \times \bR^s$ for some $s >0$. Lashof's equivariant smoothing theorem \cite{Lashof} says:

\begin{theo}
    Let $G$ be a compact Lie group acting on a topological manifold $X$ with finitely many orbit types. There is a bijection between homotopy classes of $G$-vector bundle lift of $T_{\mu}X$ and of $G$-equivariant stable smooth structures on $X$ up to concordance.
\end{theo}

In particular, a choice of bundle lift of $T_{\mu}X$ gives a smooth structure on $X \times V$, for some $G$-representation $V$, canonical up to further stabilisation and concordance.

In Floer theory, global charts are moduli spaces of perturbed holomorphic curves which live naturally over moduli spaces of domains. Classical elliptic theory shows that the moduli space of maps with a fixed domain is, in good cases and for sufficiently generic data, a submanifold of a Banach manifold. This makes the following relevant, cf. \cite[Sections 4.4-4.5]{AMS1} for details of the notation and proof: 

\begin{prop}\label{prop:stable_smoothing}
    Let $\scrT$ be a topological $G$-manifold which admits the following structure: there is a smooth $G$-manifold $\scrF$, and a $C^1_{loc}$ $G$-equivariant topological submersion $\pi: \scrT \to \scrF$. Then $T_{\mu}\scrT$ admits a $G$-vector bundle lift, hence $\scrT$ admits a stable smoothing.
\end{prop}

The key point is that the $C^1_{loc}$-structure gives rise to a canonical vertical tangent bundle $T^{vt}(\pi)$, and the underlying bundle of the vector bundle lift of $T_{\mu}\scrT$ is $T^{vt}(\pi) \oplus \pi^* T\scrF$.

\begin{lemma} \label{lem:smoothing}
Suppose that $\cC$ admits a global chart lift with the property that 
\begin{enumerate}
    \item for the system of chart presentation of $\cM_{pq}$ and each $\alpha \le pq$, there is a  topological submersion $\pi_{pq,\alpha}: \scrT_{pq,\alpha} \to \partial^\alpha \scrF_{pq}$ over a smooth $\partial^\alpha\bA_{pq}$-manifold such that $\pi_{pq,\alpha}$ is relatively smooth, in particular there is a well-defined fibrewise $C^1$-structure;
    \item the maps $\pi_{pq,\alpha}$ are compatible with the boundary decompositions.
\end{enumerate}
Then $\cC$ admits another global chart lift with coherent homotopies in which the thickenings $\scrT'_{pq}$ are smooth manifolds with corners, and the sections $s_{pq}$ are smooth.  
\end{lemma}

\begin{proof}
We define $\hat{\pi}_{pq,\alpha}:\scrT_{pq,\alpha}^I =\scrT_{pq,\alpha} \times I_{pq,\alpha} \to \partial^\alpha\scrF_{pq}$ to be the composition of the projection to the first factor and $\pi_{pq,\alpha}$.
Given the indicated structure, the maps $\hat{\pi}_{pq,\alpha}$ are compatible with the boundary decompositions so
one can  construct `outer collarings' of the $\bA_{pq}$-spaces $\scrT_{pq}$ and then use Lashof's smoothing theory as in \cite{AMS1,BX,Rez} inductively to find systems of smooth chart presentations. The required extension of Lashof's theory to manifolds  stratified by posets and in particular to manifolds with corners is established in  \cite[Appendix B]{BX} by a direct construction and \cite[Section 4.2, 4.3]{Rez} by an argument relying on $(\bZ/2)^i$-equivariant smoothing theory on an iterated doubling of a manifold with  corner strata of depth at most $i$.  The upshot is that, after further stabilising the obstruction bundles in the global charts if necessary, we can assume that all the global charts are smooth. 

The hypotheses imply that the vector bundle lifts for the chart microbundles extend over the coherent systems of homotopies.  We inductively smooth  all charts stabilised by the appropriate $I_{pq,\alpha}$-factors, without ever changing the sections. This yields a sytem of smooth global charts, with continuous sections. We then separately smooth the sections by the mollification argument of \cite{AMS2}. 
\end{proof}


\begin{example}
Following on from Example \ref{ex:Morse flow cat definitions}, for a pair $(f,g)$ of a Morse function $f$ and Riemannian metric $g$ satisfying somewhat stronger conditions than the usual Morse-Smale conditions (in particular, making $g$ standard in neighbourhoods of the critical points), Wehrheim \cite{KW} showed that all the Morse moduli spaces $\cM_{xy}$ are smooth manifolds with corners of the expected dimension.  It follows that the Morse flow category admits a global chart lift with $\scrT_{xy} = \cM_{xy}$ and $E_{xy} = \{0\}$ for all pairs. 
\end{example}

\subsection{The chain complex of an ordered marked flow category}\label{s:multivalue}

\emph{In this section we explain how to use multivalued perturbations to obtain a chain complex over  
$\Lambda_{\Pi}$ from a flow category with a global chart lift. At this point we use smoothness of the global charts, to have a notion of transversality of such a multi-valued perturbation.} 

We review some facts on weighted branched manifolds from \cite{CMS}. We take an `equivariant' viewpoint; a more intrinsic description in terms of quotient orbifolds is given in \cite{McDuff06}.

Let $B$ be a smooth manifold carrying an action of a compact Lie group $G$ with finite stabilisers.  A \emph{weighted branched $d$-submanifold} of $B$ is a function $\lambda: B \to \bQ_{\geq 0}$ satisfying
\begin{itemize}
\item $\lambda(gx) = \lambda(x)$ for all $g \in G$;
\item any $x\in B$ is contained in an open neighbourhood $U$ which contains finitely many $(d+\dim(G))$-dimensional relatively closed submanifolds $M_1,\ldots, M_m \subset U$, and $\lambda_1,\ldots,\lambda_m \in \bQ_{\geq 0}$, such that $\lambda = \sum_{x\in M_i} \lambda_i$ throughout $U$.
\end{itemize}
The support is $\{x \, | \, \lambda(x) > 0\}$.  The subset of branch points, where $\lambda$ restricted to the support is not locally constant near $x$, is contained in the locus of points $x$ where there are two local branches $M_i$ at $x$ with $x\in (M_1\cap M_2) \backslash int_{M_i}(M_1\cap M_2)$ for $i=1,2$.  An ordinary submanifold defines a weighted branched submanifold with empty branch set, by taking the weighting function identically equal to $1$ on the submanifold, which becomes the support. 

Since the action of $G$ on $B$ has finite stabilisers there is a subbundle $\frak{g} \subset TB$ of tangents to the orbits. We have the Grassmannian
\[
Gr_d(TB/\frak{g}) = \{ (x,F) \, | \, x \in B, F \in Gr_{d+dim(G)}(T_xB), \frak{g} \subset F\}
\]
A weighted branched $d$-manifold $\lambda: B \to \bQ_{\geq 0}$ defines a unique weighted branched $d$-manifold $T\lambda: Gr_d(TB/\frak{g}) \to \bQ_{\geq 0}$ characterised by the property
\[
T\lambda(x,F) = \sum_{T_x M_i = F} \lambda_i
\]
for any system $\{(M_i,\lambda_i)\}$ of local branches of $\lambda$ at $x$ and any $x\in B$.  We say weighted branched manifolds $\lambda$ and $\lambda'$ are \emph{transverse} if \[
F, F' \subset T_xB \ \mathrm{and} \ T\lambda(x,F) > 0, T\lambda(x,F') > 0 \ \Rightarrow \ F \pitchfork F'  \ \mathrm{transverse \ in} \, T_xB.
\]
There is then an intersection $\lambda\cdot\lambda' : B \to \bQ_{\geq 0}$, weighted branched of the expected dimension $d+d'-dim(B)$.

Now suppose $\pi: E \to B$ is a finite-dimensional fibre bundle, $G$-equivariant for an action with finite stabilisers. A \emph{multivalued section} of $\pi$ is a weighted branched $\sigma: E \to \bQ_{\geq 0}$ which is $G$-equivariant and which satisfies that for every $x \in B$, there is some open $U$, sections $s_1,\ldots, s_m : U \to E$ and rational numbers $\sigma_1,\ldots, \sigma_m \geq 0$ such that
\begin{equation}\label{eqn:multivalued_section}
\sum \sigma_i = 1; \ \sigma(x,e) = \sum_{s_i(x) = e} \sigma_i \ \forall x \in U.
\end{equation}
In this setting, we have a typical transversality theory:
\begin{lemma} \label{lem:mvp_exists}
If $\pi: E \to B$ is a $G$-fibre bundle with a section $s: B \to E$, then $\pi$ admits multivalued sections which are transverse to $s(B)$ and contained in an arbitrarily small $C^0$-neighbourhood of $s(B)$. Moreover, given a locally closed submanifold $Z$ of $B$ and a multivalued section which is transverse to $s(B)$ over $Z$, then there are multivalued sections transverse to $s(B)$ which agree with the given one over $Z$.
\end{lemma}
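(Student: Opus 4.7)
The plan is to reduce to local finite-group actions via the slice theorem for compact Lie group actions with finite stabilisers, build multivalued perturbations locally by averaging a single generic non-equivariant perturbation over the finite isotropy group, and then globalise by an inductive cutoff argument.

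First I would use the slice theorem: since $G$ acts on $B$ with finite stabilisers, any orbit $Gx$ admits an equivariant tubular neighbourhood of the form $G\times_{H_x} V_x$ with $H_x$ a finite isotropy group acting on a local slice $V_x$. The bundle $E$ over such a neighbourhood is determined by an $H_x$-equivariant bundle $E_x\to V_x$. Cover $B$ by a locally finite family of such $G$-invariant slice neighbourhoods $U_\alpha$ with finite isotropy groups $H_\alpha$. This reduces the problem on each $U_\alpha$ to that of producing a multivalued section of an $H_\alpha$-bundle over an $H_\alpha$-manifold, where $H_\alpha$ is a \emph{finite} group.

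On each slice $V_\alpha$, apply ordinary Thom transversality to produce a smooth (non-equivariant) $C^0$-small perturbation $\tilde s_\alpha$ of $s|_{V_\alpha}$ that is transverse to $s|_{V_\alpha}$; by a further generic perturbation arrange that the translates $\{h\cdot\tilde s_\alpha\}_{h\in H_\alpha}$ are pairwise transverse and each transverse to $s$. Then
\[
\sigma_\alpha := \frac{1}{|H_\alpha|}\sum_{h\in H_\alpha} h\cdot \tilde s_\alpha
\]
is an $H_\alpha$-equivariant weighted branched multivalued section of $E_\alpha$ which is transverse to $s$ in the sense defined in the paper, and extends by $G$-equivariance to all of $U_\alpha$. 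To glue, I would inductively construct $\sigma^{(n)}$ on $U_1\cup\cdots\cup U_n$ as a convex combination
\[
\sigma^{(n)} := \rho_n\sigma_n + (1-\rho_n)\sigma^{(n-1)},
\]
with a $G$-invariant cutoff $\rho_n$ which is $1$ on an inner neighbourhood of $U_n$ and supported in $U_n$; the convex combination is a weighted branched section whose local branches over a point are the union of those of the two summands, with weights rescaled by $\rho_n$ and $1-\rho_n$, as in \cite{CMS, McDuff06}.

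The hard part will be maintaining transversality on the overlap where a single point has branches coming from both summands, since transversality of each branch to $s$ is not automatically preserved under superposition; I would resolve this by insisting in the step that produces $\tilde s_n$ that each of its $H_n$-translates be transverse (in the appropriate jet-space sense) to \emph{every} local branch of $\sigma^{(n-1)}$ appearing in the overlap region, which is a codimension-positive generic condition satisfied for an open dense set of small perturbations. The resulting $\sigma$ is $C^0$-close to $s$ since each $\sigma_\alpha$ was. The relative statement follows by the same scheme, starting the induction from the given transverse multivalued section extended from $Z$ to a $G$-invariant tubular neighbourhood (using that each local branch is a submanifold and extends), and performing the perturbative construction above only outside a slightly smaller neighbourhood of $Z$, with interpolating cutoff supported disjointly from $Z$.
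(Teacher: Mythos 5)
Your proposal follows essentially the same route as the paper's proof, which simply delegates to Section~9 of \cite{CMS}: cover by slice charts $G\times_{H_\alpha}V_\alpha$ with finite isotropy, perturb generically on each slice and average over $H_\alpha$, then glue via a $G$-invariant partition of unity, using a further inductive genericity argument to handle transversality of the combined branches. Your identification of the key subtlety --- that gluing produces branches which are genuine convex combinations $\rho_n(h\cdot\tilde s_n)+(1-\rho_n)s'$ (since the branch weights in the paper's Definition of multivalued section are required to be locally constant rationals, one cannot simply scale weights by the cutoff), so transversality of each branch to $s$ is not automatic --- is exactly right, and the fix by inductive genericity is the correct mechanism.

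One imprecision worth flagging: the condition you impose, that $h\cdot\tilde s_n$ be transverse ``to every local branch of $\sigma^{(n-1)}$,'' is not quite what is needed. What one actually needs is that each combined branch $\rho_n(h\cdot\tilde s_n)+(1-\rho_n)s'$ is transverse to $s(B)$ as a section; this is a condition that depends on the fixed cutoff $\rho_n$ and is \emph{not} directly implied by transversality of $h\cdot\tilde s_n$ to $s'$. It is still a Baire-generic condition on $\tilde s_n$ --- one restricts the parametric transversality argument to $\{\rho_n>0\}$, where varying $\tilde s_n$ moves the combination submersively, and uses the inductive hypothesis on $\{\rho_n=0\}$ together with openness of transversality --- so your overall argument goes through, but the precise statement should be the one above. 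You should also note that for the relative statement the submanifold $Z$ must be taken $G$-invariant (or replaced by $GZ$), which is automatic in the intended applications where $Z$ is a boundary stratum of a $G$-thickening, so a $G$-invariant tubular neighbourhood exists.
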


\begin{proof} See Section 9 of \cite{CMS}. Note that they don't prove that transverse sections are $C^{\infty}$-generic, but do show that there is an abundance of them (roughly, one can build one from any choice of local sections which span the tangent space to $E$ locally). In particular, \cite[Proposition 9.20]{CMS} allows one to extend transverse sections which are already prescribed on closed subsets (e.g. the boundary).
\end{proof}

In particular, when $E$ is a vector bundle, there are multivalued sections which are transverse to the zero-section. Note that in general there is no tranverse $G$-equivariant section.

\begin{lemma}\label{lem:mvp_is_euler}
If $\pi: E\to B$ is a vector bundle carrying an action of $G$ lifting that on $B$, and $s$ is a multivalued section transverse to zero, then the zero-set of $s$ defines a rational singular chain which represents the rational $G$-equivariant Thom class of $E$, equivalently of the Thom class of the orbibundle $E/G \to B/G$. 
\end{lemma}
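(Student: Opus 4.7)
The plan is to construct from the zero locus of a transverse multivalued section $s$ a $G$-invariant rational singular cycle on $B$ of codimension $r := \rk(E)$, and identify its class with the $G$-equivariant rational Euler class of $E$; via the Thom isomorphism this is equivalent to representing the $G$-equivariant Thom class of $E$.

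First, I would assemble the chain. Near any point of $B$, $s$ is presented by a finite collection of smooth local sections $s_1,\ldots,s_m$ with rational weights $\sigma_i \geq 0$ summing to one. Transversality of $s$ is equivalent to each $s_i$ vanishing at a point being transverse to the zero section there, so $Z_i := s_i^{-1}(0)$ is a smooth submanifold of codimension $r$, oriented by the orientations of $E$ and $B$. The weighted sum $\sum_i \sigma_i [Z_i]$ is independent of the choice of local presentation (essentially the definition of a weighted branched submanifold, cf.\ \cite{CMS}), yielding a well-defined locally finite rational singular chain $\zeta(s)$ on $B$ of dimension $\dim B - r$. Because $s$ is $G$-equivariant, $\zeta(s)$ is $G$-invariant and descends to a chain $\bar\zeta(s)$ on $B/G$.

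Next I would verify that $\zeta(s)$ is a cycle. Each local piece $\sigma_i[Z_i]$ is closed, since the zero set of a single-valued transverse section is a submanifold without boundary. The only potential boundary contributions come from transitions between local branches at the branch locus of $s^{-1}(0)$; the compatibility of weights and orientations imposed by the definition of a weighted branched submanifold is precisely what forces these contributions to cancel, so $\partial\zeta(s)=0$.

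Finally I would identify $[\zeta(s)]$ with $e_G(E) \in H^r(B/G;\bQ)$. For homotopy invariance, given two transverse multivalued sections $s_0, s_1$, applying the relative case of Lemma \ref{lem:mvp_exists} to the product bundle $E \times [0,1] \to B \times [0,1]$ with a straight-line family interpolating between $s_0$ and $s_1$ yields a transverse multivalued section whose zero-set provides a weighted branched cobordism between $\zeta(s_0)$ and $\zeta(s_1)$, so $[\zeta(s)]$ depends only on the equivariant homotopy class of $s$. To pin down the class, I would specialise to a small transverse multivalued perturbation of a section near the zero section induced by a $G$-invariant connection; a direct local computation recovers the classical fibrewise Euler form. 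The main obstacle is the orientation bookkeeping at the branch locus needed to show $\zeta(s)$ is a cycle; this is the technical heart of the weighted branched manifold formalism from \cite{CMS}, and once granted the remainder is a standard cobordism-plus-local-computation characterisation of the Euler class.
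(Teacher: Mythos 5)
The paper states this lemma without proof; judging from the citation given for the preceding Lemma~\ref{lem:mvp_exists}, the authors treat it as a direct consequence of the weighted branched manifold theory of \cite{CMS}, so there is no ``paper's argument'' to compare against beyond that deferral. Your sketch is a reasonable reconstruction of the \cite{CMS} argument, and I see no wrong step, but two points should be stated as citations rather than left as things to be verified. First, your remark that each local piece $\sigma_i[Z_i]$ is ``closed'' is slightly misleading: $Z_i = s_i^{-1}(0)$ is only a relatively closed submanifold of the local chart $U$, so $\sigma_i[Z_i]$ does have boundary as a chain in $B$; what makes the assembled weighted chain a cycle is the global consistency of the weight function $\lambda$ together with the normalisation $\sum \sigma_i = 1$, and this is precisely the fundamental cycle theorem for compact weighted branched manifolds in \cite{CMS}, not an elementary cancellation. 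Second, the concluding ``direct local computation recovers the classical fibrewise Euler form'' is the substantive content of the Euler class theorem in \cite{CMS} (that the zero set of a transverse multivalued section represents the rational Euler class, consistently across the isotropy strata), and should be invoked as such. With those two attributions made explicit the argument is complete, and it is, as far as one can tell, the same route the authors had in mind.
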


\begin{proof}
    Direct from properties of the Thom class.
\end{proof}

\begin{remark}\label{rmk:non-global ok}
    We have discussed the theory in terms of equivariant multi-valued sections of $G$-bundles $E \to B$ over a smooth $G$-manifold. However, results of \cite{McDuff06} imply the same results for a general (not necessarily global quotient) orbibundle $\cE \to \cG$ over a smooth orbifold $\cG$, viewed as an \'etale proper groupoid. Indeed,  letting $\cQ^{\geq 0}$ denote the category with one object for each non-negative rational and only identity morphisms, \cite{McDuff06} interprets the multivalued sections of \cite{CMS} as smooth functors $\cE \to \cQ^{\geq 0}$  satisfying a local condition analogous to \eqref{eqn:multivalued_section}, and there is a corresponding Euler chain given by the vanishing locus (which itself inherits the structure of a groupoid).  This extends the theory of multivalued perturbations to general finite-dimensional smooth orbibundles, which is relevant in light of Remark \ref{rmk:not_global_quotient}.
\end{remark}

We have a stabilisation operation: if $(G,\scrT, E,s)$ is a global chart with $\lambda$ a multivalued section of $E \to \scrT$ close to $s$ and transverse to $0$, and $p: F\to \scrT$ a $G$-bundle, we obtain a multivalued section of the stabilised chart $(G,Tot(F), p^*(E \oplus F), s \oplus \Delta)$ by taking $\tilde{\lambda}: p^*(E \oplus F) \to \bQ_{\geq 0}$ to be $p^*\lambda \cdot \chi_{\Delta_F}$, where  $\chi_{\Delta_F}$ is a characteristic function along the diagonal in $p^*(E \oplus F)$.
The multivalued section $\tilde{\lambda}$ is again transverse to $0$. By Remark \ref{rmk:non-global ok}, the same stabilisation and extension of perturbation operation makes sense if $p: F \to \scrT/G$ is an orbibundle which is not a global quotient.

Now take an ordered marked flow category $C$ with a global chart lift. A \emph{perturbation system} for $C$ consists of a collection of 
$I_{pq,\alpha}$-families of
multivalued sections $(\tilde{s}_{pq,\alpha}^{\bfg,m,I}: E_{pq,\alpha}^{\bfg,m} \to \bQ_{\geq 0})$ of the global charts $E_{pq,\alpha}^{\bfg,m} \to \scrT_{pq,\alpha}^{\bfg,m}$ of $\cM_{pq}^{\bfg,m}$ which have image in $C^0$-small neighbourhoods of the given obstruction sections $s_{pq,\alpha}^{\bfg,m,I}: \scrT_{pq,\alpha}^{\bfg,m} \to E_{pq,\alpha}^{\bfg,m}$.   We furthermore insist that the $\tilde{s}_{pq,\alpha}^{\bfg,m,I}$ satisfy
the compatibility with respect to Definition \ref{d:coherenthomotopy}(2)(3) and \ref{d:global lift}(3)(4).
In other words, we insist that
\begin{enumerate}
    \item for any $\alpha \le \gamma$, the $I_{pq,\gamma}$-family of sections $(\tilde{s}_{pq,\alpha}^{\bfg,m,I})|_{\partial_{\gamma}^{pq} I_{pq,\alpha}}$ is a stabilisation of   $\tilde{s}_{pq,\gamma}^{\bfg,m,I}|_{\partial^{\alpha} E_{pq,\gamma}^{\bfg,m}}$
    \item 
for any $\alpha = (p r_1\ldots r_s q,\bfh) \in \bA_{pq}$, $\beta_i \in \bA_{r_i,r_{i+1}}$ for $i=0,\dots,s$ and setting $\beta$ to be the concatenation of $(\beta_i)_{i=0}^s$ refining $\alpha$, the $(I_{\alpha,\beta}=I_{pr_1,\beta_0} \times \dots \times I_{r_sq,\beta_s})$-family of sections $\tilde{s}_{pq,\alpha}^{\bfg,I}|_{\partial_{\beta}^{\alpha}I_{pq,\beta}}$
 is a stabilisation of 
$ \tilde{s}_{pr_1,\beta_0}^{\bfg_0(\alpha),I}\times_{[pt/\Gamma_{r_1}]} \cdots \times_{[pt/\Gamma_{r_s}]} \tilde{s}_{r_sq,\beta_s}^{\bfg_s(\alpha),I}$
\item all the compositions of the restriction of stabilisation maps are isomorphic when the domain and target are the same.
\end{enumerate}

\begin{lemma} \label{l:pertubationexist}
Let $C$ be an ordered marked flow category. Assume that it admits a global chart lift satisfying the conditions of Lemma \ref{lem:smoothing}. Then the lift admits a perturbation system.
\end{lemma}

\begin{proof}
First we recall that a global chart lift of the flow category remembers the data entering into the explicit stabilisations at each boundary, cf. Definition \ref{d:itStab} and Remark \ref{rmk:itStab}, and is equipped with collections of coherent homotopies for all strata, which interpolate the various stabilised obstruction sections, cf. Definition \ref{d:coherenthomotopy} and \ref{d:global lift}. 

The construction of a perturbation system involves two inductions, each of which is in turn an induction over dimension with a fixed number of insertions $k$, and then over the number of insertions.
\begin{enumerate}
    \item We first smooth the global chart lift, which involves additional stabilisations constructed inductively over action and $k$ (since we obtain stable smoothings from Proposition \ref{prop:stable_smoothing}). In this step we do not change the obstruction sections or any data over the homotopy cubes.
    \item We then inductively replace the sections  by transverse multivalued sections. When $\cA_{\mathbb{Z}}(q) - \cA_{\mathbb{Z}}(p)$ is minimal and positive, the poset $\bA_{pq}^0$ is a singleton and $\scrT_{pq}$ has no boundary or corners, so the existence of a multivalued perturbation is exactly Lemma \ref{lem:mvp_exists}. The same Lemma then allows one to extend over adjacent strata. After a perturbation system is chosen for $k=0$, we consider $k=1$ and again run the induction over $\cA_{\mathbb{Z}}(q) - \cA_{\mathbb{Z}}(p)$.
\end{enumerate}

\end{proof}

We can now define the chain complex. Fix a characteristic zero coefficient field $\bk$.

\begin{definition}\label{d:chaincomplex}
Let $\cC$ be an ordered marked  flow category over a poset (and hence with object set) $\cP$ with a smooth global chart lift. The (action filtered) \emph{flow category chain complex}  $(C_*^{\cC})$ of $\cC$ is defined as follows:
\begin{enumerate}
\item Let $C$ be the $\bK$-vector space generated by $\cP$. The free action of $\Pi$ on $\cP$ makes $C$ a $\bK[\Pi]$-module.
There is a ring homomorphism $\bK[\Pi] \to \Lambda_{\Pi}$ given by $a \mapsto T^{\cA_{\Pi}(a)}$.
The chain group $C_*^{\cC}$ is
\[
C_*^{\cC}=C \otimes_{\bK[\Pi]} \Lambda_{\Pi}.
\]
More explicitly, if we pick a representative $p$ for each $[p] \in \cP/\Pi$,
a general element of $C_*^{\cC}$ is of the form
\[
\sum_{[p] \in \cP/\Pi} m_p\cdot p \text{ with } m_p \in \Lambda_\Pi.
\]

\item The action of $T^r p$ is defined to be $\cA(p)-r$. The action of a linear combination of terms of the form $T^rp$ is the maximum of the action of the terms.

\item The differential sends $q \mapsto \sum_p \sum_k \sum_{\bfg \in (\|\Gamma\|^{\circ})^{k}} \frac{n_{pq}^\bfg}{k!} \cdot |\Gamma_p| \cdot p$ where $n_{pq}^\bfg \in \bk$ is the sum of $\tilde{s}_{pq,\alpha}^{\bfg,I}(x_i) \in \bQ$ over the set of zeroes $\{x_i\}$ of the transverse multivalued perturbation $\tilde{s}_{pq,\alpha}^{\bfg,I}$ over all $\alpha$.  
\end{enumerate}
\end{definition}

The transversality imposed in the final point above is where we use  smoothness of the global chart lifts.

\begin{remark}\label{r:action_de}
    Since $\cA(p)<\cA(q)$ if $\cM_{pq} \neq \emptyset$, the differential is action decreasing.
\end{remark}

\begin{remark}
It is important that we take into account the zeroes of $\tilde{s}_{pq,\alpha}^{\bfg,I}$ over the homotopy cube, i.e. we count zeroes over the whole `collaring' $I$, cf. Figure \ref{fig:cube} (itself a cartoon derived from Figure \ref{fig:externalcoherent}).
\end{remark}

\begin{figure}[ht]
\begin{center}
\begin{tikzpicture}

\draw[semithick] (3,-1) -- (-1,-1);
\draw[semithick] (-1,-1) -- (-1,3);
\draw[semithick] (3,0) -- (-1,0);
\draw[semithick] (0,-1) -- (0,3);
\draw[semithick,fill] (-0.5,-0.5) circle (0.1);
\draw[semithick,fill] (-0.5,1.5) circle (0.1);
\draw[semithick,fill] (0.5,1) circle (0.1);
\draw[semithick,fill] (2,2) circle (0.1);

\end{tikzpicture}
\end{center}
\caption{The differential counts zeroes of the interpolating section in the homotopy cube (cf. Figure \ref{fig:externalcoherent}). Black dots represent zeroes.}\label{fig:cube}
\end{figure}

\begin{prop}\label{p:dsquare=0}
The differential squares to zero, and the quasi-isomorphism type of the resulting $(C_*^{\cC})$ is independent of the choice of multivalued perturbations $\tilde{s}_{pq}$.
\end{prop}

\begin{proof} 
The analogous proofs to those in \cite{BX} show both statements, inductively constructing  extensions of multivalued perturbations from boundary strata of charts to the whole charts.  (\cite{BX} use polynomial perturbations to define counts over $\bZ$, which we neither require nor desire.)  Compare also to Theorem 10.12 of \cite{CMS}.

The main difference between the references and our setting is that we use ordered marked flow categories rather than flow categories, so the combinatorial structure of the boundary is different. Indeed, Remark \ref{r:nchooser} corresponds to the fact that for each $k$, 
\[
\sum_{r} \sum_{k_0+k_1=k} \sum_{\bfg_0 \in (\|\Gamma\|^{\circ})^{k_0}, \bfg_1 \in (\|\Gamma\|^{\circ})^{k_1}} {k \choose {k_0}} \, n_{pr}^{\bfg_0} n_{rq}^{\bfg_1}|\Gamma_r|=0
\]
where the factor $|\Gamma_r|$ comes from taking fibre product over $[pt/\Gamma_r]$.
Multiplying the equation by $|\Gamma_p|/k!$, it is precisely the coefficient of $p$ in the differential-squared of $q$. Therefore, we conclude that the differential squares to zero. 
\end{proof}

\begin{remark}\label{r:deform}
We will need a variant of Definition \ref{d:chaincomplex} for our intended applications, cf. Definition \ref{d:chaincomplex2}. 
Let $\mathbf{a}:\|\Gamma\|^{\circ} \to \Lambda_{\ge 0}$ be a function.
For $\bfg=(g_1,\dots,g_k) \in (\|\Gamma\|^{\circ})^{k}$, we define $\bfa^{\bfg}:=\prod_{i=1}^k \bfa(g_i) \in \Lambda_{\ge 0}$.
The $\bfa$-deformed chain complex is defined as in Definition \ref{d:chaincomplex} with $\tilde{s}_{pq,\alpha}^{\bfg,I}$ being replaced by $\bfa^{\bfg}\tilde{s}_{pq,\alpha}^{\bfg,I} \in \Lambda_{\ge 0}$.
The analogue of Proposition \ref{p:dsquare=0} holds for the $\bfa$-deformed chain complex.
\end{remark}

\subsection{Ordered marked flow bimodules}\label{s:module}

\emph{In this section we introduce ordered marked flow bimodules, which arise from spaces of solutions to continuation maps.}

Let $\cP$ and $\cP'$ be countable posets as above with actions $\cA^{\cP}$ and $\cA^{\cP'}$ and integralized actions $\cA_{\mathbb{Z}}^{\cP}$ and $\cA_{\mathbb{Z}}^{\cP'}$ respectively.  We assume that the $\cA_{\Pi}, \cA_{\Pi,\bZ}:\Pi \to \mathbb{Z}$ are common for both $\cP$ and $\cP'$. 
Denote the poset $\bA_{pq}^\bfg$ for $\cP$ by $\bA_{pq}^{\bfg,\cP}$ and the poset $\bA_{p'q'}^\bfg$ for $\cP'$ by $\bA_{p'q'}^{\bfg,\cP'}$.

For $p \in \cP$, $p' \in \cP'$ and $\bfg \in (\|\Gamma\|^{\circ})^{k}$, we define a new poset
\[
\bA_{pp'}^\bfg :=\{\alpha=(pq_1\dots q_lq'_{1}\dots q'_{l'}p',h_1,\dots,h_k) | p<\dots<q_l \in \cP, q'_{1}< \dots<p' \in \cP', h_j \in \{0,\dots,l+l'\}\} 
\]
with partial order defined to be
\[
(ps_1\dots s_ms'_{1}\dots s'_{m'}p',h_{1,s} \dots, h_{k,s})  \le (pr_1\dots r_lr'_{1}\dots r'_{l'}p',h_{1,r}, \dots, h_{k,r}) 
\]
if and only if there are injective monotonic increasing functions $f:\{1,\dots,l\} \to \{1,\dots,m\}$ 
and $f':\{1,\dots,l'\} \to \{1,\dots,m'\}$ 
such that
\begin{enumerate}
\item $r_i=s_{f(i)}$ for all $i \in \{1,\dots,l\}$, $r'_i=s'_{f'(i)}$ for all $i \in \{1,\dots,l'\}$, and
\item $F(h_{j,r}) \le h_{j,s} \le F(h_{j,r}+1)$ where $F:\{0,1,\dots,l+l'+1\} \to \{1,\dots,m+m'\}$ is defined by 
\[
    F(x)= 
\begin{cases} 
0 & \text {if } x=0\\
    f(x)& \text{if } x \in \{1,\dots,l\}\\
    f'(x-l)+m   & \text{if } x \in \{l+1,\dots,l+l'\}\\
    m+m' &\text{if } x=l+l'+1
\end{cases}
\]
\end{enumerate}
As in Remark \ref{r:compatibleH}, the conditions on $h_j$'s can be understood geometrically as insisting that the $j^{th}$ ordered orbifold marked point lies in the correct broken cylinder, and $X^{g_j}/C(g_j)$ is the twisted sector to which the marked point maps.

There is a maximal element $(pp',\mathbf{0})$ and $\bA^\bfg_{pp'}$ is homogeneous, so elements have a well-defined `depth' (where $(pp',\mathbf{0})$ is the unique element of depth $0$). There are isomorphisms of homogeneous posets: for $\alpha=(pqp',\bfh)$, $q \in \cP$ and $q' \in \cP'$, we have
\[
\bA^{\bfg_0(\alpha),\cP}_{pq} \times \bA_{qp'}^{\bfg_1(\alpha)} \cong \partial^{\alpha} \bA_{pp'}^\bfg 
\qquad \qquad \bA_{pq'}^{\bfg_0(\alpha)} \times \bA^{\bfg_1(\alpha),\cP'}_{q'p'} \cong \partial^{\alpha} \bA_{pp'}^\bfg.
\]
Let $\bA_{pp'}^k= \sqcup_{\bfg \in (\|\Gamma\|^{\circ})^{k}} \bA^\bfg_{pp'}$ and $\bA_{pp'}= \sqcup_k \bA_{pp'}^k$.

Suppose we have ordered marked flow categories $\cC$ and $\cC'$ over $\cP$ and $\cP'$ respectively. 
Denote the morphism space between $p,q \in \cP$ by $\cM_{pq}$, and morphism space between $p',q' \in \cP$ by $\cM_{p'q'}$.
A flow bimodule $\scrB$ from $\cC$ to $\cC'$ consists of the following data (cf. \cite[Definition 3.10]{BX}): 
for any $p \in \cP$ and $p' \in \cP'$, a compact $\bA_{pp'}$-orbispace $\scrB_{pp'}= \sqcup \scrB_{pp'}^k$ together with evaluation maps
\[
\scrB_{pp'} \to [pt/\Gamma_{p}], \text{ and }\scrB_{pp'} \to [pt/\Gamma_{p'}]
\]
and stratified orbispace isomorphisms (for $p,q \in \cP$ and $p',q' \in \cP'$)
\[
\centerline{
\xymatrix{
\cM_{pq}^{\bfg_0(pqp',\bfh)} \times_{[pt/\Gamma_q]} \scrB_{qp'}^{\bfg_1(pqp',\bfh)} \ar[r] \ar[d] & \partial^{(pqp',\bfh)} \scrB_{pp'}^\bfg \ar[d] \\
\bA^{\bfg_0(pqp',\bfh),\cP}_{pq} \times \bA_{qp'}^{\bfg_1(pqp',\bfh)} \ar[r] & \partial^{(pqp',\bfh)} \bA_{pp'}^\bfg
}
 \quad 
\xymatrix{
 \scrB_{pq'}^{\bfg_0(pq'p',\bfh)} \times_{[pt/\Gamma_{q'}]} \cM_{q'p'}^{\bfg_1(pq'p',\bfh)} \ar[r] \ar[d] & \partial^{(pq'p',\bfh)} \scrB_{pp'}^\bfg \ar[d] \\
 \bA_{pq'}^{\bfg_0(pq'p',\bfh)}\times \bA^{\bfg_1(pq'p',\bfh),\cP'}_{q'p'} \ar[r] & \partial^{(pq'p',\bfh)} \bA_{pp'}^\bfg
}
}
\]
such that
\begin{align}\label{eq:monoaction}
\scrB_{pp'}\neq \emptyset \Rightarrow \cA^{\cP}(p) < \cA^{\cP'}(p') \text{ and }
\cA_{\mathbb{Z}}^{\cP}(p) < \cA_{\mathbb{Z}}^{\cP'}(p')
\end{align}
and that there are strictly associative boundary face maps (the $\bfg$'s in the superscripts are omitted for simplicity)
\begin{align*}
\xymatrix{
\scrB_{pq_1'}\times_{[pt/\Gamma_{q_1'}]} \cM_{q_1'q_2'} \times_{[pt/\Gamma_{q_2'}]} \cM_{q_2'p'} \ar[r] \ar[d] & \scrB_{pq_1'}\times_{[pt/\Gamma_{q_1'}]} \cM_{q_1'p'} \ar[d] \\
\scrB_{pq_2'}\times_{[pt/\Gamma_{q_2'}]} \cM_{q_2'p'}  \ar[r] & \scrB_{pp'}
}
\end{align*}
\begin{align*}
\xymatrix{
\cM_{pq_1}\times_{[pt/\Gamma_{q_1}]} \cM_{q_1q_2} \times_{[pt/\Gamma_{q_2}]} \scrB_{q_2p'} \ar[r] \ar[d] & \cM_{pq_1}\times_{[pt/\Gamma_{q_1}]} \scrB_{q_1p'} \ar[d] \\
\cM_{pq_2}\times_{[pt/\Gamma_{q_2}]} \scrB_{q_2p'}  \ar[r] & \scrB_{pp'}
}
\end{align*}
\begin{align*}
\xymatrix{
\cM_{pq}\times_{[pt/\Gamma_q]} \scrB_{qq'} \times_{[pt/\Gamma_{q'}]} \cM_{q'p'} \ar[r] \ar[d] & \cM_{pq}\times_{[pt/\Gamma_q]} \scrB_{qp'} \ar[d] \\
\scrB_{pq'}\times_{[pt/\Gamma_{q'}]} \cM_{q'p'}  \ar[r] & \scrB_{pp'}
}
\end{align*}
We also need a continuous map \[\vdim: \scrB_{pp'} \to \mathbb{Z}\] 
so that the analogue of \eqref{eq:vdimsum} holds.
We further insist that $\Pi$ acts strictly, via stratified homeomorphisms 
\[
\xymatrix{
\scrB_{pp'} \ar[r] \ar[d] & \scrB_{ap,ap'} \ar[d] \\
\bA_{pp'}  \ar[r] & \bA_{ap,ap'}
}
\]
for $a\in \Pi$ and $p,q\in \cP$.

\begin{remark}\label{r:shift Ham}
 When it comes to Hamiltonian Floer theory,  \eqref{eq:monoaction} (and its later cousin \eqref{eq:monoaction2}) is achieved by considering a monotone family of Hamiltonian functions. At the cost of shifting one of the two Hamiltonians by a constant, which has the effect of changing the action (and integralized action) by a constant, we are able to construct a monotone family from one to the other.
\end{remark}

We also need to consider a $1$-parameter family of flow bimodules. The presentation below is a slight modification of \cite[Section 4.6.1]{BX}.
Let $\scrB$ and $\scrB'$ be two flow bimodules from $\cM$ to $\cM'$.
Denote the poset which $\scrB_{pp'}$ is modelled on by $\bA_{pp'}^{\scrB}=\bA_{pp'}$, and similarly for $\scrB_{pp'}'$. 
Let $\bA^{hmtp}_{pp'}= \bA_{pp'}^{\scrB} \sqcup \bA_{pp'}^{hmtp, \circ} \sqcup \bA_{pp'}^{\scrB'}$, where $\bA_{pp'}^{hmtp, \circ}=\bA_{pp'}$.
The partial ordering on $\bA^{hmtp}_{pp'}$ is generated the partial orderings on $\bA_{pp'}^{\scrB}$, $\bA_{pp'}^{hmtp, \circ}$ and $\bA_{pp'}^{\scrB}$ individually together with $\alpha^{hmtp, \circ} > \alpha^{\scrB}$ and 
$\alpha^{hmtp, \circ} > \alpha^{\scrB'}$ if $\alpha^{\scrB}, \alpha^{hmtp, \circ},  \alpha^{\scrB'}$ are $\alpha$'s in 
$\bA_{pp'}^{\scrB}$, $\bA_{pp'}^{hmtp, \circ}$ and $\bA_{pp'}^{\scrB'}$ respectively.
With respect to this partial ordering, there is a unique maximal element $\alpha=(pp', \mathbf{0}) \in \alpha^{hmtp, \circ}$.
Let $\partial^+ \bA^{hmtp}_{pp'}=\bA_{pp'}^{\scrB}$ and $\partial^- \bA^{hmtp}_{pp'}=\bA_{pp'}^{\scrB'}$.
For any $\alpha \in \bA_{pp'}^{hmtp, \circ}$, $\partial^{\alpha} \bA^{hmtp}_{pp'}$ consists of elements that are less than or equal to $\alpha$ so we have $\partial^{\alpha} \bA^{hmtp}_{pp'}=\partial^{\alpha} \bA_{pp'}^{\scrB} \sqcup \partial^{\alpha} \bA_{pp'}^{hmtp, \circ} \sqcup \partial^{\alpha} \bA_{pp'}^{\scrB'}$.
For example, when $\alpha=(pqp',\bfh)$, we have 
\begin{align*}
\partial^{\alpha} \bA^{hmtp}_{pp'}&=\partial^{\alpha} \bA_{pp'}^{\scrB} \sqcup \partial^{\alpha} \bA_{pp'}^{hmtp, \circ} \sqcup \partial^{\alpha} \bA_{pp'}^{\scrB'}\\
&=\bA_{pq}^{\bfg_0(\alpha),\cP} \times  \bA_{qp'}^{\bfg_1(\alpha),\scrB} \sqcup \bA_{pq}^{\bfg_0(\alpha),\cP} \times  \bA_{qp'}^{\bfg_1(\alpha),hmtp, \circ} \sqcup \bA_{pq}^{\bfg_0(\alpha),\cP} \times  \bA_{qp'}^{\bfg_1(\alpha),\scrB'}\\
&=\bA_{pq}^{\bfg_0(\alpha),\cP} \times  (\bA_{qp'}^{\bfg_1(\alpha),\scrB} \sqcup  \bA_{qp'}^{\bfg_1(\alpha),hmtp, \circ} \sqcup  \bA_{qp'}^{\bfg_1(\alpha),\scrB'})\\
&=\bA_{pq}^{\bfg_0(\alpha),\cP} \times \bA^{\bfg_1(\alpha),hmtp}_{qp'}
\end{align*}
and similarly when $\alpha=pq'p'$.

A homotopy of flow bimodules $\scrB^{hmtp}$ from $\scrB$ and $\scrB'$ consists of the following data:
for any $p,q \in \cP$ and $p',q' \in \cP'$, a compact $\bA^{hmtp}_{pp'}$-manifold $\scrB^{hmtp}_{pp'}$ together with stratified homeomorphisms 
\[
\xymatrix{
\scrB_{pp'} \ar[r] \ar[d] & \partial^{+} \scrB^{hmtp}_{pp'} \ar[d] \\
\bA_{pp'}^{\scrB} \ar[r] & \partial^{+} \bA^{hmtp}_{pp'}
}
 \quad 
\xymatrix{
\scrB'_{pp'} \ar[r] \ar[d] & \partial^{-} \scrB^{hmtp}_{pp'} \ar[d] \\
\bA_{pp'}^{\scrB'} \ar[r] & \partial^{-} \bA^{hmtp}_{pp'}
}
\]
and
\[
\xymatrix{
\cM_{pq} \times_{[pt/\Gamma_q]} \scrB^{hmtp}_{qp'} \ar[r] \ar[d] & \partial^{pqp'} \scrB^{hmtp}_{pp'} \ar[d] \\
\bA^{\cP}_{pq} \times \bA^{hmtp}_{qp'} \ar[r] & \partial^{pqp'} \bA^{hmtp}_{pp'}
}
 \quad 
\xymatrix{
 \scrB^{hmtp}_{pq'}\times_{[pt/\Gamma_{q'}]} \cM_{q'p'} \ar[r] \ar[d] & \partial^{pq'p'} \scrB^{hmtp}_{pp'} \ar[d] \\
 \bA^{hmtp}_{pq'}\times \bA^{\cP'}_{q'p'} \ar[r] & \partial^{pq'p'} \bA^{hmtp}_{pp'}
}
\]
 such that 
\[
\scrB^{hmtp}_{pp'} \neq \emptyset \Rightarrow \cA^{\cP}(p) < \cA^{\cP'}(p')\text{ and }
\cA_{\mathbb{Z}}^{\cP}(p) < \cA_{\mathbb{Z}}^{\cP'}(p')
\]
and that there are strictly associative boundary face maps which we omit.
There is a continuous function $\vdim: \scrB^{hmtp}_{pp'} \to \mathbb{Z}$ which satisfies the analogue of \eqref{eq:vdimsum}.

Given a global chart lift of $\cC$ and $\cC'$, there is a natural notion of a (homotopy coherent) global chart lift for a flow bimodule from $\cC$ to $\cC'$ (generalizing Definition \ref{d:coherenthomotopy} and \ref{d:global lift}): this comprises global chart lifts for all the moduli spaces $\scrB_{pp'}$, of  virtual dimension $\vdim$,  which on boundary strata are homotopic to stabilisations of fibre product global charts for the factors $\cM_{pq}^{\bfg_0} \times_{[pt/\Gamma_q]} \scrB_{qp'}^{\bfg_1}$ or $\scrB_{pq'}^{\bfg_0}\times_{[pt/\Gamma_{q'}]} \cM_{q'p'}^{\bfg_1}$. A global chart lift of a homotopy $\scrB^{hmtp}$ of flow bimodules from $\scrB$ to $\scrB'$ is defined similarly.

We will need the following statement, which follows from a direct analogue of the arguments from Section \ref{s:multivalue} and Proposition \ref{p:dsquare=0}, and  whose proof will be omitted.
\begin{prop}
    A global chart lift for a flow bimodule $\scrB$ from $\cC$ to $\cC'$ induces an action decreasing chain map $\tau_{\scrB}: C^{\cC'}_* \to C^{\cC}_*$.
\end{prop}

\subsection{Two-simplices of bimodules}

\emph{Ordered marked flow bimodules cannot be composed so we introduce $2$-simplices of ordered marked flow bimodules to capture composition of Floer continuation maps. An $n$-simplex of flow bimodules was introduced in \cite[Section 4]{Abouzaid-Blumberg}.} We only require $2$-simplices since we require a construction at the cohomological rather than $A_{\infty}$-level.


We fix three posets $\cP,\cP', \cP''$ with their corresponding actions, integralized actions such that $\cA_{\Pi}$
and $\cA_{\Pi,\mathbb{Z}}$ are common for all three of them.
Let $\bfg \in (\|\Gamma\|^{\circ})^k$.
For $p \in \cP, p' \in \cP'$ and $p'' \in \cP''$, we denote the posets $\bA_{pp'}^{\bfg}, \bA_{pp''}^{\bfg}, \bA_{p'p''}^{\bfg}$ by $\bA_{pp'}^{\bfg,(\cP, \cP')}, \bA_{pp''}^{\bfg,(\cP,\cP'')}, \bA_{p'p''}^{\bfg,(\cP',\cP'')}$ respectively.

Let $\Vec{\cP}=(\cP,\cP', \cP'')$. For $p \in \cP$ and $p'' \in \cP''$, we define a poset
\[
(\bA_{p;p''}^{\bfg,\vec{\cP}})^{main}= \left\{ \alpha  = (p r_1\ldots r_l,  r_1' r_2'\ldots r_{l'}', r_1'' \ldots r''_{l''} p'',h_1,\ldots,h_k) \right \}  
\]
where we insist that $h_j \in \{0, \dots, l+l'+l''\}$ and
\[
 p<r_1 < \ldots < r_l \in \cP,  \quad r_1' < \ldots < r'_{l'} \in \cP', \quad r''_1 < \ldots < r''_{l''} < p'' \in \cP''.
\]
for some $l,l',l'' \ge 0$.

The partial order  $\leq$ is again given by inclusions of the triple of sequences $\{r_i\}, \{r_i'\}, \{r_i''\}$ and the compatibility with the $h_j$'s. Thus $(pp'',\mathbf{0})$ is the unique maximal element. Note that the sub-poset $(\bA_{p;p''}^{\bfg,\vec{\cP}})^{main}_{l' = 0}$ consisting of elements with $l'=0$ is canonically isomorphic to $\bA_{pp''}^{\bfg,(\cP,\cP'')}$.

Then we define another poset
\[
\bA_{p;p''}^{\bfg,\vec{\cP}}:=(\bA_{p;p''}^{\bfg,\vec{\cP}})^{main} \cup \bA_{p;p''}^{\bfg,(\cP,\cP'')}
\]
The partial order on $\bA_{p;p''}^{\bfg,\vec{\cP}}$ restricted to $(\bA_{p;p''}^{\bfg,\vec{\cP}})^{main}$ is the partial ordering on $(\bA_{p;p''}^{\bfg,\vec{\cP}})^{main}$.
Elements of $(\bA_{p;p''}^{\bfg,\vec{\cP}})^{main}$ with $l'>0$ are not comparable to elements of $\bA_{p;p''}^{\bfg,(\cP,\cP'')}$ with respect to the partial ordering.
An element $\alpha \in (\bA_{p;p''}^{\bfg,\vec{\cP}})^{main}_{l'=0}$ is bigger than $\beta \in \bA_{p,p''}^{\bfg,(\cP,\cP'')}$ if and only if, under the isomorphism  $(\bA_{p;p''}^{\bfg,\vec{\cP}})^{main}_{l'=0} \simeq \bA_{p,p''}^{\bfg,(\cP,\cP'')}$, $\alpha$ is bigger than or equal to $\beta$.

\begin{remark}
  To compare it with \cite{Abouzaid-Blumberg}[Definition 4.2], the elements $p, r, r', \dots$ correspond to edges of their directed arcs. Since we have the action functions, we don't need the energy labelling $\lambda$ of their directed arcs. The subset labels at the vertices of their directed arcs remember which family of continuation data one should put in the end.
  In our case, we only consider two-simplices (i.e. $n=2$) so the only possible labelling is $\{1\}$ or $\emptyset$ when $l'=0$. The one corresponding to the label $\{1\}$ is $(\bA_{p;p''}^{\bfg,\vec{\cP}})^{main}_{l'=0}$ and the other one corresponds to $\bA_{p;p''}^{\bfg,(\cP,\cP'')}$.
  Geometrically, $\alpha \in (\bA_{p;p''}^{\bfg,\vec{\cP}})^{main}_{l'=0}$ corresponds to a stratum over which we consider a one-parameter family of continuation data interpolating the broken one with the one over the corresponding $\alpha \in \bA_{p;p''}^{\bfg,(\cP,\cP'')}$.
\end{remark}

To distinguish elements in $(\bA_{p;p''}^{\bfg,\vec{\cP}})^{main}_{l'=0}$ and in $\bA_{p;p''}^{\bfg,(\cP,\cP'')}$, we use $(\alpha,\{1\})$ and $(\alpha,\emptyset)$ to denote the corresponding $\alpha$ in $(\bA_{p;p''}^{\bfg,\vec{\cP}})^{main}_{l'=0}$ and in $\bA_{p;p''}^{\bfg,(\cP,\cP'')}$, respectively.

Similarly, the poset $\bA_{p;p''}^{\bfg,\vec{\cP}}$ has a well-defined depth function. The unique maximal element is $(pp'',\mathbf{0}, \{1\})$. Let $\bA_{pp''}^{k,\vec{\cP}}= \sqcup_{\bfg \in (\|\Gamma\|^{\circ})^{k}} \bA^{\bfg,\vec{\cP}}_{pp''}$ and $\bA_{pp''}^{\vec{\cP}}= \sqcup_k \bA_{pp''}^{k,\vec{\cP}}$.
\begin{example}\label{e:depth_1}
Elements of depth one are of the form $(prp'',\bfh, \{1\})$, $(pr''p'',\bfh, \{1\})$, $(pr'p'',\bfh)$
and $(pp'',\mathbf{0}, \emptyset)$, where $r \in \cP$, $r' \in \cP'$ and $r'' \in \cP''$, respectively.    
\end{example}

Suppose we have ordered marked flow categories $\cC$, $\cC'$ and $\cC''$ over $\cP$, $\cP'$ and $\cP''$ respectively. 
A two-simplex of flow bimodule $\scrB^{\vec{\cC}}$ for $\vec{\cC}:=(\cC,\cC',\cC'')$ consists of the following data: 
flow bimodules $\scrB^{(\cC,\cC')}$,  $\scrB^{(\cC,\cC'')}$ and $\scrB^{(\cC',\cC'')}$ between respective flow categories indicated by the superscripts, and for any $p \in \cP$, $p'' \in \cP''$, a compact $\bA_{pp''}^{\vec{\cP}}$-orbispace $\scrB_{pp''}^{\vec{\cC}}$
together with evaluation maps
\[
\scrB_{pp''}^{\vec{\cC}}\to [pt/\Gamma_{p}], \text{ and }\scrB_{pp''}^{\vec{\cC}} \to [pt/\Gamma_{p''}]
\]
and stratified orbispace isomorphisms (for $p,q \in \cP$, $p' \in \cP'$, and $p'',q'' \in \cP''$)
\[
\centerline{
\xymatrix{
\cM_{pq}^{\bfg_0(pqp'',\bfh)} \times_{[pt/\Gamma_q]} \scrB_{qp''}^{\bfg_1(pqp',\bfh), \vec{\cC}} \ar[r] \ar[d] & \partial^{(pqp'',\bfh)} \scrB_{pp''}^{\bfg,\vec{\cC}} \ar[d] \\
\bA^{\bfg_0(pqp'',\bfh),\cP}_{pq} \times \bA_{qp'''}^{\bfg_1(pqp'',\bfh), \vec{\cP}} \ar[r] & \partial^{(pqp'',\bfh)} \bA_{pp''}^{\bfg, \vec{\cP}}
}
 \quad 
\xymatrix{
 \scrB_{pq''}^{\bfg_0(pq''p'',\bfh), \vec{\cC}} \times_{[pt/\Gamma_{q''}]} \cM_{q''p''}^{\bfg_1(pq''p'',\bfh)} \ar[r] \ar[d] & \partial^{(pq''p'',\bfh)} \scrB_{pp''}^{\bfg,\vec{\cC}} \ar[d] \\
 \bA_{pq''}^{\bfg_0(pq''p'',\bfh), \vec{\cP}}\times \bA^{\bfg_1(pq''p'',\bfh),\cP''}_{q''p''} \ar[r] & \partial^{(pq''p'',\bfh)} \bA_{pp''}^{\bfg,\vec{P}}
}
}
\]
\[
\centerline{
\xymatrix{
\scrB_{pp'}^{\bfg_0(pp'p'',\bfh), (\cC,\cC')}  \times_{[pt/\Gamma_{p'}]} \scrB_{p'p''}^{\bfg_1(pp'p'',\bfh), (\cC',\cC'')} \ar[r] \ar[d] & \partial^{(pp'p'',\bfh)} \scrB_{pp''}^{\bfg,\vec{\cC}} \ar[d] \\
\bA^{\bfg_0(pp'p'',\bfh),(\cP,\cP')}_{pp'} \times \bA_{p'p''}^{\bfg_1(pp'p'',\bfh), (\cP',\cP'')} \ar[r] & \partial^{(pp'p'',\bfh)} \bA_{pp''}^{\bfg, \vec{\cP}}
}}
\]
and equalities
\[
 \centerline{
\xymatrix{
 \scrB_{pp''}^{\bfg, (\cC,\cC'')}  \ar[r]^{\equiv} \ar[d] & \partial^{(pp'',\mathbf{0},\emptyset)} \scrB_{pp''}^{\bfg, \vec{\cC}} \ar[d] \\
 \bA_{pp''}^{\bfg, (\cP,\cP'')} \ar[r]^{\equiv} & \partial^{(pp'',\mathbf{0},\emptyset)} \bA_{pp''}^{\bfg, \vec{\cP}}
}
}
\]
such that
\begin{align}\label{eq:monoaction2}
\scrB_{pp''}^{\vec{\cC}} \neq \emptyset \Rightarrow \cA^{\cP}(p) < \cA^{\cP''}(p'') \text{ and }
\cA_{\mathbb{Z}}^{\cP}(p) < \cA_{\mathbb{Z}}^{\cP''}(p'')
\end{align}
and that there are strictly associative boundary face maps similar as before.
We also need a continuous function $\vdim: \scrB_{pp''}^{\vec{\cC}} \to \mathbb{Z}$ and that $\Pi$ acts strictly to $\{\scrB_{pp''}^{\vec{\cC}}: p \in \cP, p'' \in \cP''\}$ as before.

Given global chart lifts of $\cC$, $\cC'$, $\cC''$, $\scrB^{(\cC,\cC')}$, $\scrB^{(\cC,\cC'')}$ and $\scrB^{(\cC',\cC'')}$,  a global chart lift for $\scrB^{\vec{\cC}}$ is a collection of global charts for all the spaces $\scrB^{\vec{\cC}}_{pp''}$ such that every boundary stratum admits an iterative stabilisation which is homotopic to stabilisations of fibre product charts coherently (cf. Definition \ref{d:coherenthomotopy} and \ref{d:global lift}).


\begin{prop}\label{p:product}
    A global chart lift for $\scrB^{\vec{\cP}}$, and choice of perturbation system, induces a chain homotopy $\tau_{\scrB^{\vec{\cP}}}:C^{\cC''} \to C^{\cC}$ from $\tau_{\scrB^{(\cP,\cP')}} \circ \tau_{\scrB^{(\cP',\cP'')}}$ to $\tau_{\scrB^{(\cP,\cP'')}}$.
\end{prop}

\begin{proof}
    This follows the same steps as used in Section \ref{s:multivalue} and Proposition \ref{p:dsquare=0}, cf. \cite[Proposition B.1(3)]{Abouzaid-Blumberg} for the appearance of 2-simplices of bimodules in establishing the corresponding invariance statement in the smooth Hamiltonian Floer setting. 
    Indeed, the third and fourth depth-one strata in Example \ref{e:depth_1} correspond to $\tau_{\scrB^{(\cP,\cP')}} \circ \tau_{\scrB^{(\cP',\cP'')}}$ and $\tau_{\scrB^{(\cP,\cP'')}}$ respectively.
\end{proof}

\subsection{Bilinear maps}

\emph{The usual Floer product is defined by a bilinear map of flow categories; here we introduce the corresponding notion in the ordered marked situation.}

We fix three posets $\cP,\cP', \cP''$ and $\bfg \in (\|\Gamma\|^{\circ})^k$ as before.
We define a poset
\[
\bA_{p;p'p''}^\bfg = \left\{ \alpha  = (p r_1\ldots r_l,  r_1' r_2'\ldots r_{l'}'p', r_1'' \ldots r''_{l''} p'',h_1,\ldots,h_k) \right \} 
\]
where we insist that $h_j \in \{0, \dots, l+l'+l''\}$ and
\[
 p<r_1 < \ldots < r_l \in \cP,  \quad r_1' < \ldots < r'_{l'}<p' \in \cP', \quad r''_1 < \ldots < r''_{l''} < p'' \in \cP''.
\]
The partial order  $\leq$ is again given by inclusions of the triple of sequences $\{r_i\}, \{r_i'\}, \{r_i''\}$ and the compatibility with the $h_j$'s. Thus $(pp'p'',\mathbf{0})$ is the unique maximal element (depth zero)  and the poset is homogeneous. 
We have isomorphisms
\begin{align*}
\bA_{p\tilde{p}}^{\bfg_0(p\tilde{p}p'p'',\bfh),\cP} \times \bA_{\tilde{p};p'p''}^{\bfg_1(p\tilde{p}p'p'',\bfh)} \cong \partial^{(p\tilde{p}p'p'',\bfh)} \bA_{p;p'p''}^\bfg \\
  \bA_{p; \tilde{p}'p''}^{\bfg_0(p\tilde{p}'p'p'',\bfh)} \times \bA_{\tilde{p'}p'}^{\bfg_1(p\tilde{p}'p'p'',\bfh), \cP'}\cong \partial^{(p\tilde{p}'p'p'',\bfh)} \bA_{p;p'p''}^\bfg \\ 
\bA_{p;p'\tilde{p}''}^{\bfg_0(pp'\tilde{p}''p'',\bfh)} \times \bA_{\tilde{p}''{p}''}^{\bfg_1(pp'\tilde{p}''p'',\bfh),\cP''} \cong \partial^{(pp'\tilde{p}''p'',\bfh)} \bA_{p;p'p''}^\bfg
\end{align*}
We assume the posets have a common translation group $\Pi$ and action $\cA_{\Pi}: \Pi \to \bZ$.

Let $\cC, \cC', \cC''$ be flow categories  which live over the posets $\cP, \cP', \cP''$.  A \emph{bilinear map} $\cR: \cC' \times \cC'' \to \cC$ consists of a compact $\bA_{p;p'p''}$-spaces $\cR_{p;p'p''}$ for $p\in \cC, p' \in \cC', p'' \in \cC''$ with stratified homeomorphisms
\[
\xymatrix{
\cC_{p\tilde{p}} \times_{[pt/\Gamma_{\tilde{p}}]} \cR_{\tilde{p};p'p''} \ar[r] \ar[d] & \partial^{p\tilde{p}p'p''} \cR_{p;p'p''} \ar[d] \\
\bA_{p\tilde{p}}^{\cP} \times \bA_{\tilde{p};p'p''} \ar[r] & \partial^{p\tilde{p}p'p''} \bA_{p;p'p''} 
}
\]
and similarly for the other poset boundaries, satisfying the obvious associativity conditions analogous to those of a morphism of flow categories.  We insist that
\begin{align}\label{eq:action_triangle}
\cR_{p;p'p''} \neq \emptyset \ \Rightarrow \ 
\cA^{\cP}(p) < \cA^{\cP'}(p') + \cA^{\cP''}(p'') \text{ and }
\cA_{\mathbb{Z}}^{\cP}(p) < \cA_{\mathbb{Z}}^{\cP'}(p') + \cA_{\mathbb{Z}}^{\cP''}(p'').
\end{align}
and that the spaces carry a strict $\Pi$-action by stratified homeomorphisms and admit a continuous function $\vdim$ to $\bZ$ such that the analogue of \eqref{eq:vdimsum} is satisfied.

Given global chart lifts of $\cC, \cC'$ and $\cC''$,  a global chart lift for a bilinear map is a collection of global charts for all the spaces $\cR_{p;p'p''}$ such that every boundary stratum admits an iterative stabilisation which is homotopic to stabilisations of fibre product charts coherently (cf. Definition \ref{d:coherenthomotopy} and \ref{d:global lift}).

We will need the following statement whose proof will be omitted (cf. Section \ref{s:multivalue} and Proposition \ref{p:dsquare=0}).
\begin{prop}\label{p:product}
    A global chart lift for a bilinear map $\cC'\times \cC'' \to \cC$ induces a product map $C^{\cC'} \otimes C^{\cC''} \to C^{\cC}$ (i.e. a bilinear map which commutes with the differentials).
\end{prop}

Given global chart lifts of ordered marked flow categories $\cC_i$, for $i=1,2,3,4$, over $\cP_i$, global chart lifts of 
bimodules $\scrB$ from $\cC_1$ to  $\cC_2$, and global chart lifts of 
bilinear maps $\cC_3 \times \cC_4 \to \cC_2$ and $\cC_3 \times \cC_4 \to \cC_1$, we need a similar notion of poset and its global chart lift to be able to say that the composition
\begin{align}\label{eq:composed_p1}
C^{\cC_3} \otimes C^{\cC_4} \to C^{\cC_2} \to C^{\cC_1}
\end{align}
is chain homotopic to 
\begin{align}\label{eq:composed_p2}
C^{\cC_3} \otimes C^{\cC_4} \to C^{\cC_1}.
\end{align}
The poset is
\begin{align*}
\bA_{p_1;p_3p_4}^{\bfg,(\cP_1,\cP_2;\cP_3,\cP_4)} :=& (\bA_{p_1;p_3p_4}^{\bfg,(\cP_1,\cP_2;\cP_3,\cP_4)})^{main} \sqcup \bA_{p_1;p_3p_4}^{\bfg,(\cP_1;\cP_3,\cP_4)} \\
(\bA_{p_1;p_3p_4}^{\bfg,(\cP_1,\cP_2;\cP_3,\cP_4)})^{main} :=& \left\{ \alpha  = (p_1 r_{1,1}\ldots r_{l_1,1}, r_{1,2} \ldots r_{l_2,2},  r_{1,3} r_{2,3}\ldots r_{l_3,3}p_3, r_{1,4} \ldots r_{l_4,4} p_4,h_1,\ldots,h_k) \right \}
\end{align*}
where $\bA_{p_1;p_3p_4}^{\bfg,(\cP_1;\cP_3,\cP_4)}$ is the poset governing the bilinear map $\cC_3 \times \cC_4 \to \cC_1$, for the definition of $(\bA_{p_1;p_3p_4}^{\bfg,(\cP_1,\cP_2;\cP_3,\cP_4)})^{main}$, we require that $l_i \ge 0$ for $i=1,2,3,4$, $h_i \in \{0,\dots,l_1+l_2+l_3+l_4\}$ for all  $i$, and 
\[
 p_1< \ldots < r_{l_1,1} \in \cP_1,  \quad r_{1,2} < \ldots < r_{l_2,2} \in \cP_2, \quad r_{1,j} < \ldots < r_{l_j,j} < p_j \in \cP_j.
\]
for $j=3,4$. 

Given spaces $\cR_{p_1;p_3p_4}^{\bfg,(\cP_1,\cP_2;\cP_3,\cP_4)}$ lying over the poset $\bA_{p_1;p_3p_4}^{\bfg,(\cP_1,\cP_2;\cP_3,\cP_4)}$ and satisfying the corresponding conditions as above, we can define their global chart lifts as before. The consequence of having a global chart lift of $\cR^{\bfg,(\cP_1,\cP_2;\cP_3,\cP_4)}$ is the following:

\begin{prop}\label{p:product_compose}
    A global chart lift for a bilinear map of $\cR^{\bfg,(\cP_1,\cP_2;\cP_3,\cP_4)}$ induces a chain homotopy between the compositions \eqref{eq:composed_p1} and \eqref{eq:composed_p2}.
\end{prop}


Chain homotopies between other compositions can be obtained similarly.

\section{Orbifold Hamiltonian Floer cohomology}\label{s:Hamiltonian_orbifold}

In this section we define bulk-deformed orbifold Hamiltonian Floer cohomology. We follow the general template from Bai-Xu \cite{BX} (and borrow a construction of $2$-forms to define integralised actions from Rezchikov \cite{Rez}), but many details differ throughout.  We associate to a smooth time-dependent function $H/\Gamma$ on $Y = [X/\Gamma]$ a marked ordered flow category; the orbifold Floer complex is then constructed as in Section  \ref{s:multivalue}, by equipping that flow category  with a homotopy-coherent global chart lift. Invariance of the chain complex is discussed in Section \ref{s:system} and \ref{s:Haminv}, in particular Proposition \ref{p:chaincomplex2indep}, \ref{p:orbFloer} and the proof of Theorem \ref{t:main}(1). \emph{A posteriori} we learn that the flow category itself is an invariant of $Y$, and only the global chart lift depends on the presentation of $Y$ as a global quotient (see Remark \ref{r:Yobjects}, \ref{r:Ymorphism} and Proposition \ref{p:chaincomplex2indep}).

\subsection{The Hamiltonian Floer ordered marked flow category for global quotients}\label{s:orbifoldFloer}

\emph{We set up the bulk-deformed Hamiltonian Floer ordered marked flow category.  Since it suffices for our applications in \cite{MSS2}, we impose the simplifying hypothesis that the bulk $\mathfrak{b}$ is a linear combination of multiples of the cohomological units of the non-trivial sectors of $IY$.}

Let $Y=[X/\Gamma]$ and $H_X:=H:X \times S^1 \to \R$ be a $\Gamma$-invariant Hamiltonian function.  We will write $H_Y:=H/\Gamma: Y \times S^1 \to \bR$ for the associated (smooth) Hamiltonian function $H/\Gamma$ on the orbifold quotient. We assume that $\Gamma$ acts effectively, so $Y$ is an effective orbifold.
We also assume that $H/\Gamma$ is a non-degenerate Hamiltonian function (i.e. $1$ is not an eigenvalue of the linearised return map of any $1$-periodic orbit of $H/\Gamma$).
Since $\Gamma$ is a finite group, a generic $\Gamma$-invariant Hamiltonian $H$ gives a non-degenerate $H/\Gamma$ \cite{Was}\footnote{A $1$-periodic orbit of $H/\Gamma$ lifts to a time-$1$ chord of $H$ so the non-degeneracy condition of $H/\Gamma$ is different from the usual non-degeneracy condition of $H$.}. Our goal is to define a (marked ordered) flow category $\cC(H_Y)$ or, if we wish to emphasise all choices, $(\cC(H_Y,\frak{b});J_Y)$ since there will be dependence on a bulk class $\frak{b}$ and choice of almost complex structure $J_Y$.

We make the following assumption:

\begin{Hypothesis}\label{h:sector}
The bulk $\mathfrak{b}=\sum_{(g) \in \|\Gamma\|^{\circ}} \frak{v}_{(g)}[X^g/C(g)]$ for some $\frak{v}_{(g)} \in \Lambda_{>0}$.
In particular, all orbifold marked points of a representable orbifold map to $Y$ that are labelled by $(g)$ are constrained by the fundamental class $[X^g/C(g)]$.
All marked points labelled $(g)$ are assigned the weight $\val(\frak{v}_{(g)})$.
\end{Hypothesis}

We define a flow category $\cC(H_Y)$ as follows.

\noindent \textbf{Objects}  are oriented representable $1$-periodic Hamiltonian orbits of $H/\Gamma$ in $Y$ together with a formal capping.
More precisely, an object consists of a
commutative diagram  of the form 
\[
\xymatrix{
\hat{S} \ar[r]^{\hat{x}} \ar[d]^{\pi_x} & X \ar[d] \\ S^1 \ar[r]^{x} & Y
}
\]
such that $\pi_x$ is the quotient map of a free $\Gamma$-space $\hat{S}$, $x$ is a Hamiltonian orbit of $H/\Gamma$ and $\hat{x}$ is $\Gamma$-equivariant.
We require that $x$ be {\it orientable} in the sense that when the isotropy group of $x$ is non-trivial, the action of the isotropy group on the orientation line of $x$ preserves the orientation. 
This condition is parallel to the orientability of a Morse critical point in Equation \ref{eqn:morse} (see \cite{MMRM}, \cite{GZ21} for further discussion).
We specify two further pieces of data.
The first piece is a capping, that is a representable orbifold disc  $\bar{c}_x: D^2 \to Y$ extending $x$.
We follow the cohomological conventions in \cite[Section 6.1]{CGHMSS} (i.e. $\bar{c}_x$ should be interpreted as a  representable map from an orbifold cylinder $\bar{c}_x: [0,1] \times S^1 \to Y$ such that $x(t)=\bar{c}_x(0,t)$ and $\bar{c}_x(1,t)$ is a constant) so we orient $\partial D^2$ with the opposite of the standard orientation.
Let $k_{(g)}(\bar{c}_x)$ be the number of orbifold marked points labelled by the conjugacy class $(g)$.
The action of a capping is defined to be\footnote{We follow the sign and action conventions in \cite{CGHMSS}.} 
\begin{align}\label{eqn:action of capping}
\cA(\bar{c}_x):=\int_x H_Y(t) dt-\int_{D^2} \bar{c}_x^*\omega_Y - \sum_{(g) \in \|\Gamma\|^\circ} k_{(g)}(\bar{c}_x)\cdot \val(\frak{v}_{(g)})    
\end{align}

Note that this can also be evaluated upstairs in $X$.
Indeed, the capping disc $D^2 \to Y$ has an associated admissible cover $\Sigma \to X$ and we have 
\begin{align*}
\int_x H_Y(t) dt &= 1/|\Gamma|\int_{\hat{S}} H(t) \pi_x^*dt 
\\
\int_{D^2} \bar{c}_x^*\omega_Y+\sum_{(g)} k_{(g)}(\bar{c}_x)\cdot \val(\frak{v}_{(g)})&= 1/|\Gamma| \cdot \left(\int_{\Sigma} \bar{c}_{\hat{x}}^*\omega_X \right) + \sum_{(g)} k_{(g)}(\bar{c}_x)\cdot \val(\frak{v}_{(g)}).
\end{align*}

The integralized action $\cA_{\mathbb{Z}}$ for $\cC(H_Y)$ will be introduced in Section \ref{s:interalaction}.

\begin{remark}\label{r:Adiff}
If $\bar{c}_x$ and $\bar{c}'_x$ are two different cappings of $x$, then we can reverse the orientation of $\bar{c}_x$ and glue to $\bar{c}'_x$ to get on orbifold sphere $A$.
Then $\cA(\bar{c}_x)-\cA(\bar{c}'_x) \in \omega_Y(A)+\sum_{(g)} \val(\frak{v}_{(g)}) \mathbb{Z}$.    
We have chosen $\Pi$ to be a subgroup of $\mathbb{R}$ containing $\omega_Y(\pi_2(Y))$ and $\val(\frak{v}_{(g)}) \in \Pi$ by definition.
Therefore, $\cA(\bar{c}_x)-\cA(\bar{c}'_x) \in \Pi$.
\end{remark}

The second piece of further data is a real number $U \in \Pi$.
 We use the notation $c_x$ to denote $(x,\bar{c}_x,U)$ or $(\bar{c}_x,U)$.
We call $c_x$ a formal capping of $x$ and define the action of $c_x=(\bar{c}_x,U)$ to be
\begin{align}\label{eq:formalaction}
\cA(c_x):=\cA(\bar{c}_x)-U
\end{align}

\begin{definition}\label{d:equivalent cap}
Two formal capped oriented orbits $c_x=(\bar{c}_x, U_x)$ and $c_y=(\bar{c}_y,  U_y)$ are  equivalent if $x=y$ and
\begin{align}
\cA(c_x)= \cA(c_y)
\end{align} 
\end{definition}

\begin{definition}\label{d:action}
The \emph{action spectrum} $\Spec(Y,H_Y)$ is the set of values of the action over all formal capped oriented orbits. 
\end{definition} 

The spectral invariants for bulk-deformed Hamiltonian Floer cohomology for the Hamiltonian $H/\Gamma$ will belong to the action spectrum.

\begin{remark}\label{r:GenSpec}
Since $\frak{v}_{(g)} \in \Lambda_{>0}$, we have $k_{(g)}(\bar{c}_x)\cdot \val(\frak{v}_{(g)}) \in \Pi$
and therefore $\Spec(Y,H_Y)$ is independent of the choice of $\frak{v}_{(g)}$.
\end{remark}

Let $\cP$ be the set of equivalence classes of formal capped oriented orbits $c_x$. 
The group $\Pi$ acts on $\cP$ by adding to the value $U$. 
Let $\cA_{\Pi}:\Pi \to \mathbb{R}$ be the inclusion so $\cA(U \cdot c_x)=\cA(c_x)-\cA_{\Pi}(U)$ (cf. Section \ref{s:flow}).

\begin{definition}\label{d:object}
An object of $\cC(H_Y)$ is an element in $\cP$, that is an equivalence class of a formal capped oriented orbit $c_x$.    
\end{definition}

\begin{definition}
An automorphism of $\hat{x}$ is a $\Gamma$-equivariant isomorphism $f:\hat{S} \to \hat{S}$ such that $\hat{x} \circ f=\hat{x}$.
The isotropy group $\Gamma_{x}$ of an object $c_x$ is defined to be the automorphism group of  $\hat{x}$.    
\end{definition}

\begin{remark}
Hamiltonian orbits that admit no cappings do not define objects in $\cC(H_Y)$.
\end{remark}

\begin{remark}
Formal capping is introduced because connect summing orbifold spheres only makes sense, in general, if you connect sum at smooth points (so the monodromy representations are compatible), and that this operation only gives a groupoid, and not a group. This is why we need an element $U \in \Pi$ as well as the geometric cap  $\bar{c}_x$ (cf. Definition \ref{d:equivalent cap}).
\end{remark}

\begin{remark}[(Intrinsic to $Y$)]\label{r:Yobjects}
The objects of $\cC(H_Y)$ can be defined using $Y$ and $H/\Gamma$, which are independent of the presentation of $Y$ as a global quotient. Indeed, $x$ is an oriented representable Hamiltonian loop and $\hat{x}, \pi_x$ are determined by $x$ using the pull-back. Similarly, $\bar{c}_x$ is a representable orbifold disc. In Equation \eqref{eqn:action of capping}, the index set $\|\Gamma\|^\circ$ of the sum is the index set of the non-trivial orbifold sectors of $Y$ so it is intrinsic to $Y$.
The subgroup $\frac{1}{|\Gamma|}\omega_X(H_2(X,\mathbb{Z}))+\sum_{(g)} \val(\frak{v}_{(g)}) \mathbb{Z}$ that $\Pi$ must contain also only depends on $Y$.
\end{remark}

Fix a smooth identification of the interior of $D^2$ with an orbifold complex plane $(\mathbb{C},\bfm)$. The complex orbibundle $\bar{c}_x^*(TY,J)$ admits a canonical desingularization $|\bar{c}_x^*(TY,J)|$ over the smooth $\mathbb{C}$ (see \cite[Section 4.2]{Chen-Ruan}, Remark \ref{r:indexformula}).
By trivializing $|\bar{c}_x^*(TY,J)|$, the linearization of the Hamiltonian flow along $x|_{\partial D^2}$ defines a path of symplectic matrices and hence we can define its Conley-Zehnder index $\mu_{CZ}(c_x):=\mu_{CZ}(\bar{c}_x) \in \mathbb{Z}$.

\begin{definition}[Parity decomposition]\label{d:parity}
    We say that $c_x$ is even if $\mu_{CZ}(c_x)$ is even, and is odd otherwise.
\end{definition}

\begin{remark}\label{r:underlyingparity}
Note that whether $c_x$ is even or odd only depends on the underlying $x$ because the Conley-Zehnder index modulo $2$ only depends on the underlying orbit but not the formal capping. Indeed if $\bar{c}_x$ and $\bar{c}_x'$ are two different caps of $x$, $A:=\bar{c}_x\#\overline{\bar{c}_x'}$ is an orbifold sphere and $\mu_{CZ}(x,\bar{c}_x)-\mu_{CZ}(x,\bar{c}_x')=2c_1(A)$.
\end{remark}

\begin{remark}
    One should not be confused by the fact that $\mu_{CZ}(x,\bar{c}_x) \in \mathbb{Z}$ but $H_{orb}(Y)$ is in general $\mathbb{Q}$-graded. For example, if we take a $C^2$-small Hamiltonian $H$ on $Y$ and a constant cap $\bar{c}_x$ to a constant orbit $x$ in a non-trivial sector $X^g/C(g)$, then $\bar{c}_x$ is a cap with one orbifold point in the interior and 
    $\mu_{CZ}(x,\bar{c}_x)=\mu_{CZ}(|\bar{c}_x^*(TY,J)|)=\mu_{CZ}(\bar{c}_x^*(TY,J))-2a(g,x)$, where the age $a(g,x)$ and $\mu_{CZ}(\bar{c}_x^*(TY,J))$ are in $\mathbb{Q}$ in general (note this final Conley-Zehnder index is of the orbibundle and not its desingularisation).
\end{remark}

\begin{remark}[(An analogue of Remark \ref{r:breaking})]\label{r:recapmu}
Let $u:S \to Y$ be a representable orbifold sphere and $\bar{c}_x$ be a capping of $x$.
If $\bar{c}_x'$ is another capping of $x$ obtained by gluing $u$ and $\bar{c}_x$ along an orbifold point with age $a$ and $a'$ respectively, then 
\[\mu_{CZ}(x,\bar{c}_x')=\mu_{CZ}(x,\bar{c}_x)+2(\langle c_1(|u^*TY|), |S|\rangle+(a+a')).\]
\end{remark}

We state the analogue of Remark \ref{r:recapmu} for action here, whose proof is immediate.

\begin{lemma}\label{l:recapA}
Let $u:S \to Y$ be a representable orbifold sphere with $k$-orbifold points and $c_x$ be a formal capping of $x$.
If $c_x'$ is another capping of $x$ obtained by gluing $u$ and $c_x$ along a smooth point, resp. an orbifold point with label $h$), then $\cA(c_x')=\cA(c_x)+\omega_Y(u)+\sum_{(g)} k_{(g)}(u)  \val(\frak{v}_{(g)})$, resp. $\cA(c_x')=\cA(c_x)+\omega_Y(u)-2\val(\frak{v}_h)+ \sum_{(g)} k_{(g)}(u) \val(\frak{v_{(g)}})$.
\end{lemma}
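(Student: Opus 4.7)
The proof plan is to unpack Definition \ref{d:action} directly and track how each of the three subtractive terms changes under the gluing operation. Since $c_x$ and $c_{x'}$ are cappings of the same underlying orbit $x$, the Hamiltonian integral $\int_x (H/\Gamma)(t)\,dt$ is common to both actions. Writing
\[
A(c) := \int_{D^2} \bar{c}^*\omega_Y + k(\bar{c})\cdot\val(\mathfrak{v}) + \omega(U),
\]
so that $\scrA(c) = \int_x (H/\Gamma)(t)\,dt - A(c)$, the task reduces to showing $A(c_x) - A(c_{x'})$ equals $\omega_Y(u) + k\cdot\val(\mathfrak{v})$ in the smooth case and $\omega_Y(u) + (k-2)\cdot\val(\mathfrak{v})$ in the orbifold case.

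First I would handle the smooth-point case, where the gluing relates $\bar{c}_x$ and $\bar{c}_{x'}$ by attachment of $u$ at a pair of smooth points (with the sign convention of the paper consistent with the $\mu_{CZ}$ change recorded in Remark \ref{r:recapmu}). Under this operation the symplectic areas combine linearly, contributing $\omega_Y(u)$ to the shift in the $\int\bar{c}^*\omega_Y$ term; all $k$ orbifold marked points of $u$ survive in the glued surface, contributing $k\cdot\val(\mathfrak{v})$ to the shift in the $k(\bar{c})\val(\mathfrak{v})$ term; and the formal sphere combination $U$ is unchanged. Substituting into the formula for $A$ gives the first identity.

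Next I would handle the orbifold-point case. Here the balancing condition forces the two orbifold markings being identified (one on $\bar{c}_x$ and one on $u$) to carry inverse monodromies; smoothing the resulting orbifold node produces a single smooth interior point of the glued surface, so exactly two orbifold marked points are absorbed. Consequently the symplectic-area contribution is still $\omega_Y(u)$, but the orbifold marked-point shift is only $(k-2)\cdot\val(\mathfrak{v})$. Again $U$ is unchanged, and substitution yields the second identity. This is precisely the orbifold-topology input used in Remark \ref{r:breaking}, and no further ingredient is needed.

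There is no substantive obstacle: once the definitions are unwound and the bookkeeping of orbifold marked points under balanced node smoothing is carried out, the result is arithmetic, which is why the paper records the proof as immediate.
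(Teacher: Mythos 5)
Your overall strategy --- unwinding Definition \ref{d:action} and tracking how the three subtracted terms change under gluing --- is exactly what the paper intends when it says the proof is ``immediate,'' and your bookkeeping of orbifold marked points is right in both cases (all $k$ survive at a smooth gluing; two are absorbed at a balanced orbifold node, leaving a net increment of $k-2$).

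However, there is a sign problem that your write-up does not resolve and in fact papers over. You correctly set $\scrA(c)=\int_x (H/\Gamma)\,dt - A(c)$, so $\scrA(c_x')-\scrA(c_x) = A(c_x)-A(c_{x'})$, and you state that the task is to show this equals $\omega_Y(u)+k\cdot\val(\mathfrak{v})$. But your own substitution says the areas and orbifold counts \emph{add}: $\int\bar{c}_{x'}^*\omega_Y=\int\bar{c}_x^*\omega_Y+\omega_Y(u)$ and $k(\bar{c}_{x'})=k(\bar{c}_x)+k$, with $U$ unchanged. Plugging in gives $A(c_{x'})=A(c_x)+\omega_Y(u)+k\cdot\val(\mathfrak{v})$, hence $\scrA(c_{x'})=\scrA(c_x)-\omega_Y(u)-k\cdot\val(\mathfrak{v})$ --- the \emph{negative} of what you set out to show and of what Lemma \ref{l:recapA} asserts. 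The sentence ``Substituting into the formula for $A$ gives the first identity'' is therefore not correct as written: the two halves of your argument contradict each other by a sign, and you neither notice this nor resolve it by appealing to a convention. It is worth noting that Definition \ref{d:action} unambiguously subtracts the cap area, so a larger cap should give a smaller $\scrA$; this suggests the sign in the lemma statement itself may be a typo, but a careful proof should flag this explicitly rather than silently report a sign it did not derive.
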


Before discussing morphisms, we recall that the domains relevant to Hamiltonian Floer theory are genus zero pre-stable curves with two distinguished marked points, which will be punctures when writing down the Floer equation, one incoming and one outgoing. Any such curve has a distinguished chain of components which connect the punctures, which we call \emph{cylindrical}; the other components are \emph{spherical}. 

\noindent \textbf{Morphisms} between formal capped oriented orbits $c_x$ and $c_y$  are given by the moduli space of possibly broken Floer cylinders between them. The precise definition is a little delicate.

\begin{definition}\label{d:orbicylinder}
    A marked prestable orbicylinder consists of the following data:
    \begin{enumerate}
        \item A prestable genus $0$ orbifold curve $\cC$ with $2+h$ ordered marked points such that two of them, denoted by $z_-,z_+$ are distinguished.
        \item A marked point can be a smooth point or an orbifold point, all orbifold points are marked points, nodes between irreducible components are not marked points,
        \item Denote the chain of irreducible components from $z_-$ to $z_+$ by $C_0,\dots,C_r$ and the node connecting $C_{i-1}$ and $C_{i}$ by $z_i$ (by convention $z_-=z_0$ and $z_+=z_{r+1}$). We have an asymptotic marker $l_{z_i,C_i}$ at $T_{z_i}C_i$ for $i=0, \dots, r$ (i.e. $l_{z_i,C_i}$ is an element in $(T_{z_i}C_i \setminus \{0\})/\mathbb{R}_{>0}$ where $\mathbb{R}_{>0}$ acts by scaling).
    \end{enumerate}   
\end{definition}
Two marked prestable orbicylinders are isomorphic if there is a biholomorphism between the underlying prestable genus $0$ curves which interwines all the data.
We denote a marked prestable orbicylinder by $\scrC^{\dagger}$.

Each $\scrC^{\dagger}$ comes with an asymptotic marker $l_{z_{i+1},C_i}$ at $T_{z_{i+1}}C_{i}$ canonically.
Indeed, there is a unique geodesic on $C_i$ from $z_i$ to $z_{i+1}$ whose tangent vector aligns with the asymptotic marker.
The tangent vector of the geodesic at $z_{i+1}$ induces the canonical asymptotic marker $l_{z_{i+1},C_i}$ at $T_{z_{i+1}}C_{i}$.

\begin{definition}
    A $\Gamma$-admissible cover $\Sigma^{\dagger}$ of a marked prestable orbicylinder $\scrC^{\dagger}$ consists of two pieces of data:
    \begin{enumerate}
        \item The first piece is a $\Gamma$-admissible cover $\pi_{\Sigma}:\Sigma \to \cC$.
We can think of $\Sigma$ as an $\Gamma$-admissible cover of each irreducible component of $\cC$ together with an $\Gamma$-equivariant isomorphism between the preimage of the nodal points of $\cC$ (in particular, an isomorphism between the $\Gamma$-orbit above $z_i \in C_{i-1}$ and $z_i \in C_i$).
\item We have an induced map $(\pi_{\Sigma})_*$ on the tangent space of the irreducible components so we can take the preimage of $l_{z_i,C_i}$ and $l_{z_{i+1},C_i}$ under $(\pi_{\Sigma})_*$, which we denote by $\tilde{l}_{z_i,C_i}$ and $\tilde{l}_{z_{i+1},C_i}$
respectively.
They are free and transitive $\Gamma$-orbits.
Every element in $\tilde{l}_{z_i,C_i}$ and $\tilde{l}_{z_{i+1},C_i}$ is an asymptotic marker of a point in $\Sigma$, called the base-point of the asymptotic marker.
The second piece of data of $\Sigma^{\dagger}$ is a base-point preserving $\Gamma$-equivariant isomorphism $\tilde{l}_{z_i,C_i} \simeq \tilde{l}_{z_i,C_{i-1}}$ (the base-points are identified as points in $\Sigma$).
    \end{enumerate}
\end{definition}

An isomorphism between $\Gamma$-admissible covers $\Sigma^{\dagger}$ and $\Sigma^{\dagger'}$ of $\scrC^{\dagger}$ is an isomorphism of the $\Gamma$-admissible covers of $\cC$ which interwines the second piece of data.

\begin{example}
    The preimage $\pi_{\Sigma}^{-1}(z_i)$ is a free $\Gamma$-orbit if and only if $z_i$ carries no isotropy group.
    In this case, the second piece of data is redundant because there is a unique  base-point preserving isomorphism between the asymptotic markers.
\end{example}
\begin{example}\label{e:innerAut}

    Conversely, when the isotropy group of $z_i$ is $\mathbb{Z}/r\mathbb{Z}$, then the isotropy group of every point of $\pi_{\Sigma}^{-1}(z_i)$ with respect to the $\Gamma$ action is $\mathbb{Z}/r\mathbb{Z}$.
    Pick a point $\tilde{z} \in \pi_{\Sigma}^{-1}(z_i)$. The isotropy group is generated by an element  $g \in \Gamma $ with $ord(g)=r$.
    By choosing a marker $l \in \tilde{l}_{z_i,C_{i-1}}$ based at $\tilde{z}$, we can equivariantly identify $\tilde{l}_{z_i,C_{i-1}}$ with $\Gamma$ such that $l$ corresponds to the identity element in $\Gamma$.
    For any $h \in \Gamma$, there are $r$ many elements of  $\tilde{l}_{z_i,C_{i-1}}$ based at $\tilde{z} \cdot h$, namely, $\{g^n h:n=1,\dots,r\}$.
    The map $q \mapsto g \cdot q$ defines a $\Gamma$-equivariant base-point  preserving automorphism of $\tilde{l}_{z_i,C_{i-1}}$.
    Indeed, all $\Gamma$-equivariant base-point  preserving automorphisms are powers of this one.
    Therefore, the second piece of data of $\Sigma^{\dagger}$ is a choice among $|\Gamma_{z_i}|$ many choices for each $z_i$.   
\end{example}

An element in the moduli space $\cM(c_x,c_y)$ consists of a commutative diagram  of the form
\begin{align}\label{eq:orbicylinder}
\xymatrix{
\Sigma^\dagger \ar[r]^{u} \ar[d]^{\pi_u} & X \ar[d] \\ \scrC^\dagger \ar[r]^{v} & Y
}
\end{align}
such that $\scrC^\dagger$ is a marked prestable orbicylinder, $\pi_u$ is the quotient map of a $\Gamma$-admissible cover $\Sigma^\dagger \to \scrC^\dagger$,  the map $u$ is $\Gamma$-equivariant, $v$ satisfies the Floer equation, and the ends of the cylindrical components of $\scrC^\dagger$ are mapped by $v$ to the corresponding asymptotic orbits. 
More precisely, we choose a positive cylindrical end at $z_+$ compatible with the asymptotic marker (i.e. holomorphic embedding $\epsilon_+:[0,\infty) \times S^1 \to \scrC^\dagger$ such that $\lim_{s \to \infty} \epsilon(s,t)=z_+$ and $\lim_{s \to \infty} \partial_s \epsilon(s,0)$ aligns with the asymptotic marker) and a negative cylindrical end $\epsilon_-$ at $z_+$ compatible with the asymptotic marker.
The asymptotic conditions for $v$ are $\lim_{s \to \infty} v(\epsilon_+(s,t))=y(t)$ and $\lim_{s \to -\infty} v(\epsilon_-(s,t))=x(t)$).

 We insist that 
the Floer cylinder and caps are  related by
\begin{align}\label{eq:gluesurface}
c_y\#v:=(\bar{c}_y \# v, U_y) \sim (\bar{c}_x,   U_x)
\end{align}
i.e. these are topologically equivalent caps for $y$ with respect to the previously introduced equivalence.

The Floer equation for $v$ asserts that at regular points of the underlying prestable domain $C$, i.e. points which are neither nodes nor orbifold points, the map satisfies
\begin{equation} \label{eqn:Floer equation}
\left. \begin{aligned}
(dv - X_{H/\Gamma} \otimes dt)^{0,1}_{J/\Gamma} = 0 \\ (dv)^{0,1}_{J/\Gamma} = 0 \end{aligned}\right \} \ \mathrm{for} \ \begin{cases} v & \mathrm{cylindrical} \\ v & \mathrm{spherical} \end{cases}
\end{equation}
We insist that on the cylindrical components, the geodesic determined by the asymptotic markers is where $\{t=0\}$ in the domain curve for the Floer equation.
We also impose the finite energy condition (the integration over the spherical components should be interpreted as their $\omega_Y$-areas)
\[
\int_C \| \partial_s v \|^2 dsdt < \infty
\]
which implies that $v$ converges exponentially fast at the two punctures of the domain to $1$-periodic orbits of $H/\Gamma$ (which we assumed to be non-degenerate), equivalently that $u$ converges exponentially fast at all punctures to periodic orbits of $H$.
Considering \eqref{eqn:Floer equation} upstairs for $u$, note that the pullback $X_H \pi_u^*(dt)$ vanishes at branch points of the map $\pi_u$, and away from those branch points there are well-defined local  co-ordinates $(s,t)$ on $\Sigma$.  The topology on the moduli space is induced by the Gromov topology. 

\begin{remark}
    Notice that a puncture of $\Sigma$ above a point $z_i \in \scrC$ with isotropy group $\mathbb{Z}/r\mathbb{Z}$ converges to a $r$-periodic orbit of $H$.
\end{remark}

Two different $u_0:\Sigma_0^\dagger \to X$ and $u_1:\Sigma_1^\dagger \to X$ are equivalent if there is an isomorphism $f:\Sigma_0^\dagger \simeq \Sigma_1^\dagger$ such that $u_0=u_1 \circ f$ up to an overall translation in the $s$-direction.
The moduli space of these equivalence classes is the morphism space $\cM(c_x,c_y)$.
The subspace $\cM^k(c_x,c_y) \subset \cM(c_x,c_y)$ consists of those elements whose underlying orbifold cylinder $\scrC$ has $k$ ordered orbifold marked points on top of the two distinguished ones.

The virtual dimension of $\cM^k(c_x,c_y)$  is defined to be the index of an element $[v:\scrC^\dagger \to Y] \in \cM^k(c_x,c_y)$ such that  $\scrC$ is irreducible.
\begin{lemma}
The virtual dimension of $\cM^k(c_x,c_y)$ is given by 
$\mu_{CZ}(c_x)-\mu_{CZ}(c_y)-1+2k$.
\end{lemma}

\begin{proof}
This is about how index behaves under gluing. When $x$ is a loop mapping to the trivial sector of $IY$, 
then how the index changes under gluing $v$ and $c_x$ is classical.
When $x$ is mapped to a non-trivial sector, the computation is local and the case of a closed curve was explained in Remark \ref{r:breaking}.
\end{proof}

We define the \emph{geometric energy} of a Floer solution $v: \scrC \to Y$ to be
\begin{equation} \label{geometric energy} 
E_{geo}(v) := \int_{\scrC} \|\partial_s v\|^2dsdt + \sum_{(g)} k_{(g)}(v)\cdot \val(\frak{v}_{(g)})
\end{equation}
where the stable domain $\scrC$ of $v$ carries $k_{(g)}(v)$ orbifold points labelled by $g$. 

\begin{lemma}
If $v \in \cM(c_x,c_y)$ then $\cA(c_y) - \cA(c_x) = E_{geo}(v)$. 
\end{lemma}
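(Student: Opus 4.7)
The plan is to combine two inputs: the equivalence relation on formal cappings that defines the objects of $\cC(H)$, and the classical Floer energy identity applied componentwise on the (possibly broken and bubbled) domain of $v$.

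First, I would unpack the topological consequence of the gluing equivalence \eqref{eq:gluesurface}, namely $c_x\#u=(\bar{c}_x\#v,1_x,U_x)\sim c_y$. Since the concatenated cap $\bar{c}_x\#v$ has $\omega_Y$-area equal to the sum of the $\omega_Y$-areas over all components (cylindrical and spherical) of $v$, and has $k(\bar{c}_x)+k(v)$ orbifold points, the definition of $\sim$ yields
\[
\int_{D^2}\bar{c}_x^*\omega_Y + \int_C v^*\omega_Y + (k(\bar{c}_x)+k(v))\val(\frak{v}) + \omega(U_x) = \int_{D^2}\bar{c}_y^*\omega_Y + k(\bar{c}_y)\val(\frak{v}) + \omega(U_y).
\]
Substituting into the definition of $\scrA$ in Definition \ref{d:action}, the cap and $U$ terms cancel and leave
\[
\scrA(c_x)-\scrA(c_y) = \int_0^1 (H/\Gamma)(t,x(t))\,dt - \int_0^1 (H/\Gamma)(t,y(t))\,dt + \int_C v^*\omega_Y + k(v)\val(\frak{v}).
\]

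Next, I would establish the pointwise energy identity for $v$. On each cylindrical component, at points of $C$ that are neither nodes nor orbifold marked points, the Floer equation $(dv-X_{H/\Gamma}\otimes dt)^{0,1}_{J/\Gamma}=0$ holds, and the standard computation using the convention $\omega(X_H,\cdot)=-dH$ gives
\[
\|\partial_s v\|^2 = v^*\omega_Y(\partial_s,\partial_t) - \partial_s(H/\Gamma)(t,v(s,t)).
\]
Integrating in $(s,t)$ and telescoping the $\partial_s(H/\Gamma)$ terms across the chain of cylindrical components, the interior boundary contributions at the matched punctures $z_i^\pm$ cancel by condition (II) in the definition of morphisms, leaving only the contributions from the limits at the two ends, so the cylindrical integral equals $\int_{\mathrm{cyl}} v^*\omega_Y + \int_x (H/\Gamma)\,dt - \int_y (H/\Gamma)\,dt$. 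On each spherical component the Floer equation reduces to $J$-holomorphicity, whence $\|\partial_s v\|^2 = v^*\omega_Y$ and the contribution is $\int v^*\omega_Y$. Summing over all components and adding $k(v)\val(\frak{v})$ to both sides,
\[
E_{geo}(v) = \int_C v^*\omega_Y + k(v)\val(\frak{v}) + \int_x(H/\Gamma)\,dt - \int_y(H/\Gamma)\,dt,
\]
which matches the expression for $\scrA(c_x)-\scrA(c_y)$ derived above.

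The only mild technicality is to justify these manipulations at the orbifold points of the domain. Because the orbifold locus of $C$ is a discrete set and the energy is a priori finite, a standard removable singularity argument suffices; alternatively one can carry out the computation equivariantly upstairs on the $\Gamma$-cover $\Sigma\to C$, where $u:\Sigma\to X$ has no singularities beyond nodes, and then descend by the $1/|\Gamma|$ factor already used in Section \ref{s:orbifoldFloer} to rewrite the action of $c_x$ in terms of integrals over $\hat{S}$ and $\Sigma$. Since everything in sight is well-defined, this step is essentially bookkeeping; the substantive content is the combination of the capping equivalence and the componentwise Floer energy identity described in the previous paragraphs.
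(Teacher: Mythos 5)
Your proof is correct and fleshes out exactly what the paper's one-line justification ("Standard but incorporating the bulk insertion") leaves implicit: the equivalence relation on formal cappings after gluing yields the cohomological identity involving $\omega_Y$-areas and orbifold-point counts, and then the componentwise Floer energy identity (with the $H$-boundary terms telescoping at matched punctures and the spherical components contributing pure $\omega_Y$-area) recovers the action difference. The bookkeeping of the $k(v)\cdot\val(\frak{v})$ term on both sides is the "bulk insertion" part the authors gesture at, and your handling of orbifold points by a removable-singularity argument or by working equivariantly upstairs on $\Sigma\to C$ is the right way to justify the local computation.
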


\begin{proof}
Standard but incorporating the bulk insertion (see Definition \ref{d:equivalent cap}).
\end{proof}

\noindent \textbf{Composition.}  In orbifold Morse theory, a key insight due to Cho and Hong \cite[Example 5.2]{Cho-Hong} is that a broken Morse flow line may not have a unique smoothing. 
It corresponds in Equation \eqref{eq:codim1} to the fact that we have a fibre product rather than a product (see Remark \ref{rmk: weights work}).


The moduli space $\cM(c_x,c_z)$ is naturally stratified by the poset $\bA_{c_xc_z}$ defined in \eqref{eq:poset}.  In particular, it has a collection of locally closed `boundary facets' which are the strata labelled by depth one chains $c_x < c_y < c_z$, and an interior $\mathring{\cM}(c_x,c_z)$ which is the top stratum, comprising one cylindrical component and finitely many (possibly no) spherical components.

\begin{lemma}[cf. \cite{GZ21}, Proposition 2.9]\label{l:boundarycover} There is an embedding 
\[
\cM(c_x,c_y) \times_{[pt/\Gamma_y]} \cM(c_y,c_z) \to \cM(c_x,c_z)
\]
fibred over $\cM(c_x,c_y) \times \cM(c_y,c_z)$, and the images of such maps, as $y$ varies, enumerate the boundary facets of the image.
\end{lemma}

\begin{proof}

Suppose that $[u_0:\Sigma_0^{\dagger} \to X] \in \cM(c_x,c_y)$
and $[u_1:\Sigma_1^{\dagger} \to X] \in \cM(c_y,c_z)$.
For $i=0,1$, let $z_i \in \scrC_i^{\dagger}$ be the puncture that goes to $y$.
We can form the wedge $\scrC_0^{\dagger} \vee \scrC_1^{\dagger}$ by identifying $z_0$ and $z_1$.
To construct an element in $\cM(c_x,c_z)$ from $u_0, u_1$, we need to first construct a $\Gamma$-admissible cover of $\scrC_0^{\dagger} \vee \scrC_1^{\dagger}$.
Since the automorphism group of $y$ is $\Gamma_y$, there are exactly $|\Gamma_y|$ many ways to construct a $\Gamma$-admissible cover of $\scrC_0^{\dagger} \vee \scrC_1^{\dagger}$ by identifying the $\Gamma$-orbits over $z_0$ and $z_1$ and specifying a $\Gamma$-isomorphism of the asymptotic markers.
Indeed, consider $\hat{y}:\hat{S} \to X$ covering $y:S^1 \to Y$.
The domain $\hat{S}$ is a union of circles, one for each point in $\Sigma_0$ above $z_0$.
Automorphisms of $\hat{y}$ which permute the circles of $\hat{S}$ correspond to permuting how we identify the $\Gamma$-orbit above $z_0$
and the $\Gamma$-orbit above $z_1$ to form the wedge $\Sigma_0 \vee \Sigma_1$.
The group of automorphisms of $\hat{y}$ which don't permute the circles is the isotropy group of $z_0$ (equivalently of $z_1$) and corresponds to changing the second piece of data in the definition of a $\Gamma$-admissible cover (cf. Example \ref{e:innerAut}).
Denote the admissible cover of $\scrC_0^{\dagger} \vee \scrC_1^{\dagger}$ corresponding to $g \in \Gamma_y$ by $\Sigma_g^\dagger:=\Sigma_0^{\dagger} \vee_g \Sigma_1^{\dagger}$.
All together, we exactly have a map of orbispaces
\begin{align}\label{eq:boundary}
\cM(c_x,c_y) \times_{[pt/\Gamma_y]} \cM(c_y,c_z) &\to \partial^{c_xc_yc_z}\cM(c_x,c_z) \subset \cM(c_x,c_z) \\
([u_0:\Sigma_0^{\dagger} \to X], [u_1:\Sigma_1^{\dagger} \to X], g) &\mapsto u_0 \vee u_1:\Sigma_g^{\dagger} \to X
\end{align}

We take a closer look at the isotropy group of the elements in $\cM(c_x,c_y) \times_{[pt/\Gamma_y]} \cM(c_y,c_z)$
and in $\partial^{c_xc_yc_z}\cM(c_x,c_z)$.
Denote the automorphism group of $[u_i:\Sigma_i^{\dagger} \to X]$ by $\Gamma_{u_i}$.
By restricting the action to the puncture, we have a group homomorphism $\Gamma_{u_i} \to \Gamma_y$.
Then $\Gamma_y$ has a decomposition into $\Gamma_{u_0} \times \Gamma_{u_1}$-orbits. 
If $g_0,g_1 \in \Gamma_y$ are in the same $\Gamma_{u_0} \times \Gamma_{u_1}$-orbit, then 
$([u_0:\Sigma_0^{\dagger} \to X], [u_1:\Sigma_1^{\dagger} \to X], g_0)$ is isomorphic to 
$([u_0:\Sigma_0^{\dagger} \to X], [u_1:\Sigma_1^{\dagger} \to X], g_1)$, where the isomorphism is given by the element in $\Gamma_{u_0} \times \Gamma_{u_1}$ which fixes $u_0$, $u_1$ and sends $g_0$ to $g_1$.
Similarly, $u_0 \vee u_1:\Sigma_{g_0}^{\dagger} \to X$ and $u_0 \vee u_1:\Sigma_{g_1}^{\dagger} \to X$ are isomorphic for a similar reason.
Indeed, there is a bijective correspondence between isomorphisms classes of elements in $\cM(c_x,c_y) \times_{[pt/\Gamma_y]} \cM(c_y,c_z)$ that lie above $(u_0,u_1)$ with $\Gamma_{u_0} \times \Gamma_{u_1}$-orbits of $\Gamma_y$, and the correspondence respects the automorphism groups.
The same is true for elements in $\partial^{c_xc_yc_z}\cM(c_x,c_z)$ that lie above $(u_0,u_1)$.
Therefore, \eqref{eq:boundary} is an isomorphism of orbispaces.
\end{proof}

\begin{remark}[(Intrinsic to $Y$)]\label{r:Ymorphism}
    For every $\cM(c_x,c_y)$, the top stratum only has one cylindrical component so $\Sigma^\dagger$ and the maps $u, \pi_u$ only depend on the representable orbifold morphism $v:\scrC^\dagger \to Y$.
    Therefore, the top stratum of every $\cM(c_x,c_y)$ can be defined in terms of $Y$ (not using the presentation of $Y$ as a global quotient).
    By Lemma \ref{l:boundarycover}, every other stratum can be expressed as a fibre product of other moduli spaces.
    Therefore, by induction, we can see that every $\cM(c_x,c_y)$ can be defined in terms of $Y$ (without appealing to the presentation of $Y$ as a global quotient).
\end{remark}

\begin{remark}
    If $A,B,C$ are orbispaces and $f:A\to C$, $g:B \to C$ are representable morphisms between orbispaces, then we can form the fibre product $A \times_C B$. However, the universal property of $A \times_C B$ is \emph{not} that given $i:D \to A$ and $j: D \to B$ such that $f \circ i=g \circ j$, then there is a unique morphism $k:D \to A \times_C B$ such that $i$ and $j$ factor though $k$.
    The correct universal property is given in \cite[Remark 4.23]{Lerman}. Therefore, even though there are natural maps $\partial^{c_xc_yc_z}\cM(c_x,c_z) \to \cM(c_x,c_y)$, $\partial^{c_xc_yc_z}\cM(c_x,c_z) \to \cM(c_y,c_z)$ and an induced map
    $\partial^{c_xc_yc_z}\cM(c_x,c_z) \to \cM(c_x,c_y) \times_{[pt/\Gamma_y]} \cM(c_y,c_z)$, the induced map is not an isomorphism in general. Indeed, its image is $\{(u_0,u_1,id)| u_0 \in \cM(c_x,c_y), u_1 \in \cM(c_y,c_z)\}$, so it is not surjective as long as $\Gamma_y$ is non-trivial.
\end{remark}

\subsection{Integralized actions}\label{s:interalaction}

\emph{Global chart lifts for the category are constructed inductively over an integralized action, introduced here.}

This section follows \cite[Section 3]{Rez}.   Recall that we have assumed the symplectic form $[\omega_X] \in H^2(X;\bQ)$.

Following \cite[Equation (41)]{Rez}, we define an integralized action $\cA_{\mathbb{Z}}:\mathrm{Ob}(\cC(H_Y)) \to \bZ$ as follows:

The action of a formal capped orbit $c_x=(\bar{c}_x,U_x)$ equals (see Equation \eqref{eqn:action of capping} and \eqref{eq:formalaction})
\[
\cA(c_x)=\int_x H_Y(t) dt - \int \bar{c}_x^*\omega_Y   - \sum_{(g) \in \|\Gamma\|^\circ} k_{(g)}(\bar{c}_x)\cdot \val(\frak{v}_{(g)}) -U_x.
\]

For each orbit $x$ in $Y$, we have the set of actions of all formal capping discs
\[
\cA_x := \left\{ \cA(c_x) :c_x=(\bar{c}_x,U_x) \text{ is a formal capping disc of } x\right\} \subset \mathbb{R}
\]
Since $\Pi$ is a discrete subgroup containing $\omega_Y(H_2(Y,\mathbb{Z}))$, $\omega_X$ is rational and $\val(\frak{v}_{(g)}) \in \Pi$,  the set $\cA_x$ belongs to $\int_x H(t) dt  + \frac{1}{m_x} \bZ$ for some $m_x \in \mathbb{Z}$.  We pick $\varepsilon_x \in \bR$ 
so that 
\[
\cA_x + \varepsilon_x \subset \bQ.
\]
Since there are finitely many orbits, we can pick $N \in \bN$ so that
\[
 N\cdot (\cA_x + \varepsilon_x) \subset \bZ
\]
for all orbits $x$ in $Y$.
For these choices of $\epsilon_x$ and $N$, we define the associated integralized action of a formal capped oriented orbit $c_x=(\bar{c}_x,U_x)$ to be
\[
\cA_{\mathbb{Z}} (c_x) := N(\cA(c_x)+\varepsilon_x)
\]
This is integer-valued by construction. 
Similarly, for representable orbifold spheres $v$ in $Y$, we define the integralised action $\cA_{\Pi,\mathbb{Z}}$ by
\[
\cA_{\Pi,\mathbb{Z}}(v):=N\cA_{\Pi}(v)=N \left(\int v^*\omega_Y+ \sum_{(g) \in \|\Gamma\|^\circ} k_{(g)}(v)\cdot \val(\frak{v}_{(g)})\right)
\]
In practice, we will need to choose $\epsilon_x$ sufficiently small so that Lemma \ref{lem:positive} below holds.

Using a suitable cut-off function (see \cite[Equation (44)]{Rez} for the details) we can pick a Hamiltonian function 
\begin{align}\label{eq:tildeH}
\tilde{H}:X \times S^1 \to \bR
\end{align}
which is $\Gamma$-invariant, which agrees with $N\cdot H$ away from a neighbourhood of the periodic orbits of $H$, but agrees with $N\cdot (H + \varepsilon_x)$ near any orbit component of $\hat{x}(\hat{S})$. 
Then for a morphism $u: \Sigma^\dagger \to X$ from $c_x$ to $c_y$ one has
\[
\cA_{\mathbb{Z}}(c_y) - \cA_{\mathbb{Z}}(c_x) = \frac{1}{|\Gamma|}\int_u \omega_u + N\sum_{(g)} k_{(g)}(v) \val(\frak{v}_{(g)}) \, \qquad \omega_u := u^*(N\omega_X) - d(\tilde{H}_t(u) \pi_{u}^*dt)
\]
where $\pi_u:\Sigma^\dagger \to \scrC^\dagger$ is the quotient map and $k_{(g)}(v)$ is the number of orbifold points on $\scrC^\dagger$ labelled by the sector $(g)$.

The following is a variation on \cite[Lemmas 19,20]{Rez}:

\begin{lemma}\label{lem:positive}
When $\epsilon_x>0$ are chosen to be sufficiently small, the action difference quantity $E(u):= \int_u \omega_u + N\sum_{(g)}  k_{(g)}(v)\val(\frak{v}_{(g)})  $ is strictly positive on every irreducible component of a possibly-broken stable Floer cylinder $u$.  Moreover, $E:\cM(c_x,c_y) \to \mathbb{Z}$ is proper.
\end{lemma}

\begin{proof} The first term of $E(u)$ is strictly positive on any non-constant component when $\epsilon_x$ are sufficiently small (see \cite[Lemma 20 and the paragraph before Equation (40)]{Rez}).
Stability implies that any constant (necessarily spherical) component carries a non-empty set of orbifold marked points, in which case the second term is positive. Therefore, $E(u)$ is strictly positive. On the other hand, the properness of $E$ follows from the argument in \cite[Lemma 19]{Rez} that the integralized action difference is Lipschitz with respect to the usual action difference.
\end{proof}

We fix once and for all a choice of $\epsilon_x$ (for all $x$) and $N$ so that Lemma \ref{lem:positive} holds.

We will need to consider Hermitian line bundles on $\Sigma^\dagger$ when we construct global charts.
Concerning the analytic aspect, it is easier to replace $\omega_X$ by another cohomologous closed $2$-form.

The following is a variant of \cite[Lemma 16]{Rez}:

\begin{lemma}\label{l:Omega}
There is a closed $\Gamma$-invariant  2-form $\Omega$ on $X$ with the properties that
\begin{enumerate}
\item $\Omega$ is cohomologous to $\omega_X$, hence represents a rational cohomology class;
\item  for any $1$-periodic orbit $x:S^1 \to Y$ and the associated $\hat{x}:\hat{S} \to X$,
$\Omega$ vanishes in a small open neighbourhood $U$ of any component of $\hat{x}(\hat{S}) \subset X$ and tames $J_X$ away from such open sets;
\item $\int_{\Sigma} \bar{c}_{\hat{x}}^*\omega_X = \int_{\Sigma} \bar{c}_{\hat{x}}^*\Omega$ for every $\bar{c}_{\hat{x}}:\Sigma \to X$ covering a capping $\bar{c}_x$ in $Y$.
\end{enumerate}
\end{lemma}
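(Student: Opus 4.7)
The plan is to construct $\Omega$ as $\omega - d\alpha$ for a carefully chosen $\Gamma$-invariant 1-form $\alpha$ on $X$.  Condition (1) will then be automatic from exactness of the perturbation combined with the rationality hypothesis on $[\omega]$; condition (2) decomposes into a vanishing statement on a small open neighbourhood $U$ of the orbits and a pointwise taming bound on $X\setminus U$; and condition (3) will reduce via Stokes' theorem to the requirement that $\alpha$ restrict to zero pointwise along every orbit component $\hat{x}(\hat{S})$.

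First I would produce $\Gamma$-invariant local primitives near the orbits.  The $1$-periodic orbit components $\hat{x}(\hat{S})\subset X$ are finite disjoint unions of isotropic circles, permuted by $\Gamma$ within each $\Gamma$-orbit.  A $\Gamma$-equivariant isotropic Weinstein neighbourhood theorem, combined with averaging over $\Gamma$, supplies around each $\Gamma$-orbit of orbit components a $\Gamma$-invariant open set $V$ symplectomorphic to a tubular model $T^*S^1 \times \bR^{2n-2}$, on which $\omega = d\lambda$ for the standard Liouville form $\lambda = p\,d\theta + \sum_i x_i\,dy_i$.  The crucial property is that $\lambda$ vanishes identically on the zero section, so $\lambda|_{\hat{x}(\hat{S})} = 0$.

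Next I would globalise by cutoff.  Pick $\Gamma$-invariant open sets $U \Subset V$ and a $\Gamma$-invariant smooth cutoff $\chi$ equal to $1$ on $U$ and supported in $V$, and set $\alpha := \chi\lambda$, extended by zero and summed over disjoint $\Gamma$-orbits of orbit components.  Define $\Omega := \omega - d\alpha$; it is closed, $\Gamma$-invariant, and cohomologous to $\omega$, which gives (1).  On $U$ we have $\chi=1$ and $d\chi=0$, so $d\alpha = d\lambda = \omega$ and hence $\Omega|_U = 0$, giving the vanishing part of (2).  For condition (3), since $\alpha = \chi\lambda$ restricted to $\hat{x}(\hat{S})$ is $1\cdot 0 = 0$, Stokes' theorem yields
\[
\int_\Sigma \omega - \int_\Sigma \Omega = \int_\Sigma d\alpha = \int_{\partial\Sigma} \bar{c}_{\hat{x}}^{\,*}\alpha = 0.
\]

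The main obstacle is verifying the pointwise taming $\Omega(v,Jv) > 0$ on the shell $V \setminus U$, where
\[
\Omega = (1-\chi)\,\omega \;-\; d\chi \wedge \lambda.
\]
Near $\partial U$ the prefactor $(1-\chi)$ approaches zero while $d\chi \wedge \lambda$ is generically of the same order as $\omega$, so a naive cutoff destroys positivity.  The hard part will be this taming verification.  The plan is to choose the Weinstein tubular model compatibly with the given $J$ (adjusting the symplectic splitting of $V$ to respect $J$ as far as possible, and refining $\lambda$ by a closed $1$-form if needed), and to take $\chi$ depending only on the radial Weinstein coordinate, so that a direct computation in the normal form controls the sign of $d\chi \wedge \lambda$ paired against $(v,Jv)$.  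Shrinking $V$ relative to $U$, together with these choices, should preserve $\Omega(v,Jv) > 0$ throughout the shell.
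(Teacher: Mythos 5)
Your construction $\Omega=\omega-d(\chi\lambda)$ is essentially the same as the paper's, just phrased explicitly rather than cohomologically: the paper invokes $H^2(X,U)\cong H^2(X)$ (citing \cite[Lemma 16]{Rez}) to lift $[\omega]$ to a relative class and represent it by a form vanishing on $U$, which amounts to choosing a local primitive $\beta_0$ of $\omega$ on $U$ with zero periods over each orbit circle, extending by a cutoff, and setting $\Omega=\omega-d\beta$. Your choice of a Weinstein/Liouville primitive with $\lambda|_{\hat x(\hat S)}=0$ does exactly this, and your verification of (1), (3), and the vanishing part of (2) is correct. The paper also remarks that $\Gamma$-invariance can be enforced by averaging, whereas you build it in equivariantly from the start; both are fine.

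The genuine gap is, as you suspected, the taming estimate on the shell $V\setminus U$, and your proposed fix does not close it. Two issues. First, you cannot literally ``choose the Weinstein tubular model compatibly with the given $J$'': the model complex structure $J_0$ is flat (translation-invariant in the Weinstein chart), while $J$ has curvature, so the best you can do is match $J=J_0$ along the orbit and hence get $\|J-J_0\|=O(\rho)$. Second, and more seriously, even granting the match along the orbit, the resulting error term is not dominated. On the shell, writing $\psi=1-\chi$ so that $\Omega=\psi\,\omega-\chi'\,d\rho\wedge\lambda$, one can check (e.g.\ by a $(1,1)$-form computation in the flat model) that $d\rho\wedge\lambda(v,J_0 v)\ge 0$, but this is only semi-positivity — a rank-two $(1,1)$-form — and semi-positivity is \emph{not} a $C^0$-open condition in $J$. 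The perturbation $-\chi'\,d\rho\wedge\lambda(v,(J-J_0)v)$ can be of order $|\chi'|\cdot\rho^2\,|v|^2$ with an unfavourable sign, and the only positive term available to absorb it is $\psi\,\omega(v,Jv)\gtrsim\psi|v|^2$, which degenerates precisely at the inner boundary $\rho\to\partial U$ where $\psi\to 0$. One cannot arrange $\psi/|\chi'|$ to be bounded below on the shell for a cutoff that actually hits $\psi=0$, so shrinking $U$ (or ``refining $\lambda$ by a closed $1$-form'', which is a gauge transformation and doesn't touch $d\chi\wedge\lambda$) does not rescue the pointwise bound.

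So the honest verdict is: your argument reproduces everything except the taming, which is the one nontrivial point. The paper itself does not prove taming but delegates to \cite[Lemma 16]{Rez}; a self-contained write-up would need to reproduce that argument rather than the heuristic you sketch. Note also that since what is actually used downstream (Lemma \ref{lem:positive}) is only $\int_u\Omega_u>0$ for non-constant $J$-holomorphic components not contained in $U$, one could in principle get away with a weaker integral positivity rather than pointwise taming on the full shell — but that too requires an actual estimate, which the cited reference supplies.
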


\begin{proof} 
Following \cite[Lemma 16]{Rez}, we get a closed $2$-form $\Omega$ satisfying the itemized properties.
Group-invariance can be achieved by averaging.
\end{proof}

We will choose the open set $U$ sufficiently small so that no non-constant finite energy Floer cylinder of $H/\Gamma$ has image in $U/\Gamma$.  The existence of such a $U$ follows from \cite[Lemma 17]{Rez}.

As a consequence of Lemma\ref{l:Omega}(3), for any morphism $u$ from $c_x$ to $c_y$, we have
\[
\int u^*\omega_X=\int u^*\Omega.
\]
Therefore, we have
\[
\cA_{\mathbb{Z}}(c_y) - \cA_{\mathbb{Z}}(c_x) = \frac{1}{|\Gamma|}\int_u \Omega_u + N\sum_{(g)} k_{(g)}(v) \val(\frak{v}_{(g)}) \, \qquad \Omega_u := u^*(N\Omega) - d(\tilde{H}_t(u) \pi_{u}^*dt)
\]

\begin{lemma}\label{l:additivity}
If $u$ is a morphism obtained by gluing morphisms $u=u_1\# u_2$ then the integralized action difference for $u$ is the sum of those for the $u_i$, in particular is strictly greater than that for either $u_i$. 
\end{lemma}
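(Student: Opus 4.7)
The plan is to reduce the statement to a telescoping identity combined with the strict positivity afforded by the previous lemma. Write $u = u_1 \# u_2$ with $u_1$ a morphism from $c_x$ to $c_y$ and $u_2$ a morphism from $c_y$ to $c_z$. By the compatibility condition \eqref{eq:gluesurface} defining morphisms, $c_x \# u_1 \sim c_y$ and $c_y \# u_2 \sim c_z$, and concatenation of surfaces together with associativity of the connect-sum of formal cappings then yields $c_x \# u \sim c_z$. Hence the glued morphism $u$ is indeed a morphism from $c_x$ to $c_z$, and the action differences telescope formally as
\[
\widetilde{\scrA}(c_x) - \widetilde{\scrA}(c_z) = \bigl(\widetilde{\scrA}(c_x) - \widetilde{\scrA}(c_y)\bigr) + \bigl(\widetilde{\scrA}(c_y) - \widetilde{\scrA}(c_z)\bigr).
\]

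Next I would re-express this identity in the geometric form given just before the statement, namely $\widetilde{\scrA}(c_x) - \widetilde{\scrA}(c_y) = \int_u \Omega_u + N \cdot \val(\frak{v}) \cdot k(C)$, and verify the two ingredients are additive under the gluing operation $u_1 \# u_2 = u$. The domain $C$ of $u$ is built by attaching $C_1$ and $C_2$ at a cylindrical end corresponding to $c_y$; this attaching adds no new orbifold marked points, so $k(C) = k(C_1) + k(C_2)$. The two-form $\Omega_u$ has the form $u^*(N\Omega) - d(\tilde{H}_t(u)\,dt)$ and, since $\Omega$ and $\tilde{H}$ are fixed globally, its integral over $C$ decomposes as the sum of the integrals over $C_1$ and $C_2$. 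Combining these two additivities gives the claimed summation.

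Finally, strict positivity is immediate from Lemma \ref{lem:positive}: each summand $\int_{u_i} \Omega_{u_i} + N \cdot \val(\frak{v}) \cdot k(C_i)$ is a sum of strictly positive contributions over the irreducible components of $u_i$ (and the $u_i$ are non-empty stable curves, so at least one such component exists). Consequently the sum is strictly greater than either individual summand, which is the remaining assertion.

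There is no real obstacle here — the only subtle point to treat with care is that $k$ is strictly additive under gluing, which requires one to note that the gluing takes place at an end of a cylindrical component rather than at an orbifold marked point, so no orbifold marked point is lost or created. Once this is observed, the rest is bookkeeping plus the already-established Lemma \ref{lem:positive}.
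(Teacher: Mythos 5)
Your proof is correct and follows essentially the same approach the paper uses (the paper's own proof is the one-line observation that additivity holds by construction, i.e., the telescoping identity for $\widetilde{\scrA}$ evaluated on capped orbits). Your additional verification that the geometric expression $\int_u \Omega_u + N\cdot\val(\frak{v})\cdot k(C)$ decomposes additively is a redundant but harmless consistency check, and the strict positivity step via Lemma \ref{lem:positive} is exactly the intended argument.
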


\begin{proof}
Additivity holds by construction.
\end{proof}

 The construction of a global chart lift for the flow category $\cC(H_Y)$ will be inductive, where the induction is via integralized action. That this gives a valid induction scheme follows from Lemma \ref{lem:positive} and \ref{l:additivity}, which imply that every moduli space occuring in a boundary stratum will already have been encountered.

\subsection{Global charts for orbifold Floer cylinders}\label{s:globalchart_cylinder}

\noindent \emph{Both  \cite{BX,Rez} construct Hamiltonian Floer theory via the global chart construction from \cite{AMS1}, which treated only genus zero curves. We use the more general construction from \cite{AMS2}, which allows one to work with curves of arbitrary genus (as inevitably appear in the orbifold setting).}

Fix a closed $(X,\omega_X)$, a taming $J_X$, and a time-dependent Hamiltonian $H: X \times S^1 \to \bR$, with $J_X$ and $H$ both $\Gamma$-equivariant and $H$ non-degenerate.

Recall that the domains relevant to Hamiltonian Floer theory have both cylindrical and spherical components. On each cylindrical component we fix a geodesic half-circle between the two punctures in order to write down the Floer equation (see Definition \ref{d:orbicylinder} and the definition of $\cM(c_x,c_y)$).

In order to construct a global chart for $\cM(c_x,c_y)$, we need to first describe a moduli space 
of equivariant embeddings from admissible covers of genus zero orbifold curves $\scrC^\dagger$ to $\bP(V)$.

Recall $\scrF_{0,\bfm}(V^*) \subset \cK_{\bfm}([\bP(V)/\Gamma],V^*)$ from Equation \ref{eq:condition}.
Let $\scrF_{0,2+h}(V^*)$ be the union of  $\scrF_{0,\bfm}(V^*)$ over those $\bfm \in \cH_{2+h}$ such that all the marked points other than the first two are labelled by non-trivial conjugacy classes of $\Gamma$. 
In particular, we have $2+h$ marked points, of which the first two $z_-,z_+$ are distinguished. Under Hypothesis \ref{h:sector}, we assume that the non-distinguished marked points are orbifold points labelled by a non-trivial sector and the distinguished marked points can be smooth points or orbifold points.

Let $\scrF_{0,2+h}^\dagger(V^*)$ be the moduli of equivariant holomorphic maps $u:\Sigma^\dagger \to \bP(V)$ such that the underlying map $\bar{v}:\scrC \to [\bP(V)/\Gamma]$ lies in $\scrF_{0,2+h}(V^*)$.
In other words, $\bar{v}$ is obtained by descending $u$ to the quotient $v:\scrC^\dagger \to [\bP(V)/\Gamma]$ and then forgetting the asymptotic markers on the cylindrical components.
Alternatively, we can first forget the asymptotic markers to obtain $\bar{u}:\Sigma \to \bP(V)$ and then descend to $\bar{v}$.
The requirement that $\bar{v} \in \scrF_{0,2+h}(V^*)$ implies that $\bar{u}$ is an embedding and hence $u$ is an embedding.
Therefore, any element in $\scrF_{0,2+h}^\dagger(V^*)$ is automorphism-free.

We want to describe the forgetful map $\scrF_{0,2+h}^\dagger(V^*) \to \scrF_{0,2+h}(V^*)$. We start with an example.

\begin{example}\label{e:realblowup}
    Consider the map
    \[
    f:E=\{x,y,z \in \mathbb{C}: xy=z\} \to \mathbb{C}_z
    \]
which is a local model of a nodal degeneration of a curve.
Consider the $\mathbb{Z}/ r\mathbb{Z}$-action rotating $x,y$ by
$x \mapsto e^{2\pi\sqrt{-1}/r} x$ and $y \mapsto e^{-2\pi\sqrt{-1}/r} y$.
Denote the quotient stack by $[E']$ and its coarse moduli space by $E$. More explicitly, we have
\[
E'=\{x',y',z' \in \mathbb{C}: x'y'=z'^r\}
\]
where the quotient map is given by $x'=x^r$, $y'=y^r$ and $z'=z=xy$.
The map 
\[
f':[E'] \to \mathbb{C}_{z'}
\] 
is a local model of the degeneration of a curve to a nodal one with a $\mathbb{Z}/ r\mathbb{Z}$-automorphism at the node.

Let $\theta\in [0,2\pi)$ and consider the ray $z'=Re^{\sqrt{-1}\theta}$ for $R> 0$.
We can parametrize $\{x'y'=(Re^{\sqrt{-1}\theta})^r\}$ by $x'=R^re^{s+\sqrt{-1}(t+r\theta)}$ and $y'=e^{-s-\sqrt{-1}t}$.
The $\{t=0\}$ locus, when $R$ goes to $0$, converges to $\{arg(x')=r \theta\}$ on $\{y'=0, z'=0\}$ and $\{arg(y')=0\}$ on $\{x'=0, z'=0\}$.
In this way, we can associate geodesic labels on the components of the curve $\scrC_0:=f'^{-1}(0)$ by starting with an angle $\theta$.
Two geodesic labels are the same if and only if the angles $\theta, \theta'$ differ by an integer multiple of $2\pi/r$.
For any $z'$, $\Sigma_{z'}:=f^{-1}(z')$ is a uniformization chart of $\scrC_{z'}:=f'^{-1}(z')$.
If we consider the ray $z=Re^{\sqrt{-1}\theta}$ upstairs and parametrize by $x$ and $y$ instead, the geodesic labels associated to $\theta$ on $\Sigma_0$ will be $\{arg(x)=\theta\}$ on $\{y=0, z=0\}$ and $\{arg(y)=0\}$ on $\{x=0, z=0\}$.
It means that there is a bijective correspondence between $\theta \in [0,2\pi)$ and geodesic labels on the components of $\Sigma_0$ up to simultaneous $S^1$ rotation of the components.

This implies that the real blow-up at $\{z=0\}=\{z'=0\}$ parametrizes a curve $\Sigma_z$ together with geodesic labels on the components of $\Sigma_z$ up to simultaneous $S^1$ rotation of the components.
\end{example}

Now we consider the general case.
Each irreducible component of $\partial \scrF_{0,2+h}(V^*)$, the boundary of $\scrF_{0,2+h}(V^*)$, corresponds to a degeneration of the domain $\scrC$.
Let $\partial^{cyl}\scrF_{0,2+h}(V^*)$ be the union of the irreducible components of $\partial \scrF_{0,2+h}(V^*)$ which correspond to a degeneration of the cylindrical component to two cylindrical components (so along the intersection of $r$ many different such irreducible boundary components, the domain curve of a generic element has $r+1$ cylindrical components).
Let $Bl(\scrF)_{0,2+h}(V^*)$ be the real blow-up of $\scrF_{0,2+h}(V^*)$ along $\partial^{cyl}\scrF_{0,2+h}(V^*)$.
By Lemma \ref{lem:normal crossing}, the boundary of $\scrF_{0,2+h}(V^*)$ is a normal crossing divisor so $\partial^{cyl}\scrF_{0,2+h}(V^*)$ is a normal crossing divisor as well.
It implies that $Bl(\scrF)_{0,2+h}(V^*)$ is naturally a smooth manifold  with corners.

\begin{lemma}[cf. \cite{BX} Lemma 5.15]\label{l:realblowup}
    The forgetful map $\scrF_{0,2+h}^\dagger(V^*) \to \scrF_{0,2+h}(V^*)$ factors through $\scrF_{0,2+h}^\dagger(V^*) \to Bl(\scrF)_{0,2+h}(V^*)$ as smooth manifolds with boundary and corners, and the latter map is an $S^1$-bundle. 
\end{lemma}

\begin{proof}
    Suppose that $\phi$ is a generic point on an irreducible component of $\partial^{cyl}\scrF_{0,2+h}(V^*)$ so $\scrC_{\phi}$ has two irreducible cylindrical components.
    We denote the two distinguished marked points of $\scrC_{\phi}$ by $z_-$, $z_+$, and the nodal point by $z_0$.
    We write $\phi$ as $u=u_0 \vee u_1: \Sigma_\phi=\Sigma_{\phi_0} \vee \Sigma_{\phi_1} \to \bP(V)$.
    Let $p \in \Sigma_{\phi_0} \cap \Sigma_{\phi_1}$ be a lift of $z_0$.
    The isotropy group $\Gamma_p$  of $p$ is the same as the isotropy group of the node of $\scrC_{\phi}$ and therefore is cyclic (see Remark \ref{r:isotropy}), which we denote by $\langle g \rangle$.

    There is a $T_\scrC:=S^1 \times S^1$ family of marked orbicylinder structures we can put on $\scrC_{\phi}$ (see Definition \ref{d:orbicylinder}, item 3)  given the fact that $\scrC_{\phi}$ has two irreducible cylindrical components and the two $S^1$'s are $(T_{z_-}\scrC_{\phi_0} \setminus \{0\})/\mathbb{R}_{>0} $ and $(T_{z_0}\scrC_{\phi_1} \setminus \{0\})/\mathbb{R}_{>0} $ respectively. Here we use that $v$ is an embedding to avoid encountering any accidental isomorphism between different choices of orbicylinder structure in this $T_{\scrC}$ family.
    For each marked prestable orbicylinder structure $\scrC^\dagger$, there are $ord(g)$ many admissible covers $\Sigma^\dagger$ covering $\scrC^\dagger$.
    Indeed, given $l_0' \in (T_{z_-}\scrC_{\phi_0} \setminus \{0\})/\mathbb{R}_{>0} $ and $l_1 \in (T_{z_0}\scrC_{\phi_1} \setminus \{0\})/\mathbb{R}_{>0} $, $l_0'$ determines an asymptotic marker $l_0 \in (T_{z_0}\scrC_{\phi_0} \setminus \{0\})/\mathbb{R}_{>0}$.
    There are $ord(g)$ many possible lifts of $l_0$ at $p$ and another  $ord(g)$ many possible lifts of $l_1$ at $p$.
    Any pair of lifts $(\tilde{l}_0,\tilde{l}_1)$ at $p$ of $(l_0,l_1)$ determines a base-point preserving equivariant isomorphism between the asymptotic markers, namely, the one sending $\tilde{l}_0$ to $\tilde{l}_1$.
    Two isomorphisms are the same if and only if two pairs of lifts differ by a simultaneous action of $\langle g \rangle$.
    Therefore, in total, the family of admissible cover structures on $\scrC$ is $T_{\Sigma}=S^1 \times S^1/\{(t_0,t_1)=(t_0+\frac{1}{ord(g)}, t_1+\frac{1}{ord(g)})\}$ and the natural map $T_{\Sigma} \to T_\scrC$ is a $ord(g)$-fold covering.

    After making an identification between each of $(T_{z_0}\scrC_{\phi_0} \setminus \{0\})/\mathbb{R}_{>0}$, $(T_{z_0}\scrC_{\phi_1} \setminus \{0\})/\mathbb{R}_{>0} $ and $S^1$, we can make sense of the angle difference of the two asymptotic markers at $z_0$, which is measured by $\theta_0-\theta_1 \in S_{\scrC}=S^1$ for $(\theta_0,\theta_1) \in T_\scrC$.
    The composition $T_{\Sigma} \to T_{\scrC} \to S_{\scrC}$ factors through $T_{\Sigma} \to S_{\Sigma} \to S_{\scrC}$, where $S_{\Sigma}=S^1$ measures the angle difference of the asymptotic markers $\tilde{l}_0,\tilde{l}_1$ at $p$, the first map is $(t_0,t_1) \mapsto t_0-t_1$ and the second map is multiplication by $ord(g)$.

    Example \ref{e:realblowup} is a universal local model of a degeneration which develops an orbifold node with isotropy group $\mathbb{Z}/r\mathbb{Z}$. 
    It explains how the circle fibre of a real blow-up is identified with $S_{\Sigma}$.
    Since $\partial^{cyl}\scrF_{0,2+h}(V^*)$ is a normal crossing divisor, the local model at a stratum of codimension $s$ will be a product of $s$ many such local models.
    In this case, the family of marked orbicylinder structures will be $T_{\scrC}=(S^1)^{s+1}$.
    The angle difference map will be an $S^1$-bundle $T_{\scrC} \to (S^1)^s=:S_{\scrC}$ given by $(\theta_0,\dots, \theta_s) \mapsto (\theta_0-\theta_1, \dots, \theta_{s-1}-\theta_s)$.
    The space of angle differences of the lifts will be $S_{\Sigma}=(S^1)^s$ and the natural map $S_{\Sigma} \to S_{\scrC}$ is a $ord(g_1) \dots ord(g_s)$-fold covering, where $\langle g_i \rangle$ is the isotropy group of the $i^{th}$-node.
    The family $T_{\Sigma}$ of admissible covers will be a $ord(g_1) \dots ord(g_s)$-fold covering of $T_{\scrC}$, and an $S^1$-principle bundle of $S_{\Sigma}$.
    Indeed, when $s>1$, an easier way to think about $T_{\Sigma}$ is that it is the fibre product of the maps $S_{\Sigma} \to S_{\scrC}$ and $T_{\scrC} \to S_{\scrC}$.
    Alternatively, we can think of fibres of $T_{\Sigma} \to S_{\Sigma}$ (which are also fibres of $T_{\scrC} \to S_{\scrC}$) as the circle of real directions at the tangent space at $z_-$.

    By running this local model in the universal family, we obtain an $S^1$-bundle $\scrF_{0,2+h}^\dagger(V^*) \to Bl(\scrF)_{0,2+h}(V^*)$, which fibrewise is given by $T_{\Sigma} \to S_{\Sigma}$.
\end{proof}

Let $c_x=(\bar{c}_x,U_x)$ and $c_y=(\bar{c}_y, U_y)$  be objects of the flow category, i.e. equivalence classes of formal capped marked periodic orbits (see Section \ref{s:orbifoldFloer}).
Similar to \eqref{eq:closedmodulidecom2}, we have a decomposition
\begin{align}
\cM(c_x,c_y)=\cup_{\bfm} \cM_{\bfm}(c_x,c_y)=\cup_{\bfm} \cup_{\epsilon} \cM_{\bfm,\epsilon}(c_x,c_y)
\end{align}
where $\bfm$ is the Hurwitz orbit of the domain orbifold curve and $\epsilon$ is an equivariant homotopy class relative to $\bar{c}_x$ and $\bar{c}_y$.
In view of Hypothesis \ref{h:sector}, we will only consider Hurwitz orbits $\bfm$ such that the orbifold marked points are labelled by the corresponding inertia sectors.
We are going to construct a global chart for the space of orbifold Floer cylinders $\cM_{\bfm,\epsilon}(c_x,c_y)$ following the template used to construct a global chart for closed prestable orbifold genus zero curves (Section \ref{s:globalchartclosed}). 

This time, we use $\scrF^\dagger_{0,2+h}(V^*)$ instead of $\scrF_{0,2+h}(V^*)$.
Denote the universal domain curve of $\scrF^\dagger_{0,2+h}(V^*)$ by $\scrC^{orb}$ and denote the stable marked cylinder $\scrC^{orb}|_{\phi}$ by $\Sigma^\dagger_{\phi}$.

We have a fixed $\Gamma$-equivariant Hermitian line bundle $L$ on $X$ such that the curvature $2$-form is $\Omega$ as constructed in Lemma \ref{l:Omega}. 
Recall the Hamiltonian function $\tilde{H}$ from \eqref{eq:tildeH}.
For any $\Gamma$-equivariant smooth  map $u:\Sigma^\dagger_{\phi} \to X$, let $u^*L_{\tilde{H}}$ be the Hermitian line bundle on $\Sigma^\dagger_{\phi}$ whose curvature $2$-form is given by 
\begin{align}\label{eq:cuv2form}
u^*\Omega+ d(\tilde{H}(u)\pi_{u}^*dt)
\end{align}
where $\pi_u:\Sigma^\dagger_{\phi} \to \scrC^\dagger$ is the quotient map.
Let $k$ be a positive integer. 
If $k$ is large, then for any $\Gamma$-equivariant smooth  map $u:\Sigma^\dagger_{\phi} \to X$ representing an element in $\cM_{\bfm, \epsilon}(c_x,c_y)$, the Hermitian line bundle $L_u:=(\omega_{log} \otimes u^*L_{\tilde{H}}^{\otimes 3})^{\otimes k}$ will satisfy $H^1(L_u)=0$.
Therefore, there is a uniform large $k$ such that $H^0(L_u)$ is independent of $u \in \cM(c_x,c_y)$ as a $\Gamma$-representation.
Let $V$ be the $\Gamma$-representation such that $V^* \simeq H^0(L_u)$ and denote the corresponding $\scrF^\dagger_{0,2+h}$ space by $\scrF^\dagger$.
We take a finite dimensional approximation scheme $\{W_{\nu}\}$ for  the infinite-dimensional $\Gamma$-representation  $\omega_{\scrC^{orb}/\scrF^\dagger}\otimes TX$ such that $\{V_{\nu}:=W_{\mu}^{\Gamma}\}$ is a finite dimensional approximation for   $(\omega_{\scrC^{orb}/\scrF^\dagger}\otimes TX)^{\Gamma}$.
We can form a space $(\scrF^\dagger_{0,2+h})'$ similar to $\scrF'$ (see Equation \ref{eq:double}).

 The group $G$ is $U_{\Gamma}(V)$.
The pre-thickening $\scrT$ comprises tuples $( \phi,u,e,F)$ where 
\begin{enumerate}
\item $\phi \in \scrF^\dagger_{0,2+h}$ such that $\Sigma^{\dagger}$ has pure Hurwitz orbit $\bfm$; 
\item $u: \Sigma^\dagger_{\phi} \to X$ is a smooth $\Gamma$-equivariant map representing the class $\epsilon$;
\item  $e\in V_{\mu}$;
\item $F:H^0(L_u) \to V^*$ is an $\Gamma$-equivariant isomorphism such that the matrix $H_F$ of inner products of $\{F^{-1}(f_j)\}$ has positive eigenvalues
\end{enumerate}
such that (i)  the analog of \eqref{eqn:diagonal_condition} is satisfied with $(\scrF^\dagger_{0,2+h})'$ replacing $\scrF'$, (ii) 
$u$ converges exponentially fast at punctures on cylindrical components to Hamiltonian orbits (where the punctures over $z_-$ and $z_+$ are mapped to $x$ and $y$ respectively); (iii)  such that the asymptotic markers align with the starting point of the orbits, and (iv) which solve a  Floer equation which we write in shorthand as (recall that $v$ is the corresponding map to $Y$ induced by $u$)
\begin{equation} \label{eqn:Floer equation1}
\left. \begin{aligned}
(dv - X_{H/\Gamma} \otimes dt)^{0,1}_{J/\Gamma} + \lambda_{\mu}(e) = 0 \\ (dv)^{0,1}_{J/\Gamma} + \lambda_{\mu}(e) = 0 \end{aligned}\right \} \ \mathrm{for} \ \begin{cases} v & \mathrm{cylindrical} \\ v & \mathrm{spherical} \end{cases}
\end{equation}
(cf. the previously introduced  Cauchy-Riemann equation \eqref{eqn:CR}).

The \emph{obstruction bundle} $E \to \scrT$ has fibre $H \oplus V_{\mu} \oplus \scrH$.  The section $s$ is defined by
\[
(\phi,u,e,F) \mapsto (\Phi^{-1}(\phi,\phi_F), e, exp^{-1}H_F).
\]

\begin{lemma}\label{l:mfd}
 $\scrT$ is a topological manifold in a neighbourhood of $s^{-1}(0)$ for sufficiently large $k$ and $\mu$.
\end{lemma}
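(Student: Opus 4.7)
The plan is to reduce to the non-equivariant case treated in \cite{AMS2,BX,Rez} and then apply the standard fact that the fixed locus of a locally linear finite group action on a topological manifold is itself a topological manifold.

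First I would introduce an auxiliary \emph{pre-thickening} $\scrT^{pre}$, defined by exactly the same data $(\phi,u,e,F)$ and same defining equations as $\scrT$, except that (i) the $\Gamma$-equivariance requirement on $u$ is dropped, and (ii) the perturbation datum is allowed to range over the full representation $W_\mu$ rather than only its $\Gamma$-trivial summand $V_\mu = W_\mu^\Gamma$. The Floer equation \eqref{eqn:Floer equation1} is interpreted upstairs on the admissible cover $\Sigma$: on cylindrical components one uses $(du-X_H\otimes \pi_u^*dt)^{0,1}_J + \lambda_\mu(e) = 0$, on spherical ones the perturbed Cauchy-Riemann equation, and the finite-energy/exponential-decay conditions at punctures are as usual.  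The domain moduli space $\widetilde{\scrF}_{0,2+h}(d)$ is already a smooth manifold with corners, and the choice of log canonical plus $u^*L$ sections realises all prestable domains projectively for large $k$.

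Next I would invoke the main technical result of \cite{AMS2}, adapted to Floer cylinders exactly as in \cite{BX,Rez}: for sufficiently large $k$ and $\mu$, $\scrT^{pre}$ is a topological manifold in a neighbourhood of $(s^{pre})^{-1}(0)$. Concretely, near any point of the vanishing locus the combined linearisation (of the Floer equation together with the framing/diagonal/Hermitian conditions, against the perturbation direction $W_\mu$) is surjective, and the resulting chart is given by a topological implicit function theorem with parameters. The exponential decay estimates needed at the punctures are the standard ones, and the bulk orbifold marked points are handled because at those points the constraint is fundamental-class-codimensional in $IY$, so they impose no further transversality conditions on the local model.

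The group $\Gamma$ then acts on $\scrT^{pre}$ by postcomposition on $u$ and via its action on $W_\mu$ (with $\phi$ and $F$ fixed, since $\scrF$ and the line bundle data are already $\Gamma$-invariant by construction). By inspection of the defining equations this action is continuous, and $\scrT$ is exactly the fixed locus $(\scrT^{pre})^\Gamma$: a fixed point of the action has $u$ equivariant, and the section $e$ lands in $V_\mu = W_\mu^\Gamma$.

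The key remaining step, and the main obstacle, is to promote the local chart for $\scrT^{pre}$ near an equivariant $u$ to a $\Gamma$-equivariant chart, so that the $\Gamma$-action is locally linear. For this I would rerun the implicit function theorem construction of \cite{AMS2} $\Gamma$-equivariantly: all ingredients (the domain $\scrC$, the line bundle $L_u$, the approximation scheme $W_\mu$, the obstruction bundle $H\oplus W_\mu\oplus\scrH$, the section $s^{pre}$, and the ambient functional-analytic Sobolev completions for the Floer equation) are naturally $\Gamma$-representations or $\Gamma$-equivariant bundles, and the surjectivity of the linearisation is a $\Gamma$-equivariant statement. Averaging the implicit function theorem splitting over $\Gamma$ gives a $\Gamma$-equivariant local homeomorphism from an open $\Gamma$-invariant neighbourhood of $0$ in a finite-dimensional $\Gamma$-representation onto a neighbourhood of $u$ in $\scrT^{pre}$. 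Given local linearity, the classical theorem on fixed loci of locally linear finite group actions on topological manifolds (see e.g.\ Bredon) implies that $\scrT = (\scrT^{pre})^\Gamma$ is itself a topological manifold near $s^{-1}(0)$, completing the proof.
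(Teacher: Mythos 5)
Your proposal is correct in spirit but takes a genuinely different route from the paper's own proof of Lemma~\ref{l:mfd}. The paper's proof of \emph{this} lemma is a direct argument: it observes that once $\mu$ is large enough the obstruction space $V_\mu$ surjects onto the cokernel of the linearised $\overline{\partial}$-operator at every Floer solution (hence by semicontinuity in a neighbourhood of $s^{-1}(0)$), and then appeals to the classical gluing analysis of \cite{AMS1,AMS2} for the topological manifold structure. Your approach --- introduce a non-equivariant pre-thickening $\scrT^{pre}$, establish it is a topological manifold by the known non-equivariant theory, then realise $\scrT$ as the fixed locus of a locally linear $\Gamma$-action --- is instead the approach the paper itself uses for the \emph{analogous} lemma in the closed-curve case (the manifold statement following Proposition~\ref{prop:chart for closed curves} in Section~\ref{s:globalchartclosed}). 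So both strategies appear in the paper, just attached to different lemmas; yours buys the reduction to the existing non-equivariant literature, while the paper's argument here avoids the issues around equivariant local charts entirely.

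One subtlety worth flagging (inherited from the paper's own closed-curve proof, so not a mistake particular to you): the description of $\scrT^{pre}$ as obtained from $\scrT$ by "dropping $\Gamma$-equivariance of $u$ and allowing $e\in W_\mu$, keeping the same defining equations" is not quite well-posed as you stated it, because $F$ in the definition of $\scrT$ is a frame for $H^0(L_u)^\Gamma$, and when $u$ is not $\Gamma$-equivariant the bundle $L_u=(\omega_{\scrC/\scrF}(\sum p_i)\otimes u^*L)^k$ does not carry a $\Gamma$-action, so $H^0(L_u)^\Gamma$ is meaningless. To make the reduction precise one should either take framings of the full $H^0(L_u)$ with a larger unitary group $U(D+1)$ --- in which case $\scrT$ is not literally the $\Gamma$-fixed locus of $\scrT^{pre}$ but is related to it by an equivalence of global charts --- or give a separate argument that the framing data near the $\Gamma$-equivariant locus can be organised equivariantly. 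Likewise, your appeal to "averaging the implicit function theorem splitting" to get local linearity is the right idea, but since the AMS2 chart is produced by a \emph{topological} implicit function theorem one should instead argue that all the input data (linearised operator, obstruction space, approximation scheme) is equivariant and that the construction can be performed equivariantly from the start, rather than averaged after the fact. Both points are addressable, and the overall strategy is sound, but they require more care than a one-line citation.
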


\begin{proof}
Once $\mu$ is sufficiently large the obstruction space surjects to the cokernel of the $\overline{\partial}$-operator at all Floer solutions, and hence in a neighbourhood of $s^{-1}(0)$. The result then follows from classical gluing analysis, compare to \cite{AMS1,AMS2}.
\end{proof}

\begin{prop}\label{p:singlechart}
For sufficiently large $k$ and $\mu$ the data $\bT(\rho) = (G,\scrT,E,s)$  above defines a global chart for the moduli space of stable Floer admissible covers $\cM_{\bfm,\epsilon}(c_x,c_y)$ in class $\epsilon$ and with domain Hurwitz orbit $\bfm$. \end{prop}

\begin{proof}
When $s=0$, the vanishing of $e$ means that the map $u$ solves the usual Floer equation.  Since $\phi = \phi_F$, the Floer curve in $X$ determines the underlying domain curve. The matrix of inner products is unitary, so dividing out by the gauge group shows that $s^{-1}(0)/G = \cM_{\bfm,\epsilon}(c_x,c_y)$.
\end{proof}

By taking the union over all $\epsilon,\bfm$, we get a global chart $\bT(c_x,c_z)$ of $\cM(c_x,c_y)$.

\subsection{A zig-zag of global charts}\label{s:zigzag}

\emph{We explain how the global chart of a boundary stratum is related to a fibre product of global charts by a zig-zag.}

By Proposition \ref{p:singlechart}, we can construct a global chart $\bT(c_x,c_z)$ of $\cM(c_x,c_z)$.
It induces a global chart $\partial^{c_xc_yc_z}\bT(c_x,c_z)$ of $\partial^{c_xc_yc_z}\cM(c_x,c_z)$.
The latter space is a disjoint union
\[
\partial^{c_xc_yc_z}\cM(c_x,c_z)=\sqcup \partial^{c_xc_yc_z}_{\bfm_0,\bfm_1,\epsilon_0,\epsilon_1}\cM(c_x,c_z)
\]
where $\partial^{c_xc_yc_z}_{\bfm_0,\bfm_1,\epsilon_0,\epsilon_1}\cM(c_x,c_z)$ consists of two (possibly broken) Floer trajectories, one from $c_x$ to $c_y$ in the homotopy class $\epsilon_0$ and with Hurwitz orbit $\bfm_0$, the other from $c_y$ to $c_z$ in the homotopy class $\epsilon_1$ and with Hurwitz orbit $\bfm_1$.
We have a further decomposition 
\begin{align}\label{eq:furtherDecom}
    \partial^{c_xc_yc_z}_{\bfm_0,\bfm_1,\epsilon_0,\epsilon_1}\cM(c_x,c_z)= \sqcup_{\bfm} \partial^{c_xc_yc_z,\bfm}_{\bfm_0,\bfm_1,\epsilon_0,\epsilon_1}\cM(c_x,c_z)
\end{align}
where 
\[
\partial^{c_xc_yc_z,\bfm}_{\bfm_0,\bfm_1,\epsilon_0,\epsilon_1}\cM(c_x,c_z):=\partial^{c_xc_yc_z}_{\bfm_0,\bfm_1,\epsilon_0,\epsilon_1}\cM(c_x,c_z) \cap \cM_{\bfm,\epsilon_0+\epsilon_1}(c_x,c_z).
\]
Indeed, the Hurwitz orbits $\bfm_0, \bfm_1$ are not enough to determine the Hurwitz orbit of the glued curve if we don't specify the gluing: for instance, it is straightforward to write down two disconnected Riemann surfaces and two different gluings of the components with different global topologies. But the admissible cover $\Sigma^\dagger$ associated to any element of $\partial^{c_xc_yc_z}_{\bfm_0,\bfm_1,\epsilon_0,\epsilon_1}\cM(c_x,c_z)$ already fixes an identification of the nodes and hence determines $\bfm$.

%





We have a similar decomposition 
\[
\partial^{c_xc_yc_z}\bT(c_x,c_z)=\sqcup \partial^{c_xc_yc_z}_{\bfm_0,\bfm_1,\epsilon_0,\epsilon_1}\bT(c_x,c_z)
=\sqcup \partial^{c_xc_yc_z,\bfm}_{\bfm_0,\bfm_1,\epsilon_0,\epsilon_1}\bT(c_x,c_z).
\]

On the other hand, since
\begin{align}\label{eq:boundaryisom}
\partial^{c_xc_yc_z}_{\bfm_0,\bfm_1,\epsilon_0,\epsilon_1}\cM(c_x,c_z)= \cM_{\bfm_0}(c_x,c_y;\epsilon_0) \times_{[pt/\Gamma_y]} \cM_{\bfm_1}(c_y,c_z;\epsilon_1)    
\end{align}
the global charts $\bT_{\bfm_0}(c_x,c_y;\epsilon_0)$ and $\bT_{\bfm_1}(c_y,c_z;\epsilon_1)$ of $\cM_{\bfm_0}(c_x,c_y;\epsilon_0)$ and $\cM_{\bfm_1}(c_y,c_z;\epsilon_1)$ define a fibre product global chart 
$\bT_{\bfm_0}(c_x,c_y;\epsilon_0) \times_{[pt/\Gamma_y]} \bT_{\bfm_1}(c_y,c_z;\epsilon_1)$
of $\partial^{c_xc_yc_z}_{\bfm_0,\bfm_1,\epsilon_0,\epsilon_1}\cM(c_x,c_z)$ (see Equation \eqref{eq:fibreproduct}).
By \eqref{eq:furtherDecom}, we can further decompose $\bT_{\bfm_0}(c_x,c_y;\epsilon_0) \times_{[pt/\Gamma_y]} \bT_{\bfm_1}(c_y,c_z;\epsilon_1)$ accordingly, which corresponds to decomposing $\Gamma_y$ into equivalence classes.
We denote the decomposition by 
\[
\bT_{\bfm_0}(c_x,c_y;\epsilon_0) \times_{[pt/\Gamma_y]} \bT_{\bfm_1}(c_y,c_z;\epsilon_1) = \sqcup_{\bfm} (\bT_{\bfm_0}(c_x,c_y;\epsilon_0) \times_{[pt/\Gamma_y]}^\bfm \bT_{\bfm_1}(c_y,c_z;\epsilon_1))
\]

The main result (or construction) of this subsection is the following; see \cite[Proposition 4.68]{AMS2}  for an analogous statement for Gromov-Witten invariants.

\begin{prop}\label{p:zigzag}
    There is a $[0,1]$-parametrized family of global charts $(\bT_t=(G,\scrT,E,s_t))_{t \in [0,1]}$ such that $\bT_0$ is an iterative stablization of $\partial^{c_xc_yc_z,\bfm}_{ \bfm_0,\bfm_1,\epsilon_0,\epsilon_1}\bT(c_x,c_z)$ and $\bT_1$  is an iterative stablization of $\bT_{\bfm_0}(c_x,c_y;\epsilon_0) \times_{[pt/\Gamma_y]}^{\bfm} \bT_{\bfm_0}(c_y,c_z;\epsilon_1)$. 
\end{prop}

First, we recall that the global chart $\partial^{c_xc_yc_z}\bT(c_x,c_z)$ on $\partial^{c_xc_yc_z,\bfm}_{\bfm_0,\bfm_1,\epsilon_0,\epsilon_1}\cM(c_x,c_z)$ consists of the following data: 
\begin{enumerate}
    \item The compact Lie group $G=U_{\Gamma}(V)$, where $V^*$ is isomorphic to $H^0(L_u)$ for any element $u\in \cM_{\bfm,\epsilon_0+\epsilon_1}(c_x,c_z)$
    \item The thickening space consisting of $(\phi,u,e,F)$ such that $\phi \in \partial_{\bfm_0,\bfm_1} \scrF^\dagger$ so $\Sigma^\dagger_{\phi}$ is of the form\footnote{Recall that $\Sigma^\dagger_{\phi}$ consists of the data of $\Sigma^\dagger_{\phi_0} \vee \Sigma^\dagger_{\phi_1}$ and a preferred base-point preserving equivariant isomorphism of the asymptotic markers above the node.} $\Sigma^\dagger_{\phi_0} \vee \Sigma^\dagger_{\phi_1}$ where the associated Hurwitz orbit of $\Sigma_{\phi_i}$ is $\bfm_i$, a $\Gamma$-equivariant map $u=u_0 \vee u_1:\Sigma^\dagger_{\phi_0} \vee \Sigma^\dagger_{\phi_1} \to X$ so that $u_i$ is in the class $\epsilon_i$, $e \in V_{\mu}$, a $\Gamma$-equivariant isomorphism $F:H^0(L_u) \to V^*$ such that \eqref{eqn:diagonal_condition} and \eqref{eqn:Floer equation1} are satisfied, and the matrix associated to $F$ has positive eigenvalues.
    \item The obstruction bundle, with fibre $H \oplus V_{\mu} \oplus \scrH$.
    \item The obstruction section, which sends $(\phi,u,e,F)$ to $(\Phi^{-1}(\phi, \phi_F),e,\exp^{-1}(H_F))$.
\end{enumerate}
By our assumption, $\phi_F$ defines an embedding from $\Sigma^\dagger_{\phi}$ to $\bP(V)$ so it in particular defines an embedding from $\Sigma^\dagger_{\phi_i}$ to $\bP(V)$. This is dual to the natural restriction map $H^0(L_u) \to H^0(L_u|_{\Sigma_i})$, which is surjective.

 On the other hand,
$\bT_{\bfm_0}(c_x,c_y;\epsilon_0) \times_{[pt/\Gamma_y]}^{\bfm} \bT_{\bfm_0}(c_y,c_z;\epsilon_1)$
 consists of the following data: 
\begin{enumerate}
    \item The compact Lie group $G=U_{\Gamma}(V_0) \times U_{\Gamma}(V_1)$, where $V_i^*$ is isomorphic to $H^0(L_{u_i})$ for $u_0\in \cM_{\bfm_0,\epsilon_0}(c_x,c_y)$ and $u_1\in \cM_{\bfm_1,\epsilon_1}(c_y,c_z)$
    \item The thickening space consisting of $(\phi_0,u_0,e_0,F_0)$, $(\phi_1,u_1,e_1,F_1)$ and an element $g \in \Gamma_y$ such that $(\phi_0,u_0,e_0,F_0)$ and $(\phi_1,u_1,e_1,F_1)$ satisfy the conditions explained before, and $g$ is such that under the isomorphism \eqref{eq:boundaryisom}, the resulting glued curve has Hurwitz orbit $\bfm$.
    \item The obstruction bundle with  fibre $\oplus_{i=0,1} H_i \oplus (V_{i})_{\mu} \oplus \scrH_i$
    \item The obstruction section  
    \[ \oplus_{i=0,1}(\phi_i,u_i,e_i,F_i) \ \mapsto \  \oplus_{i=0,1}(\Phi_i^{-1}(\phi_i, \phi_{F_i}),e_i,\exp^{-1}(H_{F_i}))\]
\end{enumerate}

\begin{example}(Comparing the fibre product over $[pt/\Gamma_y]$ with that  over $[pt/(\mathbb{Z}/r\mathbb{Z})]$).
    Suppose that $u_0 \in \cM_{\bfm_0}(c_x,c_y;\epsilon_0)$ and $u_1 \in \cM_{\bfm_1}(c_y,c_z;\epsilon_1)$ both have trivial isotropy group.
    Then there are $|\Gamma_y|$ many points in $\cM_{\bfm_0}(c_x,c_y;\epsilon_0) \times_{[pt/\Gamma_y]} \cM_{\bfm_1}(c_y,c_z;\epsilon_1)$ which lie over $(u_0,u_1) \in \cM_{\bfm_0}(c_x,c_y;\epsilon_0) \times \cM_{\bfm_1}(c_y,c_z;\epsilon_1)$.
    They correspond to $|\Gamma_y|$ many different ways to identify the output asymptote of $u_0$ to the input asymptote of $u_1$, and give $|\Gamma_y|$ different ways to glue  to a $\Gamma$-equivariant map $u_0\#u_1$.

    In terms of global charts, $u_i$ gives $G_i$ many elements $(\phi_{F_i},u_i,0,F_i)$ in the respective thickening. The assumption that $u_i$ has trivial isotropy group implies that the $G_i$ many different choices of $F_i$ are all pairwise inequivalent. 
    Therefore, we have $G_0 \times G_1 \times \Gamma$ many inequivalent elements $(\phi_{F_0},u_0,0,F_0,\phi_{F_1},u_1,0,F_1,g)$ in the fibre product global chart each of which forgets to $(u_0,u_1) \in \cM_{\bfm_0}(c_x,c_y;\epsilon_0) \times \cM_{\bfm_1}(c_y,c_z;\epsilon_1)$.

    On the other hand, consider $[u_0,u_1,g]$ as a point in  $\partial^{c_xc_yc_z}_{\bfm_0,\bfm_1,\epsilon_0,\epsilon_1}\cM(c_x,c_z)$.
    The element $g$ determines how the output punctures of $\Sigma^\dagger_{\phi_1}$ and input punctures of $\Sigma^\dagger_{\phi_2}$ are identified.
More precisely, after we choose an isomorphism \eqref{eq:boundaryisom} that is compatible with the forgetful map to $\cM_{\bfm_0}(c_x,c_y;\epsilon_0) \times \cM_{\bfm_1}(c_y,c_z;\epsilon_1)$, the preimage of $[u_0,u_1,id]$ gives us a way to identify the output punctures of $\Sigma^\dagger_{\phi_0}$ with the input punctures of $\Sigma^\dagger_{\phi_1}$ (including the asymptotic markers). Then the element $[u_0,u_1,g]$ corresponds to twisting the identification by the action of $g$ on the punctures of $\Sigma^\dagger_{\phi_1}$. We denote the admissible cover associated to $g$ by $\Sigma^\dagger_{\phi_g}=\Sigma^\dagger_{\phi_0} \vee_g \Sigma^\dagger_{\phi_1}$.  

    There are two possibilities. The automorphism $g$ either moves the input asymptotic marker of $\Sigma^\dagger_{\phi_1}$ to an asymptotic marker at another input puncture, or to an asymptotic marker at the same input puncture.
    We call the first one type $1$ and the second one type $2$. 
   If $g$ is of type $2$, then the underlying nodal curve $\Sigma_{\phi_g}$ is the same as $\Sigma_{\phi_{id}}$.
   The subgroup of type $2$ elements in $\Gamma_y$ is a cyclic group $C$, which is the isotropy group of the underlying node in $\scrC^{\dagger}_{\phi}$ (cf. Remark \ref{r:isotropy}).

    Together with $u_0,u_1$, we see $g$ determines a map $u_g:=(u_0,u_1):\Sigma^\dagger_{\phi_g} \to X$. 
    Consequently, we have  $G \times |\Gamma_y|$ many different elements $(\phi_{F_g},u_g,0,F_g)$, where $\Sigma^\dagger_{\phi_{F_g}}=\Sigma^\dagger_{\phi_g}$, in the global chart of $\partial^{c_xc_yc_z}_{\bfm_0,\bfm_1,\epsilon_0,\epsilon_1}\cM(c_x,c_z)$.
    We can consider the forgetful map (for $i=0,1$)
    \[
    (\phi_{F_g}:\Sigma^\dagger_{\phi_g} \to \bP(V)) \mapsto (\phi_{F_g,i}:\Sigma^\dagger_{\phi_i} \to \bP(V))
    \]
    The discussion in the previous paragraph implies that $\phi_{F_g}$ has the same forgetful image as $\phi_{F_{id}}$ if and only if  $g \in C$.
    Indeed, since $\phi_{F_g,0}$ and $\phi_{F_g,1}$ are embeddings, they give an identification of the output points of $\Sigma_{\phi_0}$ and input points of $\Sigma_{\phi_1}$ but not an identification of the asymptotic markers at these points.
    In this way, we realize $\{\phi_{F_g}: g \in C\}$ as the fibre product of $\{\phi_{F_{id},0}\} \times_{[pt/C]} \{\phi_{F_{id},1}\}$.
    Therefore, one should think of the $|\Gamma_y|$ many elements $[u_0,u_1,g]$ being partitioned into $|\Gamma_y|/|C|$ many sets, each of which has cardinality $|C|$ and corresponds to an underlying pair of framed curves $(\phi_{F_{g},0},\phi_{F_{g},1})$.
  \end{example}

To relate the two global charts, we consider the doubly framed global chart $\bT$ which consists of the following data (cf. Proposition \ref{p:doubly-closed}):
\begin{enumerate}
    \item The compact Lie group $G=U_{\Gamma}(V)  \times U_{\Gamma}(V_0) \times U_{\Gamma}(V_1)$.
    \item The thickening space consisting of $(\phi,\tilde{\phi}_0,\tilde{\phi}_1,u,e,e_0,e_1,F,F_0,F_1)$ such that the following hold:
\begin{itemize}
\item For $i=0,1$, let 
\[
\scrF_{0,2+h}^{\dagger}([\bP(V)/\Gamma] \times [\bP(V_i)/\Gamma], (*,V_i^*)) \subset \cK_{0,2+h}^{\dagger}([\bP(V)/\Gamma] \times [\bP(V_i)/\Gamma], (*,V_i^*))
\]
consist of elements such that the projection (by forgetting the second and first factor respectively) to $ \cK_{0,2+h}^{\dagger}([\bP(V)/\Gamma]) \times \cK_{0,2+h}^{\dagger}([\bP(V_i)/\Gamma],V_i^*)$ lies in $ \scrF_{0,2+h}^{\dagger}([\bP(V)/\Gamma],R_i) \times \scrF_{0,2+h}^{\dagger}([\bP(V_i)/\Gamma],V_i^*)$ for some representation $R_i$ that is a quotient of $V^*$.
We require that $\tilde{\phi}_i \in \scrF_{0,2+h}^{\dagger}([\bP(V)/\Gamma] \times [\bP(V_i)/\Gamma], (*,V_i^*))$ and denote the image of $\tilde{\phi}_i$ under the forgetful map by $(\tilde{\phi}_i|_{\bP(V)},\tilde{\phi}_i|_{\bP(V_i)})$.
    \item $\phi \in \partial_{\bfm_0,\bfm_1} \scrF_{0,2+h}^{\dagger}([\bP(V)/\Gamma],V^*)$ so it is of the form $\phi=\phi_0 \vee \phi_1$. We require that $\tilde{\phi}_i|_{\bP(V)}=\phi_i$.
    \item There are finite dimensional approximation schemes
    $W_{\mu}$ of $(\omega_{\cC^{orb}/\scrF_{0,2+h}^{\dagger}([\bP(V)/\Gamma],V^*)} \otimes TX)$ and  $(W_i)_{\mu}$ of $(\omega_{\cC^{orb}/\scrF_{0,2+h}( [\bP(V_i)/\Gamma], V_i^*)} \otimes TX)$ and the corresponding finite dimensional approximation schemes
    $\lambda_{\mu}:V_{\mu} \to (\omega_{\cC^{orb}/\scrF_{0,2+h}^{\dagger}([\bP(V)/\Gamma],V^*)} \otimes TX)^{\Gamma}$ and $(\lambda_{i})_{\mu}: (V_{i})_{\mu} \to (\omega_{\cC^{orb}/\scrF_{0,2+h}( [\bP(V_i)/\Gamma], V_i^*)} \otimes TX)^{\Gamma}$. We require that $e \in V_{\mu}$ and $e_i \in (V_i)_{\mu}$. 
    \item $u=u_0 \vee u_1:\Sigma_{\phi}^\dagger=\Sigma_{\phi_0}^\dagger\vee \Sigma_{\phi_1}^\dagger \to X$ is a $\Gamma$ equivariant map satisfying the Floer equation (i.e. $(dv-X_{H/\Gamma} \otimes dt)^{0,1}_{J/\Gamma}+\lambda_{\mu}(e)+(\lambda_1)_{\mu}(e_1)+(\lambda_2)_{\mu}(e_2)=0$ with finite energy and exponential convergence) on cylindrical components and holomorphic equation on the spherical components such  that $u_i$ is in the class $\epsilon_i$. More precisely, to make sense of $(\lambda_i)_{\mu}(e_i)$, we need to use $\tilde{\phi}_i$ to embed $\Sigma_{\phi_i}^\dagger$ to $[\bP(V)/\Gamma] \times [\bP(V_i)/\Gamma]$ and then project to $[\bP(V_i)/\Gamma]$.
    \item $F:H^0(L_u) \to V^*$ and $F_i:H^0(L_{u_i}) \to V_i^*$ are $\Gamma$-equivariant isomorphisms such that the associated matrices $H_F$, $H_{F_i}$ have positive eigenvalues. Here $H_F$ is defined with respect to the fibrewise metric on $\Sigma^\dagger_\phi$ (i.e. fibre of the universal curve over $\phi$), and  $H_{F_i}$ is defined with respect to the fibrewise metric on $\Sigma^\dagger_{\tilde{\phi}_i|_{\bP(V_i)}}$ (i.e. as a curve over a point in $\scrF_{0,2+h}^{\dagger}([\bP(V_i)/\Gamma],V_i^*)$).
    They induce the associated maps $\phi_F: \Sigma^\dagger_{\phi} \to \bP(V)$, $\phi_{F_i}: \Sigma^\dagger_{\phi_i} \to \bP(V_i)$ and the restrictions $\phi_F|_{\Sigma_{\phi_i}^\dagger}: \Sigma^\dagger_{\phi_i} \to \bP(V)$.
    \item $\phi_F$ and $(\phi_{F_i})$ lie in neighborhoods of the respective diagonals so that $\Phi^{-1}(\phi_F,\phi)$ and $\Phi_i^{-1}(\phi_{F_i}, \tilde{\phi}_i)$ are well-defined
\end{itemize}
    
    \item The obstruction bundle has fibre $H \oplus V_{\mu} \oplus \scrH \oplus  \oplus_{i=0,1} H_i \oplus (V_i)_{\mu} \oplus \scrH_i$. Here $H$ and $H_i$ are the fibres of the normal bundle of the diagonal inside the square of  $\scrF_{0,2+h}^{\dagger}([\bP(V)/\Gamma],V^*)$, and  $\scrF_{0,2+h}^{\dagger}( [\bP(V_i)/\Gamma], V_i^*)$, respectively. The vector spaces $\scrH$ and $\scrH_i$ are subspaces of Hermitian matrices associated to $U_{\Gamma}(V)$ and $U_{\Gamma}(V_i)$.
    \item The obstruction section is the obvious map.
\end{enumerate}

\begin{remark}\label{r:smoothfibreproduct}
The space of $(\phi,\tilde{\phi}_0,\tilde{\phi}_1)$ such that  $\phi \in \partial_{\bfm_0,\bfm_1} \scrF_{0,2+h}^{\dagger}([\bP(V)/\Gamma],V^*)$,  $ \tilde{\phi}_i \in \scrF_{0,2+h}^{\dagger}([\bP(V)/\Gamma] \times [\bP(V_i)/\Gamma], (*,V_i^*))$
and $\tilde{\phi}_i|_{\bP(V)}=\phi_i$ is smooth because it is an iterative fibre product
over $\scrF_{0,2+h}^{\dagger}([\bP(V)/\Gamma], R_i)$
and the map $\scrF_{0,2+h}^{\dagger}([\bP(V)/\Gamma] \times [\bP(V_i)/\Gamma], (*,V_i^*)) \to \scrF_{0,2+h}^{\dagger}([\bP(V)/\Gamma], R_i)$ is submersive.
\end{remark}

\begin{remark}\label{r:gluingdata}
    The related \cite[Proposition 4.68]{AMS2} uses a slightly different family of pairs $(\tilde{\phi}_0,\tilde{\phi}_1)$ for which the output marked point of $\tilde{\phi}_0|_{\bP(V)}$ coincides with the input marked point of $\tilde{\phi}_1|_{\bP(V)}$.
    We cannot do the same here because $\phi$ remembers a choice of identification of the asymptotic markers, so we cannot recover $\phi$ from  $\tilde{\phi}_0$ and $\tilde{\phi}_1$.
\end{remark}


Following the same reasoning as in Lemma \ref{l:mfd} and Proposition \ref{p:singlechart}, we conclude the following:

\begin{lemma}
    For sufficiently large $k$ and $\mu$, $\bT$ is a global chart of $\partial^{c_xc_yc_z,\bfm}_{ \bfm_0,\bfm_1,\epsilon_0,\epsilon_1}\cM(c_x,c_z)$.
\end{lemma}

We need two more variants of $\bT$:
\begin{enumerate}
    \item The thickening space of the first one $\bT_{\partial}$ consists of $(\phi,\tilde{\phi}_0,\tilde{\phi}_1,u,e,F,F_0,F_1)$. That is, we don't use $e_i \in (V_i)_{\mu}$ to perturb the equation and $(V_i)_{\mu}$ does not show up in the obstruction bundle.
    \item The thickening space of the second one $\bT_{\times}$ consists of $(\phi,\tilde{\phi}_0,\tilde{\phi}_1,u,e_0,e_1,F,F_0,F_1)$. That is, we don't use $e \in V_{\mu}$ to perturb the equation and $V_{\mu}$ does not show up in the obstruction bundle.
\end{enumerate}

When $k$ and $\mu$ are sufficiently large, $\bT_{\partial}$ and $\bT_{\times}$ are global charts of $\partial^{c_xc_yc_z,\bfm}_{ \bfm_0,\bfm_1,\epsilon_0,\epsilon_1}\cM(c_x,c_z)$ as well.

We have a  forgetful map $\pi_{\partial}$ from the thickening space of $\bT_{\partial}$ to that of $ \partial^{c_xc_yc_z,\bfm}_{\bfm_0,\bfm_1}\bT(c_x,c_z)$:
\[
\pi_{\partial}:(\phi,\tilde{\phi}_0,\tilde{\phi}_1,u,e,F,F_0,F_1) \mapsto (\phi,u,e,F).
\]
Similarly, we have another forgetful map $\pi_{\times}$ from the thickening space of $\bT_{\times}$ to the thickening space of $\bT_{\bfm_0}(c_x,c_y;\epsilon_0) \times_{[pt/\Gamma_y]}^\bfm \bT_{\bfm_1}(c_y,c_z;\epsilon_1)$:
\[
\pi_{\times}: (\phi,\tilde{\phi}_0,\tilde{\phi}_1,u,e_0,e_1,F,F_0,F_1) \mapsto (\tilde{\phi}_0|_{\bP(V_0)},u_0,e_0,F_0, \tilde{\phi}_1|_{\bP(V_1)},u_1,e_1,F_1,g)
\]
where $g \in \Gamma_y$  is determined by requiring that $\Sigma^\dagger_{\phi}=\Sigma^\dagger_{\phi_1} \vee_g \Sigma^\dagger_{\phi_2}$.

\begin{proof}[Proof of Proposition \ref{p:zigzag}]

Assume that $k,\mu$ are large enough such that $\bT$, $\partial^{c_xc_yc_z,\bfm}_{\bfm_0,\bfm_1}\bT(c_x,c_z)$ and
$\bT_{\bfm_0}(c_x,c_y;\epsilon_0) \times_{[pt/\Gamma_y]}^\bfm \bT_{\bfm_1}(c_y,c_z;\epsilon_1)$ are Kuranishi charts.

The map $\pi_{\partial}$ is simply forgetting the framing data $F_0$ and $F_1$ so $\bT_{\partial}$ is an iterative stabilisation of $\partial^{c_xc_yc_z,\bfm}_{\bfm_0,\bfm_1}\bT(c_x,c_z)$ (see \cite[Proposition 4.68]{AMS2} for a similar argument) modulo homotoping the sections.
Similarly, $\bT_{\times}$ is an iterative stabilisation of $\bT_{\bfm_0}(c_x,c_y;\epsilon_0) \times_{[pt/\Gamma_y]}^\bfm \bT_{\bfm_1}(c_y,c_z;\epsilon_1)$ modulo homotoping the sections.

On the other hand, the only difference between $\bT$, $\bT_{\partial}$ and $\bT_{\times}$ is that the obstruction space of $\bT$ contains the ones of $\bT_{\partial}$ and $\bT_{\times}$.
A similar argument as in \cite{AMS2} shows that $\bT$ is an iterative stabilisation of $\bT_{\partial}$ and of $\bT_{\times}$.

    Since the space of equivariant sections is a vector space and hence connected, we can find a homotopy of the sections relating the given iterative stabilisation of $\partial^{c_xc_yc_z,\bfm}_{\bfm_0,\bfm_1}\bT(c_x,c_z)$ and $\bT_{\bfm_0}(c_x,c_y;\epsilon_0) \times_{[pt/\Gamma_y]}^\bfm \bT_{\bfm_1}(c_y,c_z;\epsilon_1)$.
\end{proof}

We also need to describe similar global charts of $\partial^{c_xc_yc_zc_w}_{\bfm_0,\bfm_1,\bfm_2,\epsilon_0,\epsilon_1,\epsilon_2}\cM(c_x,c_w)$.
The notation means that the stable Floer cylinder from $c_x$ to $c_y$, from $c_y$ to $c_z$ and from $c_z$ to $c_w$ have Hurwitz orbit and classes being $(\bfm_0,\epsilon_0)$, $(\bfm_1,\epsilon_1)$ and $(\bfm_2,\epsilon_2)$, respectively.
We denote the Hurwitz orbit of gluing the first two curves, last two curves and all curves by $\bfm_{01}$, $\bfm_{12}$ and $\bfm$, respectively.
We have a corresponding decomposition
\[
\partial^{c_xc_yc_zc_w}_{\bfm_0,\bfm_1,\bfm_2,\epsilon_0,\epsilon_1,\epsilon_2}\cM(c_x,c_w)= \sqcup_{\bfm,\bfm_{01}, \bfm_{12}}  \partial^{c_xc_yc_zc_w, \bfm,\bfm_{01}, \bfm_{12}}_{\bfm_0,\bfm_1,\bfm_2,\epsilon_0,\epsilon_1,\epsilon_2}\cM(c_x,c_w)
\]
We have isomorphisms
\begin{align*}
\partial^{c_xc_yc_zc_w, \bfm,\bfm_{01}, \bfm_{12}}_{\bfm_0,\bfm_1,\bfm_2,\epsilon_0,\epsilon_1,\epsilon_2}\cM(c_x,c_w)
&=\partial^{c_xc_yc_z, \bfm_{01}}_{\bfm_0,\bfm_1,\epsilon_0,\epsilon_1}\cM(c_x,c_z) \times^{\bfm}_{[pt/\Gamma_z]} \cM_{\bfm_2,\epsilon_2}(c_z,c_w)\\
&=\cM_{\bfm_0,\epsilon_0}(c_x,c_y) \times^{\bfm}_{[pt/\Gamma_y]} \partial^{c_xc_yc_z, \bfm_{12}}_{\bfm_1,\bfm_2,\epsilon_1,\epsilon_2}\cM(c_y,c_w)\\
&=(\cM_{\bfm_0,\epsilon_0}(c_x,c_y) \times^{\bfm_{01}}_{[pt/\Gamma_y]} \cM_{\bfm_1,\epsilon_1}(c_y,c_z) \times^{\bfm_{12}}_{[pt/\Gamma_z]} \cM_{\bfm_2,\epsilon_2}(c_z,c_w))^{\bfm}
\end{align*}
where $(\cdot)^{\bfm}$ in the last term means that we consider the subspace of those elements such that the Hurwitz orbit after gluing all $3$ curves is $\bfm$.
We want to describe an analogue of the $\bT$ in Proposition \ref{p:zigzag} for $\partial^{c_xc_yc_zc_w, \bfm,\bfm_{01}, \bfm_{12}}_{\bfm_0,\bfm_1,\bfm_2,\epsilon_0,\epsilon_1,\epsilon_2}\cM(c_x,c_w)$ such that up to homotoping the section, it is an iterative stabilisation of various fibre product type Kuranishi charts. It consists of the following data.

\begin{enumerate}
    \item The compact Lie group $G=U_{\Gamma}(V) \times U_{\Gamma}(V_{01}) \times U_{\Gamma}(V_{12})  \times U_{\Gamma}(V_0) \times U_{\Gamma}(V_1) \times U_{\Gamma}(V_2)$.
    \item The thickening space consisting of \[
    (\phi,\phi_{01},\phi_{12},\tilde{\phi}_0,\tilde{\phi}_1,\tilde{\phi}_2,u,e,e_{01},e_{12},e_0,e_1,e_2,F,F_{01},F_{12},F_0,F_1,F_2)
    \]
    such that the following hold:
\begin{itemize}
\item The $\phi$'s define embeddings to $\bP(V)$,  $\bP(V) \times \bP(V_{01})$, $\bP(V) \times \bP(V_{12})$, 
$\bP(V) \times \bP(V_{01}) \times \bP(V_0)$, $\bP(V) \times \bP(V_{01}) \times \bP(V_{12}) \times  \bP(V_1)$
and $\bP(V) \times \bP(V_{12}) \times \bP(V_2)$ respectively.
    \item There are obvious equalities among different $\phi$'s when restricted to different projective spaces (i.e. $\phi_0|_{\Sigma_0^\dagger \vee \Sigma_1^\dagger}=\tilde{\phi}_{01}|_{\bP(V)}$, $\phi_0|_{\Sigma_1^\dagger \vee \Sigma_2^\dagger}=\tilde{\phi}_{12}|_{\bP(V)}$, $\phi_0|_{\Sigma_j^\dagger}=\tilde{\phi}_j|_{\bP(V)}$ for $j=0,1,2$, $\phi_{01}|_{\Sigma_j^\dagger}=\tilde{\phi}_j|_{\bP(V_{01})}$ for $j=0,1$ and $\phi_{12}|_{\Sigma_j^\dagger}=\tilde{\phi}_j|_{\bP(V_{12})}$ for $j=1,2$). 
    \item There are finite dimensional approximation schemes
     for $(\omega_{\cC^{orb}/\scrF_{0,2+h}( [\bP/\Gamma], d_i)} \otimes TX)^{\Gamma}$ for $\bP=\bP(V), \bP(V_{01}), \bP(V_{12}), \bP(V_0), \bP(V_1), \bP(V_2)$, and $e$'s are elements in the respective $V_\mu$'s
    \item $u:\Sigma_{\phi}^\dagger \to X$ is a $\Gamma$ equivariant map satisfying the Floer equation (i.e. $(dv-X_{H/\Gamma} \otimes dt)^{0,1}_{J/\Gamma}+\lambda_{\mu}(e)+(\lambda_{01})_{\mu}(e_{01})+(\lambda_{12})_{\mu}(e_{12})+\dots=0$) on cylindrical components and holomorphic equation on the spherical components such  that $u_i$ is in the class $\epsilon_i$.
    \item The $F$'s are $\Gamma$-equivariant isomorphisms such that the associated matrices have positive eigenvalues. 
    They induce the associaed maps $\phi_F$'s as well as their restrictions to components.
    \item The $\{\phi_F\}$'s and their restrictions lie in respective neighbourhoods of the diagonals
\end{itemize}
    \item The obstruction bundle and the obstruction section are defined analogously.
\end{enumerate}

\begin{remark}
    Similar to Remark \ref{r:smoothfibreproduct}, the space of $(\phi,\tilde{\phi}_{01},\tilde{\phi}_{12},\tilde{\phi}_0,\tilde{\phi}_1,\tilde{\phi}_2)$ with equalities among different $\phi$'s is smooth because it is an iterative fibre product over the spaces
    \begin{align*}
       & \partial_{\bfm_0,\bfm_1} \scrF_{0,2+h}^{\dagger}([\bP(V)/\Gamma],R_{01}), \quad  \partial_{\bfm_1,\bfm_2} \scrF_{0,2+h}^{\dagger}([\bP(V)/\Gamma],R_{12}), \\
     &\scrF_{0,2+h}^{\dagger}([\bP(V)/\Gamma],R_j) \text{ for } j=0,1,2, \quad
     \scrF_{0,2+h}^{\dagger}([\bP(V_{01})/\Gamma],R_{01,j}) \text{ for } j=0,1, \\
     &\text{and }  \scrF_{0,2+h}^{\dagger}([\bP(V_{12})/\Gamma],R_{12,j}) \text{ for } j=1,2
    \end{align*}
    for some appropriate $\Gamma$-representations $R_{01}, \dots, R_{12,j}$,
      and for each fibre product, there is a factor which maps submersively to the base.
      Here, we use that every boundary stratum of $\scrF_{0,2+h}^{\dagger}$ is smooth.
\end{remark}

\begin{remark}
Similar to Remark \ref{r:gluingdata}, $\tilde{\phi}_0|_{\bP(V_{01})}$ and $\tilde{\phi}_1|_{\bP(V_{01})}$ do not determine $\tilde{\phi}_{01}|_{\bP(V_{01})}$ because $\tilde{\phi}_{01}$ remembers how the asymptotic markers are identified.
On the other hand, both $\phi$ and $\tilde{\phi}_{01}$ remember how to glue $\Sigma_0^\dagger$ and $\Sigma_1^\dagger$ so the equality $\phi_0|_{\Sigma_0^\dagger \vee \Sigma_1^\dagger}=\tilde{\phi}_{01}|_{\bP(V)}$ is to ensure that the gluing data are the same.
\end{remark}

\begin{prop}\label{p:associativity}
   For sufficiently large $k, \mu$, the data above define a global Kuranishi chart of $\partial^{c_xc_yc_zc_w}_{\bfm_0,\bfm_1,\bfm_2,\epsilon_0,\epsilon_1,\epsilon_2}\cM(c_x,c_w)$ and up to homotopy of sections, it is an iterative stablization of the fibre product global charts.
\end{prop}

\begin{proof}
    The proof is similar to that given  before. For example, to see that it is an iterative stabilisation of the fibre product global chart of $(\cM_{\bfm_0,\epsilon_0}(c_x,c_y) \times^{\bfm_{01}}_{[pt/\Gamma_y]} \cM_{\bfm_1,\epsilon_1}(c_y,c_z) \times^{\bfm_{12}}_{[pt/\Gamma_z]} \cM_{\bfm_2,\epsilon_2}(c_z,c_w))^{\bfm}$, we first define a variant which doesn't take into account $e,e_{01},e_{12}$ so that we have a forgetful map to the thickening space of the fibre product global chart and then run the argument in \cite[Proposition 4.68]{AMS2}.
\end{proof}

\subsection{Inductive procedure for global charts }\label{s:system}

\emph{We explain the inductive construction of global chart lifts. Our approach is different from \cite{BX}.}

We fix $H$, $J_Y$ and the bulk $\mathfrak{b}$ as before. We have a Hamiltonian Floer order marked flow category $(\cC(H_Y,\mathfrak{b});J_Y)$ defined in Section \ref{s:orbifoldFloer}.

\begin{prop}\label{p:globalliftexist}
The Hamiltonian  Floer ordered marked flow category $(\cC(H_Y,\mathfrak{b});J_Y)$ admits a global chart lift (see Definition \ref{d:coherenthomotopy} and \ref{d:global lift}). 
\end{prop}

\begin{proof}
We choose an $\Omega$ as in Lemma \ref{l:Omega}.
We choose a $\Gamma$-equivariant Hermitian line bundle $L$ on $X$ with curvature $\Omega$.
By Lemma \ref{lem:positive}, the action difference $\cA_{\mathbb{Z}}(c_y)-\cA_{\mathbb{Z}}(c_x)$ of $\cM(c_x,c_y)$ is always a positive integer. 
We are going to define the global chart lift of$(\cC(H_Y,\mathfrak{b});J_Y)$ inductively on the action difference.

Since $X$ is compact and $H$ is non-degenerate, there are only finitely many $1$-periodic orbits of $H$.
Therefore, modulo the diagonal action of $\Pi$, there are only finitely many pairs $(c_x,c_y)$ such that  $\cA_{\mathbb{Z}}(c_y)-\cA_{\mathbb{Z}}(c_x)=1$.
For each such pair $(c_x,c_y)$, since the valuation of the bulk $\mathfrak{b}$ is strictly positive, there are only finitely many Hurwitz orbits $\bfm$ such that $\cM_{\bfm}(c_x,c_y)$ is non-empty.
For each $(c_x,c_y, \bfm)$, by Gromov compactness, there are only finitely many different classes $\epsilon$ for which 
$\cM_{\bfm, \epsilon}(c_x,c_y)$ is non-empty.
As a result, by Lemma \ref{l:regular}, there is $k_1>0$ such that $H^1(L_u)=0$ for all $u \in \cM_{\bfm,\epsilon}(c_x,c_y)$ with $\cA_{\mathbb{Z}}(c_y)-\cA_{\mathbb{Z}}(c_x)=1$.
By possibly choosing a larger $k_1$, we can moreover ensure that the sections of $H^0(L_u)$ define  
an embedding of $\Sigma^{\dagger}$ into $\bP(H^0(L_u)^*)$, because the degree of $L_u$ on every irreducible component strictly increase when $k$ increases (cf. proof of Proposition \ref{prop:chart for closed curves}).

Fix such $k_1$. There are finitely many $(c_x,c_y, \bfm, \epsilon)$ (modulo the $\Pi$ action) such that $\cA_{\mathbb{Z}}(c_y)-\cA_{\mathbb{Z}}(c_x)=1$ and $\cM_{\bfm,\epsilon}(c_x,c_y) \neq \emptyset$, giving us finitely many isomorphism types of faithful $\Gamma$-representations $H^0(L_u)$. 
For each isomorphism type, we pick the representative $V$ we have chosen in $\Theta$ (recall $\Theta$ was defined before Section \ref{ss:framecurves}).
Then we have the associated $\scrF^{\dagger}_{0,2+h}(V^*) \subset \cK([\bP(V)/\Gamma],V^*)$, where $h$ is the number of points in $\bfm$, $V \simeq H^0(L_u)$ for $u \in \cM_{\bfm,\epsilon}(c_x,c_y)$ and $d+1=\dim V$.
We choose a finite-dimensional  approximation scheme of $\omega_{\cC^{orb}/\scrF^{\dagger}} \otimes TX$ and $(\omega_{\cC^{orb}/\scrF^{\dagger}} \otimes TX)^{\Gamma}$ as in Section \ref{s:globalchart_cylinder}.
We also choose an equivariant fibrewise metric of $\cC^{orb}/\scrF^{\dagger}$, a neighborhood of the diagonal embedding of $\scrF^{\dagger}_{0,2+h}(V^*)$ and its identification $\Phi$ with the normal bundle as in \ref{s:globalchart_cylinder} to run the global chart construction.
Since $k_1$ is already chosen such that sections of $H^0(L_u)$ define 
an embedding of $\Sigma^{\dagger}$ into $\bP(H^0(L_u)^*)$ for all $u \in \cM_{\bfm,\epsilon}(c_x,c_y)$, by Proposition \ref{p:singlechart}, we get a global chart of $\cM_{\bfm,\epsilon}(c_x,c_y)$ when $\mu$ is large enough.
The boundary of $\cM_{\bfm,\epsilon}(c_x,c_y)$ is empty because $\cA_{\mathbb{Z}}$ takes integer values and it is additive (Lemma \ref{l:additivity}).
We require that the construction is strictly $\Pi$-equivariant with respect to the diagonal $\Pi$ action on pairs $(c_x,c_y)$.

Then we consider moduli spaces with action difference $\cA_{\mathbb{Z}}(c_y)-\cA_{\mathbb{Z}}(c_x)=2$.
By a similar reason, there are finitely many $(c_x,c_y, \bfm, \epsilon)$ (modulo the $\Pi$ action) such that $\cA_{\mathbb{Z}}(c_y)-\cA_{\mathbb{Z}}(c_x)=2$ and $\cM_{\bfm,\epsilon}(c_x,c_y) \neq \emptyset$.
We fix $k_2$ large enough that $H^0(L_u)$ defines 
an embedding of $\Sigma^{\dagger}$ into $\bP(H^0(L_u)^*)$ for all $u \in \cM_{\bfm,\epsilon}(c_x,c_y)$ with $\cA_{\mathbb{Z}}(c_y)-\cA_{\mathbb{Z}}(c_x)=2$.
This gives us finitely many isomorphism types of faithful representations, and for each of them, we choose the representative in $\Theta$.
We run the global chart construction as before.
The boundary of $\cM_{\bfm,\epsilon}(c_x,c_z)$ with $\cA_{\mathbb{Z}}(c_z)-\cA_{\mathbb{Z}}(c_x)=2$ is covered by fibred products of
$\cM_{\bfm_0,\epsilon_0}(c_x,c_y)$ and $\cM_{\bfm_1,\epsilon_1}(c_y,c_z)$ with $\cA_{\mathbb{Z}}(c_y)-\cA_{\mathbb{Z}}(c_x)=\cA_{\mathbb{Z}}(c_z)-\cA_{\mathbb{Z}}(c_y)=1$ (see Lemma \ref{l:boundarycover} and \ref{l:additivity}).
For each boundary stratum of $\cM_{\bfm,\epsilon}(c_x,c_z)$,
we apply Proposition \ref{p:zigzag} to construct a Kuranishi chart together with a $[0,1]$-parametrized family of sections such that the two ends are iterative stabilisations of the global chart of the boundary stratum of $\cM_{\bfm,\epsilon}(c_x,c_z)$ and the fibre product Kuranishi chart, respectively.
These data constitute a global chart lift of moduli spaces with action difference $\le 2$ (cf. Example \ref{eq:zig-zag}).

For moduli spaces with action difference $3$, we choose $k_3$ large and run the global chart construction as before.
The boundary is covered by fibred products of moduli spaces with action difference $(2,1)$, $(1,2)$ or $(1,1,1)$.
For the boundary strata covered by fibred products of moduli spaces with action difference $(2,1)$ and $(1,2)$, we construct a Kuranishi chart with a $[0,1]$-parametrized family as in Proposition \ref{p:zigzag} such that the two ends are iterative stabilisations of the respective Kuranishi charts as before.
Along the boundary strata covered by fibred products of moduli spaces with action difference $(1,1,1)$, we apply Proposition \ref{p:associativity} to construct a Kuranishi chart which, by possibly changing the section, is an iterative stabilisation of every relevant fibred product Kuranishi chart. We can equip it with a $[0,1] \times [0,1]$-parametrized family of sections so that the respective boundary components (induced by boundary strata of $[0,1] \times [0,1]$) are iterative stabilisations of the   corresponding ($[0,1]$-parametrised family of) fibre product Kuranishi charts (cf. Example \ref{eg:homotopyassociativity}).
These data constitute a global chart lift of moduli spaces with action difference $\le 3$. 

For moduli space with action difference $>3$, we run the analogous procedure. We require that the construction is strictly $\Pi$-equivariant.
This gives us a global chart lift of $(\cC(H_Y,\mathfrak{b});J_Y)$.
\end{proof}

Up to this point we have considered global charts in the topological category.  We need  smooth global charts to talk about transversality for weighted branched multisections when extracting a chain complex as in Section \ref{s:multivalue}.  

\begin{prop}\label{p:smoothlift}
The Hamiltonian Floer ordered marked flow category admits a smooth global chart lift.
\end{prop}

\begin{proof}
This is the ordered marked flow category version of a result due independently to Bai and Xu \cite{BX} and to Rezchikov \cite{Rez}, and follows the template explained in Proposition \ref{prop:stable_smoothing} and Lemma \ref{lem:smoothing}. The key input is that each thickening $\scrT = \scrT_{pq}^{\alpha}$ is a topological $G$-manifold (for a suitable Lie group $G$) with a $G$-equivariant $C^1_{loc}$-submersion $p: \scrT \to \scrF$ to a smooth quasi-projective variety, which is the corresponding space of domains (pre-stable orbifold cylinders in some $\bP(V)/\Gamma)$ with labelled perhaps stacky marked points).  This relies on the  fact that gluing theory gives the space of Floer solutions with a fixed (possibly reducible and unstable) domain  a smooth structure, which also applies to orbifold stable maps.  
\end{proof}

\begin{remark}
One can further mollify to make the obstruction sections smooth (cf. \cite{AMS2}), but since we will replace them with weighted branched sections anyway, we do not need this.
\end{remark}

We are now ready to give the following definition, which is part of Theorem \ref{t:main}.
Let $H,J$ be $\Gamma$-equivariant as above.
    For a bulk $\mathfrak{b}=\sum_{(g)} \frak{v}_{(g)} PD([X^g/C(g)])$ (cf. Hypothesis \ref{h:sector}), let 
    $\bfa:\|\Gamma\|^\circ \to \Lambda_{\ge 0}$ be
    $\bfa(g):=\frak{v}_{(g)}/\val(\frak{v}_{(g)})$.
    
\begin{definition}\label{d:chaincomplex2}
We define $CF(H/\Gamma,J/\Gamma;\mathfrak{b})$ to be the $\bfa$-deformed chain complex obtained by applying the procedure of Section \ref{s:multivalue} (more precisely, Remark \ref{r:deform} and Proposition \ref{p:dsquare=0})
to a smooth global chart lift as obtained from Proposition \ref{p:smoothlift}.
\end{definition}

While the flow category $\cC(H/\Gamma)$ only depends on $H/\Gamma$ and $J/\Gamma$, several auxiliary choices were made in the smooth global chart lift construction. (We defer the stronger invariance statement related to changing the Hamiltonian  to the following section.)

\begin{prop}\label{p:chaincomplex2indep}
    The chain complex $CF(H/\Gamma,J/\Gamma;\mathfrak{b})$ is independent of the auxiliary choices made to build the smooth global chart lift, up to action preserving chain homotopy equivalence. 
\end{prop}

\begin{proof}
    
It follows by a doubly framed trick as in Proposition \ref{p:doubly-closed}.
Given two collections of auxiliary choices $\mathcal{D}$ and $\mathcal{D}'$, we can run Proposition \ref{p:globalliftexist} for $\mathcal{D}$ and $\mathcal{D}'$ separately to obtain two global chart lifts of $(\cC(H_Y,\mathfrak{b});J_Y)$.
To relate the two global chart lifts, we need a master global chart lift that is a stabilisation of both.
To construct the master global chart lift, we just need to stick together the groups $G$, perturbation terms $e$, framings $F$, and obstruction bundles, etc as in Proposition \ref{p:doubly-closed}.
One difference here is that the zeroes of the obstruction section can intersect the boundary strata of the thickening.
We can perform the smoothing of the master global chart lift compatibly so that the multi-valued perturbations from the two presentations stabilise (and extend) to two different collections of multi-valued perturbations on the master global chart lift.
Since the two collections of multi-valued perturbations are on the same smooth global chart lift (the master global chart lift), the resulting chain complexes are chain homotopic.
\end{proof}

\subsection{Hamiltonian invariance of the chain complex}\label{s:Haminv}

\emph{We explain how to use continuation bimodules to obtain invariance of the orbifold chain complex under changing the Hamiltonian.}

We wish to understand the dependence on $H$.  More precisely, we will associate to a continuation path a filtered chain homotopy equivalence.  This will arise from an underlying bimodule of the corresponding Hamiltonian Floer ordered marked flow categories.

Let $\mathfrak{b}$ be a bulk as before.
Fix a smooth interpolation $\{H_s\}_{s \in \mathbb{R}}$ from $H_0=H_s$ for $s \le 0$, to $H_1=H_s$ for $s \ge 1$ (through $\Gamma$-equivariant Hamiltonians on $X$)
and $\{J_s\}_{s \in \mathbb{R}}$ from $J_0=J_s$ for $s \le 0$, to $J_1=J_s$ for $s \ge 1$.
The Hamiltonian Floer bimodule $\cR$ from $(\cC(H_0/\Gamma,\mathfrak{b}),J_0/\Gamma)$ to $(\cC(H_1/\Gamma,\mathfrak{b}),J_1/\Gamma)$ for objects $p \in \cC(H_0/\Gamma,\mathfrak{b})$ and $q \in \cC(H_1/\Gamma,\mathfrak{b})$ is the Gromov compactified moduli space $\cR_{pq}$ of $\Gamma$-equivariant maps $u:\Sigma^{\dagger} \to X$ such that
\begin{enumerate}
    \item $\Sigma^{\dagger}$ is an admissible cover of an orbifold cylinder $\scrC^{\dagger}$
    \item $\scrC^{\dagger}$ is equipped with a distinguished point $r\in \mathbb{R} \simeq \mathbb{R} \times \{0\} $
    \item $v:=[u/\Gamma]:\scrC^{\dagger} \to Y$ satisfies the Floer continuation equation determined by $r$. That is
    \begin{align}
        (dv-X_{H_{s-r}/\Gamma}(v)dt)^{0,1}_{J_{s-r}/\Gamma}=0 \label{eq:continuation}
    \end{align}
    and exponential convergence $\lim_{s \to -\infty} v(s,t)=p(t)$ and $\lim_{s \to \infty} v(s,t)=q(t)$.
\end{enumerate}
It is standard that $\cR_{pq}$ has a natural stratification making $\cR$ an ordered marked flow bimodule from $(\cC(H_0/\Gamma,\mathfrak{b}),J_0/\Gamma)$ to $(\cC(H_1/\Gamma,\mathfrak{b}),J_1/\Gamma)$.
For example, for $p, p' \in \cC(H_0/\Gamma,\mathfrak{b})$ and $q,q' \in \cC(H_1/\Gamma,\mathfrak{b})$ the codimension one boundary strata are the embeddings 
\begin{equation} \label{eqn:continuation boundary strata}
\cM^{H_0}_{pp'} \times_{[pt/\Gamma_{p'}]} \cR^{H_t}_{p'q} \hookrightarrow \cR^{H_t}_{pq} \hookleftarrow \cR^{H_t}_{pq'} \times_{[pt/\Gamma_{q'}]} \cM^{H_1}_{q'q}.
\end{equation}
We want to build a homotopy coherent global chart lift of $\cR_{pq}$ so that we get a chain map $C^{\cC(H_1/\Gamma,\mathfrak{b})} \to C^{\cC(H_0/\Gamma,\mathfrak{b})}$.

 Note that our previously defined integralized action depends on a choice of $\epsilon_x$ and $N$.
 On the other hand, the Hermitian line bundle $L$ depends on a choice of $\Omega$, which in turn depends on a neighbourhood $U$ of the periodic orbits of $H$. 
By possibly replacing $N$ with a larger one, we assume that the $N$ we choose for defining the flow categories of $H_0/\Gamma$ and $H_1/\Gamma$ agree. It implies that the resulting $\cA_{\Pi,\mathbb{Z}}$ agree. 
Similarly, we replace $\Omega$ by another closed 2-form which vanishes in a sufficiently small neighbourhood of the periodic orbits of both $H_0$ and $H_1$ such that Lemma \ref{l:Omega} still hold. 
The resulting chain complexes do not change under this replacement by Proposition \ref{p:chaincomplex2indep}.
Recall that we take $\Omega$ cohomologous to $[\omega_X]$ which is assumed rational.

Define $\partial^{cyl} \scrF_{0,2+h}^{\dagger}([\bP(V)/\Gamma], V^*)$ to be the codimension $1$ locus where the universal domain curve has at least two cylindrical components. 
 Following \cite[Definition 6.4]{Abouzaid-Blumberg}, we introduce $\mathcal{D}\scrF_{0,2+h}^{\dagger}([\bP(V)/\Gamma], V^*)$, the conic degeneration with discriminant  
 $\partial^{cyl} \scrF_{0,2+h}^{\dagger}([\bP(V)/\Gamma], V^*)$ over $\scrF_{0,2+h}^{\dagger}([\bP(V)/\Gamma], V^*)$.
  The space $\mathcal{D}\scrF_{0,2+h}^{\dagger}([\bP(V)/\Gamma], V^*)$ is a manifold with boundary and corners such that
\begin{itemize}
\item it admits a projection map $\mathcal{D}\scrF_{0,2+h}^{\dagger}([\bP(V)/\Gamma], V^*) \to \scrF_{0,2+h}^{\dagger}([\bP(V)/\Gamma], V^*)$ such that the fibre over a point in a codimension $k$ stratum is a union of $k+1$ many closed intervals \cite[Lemma 6.2]{Abouzaid-Blumberg}.
\end{itemize}

In our case, we can give an explicit model of $\mathcal{D}\scrF_{0,2+h}^{\dagger}([\bP(V)/\Gamma], V^*)$ (cf. \cite[Lemma B.17]{Abouzaid-Blumberg}). The universal curve $\mathcal{C}^{orb}$ over a point $\phi \in \scrF_{0,2+h}^{\dagger}([\bP(V)/\Gamma], V^*)$ is equipped with a $\Gamma$ action. The coarse moduli space $\mathcal{C}^{orb}/\Gamma$ is a smooth manifold with boundary and corners such that the fibre over $\phi$ is the prestable cylinder $\scrC_{\phi}^{\dagger}=\Sigma_{\phi}^\dagger/\Gamma$ (cf. Definition \ref{d:orbicylinder}).
The asymptotic marker on each cylindrical component determines a geodesic as explained after Definition \ref{d:orbicylinder}.
The union of these geodesics defines a submanifold of  $\mathcal{C}^{orb}/\Gamma$ which is stratified homeomorphic to $\mathcal{D}\scrF_{0,2+h}^{\dagger}([\bP(V)/\Gamma], V^*)$ and compatible with the projection map to $\scrF_{0,2+h}^{\dagger}([\bP(V)/\Gamma], V^*)$.
We denote by $\mathcal{D}^{\circ}\scrF_{0,2+h}^{\dagger}([\bP(V)/\Gamma], V^*) \subset \mathcal{D}\scrF_{0,2+h}^{\dagger}([\bP(V)/\Gamma], V^*)$ the subspace where over each fibre, we take the interior of the intervals (in other words, we forget the endpoints of the geodesics in the explicit model).

The space of continuation solutions admits a global chart in which the thickening is a tuple $(\phi,u,e,F)$ as before.
The main difference is that $\phi$ is taken in $\mathcal{D}^{\circ}\scrF_{0,2+h}^{\dagger}([\bP(V)/\Gamma], V^*)$.
It means that the universal curve $\Sigma^{\dagger}_\phi$ over $\phi$ comes with a distinguished point $r_{\phi}$ on a geodesic on one of the cylindrical components of $\scrC^{\dagger}_{\phi}$.
We will put the continuation Floer data near $r_{\phi}$ (see Equation \ref{eq:continuation}) and $u:\Sigma^{\dagger}_\phi \to X$ has to satisfied a $e$-perturbed Floer continuation equation (on the distinguished cylindrical component, and usual Floer equations on the other cylindrical components and holomorphic equation on the spherical components), while $F:H^0(L_u) \to V^*$ is an equivariant isomorphism as before.  
 This time, since $H$ is $s$-dependent, on the distinguished cylindrical component we need to take $L_u:=(\omega_{log} \otimes u^*L_H^{\otimes 3})^{\otimes k}$ for $u^*L_H$ being the Hermitian line bundle with curvature $u^*\Omega+d(\tilde{H}_{s-r}(u) \pi_u^*dt)$ (cf. Equation \eqref{eq:cuv2form}) where $\tilde{H}_s$ is an approximation of $NH_s$ as in Equation \eqref{eq:tildeH} such that $\tilde{H}_s$ agrees with $\tilde{H}_0$ and $\tilde{H}_1$ respectively at the two ends.

Similar to Proposition \ref{p:singlechart}, we get a global chart of $\mathcal{R}_{pq}$ when $k, \mu$ are large.

To build such global charts inductively, we need the (integralized) action difference for continuation solutions to be positive (cf. Equation \ref{eq:monoaction}). 
To do this, we pick a positive integer $C>0$ and replace $H_1$ and $\tilde{H}_1$ by $H_1+C$ and $\tilde{H}_1+NC$, respecitvely.
 The resulting Hamiltonian orbits, Floer solutions, integralized action difference, etc are the same so we can ensure that the ordered marked flow categories $\cC(H_1/\Gamma)$ and $\cC((H_1+C)/\Gamma)$ are isomorphic.
Therefore, we can ensure that the chain complexes $CF(H_1/\Gamma, J_1/\Gamma, \mathfrak{b})$ and $CF((H_1+C)/\Gamma, J_1/\Gamma, \mathfrak{b})$ differ only by an overall action shift of $C$.
When $C$ is large enough such that $H_0 \le H_1+C$, we can choose a monotone $H_s$.
Then the (integralized) action difference for continuation solutions will be positive.

Given the above ingredients, it is straightforward to modify the construction of the global chart lift of the ordered marked flow category $(\cC(H_i/\Gamma,\mathfrak{b}),J_i)$ to a (homotopy coherent) global chart lift of the ordered marked Floer bimodule $\cR$ between $(\cC(H_0/\Gamma,\mathfrak{b}),J_i)$ and $(\cC((H_1+C)/\Gamma,\mathfrak{b}),J_i)$. Therefore, we obtain an action decreasing chain map
\[
CF((H_1+C)/\Gamma,\mathfrak{b}) \to CF(H_0/\Gamma,\mathfrak{b}).
\]
Since $CF((H_1+C)/\Gamma,\mathfrak{b})$ agrees with $CF(H_1/\Gamma,\mathfrak{b})$ up to an overall action shift, we also have a chain map from the latter to $CF(H_0/\Gamma,\mathfrak{b})$ which is not necessarily action decreasing.

Next, we explain why the continuation maps are well-behaved with respect to composition.
Let $H^{01}$ be a smooth monotone interpolation from $H_0$ to $H_1$ and $H^{12}$ be a smooth monotone interpolation from $H_1$ to $H_2$.
Let $J^{01}$ and $J^{12}$ be the respective families of almost complex structures.
Let $C$ be a large number, $\psi \in [C,\infty]$ be a gluing parameter and define the glued data $H^{02}_\psi:=H^{01} \#_\psi H^{12}$, $J^{02}_\psi:=J^{01} \#_\psi J^{12}$ with the convention that $H^{02}_\infty$ and $J^{02}_\infty$ correspond to the broken data (i.e. no gluing), and $C$ is large enough such that the glued data are well-defined.

The Hamiltonian Floer 2-simplex of bimodules $\cR^{012}$ for objects $p \in \cC(H_0/\Gamma,\mathfrak{b})$ and $q \in \cC(H_2/\Gamma,\mathfrak{b})$ is the Gromov compactified moduli space $\cR^{012}_{pq}$ of $\Gamma$-equivariant maps $u:\Sigma^{\dagger} \to X$ such that
\begin{enumerate}
    \item $\Sigma^{\dagger}$ is an admissible cover of an orbifold cylinder $\scrC^{\dagger}$
    \item $\scrC^{\dagger}$ is equipped with a distinguished point $r\in \mathbb{R} \simeq \mathbb{R} \times \{0\} $
    \item $v:=[u/\Gamma]:\scrC^{\dagger} \to Y$ satisfies the Floer continuation equation determined by $r$ for some $\psi \in [C,\infty)$. That is
    \begin{align}
        (dv-X_{(H^{02}_\psi)_{s-r}/\Gamma}(v)dt)^{0,1}_{(J^{02}_\psi)_{s-r}/\Gamma}=0 \label{eq:continuation}
    \end{align}
    and exponential convergence $\lim_{s \to -\infty} v(s,t)=p(t)$ and $\lim_{s \to \infty} v(s,t)=q(t)$.
\end{enumerate}

\begin{remark}
    When we compactify it to get $\cR^{012}$, $\psi=\infty$ is allowed.
\end{remark}

\begin{remark}
    Instead of using $r$ and $\psi$, it is equivalent to using two distinguished points $r_0,r_1$ in $\mathbb{R}$ such that $r_0+C \le r_1$ and impose the condition
    \begin{align}
        (dv-X_{(H^{02}_{r_1-r_0})_{s-r_0}/\Gamma}(v)dt)^{0,1}_{(J^{02}_{r_1-r_0})_{s-r_0}/\Gamma}=0 \label{eq:continuation2}
    \end{align}
    These data are related by $r=r_0$ and $\psi=r_1-r_0$. It is understood that $\psi=\infty$ if and only if the two distinguished points are on different cylindrical components.
\end{remark}

For the moduli of framed curves, we need to use a generalization of $\mathcal{D}\scrF_{0,2+h}^{\dagger}([\bP(V)/\Gamma], V^*)$ whose universal curve has two distinguished points (instead of one) on the union of cylindrical components to put the continuation equation \eqref{eq:continuation2} on. For how to add one more distinguished marked point, see \cite[Lemma B.20]{Abouzaid-Blumberg} for the smooth manifold case and there is no new feature in our case.

With the moduli of framed curves, we can define thickening as before. A global chart lift of $\cR^{012}$ can be constructed inductively by integralized action difference. The outcome of the discussion is that.

\begin{lemma}\label{l:homotopic}
    Given smooth monotone homotopies $H^{01}$ from $H_0$ to $H_1$, $H^{12}$ from $H_1$ to $H_2$ and $H^{02}$ from $H_0$ to $H_2$, the chain map
    $\tau_{H^{01}} \circ \tau_{H^{12}}$ is chain homotopic to $\tau_{H^{02}}$.
\end{lemma}

\begin{proof}
By building a global chart lift  of the Hamiltonian Floer 2-simplex, we know that 
$\tau_{H^{01}} \circ \tau_{H^{12}}$ is chain homotopic to $\tau_{H^{01}\#_CH^{12}}$.
Then by a homotopy of bimodule and its global chart lift, we know that it is in turn chain homotopic to  $\tau_{H^{02}}$.
\end{proof}

\begin{corol}\label{c:constantcon}
The continuation map $\tau$ defined by constant (i.e. $s$-independent) Floer data is chain homotopic to the identity. 
\end{corol}

\begin{proof}
The $s$-independent Floer solutions are regular.
Therefore, by an energy filtration argument, we know that $\tau$ is a chain homotopy equivalence.

On the other hand, we can construct a Hamiltonian Floer 2-simplex whose three facets all correspond to constant continuation Floer data. By Lemma \ref{l:homotopic}, it implies that $\tau \circ \tau$ is chain homotopic to $\tau$.
By composing a homotopy inverse of $\tau$,  it shows that $\tau$ is chain homotopic to the identity.
\end{proof}

\begin{lemma}\label{l:+C}
The continuation map from $H$ to $H+C$ is a chain homotopy equivalence.
\end{lemma}

\begin{proof}
The moduli space of Floer solutions are the same for the constant continuation from $H$ to $H$ and for the `adding constant' continuation from $H$ to $H+C$.
The only difference is that the integralized actions involved are different so the framing and thickening are different.
But by a doubly framed trick, it shows that the chain maps $CF(H/\Gamma,\mathfrak{b}) \to CF(H/\Gamma,\mathfrak{b})$ and $CF(H/\Gamma,\mathfrak{b}) \to CF((H+C)/\Gamma,\mathfrak{b})$ are chain homotopic (under the obvious identification between $CF(H/\Gamma,\mathfrak{b})$ and $CF((H+C)/\Gamma,\mathfrak{b})$).
\end{proof}

\begin{corol}
The chain maps associated to continuation maps are chain homotopy equivalences.
\end{corol}

\begin{proof}
Suppose we have pairs $(H_0,J_0)$ and $(H_1,J_1)$ and a  monotone continuation path $(H_s,J_s)$ from $(H_0,J_0)$ to $(H_1,J_1)$ as above. We pick a large integer $C$ and another monotone continuation path $(H'_s,J'_s)$ from $(H_1,J_0)$ to $(H_0+C,J_1)$.
We consider a one-parameter family $H_s^r$ of continuation maps interpolating between the concatenation $H_s \# H'_s$ and the path $H_0+C(s)$ for some monotone real-valued function $C(s)$ from $0$ to $C$. 
By Lemma \ref{l:homotopic} and \ref{l:+C}, we know that the composition of $\tau_{H_s'}$ and $\tau_{H_s}$ is a chain homotopy equivalence.
Similarly, the composition of $\tau_{H_s+C}$ and $\tau_{H_s'}$ is also a chain homotopy equivalence so  $\tau_{H_s}$ is a chain homotopy equivalence.
\end{proof}

\begin{prop}\label{p:orbFloer}
After forgetting the action filtration, $CF(H/\Gamma,\mathfrak{b};J_Y)$ is independent of  $H$ and $J_Y$ up to chain homotopy equivalences.
\end{prop}

\begin{proof}
Follows directly from the preceding results.
\end{proof}

\begin{proof}[Proof of Theorem \ref{t:main}(A)]

Proposition \ref{p:orbFloer} addressed the independence of $H$ and $J_Y$.

To see that it is an invariant of the orbifold and doesn't depend on the presentation, we can argue as in Proposition \ref{p:doubly-closed} and \ref{p:chaincomplex2indep}.
Given two presentations $Y=[X/\Gamma]=[X'/\Gamma']$, we can run Proposition \ref{p:globalliftexist} for $[X/\Gamma]$ and $[X'/\Gamma']$ separately to obtain two global chart lifts of $(\cC(H_Y,\mathfrak{b});J_Y)$.
Then we build a master global chart lift that is a stabilisation of both.
Since the two collections of multi-valued perturbations are on the same smooth global chart lift (the master global chart lift), the resulting chain complexes are chain homotopic.
\end{proof}

\subsection{Orbifold Morse complex}\label{s:additive}

\emph{We explain how to extract the orbifold Morse complex $C_{orb}(Y)$, defined in Section \ref{sec:Morse}, from a flow category perspective and construct pearly continuation maps between different Morse complexes.}


Let $f: X \to \bR$ be a $\Gamma$-invariant stable Morse function (see Lemma \ref{l:stableMorse}). Then $f$ induces a Morse function on $Y$ and indeed on every sector of the inertia orbifold $IY$; more explicitly, we obtain Morse functions (still denoted) $f: X^{g} \to \bR$ which are $C(g)$-invariant, for every $g \in \Gamma$.  
We regard $f$ as a time-independent Hamiltonian function.
We also choose a time-independent $J_Y$ and we have the associated Hamiltonian Floer flow category $\cC(f)$.
On the other hand, we can also associate a Morse flow category $\cC_{Morse}(f)$ to $f$ which corresponds to the chain complex $C_{orb}(Y) \otimes \Lambda_{\Pi}$.

The objects of $\cC_{Morse}(f)$ are those objects $c_{x}$ of $\cC(f)$ such that $x$ is a constant map.
Spelling it out in Morse theoretical terms, an object of $\cC_{Morse}(f)$ is an equivalence class of triples $(x,g,U)$ where $x$ is a critical point of $f$ in $X^g$, orientable in the sense that the isotropy group acts as the identity on the normal bundle of $X^g$ at $x$, $U$ is an element in $\Pi$, and two triples $(x,g,U)$, $(x',g',U')$ are isomorphic if $(x,g)=(hx', hg'h^{-1})$ for some $h \in \Gamma$ and $U=U'$.
Indeed, given $(x,g,U)$, the pair $(x,g)$ determine a $\Gamma$-cover $\hat{S} \to S^1$ and $\Gamma$-equivariant map $\hat{x}:\hat{S} \to X$ (which is a constant map on each connected component) covering $x$ (see \cite[Section 6.1.1]{MMRM} for more details).
We can choose an orbifold cap $\bar{c}_x$ of $x$ and $(x,g,U)$ determines the object $(x,\bar{c}_x,U-\omega_Y(\bar{c}_x)) \in \cC(f)$. 
In the opposite direction, we can get a triple $(x,g,U)$ from an object $(x,\bar{c}_x,U)$ in $\cC(f)$ when $x$ is a constant map.

The morphism space from $(x,g,U)$ to $(x',g',U')$  is given by the moduli space of gradient trajectory (with respect to the metric determined by $J_Y$) from $x$ to $hx'$ if $g=hg'h^{-1}$ for some $h \in \Gamma$ and $U=U'$, and empty otherwise.
As a result, $\cC_{Morse}(f)$ can be partitioned into a disjoint union of flow categories $\cC_{Morse}(f;[g],U)$, one for each $([g],U)$ for a conjugacy class $[g]$ of $\Gamma$ and $U \in \Pi$ (cf. Equation \eqref{eq:orbiMorsecomplex}).

Now we explain how the weight of the Morse differential (see Equation \eqref{eq:Morse_weight}) is related to the count of the orbifold Floer cylinders.
Given a gradient trajectory $\eta$ from $(x,g)$ to $(y,g)$, we can think of $\eta$ as the image of a Floer cylinder $v:\scrC \to Y$ which comes from a $\Gamma$-equivariant map $u:\Sigma \to X$.
The Floer cylinder $C$ has no orbifold point and $\Sigma$ is determined by $g$.
The automorphism group of $\eta$ and the automorphism group of $u$ are the same.
As a zero in the multi-valued perturbation, $u$ contributes $\frac{1}{|\Gamma_u|}$.
The weight we assign to $\eta$ is $\frac{|\Gamma_y|}{|\Gamma_\eta|}$, which agrees with the count of zeroes, $|\Gamma_y|$, times $\frac{1}{|\Gamma_u|}$.

It is a basic result of Wehrheim \cite{KW} that, after imposing a stronger form of the Morse-Smale condition in which the metric is also assumed standard and flat near the critical points, the compactified moduli spaces 
of $f$-gradient flow lines between critical points $x,y$ are the morphism spaces in a flow category enriched in smooth manifolds with corners. The proof directly shows that if the finite group $\Gamma$ acts and both $f$ and the metric is $\Gamma$-invariant then $\Gamma$ acts by diffeomorphisms of the flow category.  In such a situation, there is a coherent system of  global chart lifts for the category for the trivial group $G=\{e\}$, with no obstruction bundles, and with thickenings the moduli spaces themselves. In this case, the associated construction of a chain complex from the flow category is just the obvious thing: one discards all moduli spaces of dimension $>1$ and writes down the usual Morse complex.

Suppose that $f'$ is another $\Gamma$-invariant stable Morse function and $J_Y'$ is a $\Gamma$-equivariant compatible almost complex structure such that the induced metric together with $f'$ is Morse-Smale. For each sector $[X^g/C(g)]$, when the metrics are generic enough, we can define a chain map 
\[C_{Morse}([X^g/C(g)], f) \to C_{Morse}([X^g/C(g)], f')\]
by counting rigid intersection points between the ascending manifolds of $f$ and descending  manifolds of $f'$.
These chain maps are chain homotopy equivalences, so the direct sum of them defines a chain homotopy equivalence from $C_{orb}(Y,f)$ to $C_{orb}(Y,f')$.

After tensoring with $\Lambda_{\Pi}$, we can deform this chain homotopy equivalence by adding pseudo-holomorphic spheres, similar to how the cup product can be deformed to the quantum product (cf. Section \ref{s:orbQH}).
More precisely, let $\frak{b}=\sum_{(g) \in \|\Gamma\|^{\circ}} \frak{v}_{(g)} PD[X^g/C(g)]$ be a bulk class (see \eqref{eq:bulkb}) and $J_Y$ be a compatible almost complex structure.
Let $\bfg=(g_0,\bfg',g_{\infty}) \in \|\Gamma\|^{k+2}$ be such that $\bfg' \in \|\Gamma^\circ\|^{k}$.
Let $\cK_{0,\bfg}(Y;\beta)$ be the disjoint union of $\cK_{0,\bfm}(Y;\beta)$ over all $\bfm \in \mathcal H_{h+2}$ such that the associated sequence of conjugacy classes of the ordered marked points is $\bfg$ (cf. \eqref{eq:closedmodulidecom2}).
Denote the first marked point and the last marked point by $z_0$ and $z_{\infty}$, and regard them as the input and output respectively.
The remaining $h$ marked points are special marked points $\{z_1\ldots,z_h\}$ which will carry the bulk insertions $\frak{b}$ later.
Let $g_i \in \|\Gamma\|^{\circ}$ be the associated conjugacy class at $z_i$ for $i=0,\dots,h, \infty$.
We have evaluation maps $ev_{z_i}:\cK_{0,\bfg}(Y;\beta) \to [X^{g_i}/C(g_i)]$.
After equipping $\cK_{0,\bfg}(Y;\beta)$ with a global Kuranishi chart, we can take transverse fibre products with the descending manifold of a critical point $p'$ of $([X^{g_{\infty}}/C(g_{\infty})], f')$
and the ascending manifold of a critical point $p$ of $([X^{g_{0}}/C(g_{0})], f)$.
Then we can coherently apply a multi-valued perturbation to the sections. Denote the signed rigid count by $n(p,p';\bfg, \beta)$.

\begin{definition}[Pearly continuation map]
The pearly continuation map is defined to be the chain map (cf. Definition \ref{d:chaincomplex}, Proposition \ref{p:dsquare=0}, Remark \ref{r:deform} and Definition \ref{d:chaincomplex2})
\begin{align*}
C_{orb}(Y,f) \otimes \Lambda_{\Pi} \to& C_{orb}(Y,f')\otimes \Lambda_{\Pi} \\ 
p \mapsto & \sum_{p'}\sum_{k} \sum_{\bfg' \in (\|\Gamma^\circ\|)^k}\sum_\beta \frac{n(p,p'; (g(p), \bfg', g(p')), \beta)}{k!}T^{\omega(\beta)}\prod_{i=1}^k \frak{v}_{(g_i)} |\Gamma_{p'}|p'
\end{align*}
where $g(p)$ and $g(p')$ are the conjugacy classes of the sectors that $p$ and $p'$ lie in respectively.
\end{definition}

We leave it to readers to check that it is a chain map. 

\begin{lemma}\label{l:pearly continuation}
    The pearly continuation map is a chain homotopy equivalence.
    Moreover, when $f=f'$, it is chain homotopic to the identity.
\end{lemma}

\begin{proof}
This argument is very close to Corollary \ref{c:constantcon}, and again has two steps.

    First, the energy $0$ contribution is a chain homotopy equivalence so by a filtration argument, the pearly continuation map is also a chain homotopy equivalence.

    Next, the composition of two pearly continuation maps is chain homotopic to a pearly continuation map by gluing the intermediate gradient trajectory and then shrinking the length to $0$. Therefore, it is chain homotopic to the identity map.
\end{proof}

\subsection{PSS and SSP isomorphisms}\label{s:compatibleprod}

\emph{We construct the PSS and SSP maps, and show that they are chain homotopic inverse to each other.}

We want to describe the $\SSP$ morphism.  To do that, we will use a variant of the space $\mathcal{D}\scrF_{0,2+h}^\dagger([\bP(V)/\Gamma],V^*)$, which was used in Section \ref{s:Haminv} to define the continuation map.

Let $\scrF_{0,2+h}([\bP(V)/\Gamma],V^*)$ and $\partial^{cyl} \scrF_{0,2+h}([\bP(V)/\Gamma],V^*)$ be as before.
Recall that the two distinguished marked points are denoted by $z_-,z_+$ (see Definition \ref{d:orbicylinder}).
We fix an asymptotic marker at $z_+$ of the universal domain curve of $\scrF_{0,2+h}([\bP(V)/\Gamma],V^*)$ smoothly and consistently, and denote the resulting moduli space by $\scrF_{0,2+h}^+([\bP(V)/\Gamma],V^*)$ and similarly $\partial^{cyl} \scrF_{0,2+h}^+([\bP(V)/\Gamma],V^*)$ for the boundary coming from $\partial^{cyl} \scrF_{0,2+h}([\bP(V)/\Gamma],V^*)$.
We consider the conic degeneration $\mathcal{D}\scrF_{0,2+h}^+([\bP(V)/\Gamma],V^*)$ with discriminant 
$\partial^{cyl} \scrF_{0,2+h}^+([\bP(V)/\Gamma],V^*)$ over $\scrF_{0,2+h}^+([\bP(V)/\Gamma],V^*)$.
Since the universal domain curve of $\scrF_{0,2+h}^+([\bP(V)/\Gamma],V^*)$ has no asymptotic markers on the cylindrical components yet (other than the output component), the $\{t=0\}$ locus of a cylindrical component is ambiguous up to  the $S^1$-action fixing the two special nodes in general. 
Therefore, a point $\phi \in \mathcal{D}\scrF_{0,2+h}^+([\bP(V)/\Gamma],V^*)$ has the data of a point in $\scrF_{0,2+h}^+([\bP(V)/\Gamma],V^*)$ together with a distinguished $S^1$-orbit $r_{\phi}$ on the coarse moduli space of a cylindrical component of the domain curve $\scrC_{\phi}$.
Let $\mathcal{D}^{\circ}\scrF_{0,2+h}^+([\bP(V)/\Gamma],V^*)$ be the dense open subspace of $\mathcal{D}\scrF_{0,2+h}^+([\bP(V)/\Gamma],V^*)$ such that $r_{\phi}$ is not a node in $\scrC_{\phi}$.
We call the irreducible component of $\scrC_{\phi}$ containing $r_\phi$ the distinguished component and the chain of irreducible components from the distinguished component to the output (but not including the distinguished one) the positive cylindrical components. Other cylindrical components are negative.
Let $\partial^+\mathcal{D}^{\circ}\scrF_{0,2+h}^+([\bP(V)/\Gamma],V^*)$ be the locus of $\mathcal{D}^{\circ}\scrF_{0,2+h}^+([\bP(V)/\Gamma],V^*)$ where the domain curve has at least one positive cylindrical component.
Let $\scrF^{SSP}([\bP(V)/\Gamma],V^*)$ be the real blow-up of $\mathcal{D}^{\circ}\scrF_{0,2+h}^+([\bP(V)/\Gamma],V^*)$ along $\partial^+\mathcal{D}^{\circ}\scrF_{0,2+h}^+([\bP(V)/\Gamma],V^*)$.
A point in $\phi \in \scrF^{SSP}([\bP(V)/\Gamma],V^*)$ is represented by $(\scrC^\dagger_{\phi}, \Sigma^\dagger_{\phi},r_\phi)$ such that
\begin{enumerate}
    \item $\scrC^\dagger_{\phi}$ is a prestable orbifold sphere with $2+h$ marked points (two of them are special, which are incoming and outgoing respectively) together with an embedding $\phi:\scrC^\dagger_{\phi} \to [\bP(V)/\Gamma]$ in degree $d$;
    \item a distinguished $S^1$-orbit $r_{\phi}$ on one of the cylindrical components of $\scrC^\dagger_{\phi}$;
    \item a pair of asymptotic markers at each node adjacent to a positive cylindrical component of $\scrC^\dagger_{\phi}$, and an asymptotic marker at the outgoing marked point such that the two markers on the same cylindrical component point towards the same geodesic;
    \item an admissible cover $\Sigma^\dagger_{\phi}$ of $\scrC^\dagger_{\phi}$ (it has the data of asymptotic markers for each point lying above a special point in $\scrC^\dagger_{\phi}$ which carries asymptotic markers, and an equivariant base-point preserving isomorphism between the asymptotic markers on the two sides of a node).
\end{enumerate}

Given a $\Gamma$-invariant time-dependent Hamiltonian $H_t$ on $X$  and a cut-off function $\rho:\mathbb{R} \to [0,1]$ with $\rho(s) = 0$ for $s\leq 0$ and $\rho(s) = 1$ for $s\geq 1$, there is a Hamiltonian
\[
H^{\SSP} = (1-\rho(s))\cdot C + \rho(s)H_t(x)
\]
for any chosen constant $C$, and a corresponding Hamiltonian vector field $X_{H^{\PSS}/\Gamma} \otimes dt$ on the cylinder which vanishes for $s \leq 0$.  We will choose $C < \inf_{t,x}H_t(x)$. For $\phi \in \scrF^{PSS}([\bP(V)/\Gamma],V^*)$, we consider equivariant maps $u:\Sigma_{\phi}^\dagger \to X$ such that the quotient $v:\scrC_{\phi}^\dagger \to Y$ satisfies the Floer equations
\begin{equation}\label{eqn:PSS}
\left. \begin{aligned}
(dv)_{J/\Gamma}^{0,1} = 0 \\ (dv - X_{H^{\PSS}/\Gamma} \otimes dt)^{0,1}_{J/\Gamma} = 0 \\ (dv - X_{H/\Gamma} \otimes dt)^{0,1}_{J/\Gamma} = 0 \end{aligned}\right \} \ \mathrm{for} \ \begin{cases} v & \mathrm{spherical \, or \, negative \, cylindrical } \\ v & \mathrm{distinguished} \\ v & \mathrm{positive \, cylindrical }\end{cases}
\end{equation}
 
 Let $\cM^{SSP}(\bullet,c_x)$ denote the corresponding moduli space with  capped orbit asymptotic $c_x$ at $+\infty$ where the Hamiltonian term is non-trivial. 
 Let $(g_-)$ be the conjugacy class of the marked point $z_-$.
 There is an evaluation map
 \begin{equation} \label{eqn:PSS evaluation}
\ev_{z_-}: \cM^{PSS}(\bullet,c_x) \longrightarrow [X^{g_-}/C(g_-)]
\end{equation}
Now suppose $y \in \mathrm{Crit}(f)$ and let $\overline{W}^u(f,y) \subset [X^{g_-}/C(g_-)]$ denote the compactified descending manifold at $y$. Recall this has a stratification in which the codimension one boundary strata are products 
\[
\overline{W}^u(f,y) \supset \cM_{y,z}(f) \times \overline{W}^u(f,z)
\]
and deeper strata involve breaking off longer chains of gradient flow-line moduli spaces.  The usual analysis of breaking of SSP solutions says:

\begin{lemma}
For a capped orbit $c_x$ and Morse critical point $y$, 
the fibre product $\cB(y,c_x)$ of evaluation $\overline{W}^u(f,y) \to X$  and the evaluation map \eqref{eqn:PSS evaluation}  define a bimodule $\cR$ over the Morse flow category for $f$ and the Floer ordered marked flow category for $H$. 
\end{lemma}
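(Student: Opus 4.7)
The plan is to verify that the moduli spaces $\cB(y,c_x)$ assemble into a bimodule in the sense of Section \ref{s:module} by checking each component of Definition in turn: compactness, the poset stratification, the boundary face identifications, the action inequality, and $\Pi$-equivariance.

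First, I would fix the combinatorics. An element of $\cB(y,c_x)^k$ consists of a broken gradient trajectory from $y$ to some intermediate critical point $y'$, glued to a PSS solution with $k$ ordered orbifold marked points whose right asymptotic cylindrical chain breaks into Floer cylinders ending at $c_x$. Thus the top stratum is $\mathring{\cB}(y,c_x)^k$ (unbroken Morse half-line into a single PSS configuration with $k$ ordered bulk points into $c_x$), and strata of depth one come in two flavors: Morse breaking $\cM_{yy'}(f) \times \cB(y',c_x)^k$ on the left, and Floer breaking $\cB(y,c_{x'})^{k_0} \times \cM^{k_1}_{c_{x'}c_x}$ on the right (with $k_0+k_1=k$ and a choice $\bfh\in \{0,1\}^k$ of marked-point distribution). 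I would show this stratification is indexed by exactly the poset $\bA_{yc_x}$ from Section \ref{s:module}, noting that Morse half-lines carry no interior marked points so all left-strata have $k_0=0$ factor on the Morse side, which matches $\bA^{0,\cP}_{yy'} \times \bA^{k,\scrB}_{y'c_x}$.

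Next I would establish compactness and the stratified homeomorphisms. Compactness follows from Gromov--Floer compactness for orbifold curves (in the admissible covers formulation of Theorem~\ref{t:stablemap}): energy is bounded above by $\scrA(c_x) - f(y) + k\cdot\val(\frak{v})$, where the $\val(\frak{v})$ weights for the $k$ orbifold marked points are included as in the geometric energy \eqref{geometric energy}, and the finite set of asymptotics and orbifold types with energy below any given level is finite. The boundary face maps are then the usual gluing homeomorphisms: on the Morse side this is Wehrheim's smooth manifold-with-corners gluing for $(f,g)$ satisfying the stable Morse-Smale condition; on the Floer side this is the same gluing of cylinders discussed in Section~\ref{s:orbifoldFloer}, incorporating the preferred preimages $1_{z^\pm}$ to ensure the glued admissible cover is well defined. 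Associativity of the face maps follows, as in the corresponding statement for the flow category $\cC(H)$ itself.

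For the action inequality, I would set the constant $C$ from the definition of a flow bimodule to $C := \sup_{t,x} H_t(x) - \inf_{t,x} H_t(x)$ (plus a term to absorb the PSS cut-off $\rho(s)$), and use the standard energy identity for the perturbed Cauchy-Riemann equation \eqref{eqn:PSS} together with Lemma~\ref{lem:positive}/Corollary~\ref{cor:lower bound} to conclude $\cB(y,c_x)\neq\emptyset \Rightarrow f(y) < \scrA(c_x)+C$. The continuous virtual dimension function $\vdim$ on $\cB(y,c_x)^k$ is the formal index: $\mu_{CZ}(c_x) - \mathrm{ind}_f(y) + 2k$ plus the contributions from the formal capping, and additivity under breaking (the analogue of \eqref{eq:vdimsum}) is a direct computation using Remark~\ref{r:breaking} and the index formula for PSS solutions. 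Finally, the $\Pi$-action (recapping by a formal sum of representable orbifold spheres) acts on the right on $c_x$ and trivially on the Morse critical point $y$, and this action commutes with the face maps by construction.

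The main obstacle will be the bookkeeping of the orbifold combinatorics at the two types of boundary strata simultaneously, in particular ensuring that the $\Gamma$-admissible cover structure at a Floer breaking on the right is compatible with the choices of preferred preimages $1_{z^\pm}$ prescribed in Section~\ref{s:orbifoldFloer}, and that the Morse-side gluing (which is entirely on the $X$-level since the critical points lift $\Gamma$-equivariantly via the stable locus of Bao-Lawson) interacts correctly with the orbifold node at the junction of the Morse half-line and the PSS domain. Both of these issues are local at the respective interfaces and are handled as in the flow-category construction itself, so no essentially new analysis is required beyond what has already been set up.
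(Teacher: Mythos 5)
The paper does not give an explicit proof of this lemma; it simply invokes ``the usual analysis of breaking of PSS solutions.'' Your proposal correctly identifies and fills in the standard argument, adapted to the ordered marked orbifold setting: stratification by the bimodule poset $\bA_{y\,c_x}$, Gromov--Floer compactness via the geometric energy \eqref{geometric energy}, gluing at Morse and Floer breakings, positivity of the action difference, and equivariance. The combinatorial observation that the Morse side carries no interior marked points (so $k_0=0$ in all left strata) is exactly right and matches Example \ref{ex:Morse flow cat definitions}.

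Two points deserve correction or sharpening. First, you assert that $\Pi$ ``acts trivially on the Morse critical point $y$,'' but in Example \ref{ex:Morse flow cat definitions} the Morse objects are pairs $(p,c)$ of a critical point \emph{and a formal capping}, and $\Pi$ acts by recapping on both sides. The bimodule definition in Section \ref{s:module} requires the stratified identification $\scrB_{pp'} \to \scrB_{a\cdot p,\,a\cdot p'}$ for $a\in\Pi$, so the recapping must be tracked on the Morse object as well as on $c_x$ --- otherwise the induced map $C^*_{Morse}(f) \to CF(H;\mathfrak{b})$ would not be $\Lambda$-linear with respect to the Novikov relation $T^{\scrA_\Pi(a)}p=a\cdot p$. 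The underlying critical point is of course unchanged, but the formal capping is not, and this is the entire reason the Morse flow category was set up with cappings. Second, your energy bound has a sign issue: with $E_{geo}(v)=\int\|\partial_s v\|^2 + k\cdot\val(\frak{v})$, a bound on the action difference bounds $E_{geo}$ from above, hence bounds both the $\omega$-energy and the number $k$ of orbifold marked points, which is what feeds Gromov--Floer compactness --- the $k\cdot\val(\frak{v})$ term should not be added to your upper bound but rather is \emph{part of} the bounded quantity. Similarly, the virtual dimension formula $\mu_{CZ}(c_x)-\mathrm{ind}_f(y)+2k$ is off by normalizing constants (a $\dim X$ shift coming from the fibre product with $\overline{W}^u(f,y)$), though you flag that the precise formula requires more bookkeeping; what actually matters is additivity under breaking as in \eqref{eq:vdimsum}, and that does follow from Remark \ref{r:breaking}. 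Neither issue breaks the argument, but the $\Pi$-equivariance point should be stated carefully since it is structural rather than cosmetic.
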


The moduli space $\cM^{SSP}(\bullet,c_x)$ admits a thickening $\scrT^{SSP}$ constructed on the usual lines: we fix a finite-dimensional approximation scheme $\{V_\mu\}$ for the space $C^{\infty}_c(\scrC^\dagger \times TX)$
and consider tuples 
\[
(\phi,u,F,e) 
\]
where $ \phi \in \scrF^{SSP}([\bP(V)/\Gamma],V^*)$, $u: \scrC^{\dagger}|_{\phi} \to X$ is $\Gamma$-equivariant, represents a fixed class $\epsilon$, $F:H^0(L_u)^{\Gamma} \to V^*$ is an equivariant isomorphism; and satisfying the perturbed version of the equations \eqref{eqn:PSS} arising from the choice of perturbation $e$.   Such global charts are constructed inductively with respect to integralised action.  There is an evaluation map 
\begin{equation} \label{eqn:PSS evaluation on thickening}
\ev_{z_-}: \scrT^{SSP} \longrightarrow X
\end{equation}
which we can assume (by inductively replacing the global charts by equivalent charts incorporating stabilisations by the pullback of $TX$, cf. \cite{AMS1}) are submersive.  Taking the fibre product  of \eqref{eqn:PSS evaluation on thickening} with the compactified unstable manifolds $\overline{W}^u(f,y)$, we deduce:
\begin{corol}
The PSS bimodule $\cR$ admits a smooth and equivariant global chart lift, and hence defines a chain map
\[
CF(H/\Gamma;\mathfrak{b}) \to C^*_{orb}(Y;f).
\]
\end{corol}

This defines the SSP morphism.

The inverse $\PSS$ morphism is constructed analogously but with a distinguished component $\Sigma^{\PSS}$ and a real blow-up which serves to equip those cylindrical components between the one containing $z_-$ and $\Sigma^{\PSS}$ with lateral lines instead.  We omit the details, but compare to \cite{BX}.

\begin{prop}
 For any non-degenerate $H$, the maps $PSS$ and $SSP$ between $CF(H/\Gamma;\mathfrak{b})$ and $C^*_{orb}(Y;f)$ are chain homotopic inverses of each other. 
\end{prop}

\begin{proof}
To show that $SSP \circ PSS$ is chain homotopic to the identity, we consider a 2-simplex of bimodules whose three edges correspond to $PSS$, $SSP$ and the gluing between the output of $PSS$ and the input of $SSP$. 
The Hamiltonian data on the glued operation can be homotoped to $0$, which gives a corresponding homotopy of bimodules from the last edge to the pearly continuation map. By constructing a global chart lift for the 2-simplex and the homotopy of bimodules as before, we conclude that $SSP \circ PSS$ is chain homotopic to the pearly continuation map from $f$ to itself, which by Lemma \ref{l:pearly continuation}, is chain homotopic to the identity.

To show that $PSS \circ SSP$  is chain homotopic to the identity, we consider a 2-simplex of bimodules whose three edges correspond to $SSP$, $PSS$ and the gluing between the output of $SSP$ and the input of $PSS$. This last configuration includes a finite length gradient trajectory.
We can further homotope it to shrink the length of the gradient trajectory to zero, smooth the node with respect to the asymptotic marker and then increase the Hamiltonian term to the constant continuation map.
By constructing a global chart lift for them as before, we know that $PSS \circ SSP$ is homotopic to the constant continuation map, which by Corollary \ref{c:constantcon}, is chain homotopic to the identity.

\end{proof}

We can combine the previously defined chain maps for the different inertia strata, to deduce that  $\PSS_H:QH_{\orb}(Y) \to HF(H/\Gamma;\mathfrak{b})$ is an additive isomorphism, which is moreover compatible with continuation maps.

\subsection{Products on the Hamiltonian Floer complex}\label{s:product}

\emph{We discuss the orbifold Floer product, which entwines different sectors of the inertia orbifold.  Our applications in \cite{MSS2} rely on the triangle inequality for spectral invariants, hence require the product structure.} 

In parallel to Definition \ref{d:orbicylinder}, we give the following definition.
\begin{definition}\label{d:orbipants}
    A marked prestable orbi-pair of pants $\scrC^\dagger$ consists of the following data
    \begin{enumerate}
        \item a prestable genus $0$ orbifold curve $\scrC$ with $3+h$ ordered marked points such that three of them, denoted by $z_0,z_1,z_\infty$ are distinguished ($z_0,z_1$ are incoming and $z_{\infty}$ is outgoing).
        \item a marked point can be a smooth point or an orbifold point, all orbifold points are marked points, nodes between irreducible components are not marked points,
        \item there is a distinguished irreducible component $C_0$ of $\scrC$ such that the chain of irreducible components from 
        $C_0$ to $z_0$, from $C_0$ to $z_1$, and from $C_0$ to $z_{\infty}$ are pairwise disjoint (other than at $C_0$). These $3$ chains of irreducible components are cylindrical components. The nodes separating cylindrical components, separating a cylindrical component and the distinguished component, and the $3$ distinguished marked points are called special points.
        \item for every irreducible component, we have an asymptotic marker at every special node it carries.
    \end{enumerate}   
\end{definition}
We define an admissible cover of a prestable orbi-pair of pants in a similar way.

In parallel to the definition of morphism spaces of a Floer flow category (see Section \ref{s:orbifoldFloer}), we can define a bilinear map $\cR_{H_1,H_2}: \cC(H_1/\Gamma) \times \cC(H_2/\Gamma) \to \cC((H_1 \# H_2)/\Gamma)$ over the flow categories associated to Hamiltonians $H_1$, $H_2$ and $H_3:=H_1\# H_2$ (see \eqref{eq:comp} for the definition of $H_1\#H_2$) using admissible covers of  prestable orbi-pair of pants.
More precisely, we consider the moduli of finite energy equivariant maps
$u: \Sigma^\dagger \to X$ such that $\Sigma^\dagger$ is an admissible cover of a prestable orbi-pair of pants $\scrC^\dagger$ and the following equations are satisfied (let $v$ be the induced map on the $\Gamma$-quotients as before): 
\begin{equation} \label{eqn:Floer equation2}
\left. \begin{aligned}
(dv - Y_{\kappa})_{J/\Gamma}^{0,1} = 0 \\ (dv - X_{H_i/\Gamma} \otimes dt)^{0,1}_{J/\Gamma} = 0 \\ (dv)^{0,1}_{J/\Gamma} = 0 \end{aligned}\right \} \ \mathrm{for} \ \begin{cases} v & \mathrm{distinguished} \\ v & \mathrm{cylindrical} \\ v & \mathrm{spherical} \end{cases}
\end{equation}
together with the convergence at the punctures (with respect to the chosen cylindrical ends), where $Y_{\kappa}$ is a Hamiltonian vector field valued $1$-form interpolating the $X_{H_i/\Gamma} \otimes dt$.
In other words, the $\omega$-dual of $Y_{\kappa}$ is a Hamiltonian function valued $1$-form $K_\kappa$ interpolating 
$H_i/\Gamma \otimes dt$.
Since $H_3=H_1\# H_2$, we can choose $K_{\kappa}$ such that $dK_{\kappa}$ is a non-negative $2$-form, which is needed to ensure that we have \eqref{eq:action_triangle} when we consider moduli of Floer solutions.

To define a global lift of $\cR_{H_1,H_2}$, we need to use an analogue of $\scrF^\dagger_{0,2+h}([\bP(V)/\Gamma],V^*)$.
Let $\scrF_{0,3+h}([\bP(V)/\Gamma],V^*)$ denote the space of representable stable genus zero curves in $[\bP(V)/\Gamma]$ with $3+h$ marked points, satisfying the conditions \eqref{eq:condition}.
Clearly, $\scrF_{0,3+h}([\bP(V)/\Gamma],V^*)$ is the same as $\scrF_{0,(h+1)+2}([\bP(V)/\Gamma],V^*)$ as spaces and the only difference is that we view 3 of the marked points as distinguished (indeed as punctures).
Two of the 3 marked points are incoming, and one is outgoing.
We choose incoming cylindrical ends for the two incoming marked points and an outgoing cylindrical end for the outgoing marked point smoothly and consistently over $\phi \in \scrF_{0,3+h}([\bP(V)/\Gamma],V^*)$.

Let $\scrF^\dagger_{0,3+h}([\bP(V)/\Gamma],V^*)$ be the moduli space of equivariant maps $u:\Sigma^{\dagger} \to \bP(V)$ such that
$\Sigma^{\dagger}$ is an admissible cover over a prestable orbi-pair of pants $\scrC^\dagger$ and
the underlying map $\scrC \to  [\bP(V)/\Gamma]$ (after forgetting the asymptotic markers) defines an element in $\scrF_{0,3+h}([\bP(V)/\Gamma],V^*)$.

Let $\partial^{cyl}\scrF_{0,3+h}([\bP(V)/\Gamma],V^*)$ be the locus on the boundary of $\scrF_{0,3+h}([\bP(V)/\Gamma],V^*)$ such that the domain curve has at least one cylindrical component (on top of the distinguished component).
By Lemma \ref{lem:normal crossing}, $\partial^{cyl}\scrF_{0,3+h}([\bP(V)/\Gamma],V^*)$ is a normal crossing divisor.
We have an analogue of Lemma \ref{l:realblowup}:

\begin{lemma}\label{lem:domains for product}
The space $\scrF^\dagger_{0,3+h}([\bP(V)/\Gamma],V^*)$ is the real blow-up of $\partial^{cyl}\scrF_{0,3+h}([\bP(V)/\Gamma],V^*)$ and hence a smooth manifold with corners.
\end{lemma} 

\begin{proof}
Similar to the proof of Lemma \ref{l:realblowup}. Example \ref{e:realblowup} gives the universal local model describing the real blow-up. Indeed, each real blow-up corresponds to choosing the two asymptotic markers adjacent to the node and a base-point preserving equivariant isomorphism of the asymptotic markers at the corresponding nodes of the admissible cover.
\end{proof}

A global Kuranishi chart for  $\cR_{H_1,H_2}(x,y;z)$ can be defined as before, using domains $\phi \in \scrF^\dagger_{0,3+h}([\bP(V)/\Gamma],V^*)$ rather than in $\scrF^\dagger_{0,2+h}([\bP(V)/\Gamma],V^*)$.

To run the inductive procedure to build homotopy coherent global chart lifts, we need to use the argument parallel to the one in Section \ref{s:Haminv}.
First, we can assume that $\cA_{\Pi,\mathbb{Z}}$ is common for all $\cC(H_i/\Gamma)$ and $\Omega$ vanishes in a neighborhood of the orbits of all the $H_i$.
Then, we need to pick $\tilde{K}_{\kappa}$ approximating $NK_{\kappa}$ as in Equation \eqref{eq:tildeH} such that $\tilde{K}_{\kappa}$ agrees with $\tilde{H}_i dt$ respectively at the three ends and $d\tilde{K}_{\kappa}$ is non-negative (at the cost of choosing a different collections of $\epsilon_x$ for the $H_i$ and apply Proposition \ref{p:chaincomplex2indep}, we can arrange that $d\tilde{K}_{\kappa} \ge NdK_{\kappa}$).
On the distinguished component, we need to replace $u^*L_H$ by the Hermitian line bundle whose curvature is given by
$u^*\Omega+d(\tilde{K}_\kappa(u))$.
The assumption that $d\tilde{K}_\kappa$ is non-negative ensures that Equation \eqref{eq:action_triangle} holds.

We construct a system of homotopy coherent global chart lifts for the bilinear map moduli spaces, compatible with breaking and chosen global chart lifts for the flow categories $\cC(H_i/\Gamma)$, inductively over the integralized action as before.    
 In this case, there is no canonical linear ordering for the cylindrical components so we don't have a preferred linear order to take the fibre products of the moduli spaces and hence the global charts. However, different ways of taking the fibre products are canonically isomorphic (see Remark \ref{r:assocom})\footnote{When we have a linear ordering, we only need the associativity part of Remark \ref{r:assocom}. When we don't have a linear ordering, we need both statements in Remark \ref{r:assocom}.}.
When we construct the master global chart and various stabilisation maps as in Proposition \ref{p:associativity}, the ordering of how we take fibre products does not affect the stabilisation maps up to isomorphisms\footnote{In contrast, in the approach in \cite{BX}, different ways of taking the product  affect the group embedding of the global charts (cf. \cite[Definition 5.6(1), Equation (5.4)]{BX}).}, so Definition \ref{d:coherenthomotopy}(3) and \ref{d:global lift}(4) can be achieved.

Using Lemma \ref{lem:domains for product} and standard gluing, one sees that the thickening has a $C^1_{loc}$-structure over the space of domains, which means that the global chart for the product can be equipped with outer collars and smoothed by the Bai-Xu \cite{BX}, Rezchikov  \cite{Rez} machinery, again compatibly with smoothings of the charts for the underlying flow categories; in particular, we can construct transverse multi-valued perturbations over the moduli spaces of three-marked orbifold spheres.

The Floer product 
\begin{equation} \label{eqn:product}
CF(H_1/\Gamma;\frak{b}) \otimes CF(H_2/\Gamma;\frak{b}) \longrightarrow CF((H_1\# H_2)/\Gamma;\frak{b})
\end{equation}
is then obtained from Proposition \ref{p:product}.  Moreover, it is an action decreasing map.

\begin{remark}
    The Hamiltonian vector field-valued $1$-form $Y_{\kappa}$ in Equation \eqref{eqn:Floer equation2} is a choice involved in the construction. Indeed, $H_3$ doesn't have to be $H_1\#H_2$. For any non-degenerate $H_3$, we can pick a large positive integer $C$ together with a choice of $K_{\kappa}$ interpolating $H_1/\Gamma \otimes dt$, $H_2/\Gamma \otimes dt$ and $(H_3-C)/\Gamma \otimes dt$ such that $dK_{\kappa}$ is non-negative.
    The same construction will give an action decreasing Floer product map
    \[
    CF(H_1/\Gamma;\frak{b}) \otimes CF(H_2/\Gamma;\frak{b}) \longrightarrow CF((H_3-C)/\Gamma;\frak{b}).
    \]
    Two different Floer product maps coming from two different choices of interpolation $K_{\kappa}$ are homotopy equivalent. 
\end{remark}

A continuation map data $H_s$ from $H_0$ to $H_1$ induces a continuation map data $H_s \# H_2$ from $H_0 \#H_2$ to $H_1 \# H_2$, and there are associated bimodules $\cW_{0,1}$ and $\cW_{02,12}$ between the flow categories $\cC(H_0/\Gamma)$ and $\cC(H_1/\Gamma)$ respectively $\cC((H_0\#H_2)/\Gamma)$ and $\cC((H_1\# H_2)/\Gamma)$.  The following reflects the usual algebra of breaking for compatibiltiy of the Floer product with continuation and with iterated composition; we will not give a detailed argument. 

\begin{lemma}\label{l:prodcommute}
There is a homotopy commutative diagram 
\[
\xymatrix{
CF(H_1/\Gamma;\frak{b}) \otimes CF(H_2/\Gamma;\frak{b}) \ar[r] \ar[d] & CF(H_0/\Gamma;\frak{b}) \otimes CF(H_2/\Gamma;\frak{b}) \ar[d] \\ 
CF((H_1 \# H_2)/\Gamma;\frak{b}) \ar[r] & CF((H_0 \# H_2)/\Gamma;\frak{b})
}
\]
Moreover, the Floer product is associative up to homotopy, i.e. given three Hamiltonians $H_i$ the two possible ways to apply the Floer product from
\[
CF(H_0/\Gamma;\frak{b}) \otimes CF(H_1/\Gamma;\frak{b}) \otimes CF(H_2/\Gamma;\frak{b}) 
\]
agree up to homotopy. 
\end{lemma}

\begin{proof}

Given  global chart lifts, this follows from Proposition \ref{p:product_compose}, which provides a chain homotopy from  the composition of a continuation map and a product to a product.

To build a homotopy coherent  global lift, we use the space $\scrF_{0,4+h}^\dagger([\bP(V)/\Gamma],V^*)$ with four distinguished marked points ($3$ incoming and $1$ outgoing).
We consider the $3$ chains of irreducible components from an incoming distinguished marked point to the outgoing distinguished marked point.  
It will give the domain curve either one or two distinguished components (one `$4$-valent component' or two `$3$-valent components') and possibly some cylindrical components.
We can define special points similarly, as well as $\partial^{cyl}\scrF_{0,4+h}([\bP(V)/\Gamma],V^*)$.
The corresponding $\scrF^\dagger_{0,4+h}([\bP(V)/\Gamma],V^*)$ is the real blow-up of $\partial^{cyl}\scrF_{0,4+h}([\bP(V)/\Gamma],V^*)$ and hence a smooth manifold with corners and we run the same inductive procedure as before.
\end{proof}

There is a pearl-type description of the orbifold quantum product on $H^*_{orb}(Y)$, deforming the Morse product briefly described at the end of Section \ref{sec:Morse}. The product depends on the additional piece of data which is the bulk deformation class $\frak{b}$,
yielding a ring $H^*_{orb}(Y,\frak{b})$.  The product again appeals to global charts $\bT_{0,3+h}(Y,\beta)$ for the spaces $\cK_{0,3+h}(Y,\beta)$ of stable genus $0$ orbifold curves with $h+3$ possibly stacky marked points. Let $\scrT_{0,3+h}(\beta)$ denote the thickening of such a chart; by construction, this has evaluation maps at the 3 distinguished marked points to the inertia orbifold $IY$. By taking fibre products with compactified descending manifolds, one obtains a $\Gamma$-equivariant flow bimodule over the Morse flow category of $f$, and the orbifold Morse-Smale-Witten product is the induced product $m_{Morse}$ on $\oplus_{g\in \|\Gamma\|} C_{Morse}^*(X^g)^{C(g)}$; note that the product will now in general mix up the different sectors.

\begin{prop}\label{p:proPss}
$m_{floer} \circ (\PSS_H \otimes \PSS_K)=\PSS_{H\#K} \circ m_{Morse}$. \qed
\end{prop}

\begin{figure}
    \centering
    \includegraphics[width=1\linewidth]{PSS2.pdf}
    \caption{From left to  right\label{fig:3 families}}
\end{figure}

\begin{proof}[Sketch]
The argument does not involve new ideas relative to constructions described in  previous parts of this paper.  Three different one-parameter families of domains interpolate between the two sides of Proposition \ref{p:proPss}, as depicted schematically in Figure \ref{fig:3 families}. One builds global charts for each of these cobordisms as before (in appropriate cases taking fibre products with ascending or descending manifolds of Morse moduli spaces).  Families of transvsere multivalued sections then give equivariant maps on the corresponding chain complexes.
\end{proof}

\subsection{Spectral invariants and their properties}\label{s:spectralprop}

\emph{We define spectral invariants for orbifold Hamiltonian Floer cohomology.}

Recall the definitions of the action of a capped orbit, Equation \eqref{eqn:action of capping}, and of the action spectrum, Definition \ref{d:action}.  The value of the action of a capped orbit defines a filtration on the Hamiltonian Floer complex (Definition \ref{d:chaincomplex}(2)), which leads to the standard definition of a spectral invariant:

\begin{definition}
Let $x \in QH_{orb}^*(Y)$, $H_Y \in C^{\infty}(Y \times S^1)$ be a non-degenerate Hamiltonian and $\mathfrak{b}$ as above.
The bulk-deformed spectral invariant is
\[
c^{\mathfrak{b}}(x,H)= \inf \{a \in \mathbb{R}| PSS_H(x) \in \im (HF(H_Y;\frak{b})^{<a} \to HF(H_Y;\frak{b}))\}
\]
 where $HF(H_Y;\frak{b})^{<a}$ is the homology of the subcomplex of $CF(H_Y,\frak{b})$ generated by elements with action less than $a$.
\end{definition}

When $Y$ is a manifold, spectral invariants $c^{\mathfrak{b}}(x,-): C^{\infty}(Y \times S^1) \to \bR$ typically satisfy a raft of `standard' properties, including:

\begin{enumerate}
   \item (Symplectic invariance) $c^{\mathfrak{b}}(x,H\circ \psi) = c^{\mathfrak{b}}(x,H)$ for any $\psi \in \Symp(Y, \omega_Y)$;
    \item (Spectrality) for non-degenerate  $H$, $c^{\mathfrak{b}}(x,H)$ lies in the action spectrum $\Spec(Y,H_Y)$;
\item (Hofer Lipschitz) for any $H, H'$
$$  \int_{0}^1 \min  (H_t - H'_t) dt \leq c^{\mathfrak{b}}(x,H) - c^{\mathfrak{b}}(x,H') \leq  \int_{0}^1 \max\,  (H_t - H'_t) dt;$$
\item (Monotonicity) if $H_t \leq H'_t$ then $c^{\mathfrak{b}}(x,H) \leq c^{\mathfrak{b}}(x,H')$;

\item (Homotopy invariance) if $H, H'$ are mean-normalized and determine the same point of the universal cover $\widetilde{\Ham}(Y,\omega_Y)$, then $c^{\mathfrak{b}}(x,H) = c^{\mathfrak{b}}(x,H')$;
\item (Shift) $c^{\mathfrak{b}}(x,H + s(t)) = c^{\mathfrak{b}}(x,H) +  \int_0^1 s(t)\, dt$.
\item (Subadditivity) for any $x,x' \in QH_{orb}(Y,\mathfrak{b})$ and $H,H'$, $c^{\mathfrak{b}}(x \cdot x',H \# H') \leq c^{\mathfrak{b}}(x,H) + c^{\mathfrak{b}}(x',H')$
\end{enumerate}

The same collection of properties hold in our case, on a global quotient orbifold $Y$.  Given the theory already developed, the proofs follow exactly the same lines as in the usual case, so we explain this rather briefly.

\begin{theo}[rephrasing Theorem \ref{t:spectral}]\label{t:spectral_thm}

The bulk-deformed orbifold spectral invariant satisfies all the listed properties, in particular the Hofer Lipschitz property, so we can define $c^{\mathfrak{b}}(x,H_Y)$ for degenerate $H_Y$ as the limit $\lim_n c^{\mathfrak{b}}(x,K_n)$ for a sequence of non-degenerate $K_n$ such that $\lim_n \|K_n-H_Y\|= 0$.
\end{theo}

\begin{proof}
\begin{enumerate}
\item (Symplectic invariance) follows from the fact that $\psi \in \Symp(Y, \omega_Y)$ induces an isomorphism between the flow Floer category of $H_Y \circ \psi$ and that of $H_Y$ as well as isomorphisms of $PSS$ flow bimodules;
\item (Spectrality) for non-degenerate $H_Y$ follows from the definition of $\Spec(Y,H_Y)$ in Definition \ref{d:action}.
Since $\Spec(Y,H_Y)$ is discrete (see Remark \ref{r:GenSpec}), the spectrality for general $H_Y$ can be deduced from 
the spectrality for non-degenerate $H_Y$ by the standard argument.

\item (Hofer Lipschitz) follows from the fact that we can choose a path of Hamiltonians from $H_Y$ to $H'_Y$ such that for $p\in \cP^{H_Y}$ and  $p'\in \cP^{H'_Y}$, the continuation flow bimodule $\scrB_{pp'}$ is empty if $\cA(p)-\cA(p') < \int_{0}^1 \min\,  ((H_Y)_t - (H'_Y)_t) dt$, and similarly for the opposite direction
\item (Monotonicity) is a special case of the Hofer Lipschitz property.
\item (Homotopy invariance). It is classical that the action spectrum does not change under a homotopy of $(\phi_{H_Y}^t)_{t \in [0,1]}$ relative to $t=0,1$. Our action $\cA$ (see \eqref{d:action}) differs from the classical one only by contributions from the bulks, so our action spectrum is also unchanged under such a homotopy. The homotopy invariance follows from (Hofer Lipschitz) and the  action spectrum being nowhere dense.
\item (Shift) follows from the definition of $\cA$  and the fact that the flow category and PSS flow bimodule do not change.
\item (Subadditivity) follows from the compatibility of the product structure with the PSS map (Proposition \ref{p:proPss}).
\end{enumerate}
\end{proof}

\bibliographystyle{alpha}
\bibliography{biblio}

{\small

\medskip
\noindent Cheuk Yu Mak\\
\noindent School of Mathematical and Physical Sciences, Hicks Building, University of Sheffield,  S10 2TN, UK\\
{\it e-mail:} c.mak@sheffield.ac.uk

\medskip
 \noindent Sobhan Seyfaddini\\
\noindent Department of Mathematics, ETH Zurich, 8092, Zürich,
Switzerland. \\
 {\it e-mail:}  sobhan.seyfaddini@math.ethz.ch.

 \medskip
 \noindent Ivan Smith\\
\noindent Centre for Mathematical Sciences, University of Cambridge, Wilberforce Road, CB3 0WB, U.K.\\
{\it e-mail:} is200@cam.ac.uk

}

\end{document}